\newcommand{\cthree}{\operatorname{3-cycle}}
\newcommand{\srw}{\operatorname{srw}}
\newcommand{\ws}{\operatorname{ws}}
\newcommand{\nws}{\operatorname{nws}}
\newcommand{\Pois}{\operatorname{Pois}}
\newcommand{\supp}{\operatorname{supp}}
\newcommand{\bad}{\operatorname{bad}}
\newcommand{\tr}{\operatorname{tr}}
\newcommand{\TV}{\operatorname{TV}}
\newcommand{\Alt}{\operatorname{Alt}}
\newcommand{\Brownian}{\operatorname{Br}}
\newcommand{\puzzle}{\operatorname{puz}}
\newcommand{\bN}{\mathbb{N}}
\newcommand{\bR}{\mathbb{R}}
\newcommand{\bU}{\mathbb{U}}
\newcommand{\zed}{\mathbb{Z}}
\newcommand{\id}{\mathrm{id}}
\newcommand{\one}{\mathbf{1}}
\newcommand{\E}{\mathbf{E}}
\newcommand{\Var}{\mathbf{Var}}
\newcommand{\Prob}{\mathbf{Prob}}
\newcommand{\sD}{\mathscr{D}}
\newcommand{\sE}{\mathscr{E}}
\newcommand{\sR}{\mathscr{R}}
\newcommand{\sX}{\mathscr{X}}
\newcommand{\ua}{\underline{a}}
\newcommand{\sP}{{\mathscr{P}}}
\newtheorem{theorem}{Theorem}
\newtheorem{cor}[theorem]{Corollary}
\newtheorem{lemma}[theorem]{Lemma}
\newtheorem*{lemma*}{Lemma}
\newtheorem{proposition}[theorem]{Proposition}
\theoremstyle{remark}
\newtheorem*{rem}{Remark}
\newtheorem{definition}{Definition}
\title{Solution of the 15 puzzle problem}
\author{Yang Chu}
\address{Department of Mathematics, Stony Brook University, Stony Brook,
NY, 11794}
\email[Yang Chu]{yang.chu@stonybrook.edu}
\author{Robert Hough}
\email[Robert Hough]{robert.hough@stonybrook.edu}
\subjclass[2010]{Primary 60B15, 60J35, 20B20}
\keywords{Symmetric group, random walk on a group, Markov chain, Dirichlet form, Poisson approximation,  cut-off
phenomenon}
\thanks{This material is based upon work supported by the National Science
Foundation under agreement No.\ DMS-1802336. Any opinions, findings and
conclusions or recommendations expressed in this material are those of the
author and do not necessarily reflect the views of the National Science
Foundation.}
\thanks{Yang Chu was supported by a fellowship from the Summer Math Foundation at Stony Brook.}
\begin{document}

\begin{abstract}
A generalized `$15$ puzzle' consists of an $n \times n$ numbered grid, with one missing number.  A move in the game switches the position of the empty square with the position of one of its neighbors.  We solve  Diaconis' `15 puzzle problem' by proving that the asymptotic total variation mixing time of the board is at least order $ n^4 $ when the board is given periodic boundary conditions and when random moves are made.  We  demonstrate  that for any $f(n) \to \infty$ with $n$, the number of fixed points after $n^4 f(n)$ moves converges to a Poisson distribution of parameter 1.  The order of total variation mixing time for this convergence is $n^4$ without cut-off.  We also prove an upper bound of order $n^{4 }\log n$ for the total variation mixing time. 
\end{abstract}

\maketitle

\section{Introduction}

A `15 puzzle' consists of a $4 \times 4$ board with 15 numbered unit tiles and one empty square.  A move in the puzzle consists of sliding a numbered tile into the empty square.  The 15 puzzle gained notoriety in the United States in the 1870's when an article in the American Journal of Math \cite{JS79} asked whether the board with positions 14 and 15 exchanged and an empty tile in the lower right corner could be shifted into sorted order, again with the empty tile in its initial position, see Figure \ref{fig:15_puzzle} (it cannot, the group of permutations generated is $A_{15}$). In general, an $n^2-1$ puzzle consists of an $n \times n$ board with $n^2-1$ numbered tiles and one empty square.   In the book \cite{D88}, Diaconis considers the problem of randomizing an $n^2-1$ puzzle given periodic boundary conditions by, at each step, shifting a uniform random neighbor of the open square into its place and makes conjectures for the mixing time of a single labeled piece on the board and for the entire puzzle.  Our main results solve Diaconis' `15 puzzle' problem.
\begin{figure}
 \centering
\begin{tikzpicture}[node distance = 1.3cm, auto, place/.style = {circle,  thick, draw=blue!75, fill=blue!20}]
 \draw (-2, -2)--(2,-2);
 \draw (-2,-1)--(2,-1);
 \draw (-2,0)--(2,0);
 \draw (-2,1)--(2,1);
 \draw (-2,2)--(2,2);
 \draw(-2,-2)--(-2,2);
 \draw(-1,-2)--(-1,2);
 \draw(0,-2)--(0,2);
 \draw(1,-2)--(1,2);
 \draw(2,-2)--(2,2);
 \draw (-1.5, 1.5) node {1};
 \draw (-.5, 1.5) node {2};
 \draw (.5, 1.5) node {3};
 \draw (1.5, 1.5) node {4};
 \draw (-1.5, .5) node {5};
 \draw (-.5, .5) node {6};
 \draw (.5, .5) node {7};
 \draw (1.5, .5) node {8};
 \draw (-1.5, -.5) node {9};
 \draw (-.5, -.5) node {10};
 \draw (.5, -.5) node {11};
 \draw (1.5, -.5) node {12};
 \draw (-1.5, -1.5) node {13};
 \draw (-.5, -1.5) node {15};
 \draw (.5, -1.5) node {14};
 \filldraw[fill=black!10] (1, -1) rectangle (2,-2);  
 \draw[ ultra thick](2,-2)--(2,2);
 \draw[ultra thick] (-2,-2)--(2,-2);
 \draw[ ultra thick] (-2,2)--(2,2);
 \draw[ ultra thick](-2,-2)--(-2,2);
\end{tikzpicture}
\caption{A 15 puzzle. A move in the puzzle slides a numbered tile into the empty space.}\label{fig:15_puzzle}
\end{figure}

\begin{theorem}\label{single_piece_mixing_theorem}
 Let $d_{\Brownian}(t)$ be the total variation distance to uniformity at time $t>0$  of standard Brownian motion started from $(0,0)$ on $(\bR/\zed)^2$.  Let $c_{\puzzle} = \frac{5}{2}(\pi-1)$. As $n \to \infty$, the total variation distance to uniformity of a single piece in the $n^2-1$ puzzle  at time $c_{\puzzle}  n^4 t$ converges to $d_{\Brownian}(t)$ uniformly for $t$ in compact subsets of $(0,\infty)$.
\end{theorem}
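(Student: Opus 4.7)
Because the empty square itself performs simple random walk on the torus $(\zed/n)^2$, the tagged piece's trajectory is a deterministic functional of that walk: the displacement $D_t := P_t - P_0$ accumulates a $-\e$ increment exactly when the empty walker takes a step $+\e$ into the piece's current location. The plan is to prove a quantitative invariance principle for $D_t$ with the correct diffusion constant, then upgrade weak convergence to total-variation convergence on $(\bR/\zed)^2$ via Fourier analysis.

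By the reflection and rotation symmetries of the dynamics $D_t$ has mean zero, and up to a bounded corrector it is a martingale. Its quadratic variation grows linearly in $t$: in equilibrium the empty walker visits any fixed site at rate $n^{-2}$, and conditionally on sitting at a neighbor of the piece swaps with it at probability $\tfrac14$. If swaps were independent this would yield $c_{\puzzle}=2$; the actual value is larger because successive swaps are negatively correlated---after a swap the empty walker is deposited at the piece's old position and tends to rattle back. Quantifying this correction reduces to a Green's-function computation for 2D simple random walk on $\zed^2$: the expected signed contribution to $D_t$ from the excursion of the empty walker between consecutive swaps can be expressed via the potential kernel $a(\cdot)$ at small displacements. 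Summing over the four neighbors of the piece and over excursion lengths produces $c_{\puzzle}=\tfrac52(\pi-1)$, with the factor $\pi-1$ inherited from the standard values of $a$.

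The invariance principle for $D_t$ then follows from a martingale CLT for additive functionals of the empty walk, which itself mixes on time scale $O(n^2 \log n) \ll n^4$; this gives $n^{-1} D_{c_{\puzzle} n^4 t} \Rightarrow$ Brownian motion on $(\bR/\zed)^2$ at time $t$. To upgrade to total variation, I would Fourier-expand the law of the piece on $(\zed/n)^2$: for each fixed $k \in \zed^2$, the $k$-th Fourier coefficient converges to $e^{-2\pi^2|k|^2 t}$ (the corresponding Gaussian coefficient of Brownian motion at time $t$) by a local CLT derived from the invariance principle, while high-frequency modes decay uniformly in $t$ bounded away from $0$ via a spectral gap estimate for the joint empty-piece dynamics. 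Summing gives uniform TV convergence on compact subsets of $(0,\infty)$.

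The main technical obstacle is the exact computation of $c_{\puzzle}$. Correlations between successive swaps decay only on the mixing time of the empty walk, much longer than the typical return time to the piece, so the excursion-by-excursion contribution to the quadratic variation must be summed across a wide range of scales; it is this careful bookkeeping, in which the 2D potential kernel enters through its small-argument values, that produces the specific combinatorial constant $\tfrac52(\pi-1)$. The other ingredients (martingale CLT, local Fourier inversion, spectral gap) are more standard but must be made quantitative.
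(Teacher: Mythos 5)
Your plan shares the overall shape of the paper's argument (track the tagged piece and its displacement, reduce the constant to a 2D potential-kernel computation, then use Fourier analysis to get TV convergence), but it misses the key technical moves and has at least one step that would not go through as written.

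\textbf{The local CLT is not a consequence of the invariance principle.} You write that the Fourier coefficient of the piece's law converges to $e^{-2\pi^2|k|^2 t}$ ``by a local CLT derived from the invariance principle.'' Weak convergence does give convergence of $\E[e^{2\pi i k\cdot D_T/n}]$ for each fixed $k$, because $e^{2\pi i k\cdot x}$ is a bounded continuous test function; that part is fine. But upgrading to TV on $(\zed/n\zed)^2$ requires summing $|\hat{\mu}(k)-\hat{\theta}_t(k)|$ over all $n^2$ frequencies, and the high-frequency tail is exactly the local limit theorem, which is \emph{not} implied by the functional CLT and is the main technical burden. The paper's proof devotes most of its effort to exactly this: the characteristic function bounds $|\chi(\xi_1,\xi_2)|\leq 1-c\max(\xi_1^2/n^2,\xi_2^2)$ and $|\chi(\xi_1,\xi_2)|\leq 1-c\min((\log n)^{-3},\xi_2^2 n^4)$, which flow through the explicit resolvent formula $\chi=w^t(I-RM)^{-1}Rv$ and the spectral decomposition of $P'$ (the walk with the origin deleted), and these are what make the Fourier-inversion sum in Theorem~\ref{llt} converge. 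Your proposed substitute, a ``spectral gap estimate for the joint empty--piece dynamics,'' does not do the job: the joint chain has spectral gap $\Theta(n^{-4})$, the same order as the claimed mixing time, so it controls the slowest mode, not the high-frequency tail.

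\textbf{The renewal structure is cleaner than you assume, and this is not optional.} You describe successive swaps as negatively correlated with ``correlations decaying only on the mixing time of the empty walk, much longer than the typical return time.'' That picture would make the constant $c_{\puzzle}$ very hard to nail down exactly. The paper avoids it entirely by choosing stopping times at \emph{alternating} horizontal and vertical swap events: at a vertical swap the relative position of $\sP_e$ is determined up to an up/down reflection, and the strong Markov property then makes the horizontal increments $H_i$ (and the inter-arrival times $r_i$) genuinely i.i.d., not merely weakly correlated. This is what lets one write the joint characteristic function as an $N$-fold power and carry out the LLT. Your martingale-plus-corrector route might be made to work for a CLT, but you would then still need the corrector to be bounded (a Poisson-equation/Green's-function estimate you don't carry out), a precise asymptotic for the quadratic variation, and an independent proof of the local CLT, so the martingale framing does not actually buy you anything over the i.i.d.\ renewal decomposition.

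\textbf{The constant $c_{\puzzle}$ is asserted, not derived, and there is a further decoupling you don't address.} The paper obtains $c_{\puzzle}=2\mu/s^2$, with $s^2$ coming from a geometric-sum evaluation of $\E[H_1^2]$ in terms of the return probabilities $p_{(1,0)}=\tfrac12$, $p_{(0,\pm1)}=\tfrac12-\tfrac1\pi$, $p_{(-1,0)}=\tfrac2\pi-\tfrac12$ (extracted from the 2D Green's function), and $\mu$ coming from the expected number of moves of $\sP_e$ per renewal. Beyond the constant, there is also the issue that one samples at a \emph{fixed number of chain steps} $T=\lfloor c_{\puzzle}n^4 t\rfloor$, not a fixed number of renewals: the number of completed renewals by time $T$ is random and correlated with the displacement. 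The paper handles this by including the elapsed time $\sum(r_i+s_i)$ as a third coordinate in the local limit theorem (Theorem~\ref{llt}), proving the displacement is asymptotically independent of the elapsed time, and then absorbing the leftover $O(n^{1/2+\epsilon})$ moves into the $C^1$ smoothness of the test function $\psi_\varepsilon$. Your plan does not mention this joint-law decoupling, and it is not a formality.

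In short: the direction is right (quantitative CLT with constant from the potential kernel, then Fourier analysis), but the proposal treats the local limit theorem as a corollary of weak convergence when it is in fact the heart of the proof, proposes a spectral-gap argument that is too weak for the high-frequency regime, and skips the i.i.d.\ renewal decomposition and the joint-law-with-elapsed-time decoupling that the paper's proof relies on.
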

Although Theorem \ref{single_piece_mixing_theorem} is stated for convergence in the total variation metric, the order $n^4$ mixing time also holds in the stronger $\frac{\epsilon}{|G|}-\ell^\infty$ metric, see Section \ref{background_section} for a discussion of these metrics.  This stronger convergence is illustrated in the following theorem.  As is well known, the number of fixed points of a uniform random permutation on the symmetric group $S_n$ converges to a $\Pois(1)$ distribution as $n \to \infty$.
\begin{theorem}\label{poisson_theorem}
 Let $f:\bN \to \bN$ be an arbitrary growth function such that $f(n) \to \infty$ as $n \to \infty$.  If an $n^2-1$ puzzle is sampled after $n^4f(n)$ random steps, then the number of pieces in the puzzle in their original position converges to a $\Pois(1)$ distribution.  The convergence does not hold if $f$ remains bounded. 
\end{theorem}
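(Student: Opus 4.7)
The plan is the method of factorial moments: it suffices to show that for each fixed $k\ge 1$, the $k$-th descending factorial moment of the number $F_t$ of pieces in their original position converges to $1$, which is the $k$-th factorial moment of $\Pois(1)$. Writing
$$\E\bigl[(F_t)_k\bigr]=\sum\Prob\bigl(\sigma_t(i_j)=i_j,\ j=1,\ldots,k\bigr),$$
with the sum over ordered $k$-tuples of distinct pieces $(i_1,\ldots,i_k)$ and $\sigma_t$ the permutation at time $t$, the question reduces to joint control of the positions of $k$ tagged pieces. Since the number of such ordered tuples is $(1+o(1))n^{2k}$, what is needed is the asymptotic identity
$$\Prob\bigl(\sigma_t(i_j)=p_j,\ j=1,\ldots,k\bigr)=\frac{1+o(1)}{n^{2k}}$$
uniformly in distinct target positions, valid at $t=n^4 f(n)$ when $f(n)\to\infty$.

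The case $k=1$ is essentially Theorem~\ref{single_piece_mixing_theorem}, whose stronger $\frac{\epsilon}{|G|}-\ell^{\infty}$ form (noted after the statement) upgrades the total variation bound to pointwise control of the marginal density; since $f(n)/c_{\puzzle}\to\infty$, the Brownian density on $(\bR/\zed)^2$ equilibrates to $1+o(1)$ at every point. The core technical step is the promotion of this single-piece result to a joint estimate for $k$ tagged pieces. I plan to do so by coupling each tagged piece's motion to the trajectory of the hole: piece $i$ changes position only when the hole occupies one of its four neighbours and selects that neighbour as its next step, so on the timescale $n^4 f(n)$ the $k$ tagged pieces evolve as a time-change of a single hole walk driven by visits to $k$ geographically separated neighbourhoods. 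A decorrelation estimate along the lines of the single-piece analysis should then show that the $k$ pieces behave approximately as independent Brownian motions on the torus, each equilibrated by the assumption $f(n)\to\infty$. Combined with the sum over $k$-tuples this yields $\E[(F_t)_k]=1+o(1)$, so by the method of moments $F_t\to\Pois(1)$ in distribution.

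For the negative assertion, suppose $f$ is bounded and pass to a subsequence along which $f(n)\to s<\infty$. Theorem~\ref{single_piece_mixing_theorem} then implies that for each piece $i$ the position $\sigma_t(i)$, rescaled to the unit torus, converges in distribution to Brownian motion on $(\bR/\zed)^2$ at time $\tau=s/c_{\puzzle}$, whose heat-kernel density at the origin is strictly greater than $1$ for every finite $\tau>0$. Consequently $\E[F_t]$ converges along this subsequence to this density, a constant bounded away from $1$, which contradicts any limit distribution of mean $1$; hence $F_t$ cannot converge to $\Pois(1)$.

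The principal obstacle is the joint multi-piece estimate: Theorem~\ref{single_piece_mixing_theorem} is a single-piece marginal statement, whereas the factorial moments demand pointwise control of the joint law of $k$ tagged pieces at the same $n^4$ timescale, and the pieces are genuinely coupled through the shared hole trajectory. One cannot simply invoke full $\ell^{\infty}$ mixing of the puzzle, since full mixing only takes place at the larger scale $n^4\log n$ according to the upper bound announced in the abstract.
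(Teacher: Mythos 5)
Your high-level plan agrees with the paper's: both use the method of (factorial) moments, reducing the claim to showing that the $k$-th factorial moment of the number of fixed points tends to $1$ for each fixed $k$, and both reduce that to the joint behaviour of $k$ tagged pieces plus the hole. Your handling of the negative direction is also essentially the paper's (Corollary~\ref{expectation_corollary} gives $\E[F_t]\to\theta_\tau(0)>1$ along a subsequence), though to rule out $\Pois(1)$ convergence from a non-convergent mean you should note, as the paper does, that the second moment stays bounded, giving the uniform integrability needed to pass from convergence in distribution to convergence of the mean.

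The genuine gap is exactly where you flag it: you propose to prove a uniform pointwise estimate $\Prob(\sigma_t(i_j)=p_j,\ j\le k)=(1+o(1))n^{-2k}$ by a decorrelation argument for $k$ tagged pieces time-changed along the hole's trajectory, but you do not carry this out, and you yourself observe that $\ell^\infty$ control of the full puzzle only arrives at scale $n^4\log n$. The paper sidesteps this entirely by a structural observation you are missing: the $k$-th factorial moment is a \emph{partial trace}, i.e.\ a sum over diagonal entries of the $d$-piece transition kernel, not any individual entry, and partial traces are controlled by $\ell^2$ (the $d_2$ distance), not $\ell^\infty$. Concretely, the paper proves that the chain $(\sX_d,P_d)$ tracking $d$ labeled pieces and the hole has $d_2$ mixing time $O(n^4)$ (Theorem~\ref{d_pieces_theorem}), then writes
\begin{equation*}
E_d-\tfrac1{d!}=\tfrac1{d!}\sum_{i\ge1}\lambda_i^{N-n^2}\sum_{x}\langle P_d^{n^2}a_x,v_i\rangle\langle b_x,v_i\rangle,
\end{equation*}
where the $a_x$ fix the hole at $(n,n)$ and the $b_x$ integrate it out; after applying $P_d^{n^2}$ once to randomize the hole, Cauchy--Schwarz and orthogonality bound the inner sum by $O(1)$, so $|E_d-\tfrac1{d!}|\ll\sum_i|\lambda_i|^{N-n^2}$, which is the squared $d_2$ distance and tends to $0$ when $N/n^4\to\infty$. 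The heavy lifting is thus the $d_2$ spectral estimate for $\sX_d$, obtained in the paper by comparison to a symmetrized chain, concentration estimates for the hole's excursions, and a local limit theorem on well-spaced configurations. Without either the partial-trace reduction or the decorrelation estimate it replaces, your argument does not close.
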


The cut-off phenomenon  for a sequence of Markov chains describes a transition from non-uniform to uniform during a number of steps which is lower order than the mixing time.  Cut-off phenomena have been proved to occur for a wide class of chains, including nearest neighbor random walk on the hypercube and the random transposition walk on the symmetric group. 
As a consequence of the proof of Theorem \ref{poisson_theorem} we obtain the following Corollary.
\begin{cor}\label{no_cut-off_cor}
 The convergence of the number of fixed points in an $n^2-1$ puzzle does not exhibit a cut-off phenomenon in total variation.
\end{cor}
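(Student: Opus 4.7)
The plan is to derive Corollary \ref{no_cut-off_cor} directly from Theorem \ref{poisson_theorem} by a short scaling argument. Let $d_n(t) = d_{\TV}(\mathcal{L}(X_t^{(n)}), \Pois(1))$, where $X_t^{(n)}$ denotes the number of fixed points of the $n^2-1$ puzzle after $t$ moves. Cut-off at a sequence $t_n$ means that for every $\epsilon > 0$ one has both $d_n((1-\epsilon)t_n) \to 1$ and $d_n((1+\epsilon)t_n) \to 0$; the goal is to rule this out for every choice of $t_n$.

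Assume for contradiction that cut-off holds at $t_n$. Then $d_n((1+\epsilon)t_n) \to 0$ for any $\epsilon > 0$. By the final assertion of Theorem \ref{poisson_theorem}, the Poisson limit fails along any sequence of times $n^4 f(n)$ with $f$ bounded, so one must have $t_n / n^4 \to \infty$. Now fix $\epsilon \in (0,1)$ and set $s_n = (1-\epsilon) t_n$, so that $s_n / n^4 \to \infty$ as well. Applying Theorem \ref{poisson_theorem} with the growth function $f(n) := s_n / n^4$ (or, if the theorem is read strictly for integer-valued $f$, with $f(n) := \lfloor s_n / n^4 \rfloor$ and noting that the residual $O(n^4)$ extra steps can be absorbed into the argument) yields $d_n(s_n) \to 0$, contradicting the cut-off requirement $d_n((1-\epsilon)t_n) \to 1$.

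The only subtlety, rather than a genuine obstacle, is ensuring that the Poisson approximation in Theorem \ref{poisson_theorem} applies at times $T_n$ which are not integer multiples of $n^4$. Inspection of that proof should show that the two key inputs, namely the single-piece return probability (which varies slowly on the $n^4$-scale by Theorem \ref{single_piece_mixing_theorem}) and the asymptotic independence of distinct pieces (which holds whenever $T_n / n^4 \to \infty$), are both valid at arbitrary integer times $T_n$ in this regime. With this uniformity in hand, the absence of cut-off follows as a clean corollary of the Poisson limit; no new estimate beyond Theorem \ref{poisson_theorem} itself is required.
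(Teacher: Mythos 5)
Your argument is correct and is essentially the same as the paper's, which settles the corollary in two sentences at the end of the deduction of Theorem \ref{poisson_theorem}: the Poisson convergence for $f(n)\to\infty$ shows that the $\epsilon$-mixing time is $O(n^4)$, while Corollary \ref{expectation_corollary} shows that the distance to $\Pois(1)$ stays bounded away from $0$ at times $O(n^4)$, so the transition window has the same order as the mixing time. You merely repackage this as a proof by contradiction (ruling out $t_n/n^4 \not\to\infty$ via the final assertion of Theorem \ref{poisson_theorem} and then showing $d_n((1-\epsilon)t_n)\to 0$), and the "subtlety" you flag about general integer times is indeed harmless since the proof of Theorem \ref{poisson_theorem} only uses $N/n^4\to\infty$, not that $N$ be an exact multiple of $n^4$.
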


Modifying an argument of \cite{HSZ15} we demonstrate the following mixing time upper bound for the full $n^2-1$ puzzle.
\begin{theorem}\label{upper_bound_theorem}
 The total variation and $\frac{\epsilon}{|G|}-\ell^\infty$ mixing time of an $n^2-1$ puzzle is $O\left(n^4 \log n\right)$.
\end{theorem}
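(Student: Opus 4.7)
The plan is to follow a Dirichlet form comparison argument in the spirit of \cite{HSZ15}, transferring known mixing estimates for a reference walk on $S_{n^2-1}$ across to the puzzle walk. The natural reference is the lazy random transposition walk on $S_{n^2-1}$, whose spectral gap (Diaconis--Shahshahani) and logarithmic Sobolev constant (Lee--Yau) are both $\Omega(1/n^2)$ up to logarithmic corrections.

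\emph{Step 1 (canonical words).}  For each transposition $(i,j) \in S_{n^2-1}$, I would realize $(i,j)$ as an explicit word in the puzzle generators.  The construction routes the empty square from its current position along a lattice path to the tile labeled $i$, shuttles tile $i$ adjacent to tile $j$ by repeatedly rotating the empty square around tile $i$, performs an $O(1)$-move commutator to swap them, and retraces its steps to restore the empty square.  Each elementary shuttle step uses $O(1)$ puzzle moves, so the full word has length $O(n)$ and depends only on $i,j$ and the initial position of the empty square, not on the arrangement of the remaining tiles.

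\emph{Step 2 (congestion).}  I would apply the Diaconis--Saloff-Coste comparison lemma.  The congestion through a single puzzle-move edge is a weighted count of the canonical paths crossing it; by the translational symmetry of the torus this reduces to summing over translates of a fixed local pattern, and the resulting bound is polynomial in $n$.  Combined with the reference estimates, this yields $\gap^{\mathrm{puz}} = \Omega(n^{-4})$ and an analogous lower bound on $\rho_{LS}^{\mathrm{puz}}$.

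\emph{Step 3 (from functional inequalities to mixing).}  Relative-entropy decay $\mathrm{Ent}_\pi(P^t f) \le e^{-2 \rho_{LS} t} \mathrm{Ent}_\pi(f)$ applied to an initial Dirac mass, for which $\mathrm{Ent}_\pi(\delta_x/\pi) \le \log|G| = O(n^2 \log n)$, combined with Pinsker's inequality, yields total-variation mixing in $O(n^4 \log n)$ steps.  The $\frac{\epsilon}{|G|}-\ell^\infty$ bound follows by concatenating a further $O(\tau_2 \log|G|) = O(n^4 \log n)$ relaxation phase in the standard Diaconis--Saloff-Coste $\ell^\infty$ framework.

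The main obstacle is Step 2: choosing canonical paths that are simultaneously short and spread evenly over the puzzle edges, so that no particular edge is overloaded.  Exploiting the discrete torus symmetry to reduce the congestion calculation to a local computation is the key ingredient, as is verifying that the comparison actually preserves—rather than degrades by a spurious $\log n$ factor—the ratio needed for the claimed $n^4 \log n$ upper bound.
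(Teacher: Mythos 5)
Your overall strategy — Dirichlet-form comparison with a symmetric reference walk on the permutation group — is the same as the paper's, but the specific choices in Steps 1 and 3 do not work as stated.

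\textbf{The parity obstruction for $n$ even.}  Every generator $R,L,U,D$ of the puzzle walk is an odd permutation of $S_{n^2-1}$ (each is a product of $n-1$ $n$-cycles and one $(n-1)$-cycle, hence has sign $(-1)^{n^2-n-1}=-1$).  When $n$ is even, the parity of the torus coordinate sum on $(\zed/n\zed)^2$ is a well-defined invariant, so any word that returns the empty square to its starting position has even length and therefore realizes an \emph{even} permutation.  A transposition is odd, so the word you describe in Step 1 cannot exist for $n$ even — this is precisely the classical $15$-puzzle parity obstruction.  The paper avoids it by taking the reference generators to be $3$-cycles: the commutator $URU^{-1}R^{-1}$ is a $3$-cycle that fixes the empty square, and conjugating it by an $O(n)$-length word reaches an arbitrary $3$-cycle while staying in the alternating group.

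\textbf{The reference walk must live on the same state space.}  The transposition walk on $S_{n^2-1}$ never touches the torus coordinate, so it is not a Markov chain on the puzzle's state space, and Theorem \ref{group_comparison_theorem} does not apply.  The paper's reference walk is driven by $S=\{Rc, Lc, Uc, Dc, c : c=(c_3,\id),\ c_3 \text{ a } 3\text{-cycle}\}$ on $G_n=S_{n^2-1}\times(\zed/n\zed)^2$ (or $A_{n^2-1}\times(\zed/n\zed)^2$ for $n$ even), so it mixes both factors; its $d_2$ mixing time is $O(n^2\log n)$ by a representation-theoretic eigenvalue bound, and each generator decomposes into $O(n)$ puzzle moves, which gives a comparison constant $A=O(n^2)$ and hence the claimed $O(n^4\log n)$.

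\textbf{Step 3 loses a $\log n$.}  Plugging $\rho_{LS}^{\mathrm{puz}}=\Omega\!\left(\frac{1}{n^4\log n}\right)$ and initial entropy $\log|G|=O(n^2\log n)$ into the entropy decay bound gives a mixing time of $O(n^4(\log n)^2)$, not $O(n^4\log n)$.  The paper does not need log-Sobolev: the comparison $1-\lambda_{i,n^2-1}\gg n^{-2}(1-\lambda_{i,S})$ holds eigenvalue-by-eigenvalue, so one sums $\sum_{i\geq 1}\lambda_{i,n^2-1}^{2N}$ directly, and the $d_2$ distance controls both total variation and $\frac{\epsilon}{|G|}\text{-}\ell^\infty$ for symmetric walks on groups.
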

We also give a coupling of the process which tracks the empty square and boundedly many numbered pieces of the $n^2-1$ puzzle.  
\begin{theorem}\label{coupling_theorem}
 For each fixed $d$, as $n \to \infty$, there is a coupling of the Markov process described by the empty square and any $d$ labeled pieces, such that the expected time for the chain started from a deterministic position to coincide with the chain started from stationarity is $ O(n^4 \log n)$.
\end{theorem}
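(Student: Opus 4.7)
I propose to carry out the coupling in two stages: first coupling the empty squares, then coupling the $d$ labeled pieces using Theorem \ref{upper_bound_theorem}.

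First, I would couple the empty squares. The empty square performs simple random walk on the torus $(\zed/n\zed)^2$, and two such walks admit the standard coordinate-by-coordinate coupling: reduce the disagreement in the $x$-coordinate using a reflection coupling on the $x$-direction moves (moving synchronously on $y$-direction moves), then switch to coupling the $y$-coordinate once the $x$-coordinates agree, handling parity by sampling both chains at matching times. Standard estimates for meeting times on the one-dimensional torus give coalescence of the empty squares in expected time $O(n^2)$. Throughout this stage the $d$ labeled pieces evolve according to whatever randomness the coupling dictates; no effort is made yet to align them across the two chains.

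Second, once the empty squares agree, I would work with the reduced Markov process whose state consists of the empty square together with the positions of the $d$ tracked pieces. The key observation is that this projection is genuinely Markov, since to evolve it one only needs to know whether the uniformly random neighbor of the empty square is one of the tracked pieces, and if so which one. Since total variation distance only decreases under projections, the TV mixing time of this reduced chain is bounded by that of the full puzzle, which by Theorem \ref{upper_bound_theorem} is $O(n^4\log n)$. I then invoke the standard block-coupling reduction: fix $T=Cn^4\log n$ with $C$ large enough that $d_{\TV}(P^T(x_0,\cdot),\pi)\le 1/2$ uniformly in the starting state $x_0$, and at the end of each successive time window of length $T$ sample the pair of configurations using the maximal coupling between the current law of the deterministic-start chain and the stationary law $\pi$. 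Each window succeeds in coupling with probability at least $1/2$, and the trajectories between consecutive block endpoints are filled in conditionally on those endpoints. By the strong Markov property, the number of failed blocks is stochastically dominated by a geometric random variable with mean $\le 2$, so the overall expected coalescence time is $O(T)=O(n^4\log n)$.

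The main obstacle is the passage from the TV mixing-time bound (a statement about marginal distributions) to a valid trajectory-valued coupling that realizes the desired coalescence. This is handled in the standard way indicated above, taking care to verify that the stationary-start chain really remains distributed according to $\pi$ at every intermediate time and that the bridge distributions used to fill in each block are consistent with the Markovian dynamics. I would avoid trying to write down a more explicit piece-by-piece coupling that coalesces the labeled pieces one at a time; the positions of different labeled pieces driven by a shared empty-square trajectory are delicately correlated, and a direct coalescence argument of this kind would not improve on the order $n^4\log n$ bound stated in the theorem.
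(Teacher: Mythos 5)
Your proof is correct in its essentials, but it takes a genuinely different route from the paper. The paper's proof of Theorem~\ref{coupling_theorem} is a direct, explicitly constructed coupling: after matching the empty squares, the two copies of $\sP_e$ are moved \emph{together} to keep already-coupled pieces aligned, then briefly decoupled in a controlled way so as to align one further labeled piece at a time, with a bounded-probability-of-success repair loop. That construction is precisely the ``piece-by-piece coalescence'' you chose to avoid, and the delicate correlations you worried about are handled by the device of moving $\sP_e$ and $\sP_e'$ in lockstep except during short local adjustments. Your approach instead treats the reduced $(\sX_d, P_d)$ chain as a black box, observes (correctly) that it is Markov, that its total variation mixing time is dominated by that of the full puzzle (projections contract TV and the projected stationary measure is the right one), and then invokes the standard block-coupling reduction from a TV-mixing bound to an $O(T)$ expected coupling time. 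This is a valid argument, but a few remarks are in order. First, your Stage~1 (reflection-coupling the empty squares) is dead weight: the block-coupling in Stage~2 already handles the empty square along with the labeled pieces, so nothing in Stage~2 uses the fact that the empty squares have been matched. Second, the coupling you produce is not a Markov process on $\sX_d\times\sX_d$ --- the bridge-filling within a block makes the transition at time $t$ depend on the preassigned block endpoint, not just on $(X_t, Y_t)$ --- whereas Section~\ref{coupling_section} of the paper defines a coupling to be such a Markov process. This is a real, if pedantic, mismatch; the TV-bounds-by-coupling-time machinery does not require Markovianness, so the theorem's conclusion still holds under your construction, but if one insists on the paper's definition one must either time-augment the state space or appeal to a maximal Markovian coupling. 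Third, and most significantly, your argument makes Theorem~\ref{coupling_theorem} a formal corollary of Theorem~\ref{upper_bound_theorem}, which itself rests on the comparison-to-3-cycles representation theory; the paper's proof is elementary and logically independent of Theorem~\ref{upper_bound_theorem}, which is presumably why the authors included it. What your route buys is brevity and generality (the block-coupling reduction works for any chain once a TV-mixing bound is in hand); what the paper's route buys is an explicit, self-contained, genuinely Markovian coupling that does not depend on the harder spectral result.
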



\subsection{Discussion of method}
Theorems \ref{single_piece_mixing_theorem} and \ref{poisson_theorem} are proved by tracking one or several marked pieces on the board as they move.  The pieces move at the times of a renewal process when the empty square moves next to one of them and then the piece is shifted into it.  To prove Theorem \ref{single_piece_mixing_theorem}, a local limit theorem is proved which demonstrates the approximate independence of the piece's location and the number of moves of the empty square after the marked piece has moved approximately $n^2$ times.  Since the expected time of a renewal is of order $n^2$, this takes time order $n^4$.  The lower order fluctuations from the sum of the renewal process times are then absorbed as an error term.  

Theorem \ref{poisson_theorem} is proved by making a comparison to the exclusion process of several labeled pieces on the board, making moves that are again taken at the times of a renewal process. See \cite{DS93b} for an analysis of the spectrum of the exclusion process on a graph with unlabeled pieces.  A slightly different approach is taken here to analyze the exclusion process, passing to a larger state space without exclusion by extending the test functions by averaging.  The convergence to $\Pois(1)$ is essentially equivalent to proving the mixing in each representation of the symmetric group corresponding to a partition with boundedly many pieces below the first row or  with boundedly many pieces to the right of its first column, see the discussion in \cite{D88}.

The proof of the upper bound in Theorem \ref{upper_bound_theorem} works as follows. Let $U$ denote the move that shifts the blank piece up, and $R$ the move that shifts the blank piece right.  The commutator $URU^{-1}R^{-1}$ is a 3-cycle. In order $n$ moves, any three pieces can be moved into the positions adjacent to the empty square in the lower right corner.  Conjugating the 3 cycle by this permutation obtains any 3 cycle in the conjugacy class.  The mixing time upper bound now follows from the comparison bound \cite{DS93b} for the mixing of 3 cycles on the alternating group.  
\subsection{Related work}
Much of the prior work regarding $n^2-1$ puzzles has focused on sorting strategies for a puzzle in general position, and on the hardness of finding shortest sorting algorithms.  For instance, it is known that any $n^2-1$ puzzle may be returned to sorted order in order $n^3$ steps (and fewer are not always possible) \cite{P95}, and that finding the shortest solution is NP-hard \cite{RW86}, \cite{G11}. 

There has been significant interest recently in studying random walks with small generating sets in non-commutative groups, see for instance \cite{BG08a}, \cite{BG08b} and \cite{H08}, with many subsequent works, but there are still comparatively few such walks which have been studied on the symmetric group.  In \cite{DS93a} Diaconis and Saloff-Coste obtain a mixing time upper bound of order $n^3$ for symmetric random walk on the symmetric group generated by a two cycle and an $n$ cycle.  For an arbitrary pair of generators, the best known general upper bound has been obtained by Helfgott and Seress \cite{HS14}, but is not polynomial in $n$.  For a uniformly chosen pair of random generators, their bound is $O(n^3 (\log n)^c)$.  

A host of random walks on permutation groups  have been studied via a variety of techniques, including the representation theory \cite{DS81}, \cite{H16}, \cite{BN19}, and couplings \cite{BS19}. Our method for the upper bound, which uses a three transitive group action, is based on \cite{HSZ15}.  So far as we are aware, the method of using renewal theory together with a local limit theorem to prove the lower bound, is new.  The comparison techniques used in the proof of Theorem \ref{poisson_theorem} are partly based on \cite{DS93b}.

\section{Background and notation}\label{background_section}
The following asymptotic notation is used.  Unless otherwise stated, all asymptotics are taken as the side length $n$ of the puzzle tends to infinity.  The notation $A\ll B$ has the same meaning as $A = O(B)$, and means that there is a fixed constant $C$ such that $|A| \leq C B$. The notation $A=O_D(B)$ means that the constant $C$ is allowed to depend on a further parameter $D$. The notation $A \asymp B$ means $A\ll B$ and $B \ll A$.  The notation $A \sim B$ means $A = (1 + o(1))B$ or $\lim \frac{A}{B} = 1$.

For the reader's convenience we review the basic conventions regarding Markov chains, see \cite{LPW09}. Denote by $\sX$ a finite or countable state space, and by $P$ a Markov matrix.  Given two states $x, y \in \sX$, $P(x,y) = e_x^t P e_y$ denotes the probability of transferring from state $x$ to state $y$.  Here $e_x$ and $e_y$ are the standard basis vectors.  A Markov chain $X_0, X_1, X_2, ...$ is a collection of $\sX$ valued random variables satisfying the Markov property
\begin{align}
 &\Prob\left(X_{t+1} = y \Big| \bigcap_{s = 0}^{t-1} \{X_s = x_s\} \cap \{X_t = x\}\right) \\\notag&= \Prob\left(X_{t+1} = y| X_t = x\right) = P(x,y).
\end{align}
Thus $P^t$ denotes the Markov transition matrix of $t$ steps of the chain. The chain is irreducible if, given any two states $x, y$, there is a positive probability of moving from $x$ to $y$ in finitely many steps.  
A finite space irreducible Markov chain has a unique stationary probability distribution $\pi$ which satisfies $\pi^t P = \pi^t$. Denote $\Prob_x$ and $\E_x$ probability and expectation for a chain started from the state $X_0 = x$.  

The hitting time of a state $x \in \sX$ is $\tau_x := \min\{t \geq 0: X_t = x\}$.  Let $\tau_x^+ = \min\{t \geq 1: X_t = x\}$.  When $X_0 = x$, $\tau_x^+$ is called the first return time.  The unique stationary distribution of a finite state irreducible Markov chain satisfies 
\begin{equation}
 \pi(x) = \frac{1}{\E_x[\tau_x^+]}.
\end{equation}

A stopping time $\tau$ for $(X_t)$ is a $\{0, 1, 2, ...\}\cup \{\infty\}$ valued random variable satisfying $\{\tau = t\}$ is determined by $X_0, ..., X_t$. All of the stopping times considered here will be almost surely finite.  An important property in this work is the \emph{strong Markov property}
\begin{align}
 &\Prob_{x_0}\{(X_{\tau+1}, X_{\tau+2}, ..., X_{\tau+\ell} \in A| \tau = k \wedge (X_1, ..., X_k) = (x_1, ..., x_k)\}\\
 \notag&= \Prob_{x_k}\{(X_1, ..., X_\ell) \in A\}.
\end{align}

Given a function $f: \sX \to \bR$, the action of $P$ on $f$ is defined by
\begin{equation}
 Pf(x) = \sum_{y \in \sX} P(x,y)f(y).
\end{equation}
The function $f$ is said to be harmonic at $x$ if $Pf(x) = f(x)$.  The Markov chain $P$ is called reversible if, for all $x, y$, \begin{equation}\pi(x) P(x,y) = \pi(y) P(y,x).\end{equation}  All chains in this work will be irreducible and reversible, with stationary distribution that is uniform on $\sX$.  Under this condition, the matrix $P$ is symmetric with real eigenvalues $1 = \lambda_0 > \lambda_1 \geq \cdots \geq \lambda_{|\sX|-1} \geq -1$.

The time $t$ heat kernel associated to $P$ is
\begin{equation}
 H_t(P) = e^{-t}\sum_{k=0}^\infty \frac{t^k P^k}{k!}. 
\end{equation}
Write $\sigma(P)$ for the spectrum, including multiplicity, of $P$.
If $P = \sum_{\lambda \in \sigma(P)} \lambda v_\lambda v_\lambda^t$ is a diagonalization of $P$ in an orthonormal eigenbasis $\{v_\lambda\}_{\lambda \in \sigma(P)}$ then 
\begin{equation}
 H_t(P) = \sum_{\lambda \in \sigma(P)} e^{(\lambda-1)t} v_\lambda v_\lambda^t.
\end{equation}
Given a smooth function $\phi \geq 0$ of compact support, $\int_{\bR^+} \phi = 1$ on $\bR^+$, define the Laplace transform 
\begin{equation}
 \Phi_t(P) = \int_0^\infty \phi(s)H_{st}(P) ds = \sum_{\lambda \in \sigma(P)} \hat{\phi}((1-\lambda)t) v_\lambda v_\lambda^t
\end{equation}
where
\begin{equation}
 \hat{\phi}(t) = \int_0^\infty \phi(s)e^{-st}ds
\end{equation}
is the usual Laplace transform. The relation between the spectrum of the original chain $P$ and the Laplace transform $\Phi_t(P)$ will be useful in proving Theorem \ref{poisson_theorem}.
The Dirichlet form associated to $P$ is a quadratic form
\begin{equation}
 \sE(f,f) = \langle (I-P)f,f\rangle = \frac{1}{2} \sum_{x,y} (f(x)-f(y))^2 \pi(x) P(x,y).
\end{equation}

A random walk on a finite group $G$ with driving probability measure $\mu$ can be interpretted as a Markov chain in which $\sX = G$ and $P(x,y) = \mu(x^{-1}y)$.  The distribution after $n$ steps of the random walk started from the identity is given by the group convolution
\begin{equation}
 \mu^{*n} (x) = \mu*\mu^{*(n-1)}( x)
\end{equation}
where
\begin{equation}
 \mu * \nu(z) = \sum_{xy=z} \mu(x)\nu(y).
\end{equation}
If the generating measure $\mu$ is symmetric, that is $\mu(x) = \mu(x^{-1})$ for all $x$, and if it is lazy, that is $\mu(0) = \epsilon > 0$, then the stationary measure is uniform on $G$, the Markov chain is reversible, and $\mu^{*n} \to \bU_G$ as $n \to \infty$.  Also, the least eigenvalue is bounded below by $-1+\epsilon$.

Several standard metrics are used on finite state Markov chains.  The total variation metric between two probability measures $\mu$, $\nu$ on $\sX$ is
\begin{equation}
 \|\mu-\nu\|_{\TV} = \sup_{A \subset \sX} |\mu(A) - \nu(A)| = \frac{1}{2} \sum_{x \in \sX} |\mu(x)-\nu(x)|.
\end{equation}
The $d_2$ distance is a scaled version of the $\ell^2$ norm,
\begin{equation}
 \|\mu- \nu\|_{d_2}^2 = |\sX| \sum_{x \in \sX} (\mu(x)-\nu(x))^2.
\end{equation}
For $0 < \epsilon < 1$, the $\frac{\epsilon}{|\sX|}$-$\ell^\infty$ distance between $\mu$ and $\nu$ is
\begin{equation}
 \|\mu-\nu\|_{\epsilon, \infty} = \frac{|\sX|}{\epsilon} \sup_{x \in \sX} |\mu(x)-\nu(x)|.
\end{equation}
Given any of these metrics, the mixing time of the chain $\{e_{x}^tP^N\}$ to uniformity $\nu$ is the first steps $N$ such that $\|e_{x}^t P^N - \nu\|<\frac{1}{e}$. For symmetric random walk on a group, the total variation mixing time and $\frac{\epsilon}{|G|}-\ell^\infty$ mixing time are bounded up to constants by the $d_2$ mixing time, see \cite{DS93a}.

We make the following conventions regarding probability.  The standard one dimensional Gaussian distribution is
\begin{equation}
 \eta(x) = \frac{1}{\sqrt{2\pi}} e^{-\frac{x^2}{2}}.
\end{equation}
A $d$ dimensional Gaussian of mean $\mu$ and covariance matrix $\sigma^2$ is
\begin{equation}
 \eta_{\mu, \sigma}(x) = \frac{1}{(2\pi)^{\frac{d}{2}} \det\sigma} \exp\left(-\frac{1}{2} (x-\mu)^t \sigma^{-2}(x-\mu)\right).
\end{equation}
Normalize two dimensional Brownian motion $B(t)$ started from $0$ in $\bR^2$ by giving $B(1)$ a Gaussian distribution of mean 0 and covariance $I_2$, the two dimensional identity matrix. Brownian motion on $\bR^2/\zed^2$ is obtained by projection to the quotient. 

We use repeatedly that sums of independent random variables and related quantities are concentrated, in the sense that their tail distribution decays more than polynomially away from the mean.  Specifically, a sequence of events $\{E_n\}$ on a sequence of probability spaces $\{\sX_n\}$ is said to hold with overwhelming probability (w.o.p.) if, for any constant $A>0$,
\begin{equation}
 \Prob(E_n) = 1 - O_A(n^{-A}),
\end{equation}
that is, the probability of the complement of the events decays faster than any polynomial in $n$.

We use the following conventions for Fourier analysis.  The additive character on $\bR$ is $e(x) = e^{2\pi i x}$ and write $c(x) = \cos(2\pi x)$, $s(x) = \sin(2\pi x)$.  The Fourier transform of $f \in L^1(\bR^d)$ is
\begin{equation}
 \hat{f}(\xi) = \int_{\bR^d} e(\xi\cdot x) f(x)dx.
\end{equation}
If $g(x) = f(x-a)$ then $\hat{g}(\xi) = \hat{f}(\xi)e(\xi\cdot a)$.
If $f$ is Schwarz class then so is $\hat{f}$, and 
\begin{equation}
 f(x) = \int_{\bR^d} e(-\xi \cdot x) \hat{f}(\xi) d\xi.
\end{equation}
The characteristic function, or Fourier transform, of the standard one dimensional Gaussian is given by 
\begin{equation}
 \hat{\eta}(\xi) = \exp(-2\pi^2 \xi^2).
\end{equation}

The Fourier series of an $L^1$ function on $\bR^d/\zed^d$ is
\begin{align}
 f(x) \sim \sum_{n \in \zed^d} \hat{f}(n) e(n \cdot x)
\end{align}
where
\begin{equation}
\hat{f}(n) = \int_{\bR^d/\zed^d} f(x)e(n\cdot x) dx.
\end{equation}
Brownian motion at time $t$ has a distribution on $(\bR/\zed)^2$ which is a theta function $\theta_t(x)$, which has a Fourier series representation
\begin{equation}\label{theta_fourier}
 \theta_t(x) = \sum_{n \in \zed^2} e^{-2\pi^2 t \|n\|_2^2} e(n\cdot x).
\end{equation}
For $t$ in compact subsets of $\bR^+$ this function is uniformly $C^j$ for every $j$, as may be checked by differentiating term-by-term.

We use as an explicit function on $\bR/\zed$ the Dirichlet kernel $D_N$ which satisfies 
\begin{equation}
 \hat{D}_N(n) = \frac{\one(|n| \leq N)}{2N+1} .
\end{equation}
This is given by $D_N(0)=1$ and, for $0 \neq x \in \bR/\zed$, 
\begin{equation}
D_N(x) = \left(\frac{1}{2N+1} \frac{s((N+\frac{1}{2})x)}{s(\frac{x}{2})} \right).
\end{equation}
\begin{lemma}\label{dirichlet_lemma}
 The Dirichlet kernel satisfies the following bounds uniformly in $N$.
 \begin{enumerate}
  \item There is a constant $c>0$ such that, for $|x| \leq \frac{1}{2N}$,
 $|D_N(x)| \leq 1-cN^2x^2.$
 \item For each $\epsilon > 0$ there is a constant $c(\epsilon)>0$ such that, for $\frac{\epsilon}{N} \leq |x| \leq \frac{1}{2}$, $|D_N(x)| \leq 1- c(\epsilon).$
 \end{enumerate}
\end{lemma}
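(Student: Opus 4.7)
The plan is to combine two representations of $D_N$. The Fourier series expansion
\[
D_N(x) = \frac{1}{2N+1}\Bigl(1 + 2\sum_{k=1}^N \cos(2\pi k x)\Bigr),
\]
which follows directly from $\hat{D}_N(n) = \one(|n|\leq N)/(2N+1)$, yields the identity
\[
1 - D_N(x) = \frac{2}{2N+1}\sum_{k=1}^N \bigl(1-\cos(2\pi k x)\bigr),
\]
while the closed form $D_N(x) = \sin((2N+1)\pi x)/[(2N+1)\sin(\pi x)]$ gives the pointwise bound $|D_N(x)| \leq \frac{1}{(2N+1)|\sin(\pi x)|}$.

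For part (1), when $|x|\leq 1/(2N)$ each argument $2\pi k x$ with $1\leq k\leq N$ lies in $[-\pi,\pi]$, so the elementary inequality $1-\cos y \geq 2y^2/\pi^2$ together with $\sum_{k=1}^N k^2 \asymp N^3$ gives
\[
1 - D_N(x) \geq \frac{16\, x^2}{2N+1}\sum_{k=1}^N k^2 \geq c_1 N^2 x^2,
\]
which establishes $D_N(x) \leq 1 - c_1 N^2 x^2$. To upgrade to the absolute-value statement I split $|x|\leq 1/(2N)$ at the first zero $1/(2N+1)$. For $|x|\leq 1/(2N+1)$ the closed form shows $D_N(x)\geq 0$, so $|D_N(x)|=D_N(x)\leq 1-c_1 N^2 x^2$. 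For $1/(2N+1)<|x|\leq 1/(2N)$ the inequality $|\sin(\pi x)|\geq 2|x|$ yields $|D_N(x)|\leq N/(2N+1) < 1/2$; since $N^2 x^2 \leq 1/4$ in this subregime, the bound $1-cN^2 x^2\geq 1/2$ is automatic for any $c\leq 2$.

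For part (2), split the range $\epsilon/N\leq |x|\leq 1/2$ at $|x|=1/(2N)$. On the inner piece $\epsilon/N\leq |x|\leq 1/(2N)$, part (1) already yields $|D_N(x)|\leq 1 - c_1 N^2 x^2 \leq 1 - c_1 \epsilon^2$. On the outer piece $1/(2N)\leq |x|\leq 1/2$, the closed-form bound combined with $|\sin(\pi x)|\geq 2|x|\geq 1/N$ gives $|D_N(x)|\leq N/(2N+1)\leq 1/2$, uniformly bounded away from $1$. Setting $c(\epsilon):=\min(c_1\epsilon^2,\, 1/2)$ produces the single constant required.

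The only real work is coordinating constants across these subcases so that the estimates meet cleanly at the transition points $|x|=1/(2N+1)$ and $|x|=1/(2N)$; nothing goes beyond elementary trigonometric inequalities and manipulation of the two closed-form expressions for $D_N$, so there is no genuine obstacle.
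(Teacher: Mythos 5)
Your proof is correct, and in fact a bit tighter than the paper's own argument. The two treatments agree on the essential observation: writing
\[
1-D_N(x)=\frac{2}{2N+1}\sum_{j=1}^N\bigl(1-\cos(2\pi j x)\bigr)
\]
and invoking $1-\cos y\gg y^2$ on $[-\pi,\pi]$ gives $D_N(x)\le 1-cN^2x^2$ for $|x|\le 1/(2N)$. The paper then leaves it at that; you noticed that this only controls $D_N$ from above, while the lemma asserts a bound on $|D_N|$, and $D_N$ changes sign at $x=\tfrac{1}{2N+1}$, which lies inside the range $|x|\le\tfrac{1}{2N}$. Your split at the first zero — nonnegativity of the closed form on $|x|\le\tfrac{1}{2N+1}$, and the bound $|D_N(x)|\le\bigl[(2N+1)\,2|x|\bigr]^{-1}<\tfrac12$ on $\tfrac{1}{2N+1}<|x|\le\tfrac{1}{2N}$ — fills this in cleanly, and the constant reconciliation (force $c\le 2$ so that $1-cN^2x^2\ge\tfrac12$ there) is exactly right.

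For part (2) your route is genuinely different and simpler. The paper splits the range at $|x|=C/N$, handles the outer piece via the closed form, and for the inner piece $\epsilon/N\le|x|\le C/N$ appeals to a separate argument that a positive fraction of the summands $1-\cos(2\pi j x)$ are bounded below (essentially an equidistribution observation). You instead reuse part (1) on $\epsilon/N\le|x|\le\tfrac{1}{2N}$, where it directly gives $|D_N|\le 1-c_1\epsilon^2$, and use the closed-form bound $|D_N(x)|\le N/(2N+1)\le\tfrac12$ on $\tfrac{1}{2N}\le|x|\le\tfrac12$. This avoids the positive-fraction argument entirely and gives an explicit $c(\epsilon)=\min(c_1\epsilon^2,\tfrac12)$. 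One tiny slip: in the subcase $\tfrac{1}{2N+1}<|x|\le\tfrac{1}{2N}$ of part (1) the bound from the closed form is $<\tfrac12$ (using $|x|>\tfrac{1}{2N+1}$), not $\le N/(2N+1)$ (which would require $|x|\ge\tfrac{1}{2N}$, the regime of part (2)); both are $<\tfrac12$, so the argument is unaffected.
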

\begin{proof}
 Write 
 \begin{equation}
  D_N(x) = \frac{1}{2N+1}\left(1 + 2\sum_{j=1}^N c(jx) \right) = 1 - \frac{2}{2N+1} \sum_{j=1}^N (1-c(jx)).
 \end{equation}
To prove the first claim, use the bound, uniformly in $|x| \leq \frac{1}{2}$, $1 - c(x) \gg x^2$.

To prove the second claim, it suffices to consider the range $\frac{\epsilon}{N} \leq |x| \leq \frac{C}{N}$ for a fixed constant $C$, since for larger $|x|$ the claim follows on bounding $\left|s\left(\frac{x}{2}\right)\right|\gg |x|$ and $|s((n+\frac{1}{2})x)|\leq 1$.  In the stated range, a positive fraction of the terms in the sum $\sum_{j=1}^N 1-c(jx)$ are bounded below by a fixed constant depending only on $\epsilon$, which proves the claim by again using $1-c(x) \gg x^2$.
\end{proof}

For a finite abelian group $G$, the Fourier transform at a character $\chi$ of function $f$ is
\begin{equation}
 \hat{f}(\chi) = \sum_{g \in G} f(g)\chi(g).
\end{equation}
The inversion formula is given by
\begin{equation}
 f(g) = \frac{1}{|G|} \sum_{\chi \in \hat{G}} \hat{f}(\chi)\overline{\chi}(g).
\end{equation}
This is a special case of the Fourier inversion formula for a function on an arbitrary finite group, which is given by (see e.g. \cite{D88})
\begin{align}
f(g)&= \frac{1}{|G|}\sum_{\rho \text{ irrep.}}d_\rho \tr\left[ \rho\left(g^{-1}\right)\hat{f}(\rho)\right]\\ \notag
\hat{f}(\rho)& = \sum_{h \in G} f(h) \rho(h).
\end{align}
This Fourier transform has the usual property of carrying group convolution 
to pointwise product,
\begin{equation}
 \widehat{\mu * \nu}(\rho) = \hat{\mu}(\rho) \hat{\nu}(\rho).
\end{equation}

We write $\sD_M$ for the distribution of $M$ steps of simple random walk on $\zed^2$, which has a multinomial distribution.

\subsection{Specific chains}\label{list_of_chains}
The following Markov chains appear in this work.
\begin{itemize}
 \item $(\sX_{n^2-1}, P_{n^2-1})$, the full $n^2-1$ puzzle including $n^2-1$ labeled pieces and the empty square.  This Markov chain is shown to be a symmetric random walk on a group in Section \ref{random_walk_section}, and hence is reversible with uniform stationary measure on the group.
 \item $(\sX_{d}, P_d)$.  This chain is obtained from the $n^2-1$ puzzle by forgetting the location of all but $d$ labeled pieces and the empty square. It inherits the property of reversibility and uniform stationary measure from the $n^2-1$ puzzle.
 \item $(\sX, P)$.  This is the case $d = 1$ of a single labeled piece, $(\sX_1, P_1)$.
 \item $(\sX_{d,s}, P_{d,s})$.  This is a symmetrized version of $(\sX_d, P_d)$, which is described as follows.  The state space $\sX_{d,s}\supset \sX_d$ is enlarged by allowing the empty square $\sP_e$  to take any position on $(\zed/n\zed)^2$ including those states occupied by some labeled piece $\sP_i$.  In addition, at most two of the $\sP_i$ may overlap if the empty square overlaps them as well.  The transitions are as follows.  If $\sP_e$ does not overlap any $\sP_i$ then it transitions to one of its neighbors with probability $\frac{1}{10}$ and holds with probability $\frac{3}{5}$.  If $\sP_e$ overlaps a single $\sP_i$ then either $\sP_i$ and $\sP_e$ move to a neighbor each with probability $\frac{1}{10}$, $\sP_e$ moves alone to any neighbor, each with probability $\frac{1}{10}$, or the position holds with the remaining probability. If $\sP_e$ overlaps $\sP_i$ and $\sP_j$, either $\sP_i$ or $\sP_j$ but not both can move with $\sP_e$ to a neighbor each with probability $\frac{1}{10}$, or the position holds with probability $\frac{1}{5}$. 
 
 By construction, $P_{d,s}(x, y) = P_{d,s}(y, x)$ so $P_{d,s}$ is reversible with stationary measure which is  uniform on $\sX_{d,s}$.
 
 \item $(\sX_{d,M}, P_{d,M})$.  This is a further symmetrized chain which depends on a parameter $M$.  The state space $\sX_{d, M} = ((\zed/n\zed)^2)^{(d+1)}$ in which the first $d$ labeled pieces positions can overlap, and the last position in $(\zed/n\zed)^2$ is the empty square, which can also overlap any position. The transition kernel $P_{d,M}$ is a random walk.  With probability $\frac{1}{2}$ the position holds.  With probability $\frac{1}{2}$ each of the first $d$ positions transitions independently with $x$ and $y$ coordinates each transitioning to a uniform point at distance at most $M$ from the current position.    The empty piece independently transitions to a uniform point on $(\zed/n\zed)^2$.
 
 \item $(\sX_{\srw}, P_{\srw})$ is nearest neighbor simple random walk on $(\zed/n\zed)^2$ in which the piece transitions to one of its four neighbors with probability $\frac{1}{4}$ each.  In $(\sX_{n^2-1}, P_{n^2-1})$, $(\sX_d, P_d)$, and $(\sX_{d,s}, P_{d,s})$, considering only the position of the empty piece, and tracking only the times at which it moves obtains $(\sX_{\srw}, P_{\srw})$.
 
 \item $(\sX_{\cthree}, P_{\cthree})$.  This is random walk on the alternating group $A_{n^2-1}$ with driving measure which is uniform on the set of 3-cycles.
\end{itemize}

\section{Description of the random walk}
\label{random_walk_section}
Identify the $n \times n$ board of the $n^2-1$ puzzle with $(\zed/n\zed)^2$ with the pieces in initial position numbered left to right and then top down, with the empty square at $(n,n)$.  
\begin{theorem}
 The $n^2-1$ puzzle Markov chain, in which at each step the empty square moves right, left, up, down, or does not move with equal probability can be identified with random walk on the group
 \begin{equation}
  G = S_{n^2-1} \times (\zed/n\zed)^2
 \end{equation}
driven by the measure
\begin{equation}
 \mu = \frac{1}{5} \left(\delta_{\id} + \delta_{R} + \delta_L + \delta_U + \delta_D\right),
\end{equation}
where
\begin{align}
  R&= \left[\begin{array}{c}(n,n-1, \cdots, 1)\\ (2n, 2n-1,\cdots, n+1) \\ \vdots \\(n^2 -n, n^2-n-1, \cdots, n^2-2n+1)\\ (n^2-1, n^2-2, \cdots, n^2-n+1) \end{array}\right] \times (1,0)\\
 \notag U&= \left[\begin{array}{c}(1, n+1, \cdots, n^2-n+1)\\(2, n+2, \cdots, n^2 -n+2) \\ \vdots \\ (n-1, 2n-1, \cdots, n^2-1) \\ (n, 2n, \cdots, n^2-n) \end{array} \right] \times (0,1)
\end{align}
and $L = R^{-1}$, $D = U^{-1}$.  Each generator is the product of $n-1$ $n$-cycles and one $n-1$ cycle.
When $n$ is even, the random walk is supported on group elements for which the parity of the permutation is equal to the parity of the sum of the coordinates on $(\zed/n\zed)^2$.
\end{theorem}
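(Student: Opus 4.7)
The plan is to construct an explicit bijection $\Psi : G \to \{\text{puzzle states}\}$ under which each of the five physical moves corresponds to group multiplication by the listed generator, and then to compute parities for the last clause. A counting check gives $|G| = n^2 \cdot (n^2-1)!$, which matches the number of puzzle states ($n^2$ choices of empty position and $(n^2-1)!$ arrangements of the remaining labels), so any map of the intended form is automatically bijective.

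Writing $f(x,y) = (y-1)n + x$ for the sorted label at a position $(x,y) \ne (n,n)$, I define $\Psi(\pi, e)$ to be the state in which label $\ell$ occupies position $f^{-1}(\pi(\ell)) + e - (n,n) \pmod{n}$ for each $\ell \in \{1, \ldots, n^2-1\}$, with the empty square at $e$; equivalently, $\Psi(\pi, e)$ is the sorted initial configuration first relabeled by $\pi^{-1}$ and then rigidly translated so the empty square moves from $(n,n)$ to $e$. To verify that the R move (empty shifts from $e$ to $e + (1,0)$) transforms $\Psi(\pi, e)$ into $\Psi(R_{\mathrm{perm}}\pi, e + (1,0))$, one matches positions in the two states; only one piece physically moves, and the identification reduces to the relation
\[
R_{\mathrm{perm}}(k) = f\!\bigl(f^{-1}(k) - (1,0)\bigr)
\]
for all $k$ for which the right-hand side is defined, i.e., $f^{-1}(k) \ne (1,n)$. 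Restricted to a row $y \ne n$, this yields precisely the $n$-cycle $(yn, yn-1, \ldots, (y-1)n+1)$ of the theorem; restricted to the empty's row, it pins down $R_{\mathrm{perm}}$ on $n-2$ of the $n-1$ non-empty labels, and the remaining value $R_{\mathrm{perm}}(n^2-n+1) = n^2 - 1$ is forced by bijectivity, producing the stated $(n-1)$-cycle. The argument for $U$ is parallel with rows and columns interchanged, and $L = R^{-1}$, $D = U^{-1}$ follow because physical moves are mutually inverse.

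For the parity claim when $n$ is even, a $k$-cycle has sign $(-1)^{k-1}$, so the permutation part of each generator has sign $(-1)^{(n-1)^2 + (n-2)} = -1$; its shift component has coordinate-sum $1$, which is also odd modulo $2$. Since $\sgn$ and coordinate-sum modulo $2$ are both group homomorphisms, the map $\varphi(\sigma, v) = \sgn(\sigma)(-1)^{v_1 + v_2}$ is a homomorphism $G \to \{\pm 1\}$ sending every generator to $+1$, so the support of $\mu^{*k}$ lies in $\ker \varphi$ for every $k$, which is precisely the stated parity condition. The main obstacle is the bookkeeping in the cycle-structure verification, particularly the handling of the empty row through the forced-value argument; no deeper idea is required.
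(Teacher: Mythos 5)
Your proof is correct and takes essentially the same route as the paper's: translate to the empty square's perspective, identify the state with a pair (permutation, torus position), and read off the generators by tracking a single move of the empty square; your coordinate map $f$ and the relation $R_{\mathrm{perm}}(k) = f\bigl(f^{-1}(k) - (1,0)\bigr)$, together with the forced-value step at $k = n^2-n+1$, simply make the paper's verbal derivation of the cycle structure explicit. You additionally supply the parity verification via the homomorphism $\varphi(\sigma,v) = \sgn(\sigma)(-1)^{v_1+v_2}$, which the paper's proof actually omits; the one point worth stating more explicitly is that $(-1)^{v_1+v_2}$ is well-defined on $(\zed/n\zed)^2$ only when $n$ is even — that, rather than the sign computation $\sgn(R_{\mathrm{perm}}) = -1$ (which holds for every $n \ge 2$ since $n^2 - n - 1$ is always odd), is what confines the parity claim to even $n$.
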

\begin{proof}
 Each time the empty piece moves, record its position on $(\zed/n\zed)^2$, where it performs simple random walk.  On the board, make a translation of $(\zed/n\zed)^2$ to fix the blank piece's position in the lower right corner, so that the board appears to be as from the empty piece's perspective. This identifies the $n^2-1$ puzzle's position with a labeling of $n^2-1$ positions other than the lower right corner, together with a position in $(\zed/n\zed)^2$.  Each move consists of moving one unit in $(\zed/n\zed)^2$, together with the corresponding permutation of the remaining $n^2-1$ positions, so that the Markov chain can be identified with group convolution, hence is the stated random walk on a group.
 
 Started from the initial position, when the empty square makes one move right on the puzzle board its position is now directly below the column of squares labeled $(1, n+1, 2n+1, ..., n^2-n+1)^t$. Thus, from the empty square's perspective, piece 1 is now in the position of piece $n$, piece $n$ in position $n-1$, etc, and similarly in the $n-1$ rows above the last row. Thus each of these rows performs an $n$-cycle. In the last row, piece $n^2-n+2$ is now directly to the right of the empty square, in the position of $n^2 -n+1$, and the pieces other than $n^2-n+1$ are shifted left, forming an $n-1$ cycle.  
 
 When the empty square moves one unit up on the board started from its initial position, from its perspective the first $n-1$ columns perform an $n$-cycle, and the last column performs an $n-1$ cycle.
\end{proof}

\section{The spectrum and comparison}

Given a symmetric $n \times n$ matrix $P$ with eigenvalues $\lambda_0 \leq \lambda_1 \leq \cdots \leq \lambda_{n-1}$, recall that the minimax characterization of the eigenvalues is as follows.  For a subspace $W$ of $\bR^n$, define
\begin{align}
 m(W) &= \min\{ \langle Pf, f \rangle/\langle f, f \rangle: f \in W \setminus \{0\} \},\\
 \notag M(W) &= \max\{ \langle Pf, f \rangle/\langle f, f \rangle: f \in W \setminus\{0\}\}.
\end{align}
The minimax characterization of eigenvalues gives
\begin{align}
 \lambda_i &= \max\{m(W): \dim(W^\perp) = i\} \\
\notag &= \min\{M(W): \dim(W) = i+1\}.
\end{align}
By further constraining the minimum, one obtains an upper bound, for any subspace $U$, 
\begin{equation}
 \lambda_i \leq \lambda_i' = \max \{m(W \cap U): \dim(W^\perp) = i\}.
\end{equation}

In this paper, comparison of Dirichlet forms is used to compare the eigenvalues of related Markov chains on the same state space, following the method described in \cite{DS93b}.
Given a second Markov chain $\tilde{P}(x,y)$ with stationary measure $\tilde{\pi}$, the minimax characterization of eigenvalues leads to the bounds, for $1 \leq i \leq |X|-1$,
\begin{equation}
 \beta_i \leq 1 - \frac{a}{A} (1-\tilde{\beta}_i), \qquad \text{ if } \tilde{\sE} \leq A\sE, \tilde{\pi} \geq a \pi.
\end{equation}
Note that the bound $\tilde{\sE} \leq A \sE$ holds if, for all $x, y$, $\tilde{P}(x,y) \leq A P(x,y)$.
A more advanced method for estimating $A$ is given in \cite{DS93b} which uses paths.  Given $x, y \in \sX$, $x \neq y$ with $\tilde{P}(x,y)>0$, fix a path $\gamma_{xy}$ with steps $x_0 = x, x_1, x_2, ..., x_k = y$ with $P(x_i, x_{i+1}) > 0$.  Set $E = \{(x,y): P(x,y)>0\}$, $\tilde{E} = \{(x,y): \tilde{P}(x,y)>0\}$ and, for $e \in E$,
\begin{equation}
 \tilde{E}(e) = \{(x,y) \in \tilde{E}: e \in \gamma_{xy}\}.
\end{equation}
\begin{theorem}[\cite{DS93b}, Theorem 2.1]\label{path_theorem}
 Let $\tilde{P}, \tilde{\pi}$ and $P,\pi$ be reversible Markov chains on a finite set $\sX$.  The inequality
 \begin{equation}
  \tilde{\sE} \leq A \sE
 \end{equation}
holds with
\begin{equation}
 A = \max_{(z,w) \in E}\left\{\frac{1}{\pi(z)P(z,w)} \sum_{\tilde{E}(z,w)} |\gamma_{xy}| \tilde{\pi}(x)\tilde{P}(x,y) \right\}.
\end{equation}
\end{theorem}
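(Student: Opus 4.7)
The plan is to follow the classical path-comparison strategy: expand the Dirichlet form $\tilde{\sE}(f,f)$, use the fixed paths $\gamma_{xy}$ to express each difference $f(y)-f(x)$ as a telescoping sum along edges of $P$, apply Cauchy--Schwarz, and then reindex the double sum so that the outer sum runs over edges of $P$ rather than edges of $\tilde{P}$.

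First I would write
\begin{equation*}
\tilde{\sE}(f,f) = \frac{1}{2} \sum_{(x,y) \in \tilde{E}} (f(y)-f(x))^2 \tilde{\pi}(x)\tilde{P}(x,y).
\end{equation*}
For each $(x,y) \in \tilde{E}$, let $\gamma_{xy}: x = x_0, x_1, \dots, x_k = y$ be the chosen path, so $f(y)-f(x) = \sum_{i=0}^{k-1}(f(x_{i+1})-f(x_i))$. By Cauchy--Schwarz,
\begin{equation*}
(f(y)-f(x))^2 \leq |\gamma_{xy}| \sum_{i=0}^{|\gamma_{xy}|-1} (f(x_{i+1})-f(x_i))^2 = |\gamma_{xy}| \sum_{(z,w) \in E} \one((z,w) \in \gamma_{xy}) (f(w)-f(z))^2.
\end{equation*}

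Next I would substitute this bound into the expression for $\tilde{\sE}(f,f)$ and swap the order of summation so that the outer index is $(z,w) \in E$ while the inner index is $(x,y) \in \tilde{E}(z,w)$:
\begin{equation*}
\tilde{\sE}(f,f) \leq \frac{1}{2} \sum_{(z,w) \in E} (f(w)-f(z))^2 \sum_{(x,y) \in \tilde{E}(z,w)} |\gamma_{xy}| \tilde{\pi}(x)\tilde{P}(x,y).
\end{equation*}
Multiplying and dividing by $\pi(z)P(z,w)$ in the outer sum, the inner factor becomes
\begin{equation*}
\frac{1}{\pi(z)P(z,w)} \sum_{(x,y) \in \tilde{E}(z,w)} |\gamma_{xy}| \tilde{\pi}(x)\tilde{P}(x,y) \leq A
\end{equation*}
by the definition of $A$, so pulling this bound outside gives
\begin{equation*}
\tilde{\sE}(f,f) \leq A \cdot \frac{1}{2} \sum_{(z,w) \in E} (f(w)-f(z))^2 \pi(z)P(z,w) = A\,\sE(f,f).
\end{equation*}

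There is no real obstacle here: the argument is essentially bookkeeping once one commits to using Cauchy--Schwarz with weight $|\gamma_{xy}|$ on the telescoping identity. The only mildly delicate point is that the application of Cauchy--Schwarz with the uniform weight $1$ forces the factor $|\gamma_{xy}|$ to appear (rather than some other length-dependent weight), which is what ultimately makes the definition of $A$ path-length-sensitive; one could imagine variants with other weightings, but the stated form of $A$ follows from this canonical choice.
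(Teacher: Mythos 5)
Your proof is correct and is exactly the standard telescoping plus Cauchy--Schwarz argument from Diaconis and Saloff-Coste; the paper cites this as \cite[Theorem 2.1]{DS93b} without reproducing the proof, and your argument matches the one given there. The only implicit assumption worth flagging is that the paths $\gamma_{xy}$ do not repeat edges, so that the indicator $\one((z,w)\in\gamma_{xy})$ correctly accounts for the telescoped terms; this is the usual convention and is harmless here.
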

The theorem above is used in conjunction with the following Theorem regarding the spectrum of the symmetrized chain $(\sX_{d,M}, P_{d,M})$ to prove Theorem \ref{poisson_theorem}.
\begin{theorem}
 Let $M = M(n)$ vary with $n$ in the range $1 \leq M \ll \exp(\sqrt{\log n})$.  For fixed $d$, uniformly in $n>1$, as $c \to \infty$, 
 \begin{equation}\label{spectral_sum_d_M}
  \sum_{1 \neq \lambda \in \sigma(P_{d,M})} \lambda^{c \left( \frac{n}{M}\right)^2} \to 0.
 \end{equation}

\end{theorem}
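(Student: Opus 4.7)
The plan is to Fourier-diagonalize $P_{d,M}$ on the abelian group $G=((\zed/n\zed)^2)^{d+1}$. By the description of the chain, $P_{d,M}$ is convolution on $G$ by the measure $\nu=\tfrac{1}{2}\delta_0+\tfrac{1}{2}(\mu_1\otimes\cdots\otimes\mu_d\otimes U)$, where each $\mu_j$ is uniform on $\{-M,\ldots,M\}^2$ and $U$ is uniform on $(\zed/n\zed)^2$, so the character $\chi_{\vec k}$ indexed by $\vec{k}=(k^{(1)},\ldots,k^{(d+1)})$ gives an eigenvalue
\[
\lambda(\vec k) \;=\; \tfrac{1}{2}+\tfrac{1}{2}\Bigl(\prod_{j=1}^d D_M(k^{(j)}_1/n)\,D_M(k^{(j)}_2/n)\Bigr)\one(k^{(d+1)}=0).
\]
Setting $T=c(n/M)^2$, I would split the target sum as $S_1+S_2$, where $S_1$ collects the $n^{2d}(n^2-1)$ eigenvalues at $k^{(d+1)}\ne 0$ (all equal to $\tfrac{1}{2}$), and $S_2$ collects those at $k^{(d+1)}=0$ with $(k^{(1)},\ldots,k^{(d)})\ne 0$.

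For $S_2$, the key move is to factor. Setting $b_j=|D_M(k^{(j)}_1/n)D_M(k^{(j)}_2/n)|\in[0,1]$, one has $\lambda(\vec k)\le(1+\prod_j b_j)/2\le e^{-(1-\prod_j b_j)/2}$. Since $\prod_j b_j\le\min_j b_j$ on $[0,1]^d$, we have $1-\prod_j b_j\ge\max_j(1-b_j)\ge\tfrac{1}{d}\sum_j(1-b_j)$, and the exponential factors as
\[
\lambda(\vec k)^T\;\le\;\prod_{j=1}^d\exp\!\Bigl(-\tfrac{T(1-b_j)}{2d}\Bigr).
\]
Summing over $\vec k$ with $k^{(d+1)}=0$ and peeling off the all-zero term gives $S_2\le\Sigma^d-1$, where $\Sigma:=\sum_{k\in(\zed/n\zed)^2}\exp(-T(1-b(k))/(2d))$, so the problem reduces to showing $\Sigma=1+o_c(1)$ uniformly in $n$.

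I would estimate $\Sigma-1$ by splitting at $|k|_\infty=n/(4M)$. In the low-frequency regime $|k|_\infty\le n/(4M)$, Lemma~\ref{dirichlet_lemma}(1) combined with the elementary bound $(1-\alpha_1)(1-\alpha_2)\le 1-(\alpha_1+\alpha_2)/2$ yields $1-b(k)\gg(M/n)^2|k|^2$, so $T(1-b(k))/(2d)\gg c|k|^2$, and the low-frequency contribution is bounded by the convergent tail $\sum_{k\in\zed^2\setminus 0}e^{-c'|k|^2}$ which tends to $0$ as $c\to\infty$. In the high-frequency regime, Lemma~\ref{dirichlet_lemma}(2) gives the uniform gap $1-b(k)\ge c_2$; with at most $n^2$ such terms, the total is bounded by $n^2\exp(-cc_2(n/M)^2/(2d))$. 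The main obstacle is to make this small uniformly in $n$ despite the unbounded prefactor $n^2$; the remedy exploits the hypothesis $M\ll e^{\sqrt{\log n}}$, which gives $(n/M)^2\ge\phi(n)/C^2$ with $\phi(n):=n^2e^{-2\sqrt{\log n}}$. Since $\phi$ is increasing on $[2,\infty)$ and $\sup_{n\ge 2}(4\log n)/\phi(n)<\infty$, for $c$ large the bound $n^2\exp(-c''\phi(n))\le\exp(-c''\phi(n)/2)\le\exp(-c''\phi(2)/2)$ absorbs the prefactor and vanishes uniformly in $n$ as $c\to\infty$. The same $\phi$-comparison handles $S_1\le n^{2(d+1)}2^{-c(n/M)^2}$, completing the plan.
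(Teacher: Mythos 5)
Your proof is correct and takes essentially the same route as the paper's: Fourier-diagonalize $P_{d,M}$ as a random walk on the abelian group $((\zed/n\zed)^2)^{d+1}$, split the spectrum according to whether the empty-square coordinate of the character is trivial, and invoke both parts of Lemma~\ref{dirichlet_lemma} to obtain a Gaussian tail for the remaining eigenvalues. Your factoring $\lambda^T\le\prod_j e^{-T(1-b_j)/(2d)}$ leading to $S_2\le\Sigma^d-1$, and your explicit $\phi(n)=n^2e^{-2\sqrt{\log n}}$ computation showing that the polynomial-in-$n$ prefactors are absorbed uniformly precisely because $M\ll\exp(\sqrt{\log n})$, are useful elaborations of steps the paper handles more tersely (it bounds $\lambda_{\ua}\le e^{-CM^2\|\ua\|_2^2/n^2}$ term-by-term and asserts the uniform vanishing of $n^{2d+2}(1/2)^{c(n/M)^2}$ without detail), but the underlying argument is the same.
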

\begin{proof}
 Since $P_{d,M}$ is a random walk on the abelian group $((\zed/n\zed)^2)^{d+1}$ its transition kernel is diagonalized by the characters of the group.  Since the walk is $\frac{1}{2}$-lazy, the spectrum is non-negative, so it will suffice to give an upper bound for the spectrum.
 
 Index the dual group by $\ua = (a_1, ..., a_{d+1}) \in ((\zed/n\zed)^2)^{d+1}$ and write $\lambda_{\ua}$ for the corresponding eigenvalue.  Since the walk holds with probability $\frac{1}{2}$, and otherwise has the coordinates move independently, $\lambda_{\ua} = \frac{1}{2} + \frac{1}{2}\tilde{\lambda}_{\ua}$ where $\tilde{\lambda}_{\ua} = \prod_{j=1}^{d+1} \tilde{\lambda}_{a_j}$ is the eigenvalue of the individual movements, which factors as a product over the coordinates.  Since the last two coordinates move to uniform, $\tilde{\lambda}_{a_{d+1}} = \delta(a_{d+1} = 0)$.  Since each other coordinate moves a uniform distance on $[-M,M]$, its Fourier transform is given by the Dirichlet kernel, so that, for $1 \leq j \leq d$,
 \begin{equation}
 \tilde{\lambda}_{a_j} = D_M \left(\frac{a_{j,1}}{n} \right)D_M \left(\frac{a_{j,2}}{n} \right).
 \end{equation}
There are $n^{2d}(n^2-1)$ terms with $a_{d+1} \neq 0$, which contribute to (\ref{spectral_sum_d_M}) a quantity
\begin{equation}
 n^{2d}(n^2-1) \left(\frac{1}{2} \right)^{c \left(\frac{n}{M} \right)^2}
\end{equation}
which tends to 0 as $c \to \infty$ uniformly in $n$.

By Lemma \ref{dirichlet_lemma}, for each $\epsilon > 0$ there is a $c(\epsilon)$ such that if $\left|\frac{a_{j,i}}{n}\right| \geq \frac{\epsilon}{M}$ then $\left|D_M\left(\frac{a_{j,i}}{n}\right)\right| \leq 1-c(\epsilon)$.  Since $\tilde{\lambda}$ is bounded away from 1 by a fixed constant for these terms, they may be bounded as for the case $a_{d+1} \neq 0$, and are negligible.  For the remaining terms, $D_M\left(\frac{a_{j,i}}{n}\right) \leq 1 - C \frac{M^2 a_{j,i}^2}{n^2}$.  Using the bound $1-x \leq e^{-x}$, which is valid for $0 \leq x \leq 1$, it follows that when $a_{d+1} = 0$, 
\begin{equation}
\lambda_{\ua} \leq e^{-C \frac{M^2 \|\ua\|_2^2}{n^2}}.
 \end{equation}
 Thus 
 \begin{equation}
  \sum_{\ua \neq 0, a_{d+1}=0} \lambda_{\ua}^{c\left(\frac{n}{M} \right)^2} \leq o(1) + \sum_{\ua \in \zed^{2d}, \ua \neq 0} e^{-cC \|\ua\|_2^2}
 \end{equation}
where $o(1)$ tends to 0 as $c \to \infty$.

Since 
\begin{equation}
 \sum_{\ua \in \zed^{2d}, \ua \neq 0} e^{-cC \|\ua\|_2^2} = -1 + \left(\sum_{n \in \zed} e^{-cCn^2} \right)^{2d}
\end{equation}
the claim follows, since the sum over $n$ is $1 + o(1)$ as $c \to \infty$ by comparison with a geometric series.
\end{proof}

In the case that the Markov chains are symmetric random walks on a group, \cite{DS93a} gives the following simplified estimate.  Let $E$ be a symmetric set of generators of a finite group $G$.  For $y \in G$, let $y = z_1z_2 \cdots z_k$ with $z_i \in E$. Denote the least such $k$, $|y|$.  Let $N(z,y)$ denote the number of times which $z$ appears in the chosen representation of $y$.

\begin{theorem}[\cite{DS93a}, Theorem 1]\label{group_comparison_theorem}
 Let $\tilde{p}$ and $p$ be symmetric probabilities on a finite group $G$. Let $E$ be a symmetric set of generators. Suppose that the support of $p$ contains $E$. Then the Dirichlet forms satisfy
 \begin{equation}
  \tilde{\sE} \leq A \sE
 \end{equation}
with
\begin{equation}
 A = \max_{z \in E} \frac{1}{p(z)} \sum_{y \in G} |y|N(z,y) \tilde{p}(y).
\end{equation}

\end{theorem}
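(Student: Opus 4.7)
The plan is to derive this as a direct corollary of Theorem \ref{path_theorem}, using the natural paths built from the fixed generator representations and the left-invariance of group random walks.

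First I will specify the paths. For each $y \in G$ with $\tilde{p}(y)>0$, fix the chosen representation $y = z_1 z_2 \cdots z_k$ with $z_i \in E$ and $k = |y|$. For each edge $(x, xy)$ in $\tilde{E}$ (so that $\tilde{P}(x, xy) = \tilde{p}(y) > 0$), define the path $\gamma_{x, xy}$ by
\begin{equation}
  x,\ xz_1,\ xz_1 z_2,\ \ldots,\ xz_1 z_2 \cdots z_k = xy,
\end{equation}
which is a valid path in the $P$-chain since $E \subset \supp(p)$ and hence $P(u, uz) = p(z) > 0$ for every $u \in G$ and $z \in E$. The length of this path is $|y|$.

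Next I will count the contribution to the bound in Theorem \ref{path_theorem} from a fixed edge $e = (u, uz)$ with $z \in E$. A pair $(x, xy) \in \tilde{E}$ has $e \in \gamma_{x, xy}$ iff there is some index $i$ with $z_i = z$ and $xz_1 \cdots z_{i-1} = u$; equivalently, $x = u(z_1 \cdots z_{i-1})^{-1}$. Thus for each $y \in G$ the number of pairs $(x, xy)$ whose path traverses $e$ equals exactly $N(z, y)$, the number of positions in the fixed representation of $y$ where $z$ appears. Since both $p$ and $\tilde{p}$ are symmetric probabilities on $G$, both chains are reversible with $\pi = \tilde{\pi} = \one/|G|$, and $\tilde{P}(x, xy) = \tilde{p}(y)$. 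Substituting into Theorem \ref{path_theorem},
\begin{align}
  \frac{1}{\pi(u) P(u, uz)}\sum_{\tilde{E}(e)} |\gamma_{x,y}|\, \tilde{\pi}(x) \tilde{P}(x, y)
  &= \frac{|G|}{p(z)} \cdot \frac{1}{|G|}\sum_{y \in G} |y|\, N(z, y)\, \tilde{p}(y) \\
  &= \frac{1}{p(z)}\sum_{y \in G} |y|\, N(z, y)\, \tilde{p}(y).
\end{align}

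Finally, since the right-hand side depends only on $z \in E$ and not on the base point $u$, the maximum over edges $(u, uz) \in E$ reduces to the maximum over $z \in E$, yielding precisely the stated constant $A$. There is no serious obstacle here; the only point requiring a moment of care is the bijection between $(x, y)$-pairs whose path uses a particular edge $(u, uz)$ and occurrences of $z$ in representations of elements $y \in G$, which is made clean by the left-invariance of the construction and of the uniform stationary measure.
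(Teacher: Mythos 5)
Your proposal is correct. The paper itself offers no proof of this statement---it simply cites \cite{DS93a}, Theorem 1---so there is no in-paper argument to compare against. Your derivation from Theorem \ref{path_theorem} via the canonical left-translated paths $x, xz_1, xz_1z_2, \ldots, xy$ is the standard one (and is essentially how \cite{DS93a} obtains the group-specialized constant from the general reversible-chain comparison in \cite{DS93b}). The key bookkeeping step---that for fixed $y$ and fixed edge $(u,uz)$ there is exactly one $x$ per occurrence of $z$ in the chosen word for $y$, giving the count $N(z,y)$---is handled correctly, as is the observation that left-invariance makes the resulting bound independent of the base point $u$, so the maximum over edges collapses to a maximum over generators. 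One minor remark for completeness: the maximum in Theorem \ref{path_theorem} is over \emph{all} edges of the $P$-chain, including possible edges $(u,uw)$ with $w \in \supp(p) \setminus E$; those contribute zero to the sum since no chosen path traverses them, so discarding them is harmless, but it is worth saying so explicitly.
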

This theorem is used together with the following estimate for 3-cycles from \cite{HSZ15} to prove the upper bound of Theorem \ref{upper_bound_theorem}.

\begin{lemma}
 The spectral gap in the regular representation of $\Alt(n)$ for the measure supported uniformly on 3-cycles is $\frac{3}{n-1}$, and the $\frac{\epsilon}{|G|}-\ell^\infty$ mixing time is of order $n \log n$.
\end{lemma}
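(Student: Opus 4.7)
Since the measure $\mu$ uniform on 3-cycles is a class function on $\Alt(n)$ (for $n \geq 5$ the 3-cycles form a single conjugacy class, the cycle type $(3,1^{n-3})$ having repeated parts and therefore not splitting in $\Alt(n)$), the convolution operator $\nu \mapsto \mu*\nu$ commutes with the regular representation and acts on each $\rho$-isotypic component by the scalar $\lambda_\rho = \chi_\rho(c)/d_\rho$, where $c$ is any 3-cycle. Hence the spectrum of the walk in the regular representation is exactly $\{\chi_\rho(c)/d_\rho\}$ as $\rho$ ranges over irreducible representations of $\Alt(n)$, each eigenvalue appearing with multiplicity $d_\rho^2$.

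For the spectral gap, the natural candidate is the standard representation, of dimension $n-1$, which restricts irreducibly from $S_n$ to $\Alt(n)$. A 3-cycle fixes $n-3$ points, so the permutation character equals $n-3$ and the standard character equals $n-4$, giving normalized value $(n-4)/(n-1) = 1 - 3/(n-1)$. First I would verify that no other nontrivial irreducible yields a larger normalized character in absolute value, by partitioning the irreducibles of $S_n$ by Young diagram shape: for $\lambda_1$ or $\lambda_1'$ equal to $n-k$ with $k$ small, the Murnaghan--Nakayama rule applied to length-3 border strips gives $|\chi_\lambda(c)/d_\lambda| = 1 - \Omega(k/n)$, with the standard representation and its conjugate as extremizers; for partitions with both $\lambda_1, \lambda_1' \ll n$, a character-ratio bound of Roichman/Larsen--Shalev type confines $|\chi_\lambda(c)/d_\lambda|$ well below $1 - 3/(n-1)$. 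Passage to $\Alt(n)$ is harmless: when a self-conjugate representation splits as $\rho^+ \oplus \rho^-$, one has $\chi_{\rho^\pm}(c) = \chi_\rho(c)/2$ on the non-exceptional 3-cycle class, so the normalized character is unchanged.

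For the $\frac{\epsilon}{|G|}-\ell^\infty$ mixing time, apply the Plancherel identity
\begin{equation*}
\|\mu^{*t} - \bU\|_{d_2}^2 = \sum_{\rho \neq \mathrm{triv}} d_\rho^2 \lambda_\rho^{2t},
\end{equation*}
and recall that for symmetric random walks on groups, the $d_2$ distance controls the $\frac{\epsilon}{|G|}-\ell^\infty$ distance up to constants \cite{DS93a}. At $t = Cn\log n$ the standard-representation term contributes
\begin{equation*}
(n-1)^2 \left(1 - \frac{3}{n-1}\right)^{2Cn\log n} \asymp n^{2-6C},
\end{equation*}
which is $o(1)$ once $C > 1/3$. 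The remaining contribution is estimated by grouping partitions according to $\min(\lambda_1, \lambda_1')$ and combining the hook-length formula for $d_\lambda$ with the character ratio bounds alluded to above, in direct analogy with the Diaconis--Shahshahani analysis of the random transposition walk. The outcome is $\|\mu^{*t} - \bU\|_{d_2} = o(1)$ for $t \gg n\log n$, which gives the claimed mixing bound.

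The main obstacle is the uniform control of $\sum_\rho d_\rho^2 |\chi_\rho(c)/d_\rho|^{2t}$. Whereas transpositions admit the clean content-sum formula $\chi_\lambda((12))/d_\lambda = \binom{n}{2}^{-1} \sum_{(i,j) \in \lambda}(j-i)$, for 3-cycles the analogous identity from Murnaghan--Nakayama involves summing over length-3 border strips and is slightly more delicate. Nevertheless the resulting expression still splits cleanly into irreducibles ``close to'' $(n)$ or $(1^n)$ (small dimension, normalized character near $\pm 1$) and those ``far from'' the extremes (large dimension, normalized character much smaller than $1 - 3/(n-1)$), which is precisely the dichotomy needed to bound the Plancherel sum.
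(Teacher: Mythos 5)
The paper does not actually prove this lemma; it simply cites \cite{HSZ15}, Appendix A, and your proposal follows the same representation-theoretic strategy one finds there (and in Diaconis--Shahshahani for transpositions): diagonalize the class-function walk by normalized characters, identify the standard representation as the extremizer via $\chi_{(n-1,1)}(c)/d_{(n-1,1)} = (n-4)/(n-1) = 1 - 3/(n-1)$, and control the Plancherel sum $\sum_\rho d_\rho^2 |\chi_\rho(c)/d_\rho|^{2t}$ by splitting partitions according to whether $\lambda_1$ (or $\lambda_1'$) is close to $n$. So at the level of approach you are aligned with the source the paper invokes.

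That said, your write-up is explicitly a plan rather than a proof: the two load-bearing steps --- that no nontrivial irreducible has normalized 3-cycle character exceeding $1 - 3/(n-1)$, and that the tail of the Plancherel sum over partitions far from $(n)$ and $(1^n)$ is $o(1)$ at $t \asymp n\log n$ --- are delegated to unproven ``Roichman/Larsen--Shalev type'' character-ratio bounds and to an unexecuted analogue of the Diaconis--Shahshahani bookkeeping. These are exactly the parts that a self-contained proof would have to carry out (or cite precisely); as it stands, a reader cannot check them. The bookkeeping near the split/non-split distinction in $\Alt(n)$ is handled correctly: for $n \geq 5$ the class $(3,1^{n-3})$ has a repeated part, so it does not split, and when a self-conjugate $S_n$-irreducible splits as $\rho^+ \oplus \rho^-$, both $\chi_{\rho^\pm}(c)$ and $d_{\rho^\pm}$ are halved, leaving the eigenvalue unchanged. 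If you want to close the argument without re-deriving the character estimates, the cleanest route is simply to cite \cite{HSZ15}, Appendix A, as the paper does.
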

\begin{proof}
 See \cite{HSZ15}, Appendix A.
\end{proof}

\section{Return times and hitting times}
The return time of a Markov chain at a vertex is the first positive time after leaving the vertex at which the Markov chain again is at the vertex.  The hitting time of a Markov chain to a set $B$ is the first non-negative time at which the chain reaches the set.  We require some estimates for return times and hitting times of simple random walk on the torus $(\zed/n\zed)^2$, as well as the hitting probabilities regarding the likelihood of first reaching individual vertices in sets.

A function $h : \sX \to \bR$ is said to be \emph{harmonic} at $x \in \sX$ if
\begin{equation}
 h(x) = \sum_{y \in \sX} P(x,y) h(y).
\end{equation}
If $(X_t)$ is the sequence of steps of a Markov chain with transition kernel $P$, and $B \subset \sX$, define the \emph{hitting time} $\tau_B$ by
\begin{equation}
 \tau_B = \min\{t \geq 0 : X_t \in B\}.
\end{equation}
In \cite{LPW09} the following proposition is proved, which can be used to estimate return probabilities.
\begin{proposition}[\cite{LPW09}, Proposition 9.1]\label{harmonic_measure_prop}
 Let $(X_t)$ be a Markov chain on a finite state space $\sX$ with irreducible transition matrix $P$, let $B \subset \sX$, and let $h_B: B \to \bR$ be a function defined on $B$.  The function $h: \sX \to \bR$ defined by
 \begin{equation}
  h(x) := \E_x h_B(X_{\tau_B})
 \end{equation}
is the unique extension $h: \sX \to \bR$ of $h_B$ such that $h(x) = h_B(x)$ for all $x \in B$ and $h$ is harmonic for $P$ at all $x \in \sX \setminus B$.
\end{proposition}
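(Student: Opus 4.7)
The plan is to split the claim into an existence part (verifying that $h(x) := \E_x h_B(X_{\tau_B})$ has the stated properties) and a uniqueness part. The existence is a direct calculation using the (strong) Markov property at time $1$; the uniqueness follows from a finite-state maximum principle, or equivalently from optional stopping applied to the chain stopped at $\tau_B$. The essential probabilistic input is that irreducibility of $P$ on the finite space $\sX$ together with $B \neq \emptyset$ forces $\E_x \tau_B < \infty$ for every $x$, so that $h$ is well defined and bounded.

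For existence, I would first dispose of the boundary condition: if $x \in B$ then $\tau_B = 0$ and $X_{\tau_B} = x$, so $h(x) = h_B(x)$. For $x \notin B$ one has $\tau_B \geq 1$, so conditioning on $X_1$ and using the Markov property at time $1$,
\begin{equation}
h(x) \;=\; \sum_{y \in \sX} P(x,y)\,\E_x\!\left[h_B(X_{\tau_B}) \,\big|\, X_1 = y\right] \;=\; \sum_{y \in \sX} P(x,y)\, h(y),
\end{equation}
since conditional on $X_1 = y$ the post-$1$ trajectory is distributed as the chain started from $y$, whose hitting time to $B$ is then the residual hitting time. Thus $h$ is harmonic on $\sX \setminus B$.

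For uniqueness, suppose $g$ is a second extension with $g|_B = h_B$ and $Pg = g$ off $B$, and set $f = h - g$. Then $f \equiv 0$ on $B$ and $Pf = f$ on $\sX \setminus B$. Choose $x_0$ attaining $\max f$. If $x_0 \in B$ then $\max f = 0$; if $x_0 \notin B$, then $f(x_0) = \sum_y P(x_0,y) f(y)$ forces $f(y) = f(x_0)$ for every $y$ with $P(x_0,y) > 0$, and iterating together with irreducibility propagates the equality $f \equiv f(x_0)$ to every state in $\sX$, hence in particular to $B$, giving $f(x_0) = 0$. In either case $\max f \leq 0$; applying the same reasoning to $-f$ yields $\min f \geq 0$, so $f \equiv 0$.

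There is no real obstacle here; the only point requiring attention is the verification that $\tau_B$ is almost surely finite so that $h$ is well defined, which is the reason one cannot drop irreducibility. The degenerate case $B = \emptyset$ is genuinely non-unique (every harmonic function solves an empty boundary problem) and must be excluded, which is implicit in the statement. A clean alternative to the maximum principle step is to note that $f(X_{t \wedge \tau_B})$ is a bounded martingale and apply optional stopping to conclude $f(x) = \E_x f(X_{\tau_B}) = 0$; this packages the same content in martingale language.
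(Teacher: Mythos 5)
The paper does not prove this proposition; it cites it directly from \cite{LPW09} (Proposition 9.1 there). Your proof is correct and is essentially the standard textbook argument: the existence direction is the first-step Markov property, and uniqueness is the finite-state maximum principle (equivalently, optional stopping applied to the stopped martingale $f(X_{t\wedge\tau_B})$). One small point of precision: in the uniqueness step, the constancy $f \equiv f(x_0)$ propagates along paths from $x_0$ that stay in $\sX\setminus B$ until they first enter $B$; it need not reach every state of $\sX$. That is still enough, since irreducibility and $B\neq\emptyset$ guarantee such a path hits $B$, at which point $f(x_0)$ equals a value of $f$ on $B$, which is $0$.
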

The function $h$ is called the `harmonic extension' of $h_B$ to $\sX \setminus B$.

The following lemma can be deduced from Proposition \ref{harmonic_measure_prop}. Let $X_t$ be $\frac{1}{5}$-lazy simple random walk on $(\zed/n\zed)^2$ started from $(1,0)$, and let $p_{y,n}, y \in \{(1,0), (-1,0), (0, 1), (0,-1)\}$ be the probability that $X_t$ first reaches $(0,0)$ from $y$.
\begin{lemma}
  There are limiting probabilities $p_{(1,0)}, p_{(-1,0)}, p_{(0,  1)}=p_{(0,-1)} > 0$ such that $p_{y,n} \to p_y$ as $n \to\infty$.  
\end{lemma}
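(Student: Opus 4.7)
The plan is to lift the torus walk through the covering map $\pi_n : \zed^2 \to (\zed/n\zed)^2$ to a $\frac{1}{5}$-lazy simple random walk $(\tilde{X}_t)$ on $\zed^2$ started from $\tilde{X}_0 = (1,0)$, so that $X_t = \pi_n(\tilde{X}_t)$. Since $\frac{1}{5}$-lazy SRW on $\zed^2$ is recurrent, the hitting time $\tau := \min\{t \geq 0 : \tilde{X}_t = (0,0)\}$ is almost surely finite, and the candidate limits
\begin{equation*}
p_y := \Prob\bigl(\tilde{X}_{\tau - 1} = y\bigr), \qquad y \in \{(\pm 1, 0), (0, \pm 1)\},
\end{equation*}
are well-defined and sum to $1$.

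Set $T_n := \min\{t \geq 0 : \tilde{X}_t \in n\zed^2\}$, and note that the first hit of $(0,0)$ on the torus occurs at time $T_n \leq \tau$. On the event $E_n := \{T_n = \tau\}$, the vertex $\tilde{X}_{T_n - 1}$ is a $\zed^2$-neighbor of the origin, and hence agrees with its torus projection once $n \geq 3$. Splitting $p_{y,n}$ according to $E_n$ and $E_n^c$ gives
\begin{equation*}
\bigl| p_{y, n} - p_y \bigr| \leq 2\Prob(E_n^c),
\end{equation*}
so the theorem reduces to showing $\Prob(E_n^c) \to 0$. Since $E_n^c$ forces the lift to exit the ball $B_{n/2}(0) \subset \zed^2$ before reaching $(0,0)$, I would control it with the classical potential kernel $a$ for SRW on $\zed^2$, which satisfies $a((0,0)) = 0$, $a(x) = \tfrac{2}{\pi}\log\|x\|_2 + O(1)$, and is harmonic for the lazy walk on $\zed^2 \setminus \{(0,0)\}$. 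Applying optional stopping to the martingale $a(\tilde{X}_{t \wedge \tau})$ at $\tau \wedge \tau_{\partial B_{n/2}(0)}$ yields
\begin{equation*}
\Prob_{(1,0)}\bigl( \tau_{\partial B_{n/2}(0)} < \tau \bigr) = O(1/\log n),
\end{equation*}
which is the required decay.

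Positivity of each $p_y$ follows from exhibiting an explicit finite path in $\zed^2$ of positive probability from $(1,0)$ to $(0,0)$ whose last step originates at $y$ and whose prefix avoids $(0,0)$: for $y = (1,0)$ a single step suffices, and for the other three neighbors short explicit detours around the origin work. The equality $p_{(0,1)} = p_{(0,-1)}$, and the corresponding identity $p_{(0,1),n} = p_{(0,-1),n}$ for every $n \geq 3$, follows from reflection across the $x$-axis, which preserves the walk, fixes $(1,0)$ and $(0,0)$, and swaps $(0,1) \leftrightarrow (0,-1)$. The main obstacle is the martingale step via the potential kernel; fortunately, $\frac{1}{5}$-laziness affects neither $a$ nor the hitting probability estimate, since $a$ remains harmonic for $P_{\text{lazy}} = \tfrac{1}{5}I + \tfrac{4}{5}P_{\text{srw}}$ on $\zed^2 \setminus \{(0,0)\}$ and hitting probabilities depend only on the jump chain.
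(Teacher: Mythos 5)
Your proof is correct and takes the same route as the paper: both lift the torus walk to $\frac{1}{5}$-lazy SRW on $\zed^2$, invoke recurrence to make the $\zed^2$ hitting time a.s.\ finite, and observe that on the high-probability event that this happens before the lift can wrap around, the hitting direction on $(\zed/n\zed)^2$ agrees with that on $\zed^2$. You are somewhat more thorough than the paper's two-sentence sketch, in ways that genuinely tighten it: you make the wrap-around event $E_n^c$ explicit and bound it quantitatively as $O(1/\log n)$ via the potential kernel and optional stopping (the paper only appeals softly to recurrence, and its phrase ``paths which take fewer than $n$ steps are the same on $(\zed/n\zed)^2$ as on $\zed^2$'' glosses over exactly the point your $T_n$ handles), and you supply the positivity of the $p_y$ and the reflection argument for $p_{(0,1)}=p_{(0,-1)}$, both of which the lemma asserts but the paper's proof leaves unaddressed.
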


\begin{proof}
Since simple random walk on $\zed^2$ is recurrent with probability 1, the probability of the first return to 0 taking more than $n$ steps tends to 0 as $n \to \infty$.  Those return paths which take fewer than $n$ steps are the same on $(\zed/n\zed)^2$ as on $\zed^2$, which proves the limit. 
\end{proof}

The return probabilities $p_{x}$ are determined as follows.

\begin{lemma}
Started at $(1,0)$, the return probability to the origin is given by
\begin{align}
p_{(1,0)} &= \frac{1}{2}, \qquad 
 p_{(0,\pm 1)} = \frac{1}{2}-\frac{1}{\pi}
,\qquad  p_{(-1,0)} = \frac{2}{\pi}-\frac{1}{2}.
\end{align}
\end{lemma}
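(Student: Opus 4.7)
The plan is to pass to the $n \to \infty$ limit (justified by the preceding lemma) and work with non-lazy simple random walk on $\zed^2$; the $\tfrac{1}{5}$-lazy self-loops do not affect the neighbor from which $(0, 0)$ is first approached, only the time of arrival. Time-reversal of SRW identifies each path $(1,0) = x_0, x_1, \ldots, x_{t-1} = y, x_t = (0, 0)$ avoiding $(0, 0)$ before time $t$ with a reversed path $(0, 0), y, z_1, \ldots, z_{t-2}, (1, 0)$ avoiding $(0, 0)$ strictly after time $0$; both have probability $4^{-t}$. Summing over $t$ and applying the strong Markov property at time $1$ yields
\begin{equation*}
p_y \;=\; \tfrac{1}{4}\, G_{\{(0,0)\}}\!\bigl((1, 0),\, y\bigr),
\end{equation*}
where $G_{\{(0,0)\}}(x, z) = \E_x \sum_{t=0}^{\tau_{(0,0)} - 1} \one_{\{X_t = z\}}$ is the Green's function of planar SRW killed on hitting the origin.

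To evaluate this Green's function I would use the potential kernel $a : \zed^2 \to \bR$, normalized by $a(0) = 0$ and $(P - I) a = \delta_0$, together with the classical identity
\begin{equation*}
G_{\{(0, 0)\}}(x, z) \;=\; a(x) + a(z) - a(x - z), \qquad x, z \neq (0, 0),
\end{equation*}
which is verified by observing that both sides vanish on the absorbing boundary $\{(0,0)\}$ (using that $a$ is even) and that applying $(P-I)$ in $x$ to the right-hand side produces $\delta_{x, z} - \delta_{x, (0,0)}$, matching the defining equation for $G_{\{(0,0)\}}$ on $\zed^2 \setminus \{(0, 0)\}$. Substituting the known exact values $a((\pm 1, 0)) = a((0, \pm 1)) = 1$, $a((\pm 1, \pm 1)) = 4/\pi$, and $a((\pm 2, 0)) = 4 - 8/\pi$ yields $G((1,0),(1,0)) = 2$, $G((1, 0),(0, \pm 1)) = 2 - 4/\pi$, and $G((1,0),(-1, 0)) = -2 + 8/\pi$; dividing by $4$ recovers the claimed probabilities, which sum to $1$ as a sanity check.

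The main obstacle is justifying the exact small-lattice values of $a$. These can be obtained from the Fourier representation
\begin{equation*}
a(x) \;=\; \int_{[-\pi,\pi]^2} \frac{1 - \cos(x \cdot \theta)}{2 - \cos\theta_1 - \cos\theta_2}\, \frac{d\theta}{(2\pi)^2}
\end{equation*}
together with the recursion $(P - I)a = \delta_0$: a single closed-form evaluation (e.g.\ of $a((1, 0))$ via a contour integral, giving $1$) propagates via the Laplacian relations to $a((1,1)) = 4/\pi$ and $a((2, 0)) = 4 - 8/\pi$. Once these kernel values and the Green's function identity are in hand, the rest of the proof is a routine algebraic substitution.
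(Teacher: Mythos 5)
Your proposal is correct and follows a route genuinely different from the paper's. You reduce the return-direction probabilities to the Green's function of planar SRW killed at the origin via $p_y = \tfrac{1}{4}G_{\{(0,0)\}}((1,0), y)$, and then evaluate that Green's function through the classical potential-kernel identity $G_{\{(0,0)\}}(x,z) = a(x) + a(z) - a(x-z)$, plugging in the exact values $a(1,0) = 1$, $a(1,1) = 4/\pi$, $a(2,0) = 4 - 8/\pi$ (of which $a(1,0)=1$ follows from normalization and the Laplace equation at the origin, $a(2,0)$ from the Laplace equation at $(1,0)$ given $a(1,1)$, and $a(1,1)=4/\pi$ is the one genuinely analytic input, obtainable from the Fourier integral). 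The paper instead works on the finite torus: it constructs an explicit harmonic function as a difference of Green's functions relative to the two-point boundary set $B=\{(0,0),(-1,0)\}$, conditions on which point of $B$ is hit first, assembles a linear system relating the partial return probabilities $p'_{\cdot,n}$ and the hitting probability $p_{B_{(-1,0)},n}$, solves it for $p_{(1,0),n}$, $p_{(0,\pm1),n}$, $p_{(-1,0),n}$ as rational functions of $G_{(1,0),n}, G_{(1,1),n}, G_{(2,0),n}$, and then passes to the $n\to\infty$ limit with Mathematica supplying the exact numerical values. Your route is arguably cleaner --- no linear system, no computer algebra --- at the cost of importing the potential-kernel identity, which you would need to justify (or cite) carefully; your verification of it via the Laplacian and vanishing on the absorbing boundary is the right check. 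One small streamlining: the time-reversal step in your argument is superfluous; the identity $p_y = \tfrac{1}{4}G_{\{(0,0)\}}((1,0), y)$ is a direct last-exit decomposition obtained by summing $\Prob_{(1,0)}\bigl(\tau_{(0,0)} > s,\, X_s = y\bigr)\cdot \tfrac{1}{4}$ over $s\geq 0$. Your observations that the $\tfrac{1}{5}$-laziness does not affect the direction of first approach, and that the preceding lemma licenses working on $\zed^2$ rather than $(\zed/n\zed)^2$, are both sound.
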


\begin{proof}
 We work first on $(\zed/n\zed)^2$ and then take the limit as $n \to \infty$. Denote $G_{x,n}$ the Green's function on $(\zed/n\zed)^2$ started at $0$ and evaluated at $x$ and $G_{x}$ the Green's function started at 0 and evaluated at $x$ on $\zed^2$. Thus our normalization has $G_{0,n} = G_{0} = 0$.  We define function $h_B$ on set $B$, where $B = \{(0,0),(-1,0)\},$ and extend $h_B$ on $(\zed/n\zed)^2$ by Lemma \ref{harmonic_measure_prop}. 
  Let $h((0,0)) = -G_{(1,0), n}$,  $h((-1,0)) = G_{(1,0),n}$, then
  \begin{align} \label{harmo_green}
  h(x) = G_{x,n} - G_{x-(-1,0),n}
  \end{align}
 By conditioning on the state which is first hit in  $B$, we get 
 \begin{align}
  h(x) = p_{B_{(0,0)},n}(x)h((0,0)) + p_{B_{(-1,0)},n}(x)h((-1,0)),
 \end{align}
  where $p_{B_{(0,0)},n}(x)$ is, starting at point $x$ the probability of hitting the origin when first hitting set $B$ and $p_{B_{(-1,0)},n}(x)$ is the probability of hitting $(-1,0)$ when first hitting set $B$. For convenience, let $p_{B_{(0,0)},n}$ and $p_{B_{(-1,0)},n}$ denote $p_{B_{(0,0)},n}(1,0)$ and $p_{B_{(-1,0)},n}(1,0)$. 
 
Since we know $p_{B_{(0,0)},n} + p_{B_{(-1,0)},n} = 1$,  plug in $(0,1)$ in (\ref{harmo_green}) and we get 
 \begin{align}
      p_{B_{(0,0)},n} &= \frac{G_{(2,0),n}}{2G_{(1,0),n}},
\qquad    p_{B_{(-1,0)},n} = 1 - \frac{G_{(2,0),n}}{2G_{(1,0),n}}.
 \end{align}
 Plugging in $(0,1)$ in (\ref{harmo_green}), we get 
 \begin{align}
      p_{B_{(-1,0)},n}((0,1)) = 1 - \frac{G_{(1,1),n}}{2G_{(1,0),n}}.
 \end{align}
 
 Due to symmetry, we have
 \begin{equation} p_{(0,1),n} = p_{B_{(-1,0)},n}((1,0))p_{(1,0),n} \end{equation}
 i.e. start at $(1,0)$, calculate $p_{(0,1),n}$ by conditioning on the state of hitting $(0,1)$ before hitting the origin.
 
 Let $p'_{(1,0),n}$, $p'_{(0,1),n}$, $p'_{(0,-1),n}$ denote the probability of returning to origin through $(1,0),(0,1),(0,-1),$ without passing through point $(-1,0)$. We have
 \begin{align}
p_{B_{(0,0)},n} &= p'_{(0,1),n} + p'_{(0,-1),n} + p'_{(1,0),n}= 2p'_{(0,1),n} + p'_{(1,0),n}  
\\\notag p_{(-1,0),n} &= p_{B_{(-1,0)},n}p_{(1,0),n}\\ \notag
p_{(1,0),n} &= p'_{(1,0),n} + p_{B_{(-1,0)},n}p_{(-1,0),n} = \frac{p'_{(1,0),n}}{1-p^2_{B_{(-1,0)},n}}\\ \notag
p_{(0,1),n} &= p'_{(0,1),n} + p_{B_{(-1,0)},n}p_{(0,1),n} = \frac{p'_{(0,1),n}}{1-p_{B_{(-1,0)},n}}.
 \end{align} 
By solving the above linear system, we get the desired quantity of return probabilities,
\begin{align}
p_{(1,0),n} &= \frac{2G_{(1,0),n}}{8G_{(1,0),n}-2G_{(1,1),n}-G_{(2,0),n}}\\
\notag p_{(0,\pm 1),n} &= \frac{2G_{(1,0),n}-G_{(1,1),n}}{8G_{(1,0),n}-2G_{(1,1),n}-G_{(2,0),n}}
\\ \notag p_{(-1,0),n} &= \frac{2G_{(1,0),n}-G_{(2,0),n}}{8G_{(1,0),n}-2G_{(1,1),n}-G_{(2,0),n}}.
\end{align}
Letting $n \to \infty$,  $G_{x,n} \to G_{x}$.  The exact values were calculated in Mathematica. 
 
\end{proof}

The following proposition is a variant of one proved in \cite{LPW09}, adapted to the case of $\frac{1}{5}$-lazy simple random walk.
\begin{proposition}[\cite{LPW09}, Proposition 10.13]\label{stopping_time_prop}
 Let $x$ and $y$ be two points at distance $k \geq 1$ on the torus $(\zed/n\zed)^2$, and let $T_y$ be the time of the first visit to $y$ after leaving $x$.  There exist constants $0 < c_2 \leq C_2 <\infty$ such that 
 \begin{equation}
  c_2 n^2 \log(k) \leq \E_x(T_y) \leq C_2 n^2 \log(k+1).
 \end{equation}
Also, the exact formula $\E_x(T_x) = \frac{5}{4}n^2$ holds.
 \end{proposition}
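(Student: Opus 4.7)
The plan is to reduce to the corresponding result for standard (non-lazy) simple random walk on $(\zed/n\zed)^2$ via a time-change argument, and handle the exact identity $\E_x(T_x)=\tfrac{5}{4}n^2$ separately by a first-step analysis.

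For the two-sided bound, I would couple the $\tfrac{1}{5}$-lazy walk $(X_t)$ to a non-lazy SRW $(Y_s)$ on the same torus by inserting i.i.d.\ geometric pauses $G_1, G_2, \dots$ with $\Prob(G_i = j) = (1/5)^{j-1}(4/5)$ for $j \geq 1$ (mean $5/4$). Setting $S_m = G_1 + \cdots + G_m$ and $X_t := Y_{\max\{m : S_m \leq t\}}$ produces the lazy walk with the prescribed transition kernel. For $y \neq x$, the lazy hitting time $T_y^X$ equals $S_{T_y^Y}$, so by Wald's identity $\E_x T_y^X = \tfrac{5}{4}\,\E_x T_y^Y$. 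Since for $y \neq x$ the walker must leave $x$ before reaching $y$, ``first visit after leaving $x$'' coincides with ``first visit,'' and the stated inequalities follow directly from the non-lazy analogue cited as \cite{LPW09} Proposition~10.13, with $c_2 = 5c/4$ and $C_2 = 5C/4$.

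For the exact identity, decompose $T_x = G + \tau_x'$, where $G$ is the geometric time until the lazy walk first leaves $x$ (mean $5/4$) and $\tau_x'$ is the hitting time of $x$ starting from the uniformly chosen neighbor it jumps to. Since the lazy walk is reversible with uniform stationary measure on $(\zed/n\zed)^2$, $\E_x \tau_x^+ = 1/\pi(x) = n^2$. Conditioning $\tau_x^+$ on the first step yields
\begin{equation*}
n^2 = \tfrac{1}{5} + \tfrac{1}{5}\sum_{y \sim x}\bigl(1 + \E_y \tau_x\bigr) = 1 + \tfrac{1}{5}\sum_{y \sim x}\E_y \tau_x,
\end{equation*}
so $\sum_{y \sim x} \E_y \tau_x = 5(n^2 - 1)$, and therefore $\E_x T_x = \tfrac{5}{4} + \tfrac{1}{4}\cdot 5(n^2 - 1) = \tfrac{5 n^2}{4}$.

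No substantial obstacle arises. The pause sequence $(G_i)$ is independent of $(Y_s)$, so the coupling commutes with hitting times automatically, and the remainder is routine bookkeeping around the cited proposition and the standard identity $\pi(x) = 1/\E_x \tau_x^+$. If one preferred to avoid the black-box invocation of \cite{LPW09}, one could instead adapt their proof, which rests on the logarithmic asymptotics of the Green's function on $\zed^2$ in the spirit of the return-probability calculations already carried out above, but this is unnecessary since the time-change reduction suffices.
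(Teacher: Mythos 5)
Your argument is correct and, for the two-sided bound, is exactly the time-change reduction the paper has in mind: the non-lazy hitting-time bounds from \cite{LPW09} are multiplied by the mean holding time $\tfrac{5}{4}$, with Wald's identity supplying the rigor the paper leaves implicit. For the exact identity $\E_x(T_x)=\tfrac{5}{4}n^2$ you take a slightly different route: the paper again simply rescales the non-lazy return time $n^2$ by $\tfrac{5}{4}$ (for the non-lazy walk, ``first visit after leaving $x$'' and $\tau_x^+$ coincide, so the same time-change applies verbatim), whereas you run a first-step analysis directly on the lazy chain, invoking $\E_x\tau_x^+ = 1/\pi(x) = n^2$, extracting $\sum_{y\sim x}\E_y\tau_x = 5(n^2-1)$, and assembling $T_x$ from the initial geometric hold plus the post-departure hitting time. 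This has the small advantage of making explicit the distinction between the lazy $\tau_x^+$ (which can equal $1$ via an immediate hold) and $T_x$ (the first return after actually leaving), a point the paper's one-line proof glosses over. Both computations are valid and land in the same place; there is no gap.
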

 \begin{proof}
  When considering non-lazy simple random walk, which transitions to each neighbor with probability $\frac{1}{4}$, the exact expected return time is $n^2$ and the lower and upper bounds on the hitting time are proved in \cite{LPW09}. Under $\frac{1}{5}$-lazy simple random walk, the expected time to make each move is $\frac{5}{4}$, which proves the claims.
 \end{proof}

 \begin{lemma} \label{general_random_walk_hitting_time}
  Let $\mu$ be a symmetric probability measure on $(\zed/n\zed)^2$ satisfying, for some $c>0$, $\mu((0,0)), \mu((\pm 1, 0)), \mu((0, \pm 1)) \geq c$.  The expected hitting time of simple random walk driven by $\mu$ to any $x \in (\zed/n\zed)^2$ is $O(n^2\log n)$.
 \end{lemma}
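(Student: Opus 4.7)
The plan is to compare the $\mu$-driven chain to $\frac{1}{5}$-lazy simple random walk via a Fourier-spectral argument on the abelian group $(\zed/n\zed)^2$.

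The first step is to write $\mu = 5c\nu + (1-5c)\mu'$, where $\nu$ denotes the step distribution of $\frac{1}{5}$-lazy SRW (uniform on $\{(0,0), (\pm 1, 0), (0, \pm 1)\}$) and $\mu' := (\mu - 5c\nu)/(1-5c)$. The hypothesis ensures each value $\mu(z) - 5c\nu(z)$ is nonnegative, so $\mu'$ is a symmetric probability measure, shrinking $c$ if necessary so that $5c < 1$.  Both walks being symmetric on an abelian group, their transition kernels are simultaneously diagonalized by the characters of $(\zed/n\zed)^2$, with eigenvalues given by Fourier transforms. The decomposition $\hat{\mu}(\chi) = 5c\hat{\nu}(\chi) + (1-5c)\hat{\mu'}(\chi)$ together with $\hat{\mu'}(\chi) \leq 1$ yields the spectral-gap comparison $1 - \hat{\mu}(\chi) \geq 5c(1-\hat{\nu}(\chi))$ for every character $\chi$.

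The second step is the spectral formula for expected hitting times.  For a reversible chain with uniform stationary measure on a finite abelian group $G$, the fundamental matrix $Z = (I - P + \Pi)^{-1} - \Pi$ has the Fourier expansion $Z(x,y) = |G|^{-1}\sum_{\chi\neq 1} \overline{\chi(y-x)}/(1-\hat{\mu}(\chi))$; this is well defined since $\mu((0,0)) \geq c > 0$ forces aperiodicity and hence $|\hat{\mu}(\chi)| < 1$ for $\chi \neq 1$.  The identity $\E_x[\tau_y] = (Z(y,y) - Z(x,y))/\pi(y)$ then gives
\begin{equation*}
\E_x[\tau_y] = \sum_{\chi \neq 1}\frac{1 - \mathrm{Re}\,\chi(y-x)}{1 - \hat{\mu}(\chi)} \leq 2\sum_{\chi \neq 1}\frac{1}{1 - \hat{\mu}(\chi)} \leq \frac{2}{5c}\sum_{\chi \neq 1}\frac{1}{1 - \hat{\nu}(\chi)}.
\end{equation*}

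The final step is to bound the lazy SRW spectral sum.  With $\chi = (a,b)$ represented in $\{-\lfloor n/2\rfloor, \ldots, \lfloor n/2 \rfloor\}^2 \setminus \{(0,0)\}$, the eigenvalue $1 - \hat{\nu}(a,b) = \tfrac{4}{5}(\sin^2(\pi a/n) + \sin^2(\pi b/n)) \gg (a^2+b^2)/n^2$, which gives $\sum_{\chi \neq 1}1/(1-\hat{\nu}(\chi)) \ll n^2\sum_{(a,b) \neq (0,0)} 1/(a^2+b^2) = O(n^2 \log n)$ by a standard dyadic decomposition of the sum over lattice annuli of radii $2^k$.  Combining with the preceding inequality yields $\E_x[\tau_y] = O(n^2 \log n)$, as claimed.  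The main point needing care is the verification of the spectral identity for hitting times, which is classical but requires both the reversibility and the aperiodicity of $\mu$; once it is in hand, the remaining Fourier estimate is routine.
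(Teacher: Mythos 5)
Your argument is correct but takes a genuinely different route from the paper's. The paper uses the electrical-network machinery: by the commute-time identity the expected hitting time is $O(n^2\,\sR(x,y))$ where $\sR$ is the effective resistance for the network on $(\zed/n\zed)^2$ with conductances proportional to $\mu$, and since those nearest-neighbour conductances are comparable (above and below) to the ones for simple random walk, Thomson's principle gives $\sR(x,y)=O(\log n)$. You instead exploit the abelian structure: diagonalising both kernels by characters, the convex decomposition $\mu = 5c\nu + (1-5c)\mu'$ yields the pointwise spectral comparison $1-\hat\mu(\chi) \geq 5c(1-\hat\nu(\chi))$, and the fundamental-matrix hitting-time identity $\E_x[\tau_y] = (Z(y,y)-Z(x,y))/\pi(y)$ reduces everything to the explicit spectral sum $\sum_{\chi\neq 1}(1-\hat\nu(\chi))^{-1} \ll n^2\sum_{(a,b)\neq 0}(a^2+b^2)^{-1} = O(n^2\log n)$. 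Both are sound; each step of yours checks out, including the nonnegativity of $\mu'$, the validity of the fundamental-matrix formula (irreducibility from the generating support, laziness keeping $\hat\mu(\chi) > -1$), and the lower bound $1-\hat\nu(a,b) = \tfrac45(\sin^2(\pi a/n)+\sin^2(\pi b/n)) \gg (a^2+b^2)/n^2$. The resistance argument is shorter once Thomson's principle is taken as known and applies to arbitrary graphs with comparable edge weights, whereas your Fourier argument is more self-contained but leans essentially on the group being abelian (and on vertex-transitivity, which lets you replace the commute time by a single one-sided hitting-time bound).
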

\begin{proof} It follows from \cite{LPW09}, chapters 9 and 10 that the expected hitting time from $x$ to $y$ on $(\zed/n\zed)^2$ is $O(n^2 \sR(x,y))$ where $\sR(x,y)$ is the effective resistance. Moreover, by Thomson's Principle, $\sR(x,y) = \inf\{\sE[\theta], \theta \text{ a unit flow $x,y$}\}$ and $\sE[\theta] = \sum_e [\theta(e)]^2r(e)$, where $r(e)$ is the resistance of an edge $e$.  Since the resistance of the nearest neighbor edges under the random walk $\mu$ is bounded to within constants by that for simple random walk, $\sR(x,y) = O(\log n)$.  
\end{proof}

 By Markov's inequality, for any $x \in (\zed/n\zed)^2$, there is a constant $C>0$ such that $\Prob_{x}(\tau_0 > Cn^2 \log n) < \frac{1}{2}$.  Applying this iteratively proves that $\Prob_x (\tau_0 > cn^2 \log n)$ decays exponentially in $c$ as $c \to \infty$.  In particular,  $\Var_x(\tau_y) \ll n^4 (\log n)^2$.

 Another useful expression for the hitting time can be obtained as a generating function.  Let $P$ be the transition kernel of $\frac{1}{5}$-lazy simple random walk on $(\zed/n\zed)^2$, and let $P'$ be $P$ with row and column corresponding to $(0,0)$ deleted. Let 
  \begin{align}
  R(z) &= (I-zP')^{-1}= I + zP' + (zP')^2 + ...
 \end{align}
be the resolvent. 
 
 \begin{lemma}\label{char_fun_lemma}
  The characteristic function of the hitting time from $(1,0)$ to $(0,0)$ under $\frac{1}{5}$-lazy simple random walk at $z = e^{2\pi i \xi}$ is
  \begin{align}
\chi(z) =  \frac{z}{5} e_{(1,0)}^t R(z) \left(e_{(1,0)} + e_{(-1,0)} + e_{(0,1)} + e_{(0,-1)} \right).
\end{align}
The expected hitting time is
\begin{equation}
 1 + \frac{1}{5} e_{(1,0)}^t R'(1) \left(e_{(1,0)} + e_{(-1,0)} + e_{(0,1)} + e_{(0,-1)} \right)
\end{equation}
which is of order $n^2$.
 \end{lemma}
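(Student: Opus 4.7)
The plan is to derive both claims by a first-passage decomposition of paths from $(1,0)$ to $(0,0)$. I would first observe that on the event $\{\tau = k\}$ with $k \geq 1$, the chain performs $k-1$ steps that avoid $(0,0)$, ending at some neighbor $y$ of the origin, and then takes one further step into $(0,0)$. The probability of the first leg is $(P')^{k-1}((1,0),y)$ by the definition of the deletion $P'$, while the final step occurs with probability $p(y,(0,0)) = \tfrac{1}{5}$ for $y \in \{(\pm 1, 0), (0,\pm 1)\}$ and $0$ otherwise. Setting $v = e_{(1,0)} + e_{(-1,0)} + e_{(0,1)} + e_{(0,-1)}$, this gives $\Prob_{(1,0)}[\tau = k] = \tfrac{1}{5}\, e_{(1,0)}^t (P')^{k-1} v$. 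Multiplying by $z^k$ and summing over $k \geq 1$ produces the Neumann series $\sum_{k \geq 0}(zP')^k = R(z)$, yielding the claimed formula for $\chi(z)$. The series converges at $z = 1$ because $P'$ is strictly sub-stochastic on the finite state space (the four rows adjacent to $(0,0)$ have sum $\tfrac{4}{5}$), so its spectral radius is strictly less than $1$ by Perron--Frobenius, and $I - P'$ is invertible.

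For the expected hitting time I would use $\E[\tau] = \chi'(1)$. Recurrence of simple random walk on the finite torus gives $\tau < \infty$ almost surely, hence $\chi(1) = 1$, which forces the identity $\tfrac{1}{5}\, e_{(1,0)}^t R(1) v = 1$. Differentiating the product $\chi(z) = \tfrac{z}{5}\, e_{(1,0)}^t R(z) v$ by the product rule and evaluating at $z=1$ yields
\begin{equation*}
 \E[\tau] = \tfrac{1}{5}\, e_{(1,0)}^t R(1) v + \tfrac{1}{5}\, e_{(1,0)}^t R'(1) v = 1 + \tfrac{1}{5}\, e_{(1,0)}^t R'(1) v,
\end{equation*}
which is the stated expression.

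Finally, to show the expected hitting time is of order $n^2$, I would combine the exact return time $\E_{(0,0)}[\tau_{(0,0)}^+] = \tfrac{5}{4} n^2$ from Proposition \ref{stopping_time_prop} with a first-step decomposition. By the symmetry of $(\zed/n\zed)^2$ under rotations and reflections fixing $(0,0)$, the four neighbors share a common expected hitting time $h = \E_{(1,0)}[\tau]$ to the origin, and conditioning on the first step out of $(0,0)$ gives $\tfrac{5}{4} n^2 = \tfrac{1}{5} \cdot 1 + \tfrac{4}{5}(1 + h)$, hence $h = \tfrac{25}{16}\, n^2 - \tfrac{5}{4}$, which is indeed of order $n^2$. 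There is no serious obstacle in this lemma; the only technical points requiring care are the verification that $\chi(1) = 1$ (so that the Neumann series converges at the circle $|z|=1$) and the use of symmetry in the final step, both of which are standard consequences of recurrence and the vertex-transitive structure of the torus.
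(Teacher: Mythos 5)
Your derivation of the characteristic function via the first-passage decomposition and of the expected-hitting-time formula via $\chi'(1)$ matches the paper's argument, with some welcome extra detail on why the Neumann series converges on the closed disc. Your verification of the order-$n^2$ claim goes beyond the paper's terse proof (which leaves that point implicit), and the strategy --- combine an exact return-time identity with a first-step decomposition --- is sound; but there is a numerical slip. Proposition \ref{stopping_time_prop} gives $\E_x(T_x) = \frac{5}{4}n^2$ where $T_x$ is the first visit to $x$ \emph{after leaving} $x$, which is not the same as the first return time $\tau_x^+ = \min\{t \geq 1: X_t = x\}$. For the $\frac{1}{5}$-lazy walk with uniform stationary measure on $n^2$ states, the relation $\pi(x) = 1/\E_x[\tau_x^+]$ recorded in Section \ref{background_section} gives $\E_{(0,0)}[\tau^+_{(0,0)}] = n^2$, not $\frac{5}{4}n^2$. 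Your first-step decomposition $\frac{1}{5}\cdot 1 + \frac{4}{5}(1+h)$ is exactly the one for $\tau^+$, so the correct equation is $n^2 = 1 + \frac{4}{5}h$, yielding $h = \frac{5}{4}(n^2-1)$ rather than $\frac{25}{16}n^2 - \frac{5}{4}$. (Equivalently, $T_x$ decomposes as $\frac{5}{4} + h$, and $\frac{5}{4}n^2 = \frac{5}{4} + h$ gives the same value.) Since both expressions are of order $n^2$, the lemma's conclusion is unaffected, but the constant you report is off by a factor of $\frac{5}{4}$.
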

\begin{proof}
 $(P')^n$ enumerates the transition probabilities that result from length $n$ paths which do not visit $(0,0)$. To obtain the characteristic function formula, condition on the number of steps taken under $P'$, and then use that the probability of transitioning from one of the neighbors of $(0,0)$ to $(0,0)$ is $\frac{1}{5}$. 
 
 We have $\chi(1)=1$, since the walk hits $(0,0)$ in finite time with probability 1.
 
 The formula for the expected hitting time holds since the expectation is $\chi'(1)$.
\end{proof}

Since $P'$ is symmetric, it can be diagonalized using an orthonormal set of eigenvectors.  Let the corresponding eigenvalues be $\lambda_1 \geq \lambda_2 \geq \cdots \geq \lambda_{n^2-1}$ with eigenvectors $v_1, v_2, ..., v_{n^2-1}$.  We have $v_1$ is non-negative and $1 > \lambda_1$.  Write
\begin{equation}
 R(z) = \sum_{i=1}^{n^2-1} \frac{v_i v_i^t}{1-z\lambda_i}.
\end{equation}
Let $c_{i,x} = \langle v_i, e_x\rangle$.

\begin{lemma}
 The largest eigenvalue $\lambda_1$ satisfies $\frac{1}{n^2\log n} \ll 1-\lambda_1 \ll \frac{1}{n^2}$.  Also,
 \begin{align}\label{1st_moment}
  \frac{5}{4} \leq \sum_{i=1}^{n^2-1} \frac{c_{i, (1,0)}^2}{1-\lambda_i} < 5
 \end{align}
 and 
 \begin{align}\label{2nd_moment}
  \sum_{i=1}^{n^2-1} \frac{c_{i, (1,0)}^2}{(1-\lambda_i)^2}\asymp n^2.
 \end{align}
Furthermore, there is a constant $c>0$ such that
\begin{align}\label{large_eigenvalue_part}
 \sum_{i: (1-\lambda_i) > \frac{c}{n}} \frac{c_{i, (1,0)}^2}{(1-\lambda_i)} \gg \frac{1}{\log n}.
\end{align}

\end{lemma}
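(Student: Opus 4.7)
My plan rests on two interpretations that pervade all four claims: $\sum_i c_{i,(1,0)}^2/(1-\lambda_i)^k$ equals the diagonal entry $e_{(1,0)}^t (I-P')^{-k} e_{(1,0)}$, and $(I-P')^{-1}(x,y) = \E_x[\#\{t < \tau_{(0,0)} : X_t = y\}]$ is the expected number of visits to $y$ before absorption, starting from $x$. Throughout I also invoke the short-time identity $\sum_{t=0}^{T-1} (P')^t(x,x) = \sum_i c_{i,x}^2 (1-\lambda_i^T)/(1-\lambda_i)$ and its continuous-time analogue.

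For the bounds on $1-\lambda_1$, the upper bound $\ll 1/n^2$ comes from the variational characterization applied to the test function $f(x) = d(x,(0,0))$ (graph distance on the torus): it vanishes at $0$ with $\|f\|_\pi^2 \asymp n^2$ and $\sE(f,f) = O(1)$ since adjacent vertices differ by at most one. The lower bound $\gg 1/(n^2\log n)$ uses the identity $1/(1-\lambda_1) = \E_{\pi^*}[\tau_{(0,0)}]$ for the quasi-stationary distribution $\pi^*$ (left Perron eigenvector of $P'$), bounded above by $\max_x \E_x[\tau_{(0,0)}] = O(n^2\log n)$ via Proposition \ref{stopping_time_prop}.

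For the second claim, writing $(I-P')^{-1}((1,0),(1,0)) = 1/(1-q)$ with return probability $q = \Prob_{(1,0)}(\tau^+_{(1,0)} < \tau_{(0,0)})$, first-step analysis gives $q = \tfrac{1}{5} + \tfrac{1}{5}(p_{(2,0)} + 2p_{(1,1)})$ where $p_y = \Prob_y(\tau_{(1,0)} < \tau_{(0,0)}) \in [0,1]$; laziness forces $q \geq 1/5$ (sum $\geq 5/4$), while the probability $1/20$ that the first move lands directly on $(0,0)$ gives $q < 4/5$ strictly (sum $< 5$). For (3), Cauchy--Schwarz applied to $G(y) := (I-P')^{-1}((1,0),y)$ gives $\sum_y G(y)^2 \geq (\sum_y G(y))^2/(n^2-1) = \E_{(1,0)}[\tau_{(0,0)}]^2/(n^2-1) \asymp n^2$ by Lemma \ref{char_fun_lemma}. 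For the matching upper bound, I expand $\sum_i c_i^2/(1-\lambda_i)^2 = \E[N] + \E[\sum_{j=1}^N T_j]$, where $T_j$ are the times of the $N$ visits to $(1,0)$ before $\tau_{(0,0)}$; a Wald-style identity using the IID inter-visit times $D$ (conditional on return) gives $\E[\sum_j T_j] = \E[D]q/(1-q)^2$, and the bound $\E[D] \leq \E[\tau^+_{(1,0)}]/q = O(n^2)$ from Proposition \ref{stopping_time_prop} together with $q \geq 1/5$ yields the $O(n^2)$ upper bound.

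For (4), I work in continuous time to avoid sign issues with negative eigenvalues: the identity $\int_0^S H_s(P')((1,0),(1,0))\, ds = \sum_i c_{i,(1,0)}^2 (1-e^{-(1-\lambda_i)S})/(1-\lambda_i)$ has nonnegative summands regardless of the sign of $\lambda_i$, and $(1-e^{-(1-\lambda_i)S})/(1-\lambda_i) \leq S$ on slow modes yields $\sum_{1-\lambda_i > c/n} c_i^2/(1-\lambda_i) \geq \int_0^S H_s((1,0),(1,0))\, ds - S \sum_{\text{slow}} c_i^2$. Choosing $S=1$, the bound $H_s((1,0),(1,0)) \geq e^{-s}$ (from the probability of no Poisson transition in $[0,s]$) gives $\int_0^1 H_s\, ds \geq 1 - e^{-1}$. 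The key estimate is $\sum_{\text{slow}} c_{i,(1,0)}^2 \leq e \cdot H_{n/c}(P')((1,0),(1,0)) \leq e \cdot H_{n/c}(P)((1,0),(1,0)) = O(c/n)$: the first inequality uses $e^{-(1-\lambda_i)(n/c)} \geq e^{-1}$ on slow modes, the second the pointwise bound $H_t(P')(x,x) \leq H_t(P)(x,x)$ from the fact that absorption can only reduce survival, and the last a 2D torus Fourier estimate giving $H_t(P)(x,x) = O(1/t)$ for $1 \ll t \ll n^2$. Hence for $n$ large the restricted sum is bounded below by $(1-e^{-1})/2$, much stronger than the required $\gg 1/\log n$. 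The main technical obstacle is the torus heat-kernel estimate, requiring careful control of the Fourier sum past the purely Gaussian regime.
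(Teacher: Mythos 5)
Your proof is correct, and it takes a genuinely different, more probabilistic route on every part of the lemma. For $1-\lambda_1 \ll 1/n^2$ you give a direct variational bound with the test function $d(\cdot,(0,0))$; the paper instead asserts that this follows from (\ref{2nd_moment}) together with $\sum_i c_{i,(1,0)}^2 = 1$, but that deduction only yields $1-\lambda_1 \ll 1/n$ (since $\sum_i c_{i,(1,0)}^2 (1-\lambda_i)^{-2} \le (1-\lambda_1)^{-2}$), so your argument is sharper as well as cleaner. For (\ref{1st_moment}) and (\ref{2nd_moment}) you exploit the identity $\sum_i c_{i,(1,0)}^2(1-\lambda_i)^{-k} = (I-P')^{-k}((1,0),(1,0))$, reducing the first moment to one-step analysis of the return probability $q$ and the second moment to Cauchy--Schwarz on $\sum_y G((1,0),y)^2$ plus a Wald-type decomposition of $\sum_j T_j$; the paper instead reads the same quantities off $\chi(1)=1$ and $\chi'(1)$ from Lemma~\ref{char_fun_lemma} and removes cross terms by symmetry (the cross sums $\sum_i c_{i,(1,0)}c_{i,y}/(1-\lambda_i)=G((1,0),y)\ge 0$). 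A small slip: the probability of absorbing into $(0,0)$ on the first step from $(1,0)$ is $1/5$, not $1/20$, though the conclusion $q<4/5$ stands since each $p_y<1$ uniformly. For (\ref{large_eigenvalue_part}) your continuous-time argument --- $\int_0^1 H_s(P')\ge 1-e^{-1}$, the split at $1-\lambda_i\lessgtr c/n$, and the slow-mass bound via $H_t(P')\le H_t(P)$ together with the two-dimensional on-diagonal decay $H_t(P)(x,x)=O(1/t)$ for $1\ll t\ll n^2$ --- is correct and in fact yields the stronger bound $\gg 1$ rather than $\gg 1/\log n$. It also avoids a defect in the paper's own argument: the asserted step $\sum_{1-\lambda_i>c/n}c_{i,(1,0)}^2(1-\lambda_i)^{-2}\gg n^2$ is inconsistent with (\ref{1st_moment}), because on that index set $(1-\lambda_i)^{-1}<n/c$ and hence the sum is at most $5n/c=o(n^2)$; the paper's computation really establishes the corresponding estimate for the complementary range $1-\lambda_i\le c/n$, and your proof handles the lemma as literally stated. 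The Fourier estimate $H_t(P)(x,x)=O(1/t)$ you flag as the technical heart is standard (approximate the eigenvalue sum by a Gaussian integral for frequencies $|k|\ll \epsilon n$ and bound the rest by $e^{-\delta t}$).
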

\begin{proof}
 Let $v_1$ be in the top eigen-space, $1^t v_1 = 1$ so that $v_1$ is a probability vector. Since $(P')^m v_1 = \lambda_1^m v_1$, $1^t (P')^m v_1 = \lambda_1^m$ is the probability of not reaching $(0,0)$ in $m$ steps, started from a distribution proportional to $v_1$.  Since the expected hitting time to $(0,0)$ is $O(n^2 \log n)$ uniformly in the starting point, it follows that for some $c>0$,  $\lambda_1 \leq 1 - \frac{c}{n^2\log n}$.
 
 The bound $\lambda_1 > 1 - \frac{c}{n^2}$ will follow after establishing (\ref{2nd_moment}), since $\sum_i c_{i,(1,0)}^2 = 1$ by orthogonality.  To prove (\ref{1st_moment}) note that $\chi(1) = 1$ may be written 
 \begin{equation}
 1 = \frac{1}{5} \sum_{i=1}^{n^2-1} \frac{c_{i,(1,0)} (c_{i, (1,0)} + c_{i, (-1,0)} + c_{i, (0,1)} + c_{i, (0,-1)}) }{1-\lambda_i}.
\end{equation}
Furthermore, by symmetry, $\sum_i \frac{c_{i,x}^2}{1-\lambda_i}$ is independent of $x \in \{(\pm 1, 0), (0, \pm 1)\}$, so that 
\begin{align}
\sum_{i=1}^{n^2-1} \frac{c_{i, (1,0)}^2}{1-\lambda_i} &\leq  \sum_{i=1}^{n^2-1} \frac{c_{i,(1,0)} (c_{i, (1,0)} + c_{i, (-1,0)} + c_{i, (0,1)} + c_{i, (0,-1)}) }{1-\lambda_i}\\\notag& \leq 4\sum_{i=1}^{n^2-1} \frac{c_{i, (1,0)}^2}{1-\lambda_i}.
\end{align}
Similarly, the expected hitting time formula may be written
\begin{equation}
 \sum_{i=1}^{n^2-1}\frac{c_{i,(1,0)} (c_{i, (1,0)} + c_{i, (-1,0)} + c_{i, (0,1)} + c_{i, (0,-1)})\lambda_i }{(1-\lambda_i)^2} \asymp n^2.
\end{equation}
Those terms with $\frac{1}{1-\lambda_i}$ bounded contribute $O(1)$ to this sum, so that the negative terms may be dropped, and hence 
\begin{equation}
 \sum_{i=1}^{n^2-1}\frac{c_{i,(1,0)} (c_{i, (1,0)} + c_{i, (-1,0)} + c_{i, (0,1)} + c_{i, (0,-1)}) }{(1-\lambda_i)^2} \asymp n^2.
\end{equation}
Again by symmetry, and Cauchy-Schwarz, 
\begin{equation}
 \sum_{i=1}^{n^2-1} \frac{c_{i, (1,0)}^2}{(1-\lambda_i)^2} \asymp n^2.
\end{equation}
To prove (\ref{large_eigenvalue_part}), note that if $c>0$ is sufficiently small, since $\sum_i c_{i, (1,0)}^2 = 1$,
\begin{equation}
 \sum_{i: (1-\lambda_i) > \frac{c}{n}} \frac{c_{i, (1,0)}^2}{(1-\lambda_i)^2} \gg n^2.
\end{equation}
The claim now follows from $\frac{1}{1-\lambda_i} \ll n^2 \log n$.
\end{proof}

The following bounds are useful in bounding the characteristic function of the hitting time.
\begin{lemma}\label{hitting_time_char_fun_bound}
 Let $\vartheta$ be maximal such that 
 \begin{equation}
  \sum_{i: (1-\lambda_i) > \vartheta} \frac{c_{i, (1,0)}^2}{1-\lambda_i} \geq \left(1 - \frac{1}{(\log n)^3}\right) \sum_{i=1}^{n^2-1} \frac{c_{i, (1,0)}^2}{1-\lambda_i}.
 \end{equation}
 Then for $\vartheta < \xi \leq \frac{1}{2}$,
 \begin{align}
  \sum_{i: 1-\lambda_i < \xi} c_{i, (1,0)}^2 \left( \frac{1}{1-\lambda_i} - \frac{1}{|1-\lambda_i e^{2\pi i \xi}|}\right) \gg \frac{1}{(\log n)^3}.
 \end{align}
 For $0 < \xi \leq \vartheta$,
 \begin{equation}
  \sum_{i: (1-\lambda_i) > \vartheta} c_{i, (1,0)}^2 \left(\frac{1}{1-\lambda_i} - \frac{1}{|1-\lambda_i e^{2\pi i \xi}|} \right) \gg \xi^2 n^4.
 \end{equation}

\end{lemma}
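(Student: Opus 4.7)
The plan is to bound the integrand $g(\xi,\alpha) := \frac{1}{\alpha} - \frac{1}{|1-(1-\alpha)e^{2\pi i\xi}|}$ (writing $c_i := c_{i,(1,0)}$ and $\alpha_i := 1-\lambda_i$ for brevity) in two regimes and combine with the spectral moment identities (\ref{1st_moment})--(\ref{2nd_moment}). Expanding gives $|1-(1-\alpha)e^{2\pi i\xi}|^2 = \alpha^2 + \beta$ where $\beta := 4(1-\alpha)\sin^2(\pi\xi)$; for $|\xi|\leq 1/2$ one has $\sin^2(\pi\xi)\asymp\xi^2$, and for $\alpha\leq 1/2$, $\beta\asymp\xi^2$. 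The elementary inequality $1-(1+y)^{-1/2}\geq y/(2(1+y))$ with $y=\beta/\alpha^2$ yields $g(\xi,\alpha)\geq\beta/(2\alpha(\alpha^2+\beta))\geq 0$. Assuming $\alpha\leq 1/2$, this splits into two regimes: if $\alpha\leq\xi$ then $\alpha^2\lesssim\beta$ and $g(\xi,\alpha)\gg 1/\alpha$; if $\alpha\geq\xi$ then $\beta\lesssim\alpha^2$ and $g(\xi,\alpha)\gg\xi^2/\alpha^3$.

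For the first claim with $\vartheta<\xi\leq 1/2$, every $\alpha_i<\xi$ lies in the first regime, so (using $g\geq 0$) the sum is $\gg\sum_{\alpha_i<\xi}c_i^2/\alpha_i$. The defining maximality of $\vartheta$ says that just above $\vartheta$ the partial sum $\sum_{\alpha_i\leq\vartheta'}c_i^2/\alpha_i$ exceeds $T/(\log n)^3$, where $T:=\sum_i c_i^2/\alpha_i\asymp 1$ by (\ref{1st_moment}); since $\xi>\vartheta$ we may choose $\vartheta'\in(\vartheta,\xi)$, giving $\sum_{\alpha_i<\xi}c_i^2/\alpha_i\gg 1/(\log n)^3$.

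For the second claim with $0<\xi\leq\vartheta$, every term has $\alpha_i>\vartheta\geq\xi$, placing it in the second regime. Restricting further to $\alpha_i\leq 1/2$ (discarding a non-negative $O(1)$ contribution),
\[
\sum_{\alpha_i>\vartheta}c_i^2\, g(\xi,\alpha_i)\gg\xi^2\sum_{\vartheta<\alpha_i\leq 1/2}\frac{c_i^2}{\alpha_i^3}.
\]
Applying Cauchy--Schwarz in the form $(\sum c_i^2/\alpha_i^2)^2\leq(\sum c_i^2/\alpha_i^3)(\sum c_i^2/\alpha_i)$ on the restricted index set bounds the right-hand side below by $N^2/D$, where $D\leq T\asymp 1$ and $N=\sum_{\vartheta<\alpha_i\leq 1/2}c_i^2/\alpha_i^2$. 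Decompose $N=\sum_ic_i^2/\alpha_i^2-\sum_{\alpha_i>1/2}c_i^2/\alpha_i^2-\sum_{\alpha_i\leq\vartheta}c_i^2/\alpha_i^2$; the main term is $\asymp n^2$ by (\ref{2nd_moment}), the middle tail is $O(1)$, and the small-$\alpha$ tail satisfies
\[
\sum_{\alpha_i\leq\vartheta}\frac{c_i^2}{\alpha_i^2}\leq\frac{1}{1-\lambda_1}\sum_{\alpha_i\leq\vartheta}\frac{c_i^2}{\alpha_i}\lesssim\frac{n^2\log n}{(\log n)^3}=\frac{n^2}{(\log n)^2},
\]
using the previous lemma's bound $1-\lambda_1\gg 1/(n^2\log n)$ together with the defining inequality of $\vartheta$. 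Hence $N\asymp n^2$, so the double sum is $\gg n^4$ and the full expression is $\gg\xi^2 n^4$. The main obstacle is this Cauchy--Schwarz step: it is essential that the small-$\alpha$ tail not absorb the second moment, which relies on the explicit lower bound $\vartheta\geq 1-\lambda_1\gg 1/(n^2\log n)$ supplied by the previous lemma.
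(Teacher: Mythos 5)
Your proposal is correct and takes essentially the same approach as the paper: a pointwise lower bound on $g(\xi,\alpha)=\tfrac{1}{\alpha}-\tfrac{1}{|1-(1-\alpha)e^{2\pi i\xi}|}$ split into the regimes $\alpha<\xi$ and $\alpha\geq\xi$, combined with the moment identities (\ref{1st_moment})--(\ref{2nd_moment}) and a Cauchy--Schwarz/H\"older step to control the third moment. The only organizational difference is bookkeeping: you apply Cauchy--Schwarz on the restricted index set $\{\vartheta<\alpha_i\leq 1/2\}$ and then bound the restricted second moment from below, whereas the paper applies H\"older on the full index set to get $\sum_i c_i^2/\alpha_i^3\gg n^4$ directly and then subtracts the small-$\alpha$ tail $\sum_{\alpha_i\leq\vartheta}c_i^2/\alpha_i^3\ll n^4/\log n$. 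These are mathematically equivalent; the paper's version is marginally tidier since it avoids the extra step of lower-bounding the restricted second moment.

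One small imprecision: in the second claim you assert that the discarded contribution from $\alpha_i>1/2$ is ``non-negative.'' That is false for indices with $\alpha_i>1$ (i.e.\ $\lambda_i<0$), where $\beta=4(1-\alpha_i)\sin^2(\pi\xi)<0$, so $|1-\lambda_i e^{2\pi i\xi}|<1-\lambda_i$ and $g(\xi,\alpha_i)<0$; these eigenvalues do occur since $P'$ inherits spectrum near $-3/5$ from the $\tfrac{1}{5}$-lazy walk. The argument survives anyway: for $0<\xi\leq\vartheta\ll 1/n$ a Taylor expansion gives $|g(\xi,\alpha_i)|\ll\xi^2$ uniformly over $\alpha_i>1$, and $\sum_i c_i^2=1$, so the total negative contribution is $O(\xi^2)=o(\xi^2 n^4)$ and is absorbed by the main term. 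It would be cleaner to say ``discarding a contribution that is bounded in magnitude by $O(\xi^2)$'' rather than ``non-negative.'' (The paper's own proof has the same implicit gap, since the inequality $\tfrac{1}{|1-\lambda_ie^{2\pi i\xi}|}\leq\tfrac{1}{|(1-\lambda_i)+i\lambda_i\sin(2\pi\xi)|}$ used there also fails when $\lambda_i<0$.) Your closing remark that the argument ``relies on the explicit lower bound $\vartheta\geq 1-\lambda_1$'' is slightly muddled: what is actually used is only $\tfrac{1}{\alpha_i}\leq\tfrac{1}{1-\lambda_1}\ll n^2\log n$, which holds for every index, regardless of the relationship between $\vartheta$ and $1-\lambda_1$.
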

\begin{proof}
By the previous lemma, $\vartheta \ll \frac{1}{n}$. We have
\begin{equation}
 \frac{1}{|1 -\lambda_i e^{2\pi i \xi}|} \leq \frac{1}{| (1-\lambda_i) + i \lambda_i \sin(2\pi \xi)| } = \frac{1}{1-\lambda_i}  \frac{1}{\left| 1 + \frac{i \lambda_i \sin(2\pi \xi)}{1-\lambda_i}\right|}.
\end{equation}
and, thus,
\begin{align}
 \frac{1}{1-\lambda_i} - \frac{1}{|1-\lambda_i e^{2\pi i \xi}|} &\gg \frac{1}{1-\lambda_i} \left(1 - \frac{1}{\sqrt{1 + \frac{\xi^2}{(1-\lambda_i)^2}}}\right)\\
 &\notag \gg \frac{1}{1-\lambda_i} \min\left(1, \frac{\xi^2}{(1-\lambda_i)^2}\right).
\end{align}

We now conclude, when $\vartheta < \xi \leq \frac{1}{2}$,
\begin{align}
 \sum_{i: (1-\lambda_i) < \xi}c_{i, (1,0)}^2 \left( \frac{1}{1-\lambda_i} - \frac{1}{|1-\lambda_i e^{2\pi i \xi}|}\right) \gg \sum_{i: (1-\lambda_i) < \xi}\frac{c_{i, (1,0)}^2}{1-\lambda_i} \gg \frac{1}{(\log n)^3}.
\end{align}
For $0 < \xi \leq \vartheta$, note that
\begin{equation}
 \sum_{i: (1-\lambda_i) < \vartheta} \frac{c_{i, (1,0)}^2}{(1-\lambda_i)^3} \ll \frac{1}{(1-\lambda_1)^2} \sum_{i: (1-\lambda_i) < \vartheta} \frac{c_{i, (1,0)}^2}{1-\lambda_i} \ll \frac{n^4}{\log n}
\end{equation}
while, by H\"{o}lder,
\begin{equation}
 \sum_{i=1}^{n^2-1} \frac{c_{i, (1,0)}^2}{(1-\lambda_i)^3} \gg n^4.
\end{equation}
Hence,
\begin{equation}
 \sum_{i: 1-\lambda_i > \vartheta}c_{i, (1,0)}^2 \left(\frac{1}{1-\lambda_i} - \frac{1}{|1-\lambda_i e^{2\pi i \xi}|} \right) \gg \xi^2 \sum_{i: 1-\lambda_i > \vartheta}\frac{c_{i, (1,0)}^2 }{(1-\lambda_i)^3} \gg \xi^2 n^4.
\end{equation}

\end{proof}

\section{The mixing of a single piece, proof of Theorem \ref{single_piece_mixing_theorem}}
In this section we track the location of a single numbered piece $\sP$, along with the empty piece $\sP_e$.

Let $t_0$ be the first time at which $\sP_e$ swaps positions with $\sP$ from the above or below, and define a sequence of stopping times $t_1, t_2, t_3, ...$ in which $t_{2i-1}$ is the first time after $t_{2i-2}$ at which $\sP_e$ swaps positions with $\sP$ from left or right, and $t_{2i}$ is the first time after $t_{2i-1}$ at which $\sP_e$ swaps positions with $\sP$ from the above or below.  

Let $H_0$ be the number of positions (left is negative, right is positive) that $\sP$ moves prior to $t_1$ and $V_0$ the number of positions (up is positive, down is negative) that $\sP$ moves prior to $t_1$.

For $i \geq 1$,  let $H_i$ be the number of horizontal moves of $\sP$ in $[t_{2i-1},t_{2i})$ and let $V_i$ be the number of vertical moves of $\sP$ in $[t_{2i},t_{2i+1})$.  
 
\begin{lemma}
 The collection of random variables $\{H_i, V_i\}_{i=1}^\infty$ are i.i.d. symmetric, mean 0, and have exponentially decaying tails.  They are independent of $H_0, V_0$, and these variables have exponentially decaying tails.
\end{lemma}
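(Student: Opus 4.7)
The plan is to combine the strong Markov property at each $t_i$ with the reflection and rotation symmetries of lazy simple random walk on the torus. Introduce the auxiliary chain $Z_i := \sP_e(t_i) - \sP(t_i)$, which takes values in $\{(\pm 1, 0)\}$ for odd $i$ and in $\{(0, \pm 1)\}$ for even $i$. By translation invariance $(Z_i)$ is itself a Markov chain, and by up-down reflection of the torus (which fixes $Z_{2i-1}$ and negates the vertical coordinate), the conditional distribution of $Z_{2i}$ given $Z_{2i-1}$ is uniform on $\{(0, \pm 1)\}$; the analogous left-right reflection gives the uniform transition from $Z_{2i}$ to $Z_{2i+1}$. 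Hence $(Z_i)_{i \geq 1}$ is in fact an i.i.d.\ sequence uniform on its bipartite state space.

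For $i \geq 1$, $H_i$ is a functional of the walk restricted to $[t_{2i-1}, t_{2i})$ and $V_i$ is a functional of the walk restricted to $[t_{2i}, t_{2i+1})$; by strong Markov at the relevant stopping time each is conditionally independent of the past given $Z_{2i-1}$ or $Z_{2i}$ respectively. Combined with the independence of the $Z_i$ this delivers joint independence of the family $\{H_i, V_i\}_{i \geq 1}$, while time-homogeneity plus the $90^\circ$ rotational symmetry of the torus yields identical marginal distributions for all $H_i$ and $V_i$. Symmetry of $H_i$ (and hence mean zero) follows from the left-right reflection, which simultaneously fixes $Z_{2i}$ and negates both $Z_{2i-1}$ and $H_i$; averaging over the uniform $Z_{2i-1}$ produces a distribution invariant under $h \mapsto -h$. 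The analogous up-down reflection handles $V_i$.

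For the exponential tails I would show that $|H_i|$ is stochastically dominated by $1 + \operatorname{Geom}(p)$ for a constant $p>0$ independent of $n$. Conditional on $Z_{2i-1}$, immediately after the horizontal swap the walker sits at $\mp(1, 0)$ in the frame of $\sP$ and then performs lazy random walk until the next swap; by the hitting-probability computation of the preceding subsection, the chance that this next swap is vertical is $p_{(0,1)} + p_{(0,-1)} = 1 - 2/\pi + o(1)$ as $n \to \infty$, with a uniform-in-$n$ positive lower bound available from the associated harmonic-function estimates on $(\zed/n\zed)^2 \setminus \{(0,0)\}$. Iterating the dichotomy, the number of horizontal swaps before a vertical one is dominated by a geometric random variable, yielding exponential tails, and the argument for $V_i$ is identical by $90^\circ$ rotation. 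The variables $H_0, V_0$ are treated the same way: only horizontal swaps occur before $t_0$ and only vertical swaps in $[t_0, t_1)$, each sequence being geometrically bounded with the same uniform success probability, and independence of $(H_0, V_0)$ from $\{H_i, V_i\}_{i \geq 1}$ is a final application of strong Markov at $t_1$. The only delicate point is the uniform-in-$n$ lower bound on $p$, which is standard from resistance/harmonic-measure estimates on the punctured torus.
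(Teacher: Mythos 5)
Your proof is correct and follows essentially the same strategy as the paper's: strong Markov at the renewal times for independence, reflection symmetry of the walk on the torus for symmetry and identical distribution, and a geometric bound on the number of type I returns (with a uniform-in-$n$ positive lower bound on $p_{(0,\pm 1),n}$) for exponential tails. The paper compresses the i.i.d.\ claim into a one-line citation to Durrett; your explicit introduction of the auxiliary chain $Z_i$ and the up-down/left-right reflection argument that makes $Z_{2i}$ uniform given $Z_{2i-1}$ (and, implicitly, independent of $H_i$) is a reasonable way to flesh out that citation, since the renewals here alternate between two different boundary sets rather than returning to a single state.
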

\begin{proof}
 The i.i.d. property follows from the strong Markov property,  see e.g. \cite{D19}, Chapter 4. Symmetry follows from the symmetry of the walk, while the exponentially decaying tails property follows since $p_{(0, \pm 1), n} \to p_{(0, \pm 1)} > 0$, so that $H_i$ and $V_i$ are the sum of a geometric variable number of moves with parameter bounded away from 1.
\end{proof} 
 
 Let, for $i \geq 1$,
\begin{equation}
 r_i = t_{2i}-t_{2i-1}, \qquad s_i = t_{2i+1}-t_{2i}.
\end{equation}
The collection $\{(H_i, r_i), (V_i, s_i)\}_{i=1}^\infty$ are also i.i.d. and are independent of $\{H_0, V_0, t_0, t_1\}$. Set 
\begin{equation}
 s_n^2 = \E[H_1^2], \; \mu_n = \E[r_1],\;  v_n^2 = \Var[r_1].
\end{equation}

Let $m_i$ be the number of times $\sP$ moves either left or right between $t_{2i-1}$ and $t_{2i}$ and $n_i$ the number of times $\sP$ moves either up or down between $t_{2i}$ and $t_{2i+1}$.  Call a type I return of $\sP_e$ a sequence of moves in which $\sP_e$ begins adjacent to $\sP$, ends at the next time $\sP_e$ swaps position with $\sP$ from the same direction (horizontal or vertical), and does not swap positions with $\sP_e$ from the opposite direction in between.  Call a type II return a sequence of moves of $\sP_e$ in which it begins adjacent to $\sP$, ends adjacent to $\sP_e$ from the opposite direction, and does not swap position with $\sP_e$ in between, but will swap positions with $\sP_e$ from the opposite direction on the next move. Thus $r_i$ is the sum of the length of $m_i$ independent type I returns and one type II return, and $s_i$ is the sum of the lengths of $n_i$ independent type I returns and one type II return.  

The number of type I returns has a geometric distribution of bounded parameter.

\begin{lemma}\label{asymptotics_lemma}
We have
\begin{equation}
 \lim_{n \to \infty} s_n^2 = s^2, \qquad \lim_{n \to \infty} \frac{\mu_n}{n^2} = \mu 
\end{equation}
with 
\begin{equation}
 s^2 =\frac{1}{2p_{(0,\pm 1)}} \frac{1 - p_{(1,0)} + p_{(-1,0)}}{1 + p_{(1,0)} - p_{(-1,0)}}, \qquad \mu  = \frac{5}{4} \left( \frac{1}{2p_{(0,\pm 1)}} \right).
\end{equation}
Also, $v_n = O(n^2 \log n)$. 
\end{lemma}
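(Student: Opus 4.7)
The plan is to exploit the fact that $\sP_e$ performs an autonomous $\frac{1}{5}$-lazy simple random walk on $(\zed/n\zed)^2$, with $\sP$ moving only when $\sP_e$ steps onto it. By the strong Markov property at each swap time and the reflection/rotation symmetries of the torus, the successive returns of $\sP_e$ to $\sP$ decompose into i.i.d.\ pairs (approach direction, return length). The marginal distribution of the approach direction is governed by the hitting probabilities $p_{y,n}$ computed above (with $p_{y,n} \to p_y$), while the marginal of the return length is the first hitting time of the origin for lazy SRW from a neighbor.

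To compute $s_n^2 = \E[H_1^2]$, I would introduce the auxiliary two-state chain $\sigma_j \in \{L, R\}$ recording on which side of $\sP$ the empty piece lies just after the $j$-th horizontal swap in $[t_1, t_2)$. From state $L$ the chain stays in $L$ with probability $p_{-1, n} := p_{(-1, 0), n}$ (a swap from the opposite side, contributing $\xi_j = +1$), switches to $R$ with probability $p_{1, n} := p_{(1, 0), n}$ (a swap from the same side, contributing $\xi_j = -1$), or terminates with probability $q_n := 2 p_{(0, \pm 1), n}$. Writing $H_1 = \xi^* + \sum_{j=1}^{m_1} \xi_j$ and solving the standard linear recursions for the first and second moments of $Y := \sum_j \xi_j$ conditional on $\sigma_0$, one obtains
\begin{equation*}
\E[H_1^2] \;=\; \frac{1}{q_n} \cdot \frac{1 + p_{-1, n} - p_{1, n}}{1 + p_{1, n} - p_{-1, n}},
\end{equation*}
which converges to $s^2$ as $n \to \infty$ upon applying $p_{y, n} \to p_y$.

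For $\mu_n$, I would write $r_1 = \sum_{j=1}^{N_v} T_j$, where the $T_j$ are the i.i.d.\ return times and $N_v$ is the first index of a vertical-type return (geometric with mean $1/q_n$). Wald's identity then yields $\mu_n = \E[T]/q_n$, and $\E[T] = 5(n^2-1)/4$ follows from $\E_0[\tau_0^+] = n^2$ by conditioning on the first step of the lazy walk, giving $\mu_n/n^2 \to 5/(4q) = \mu$. For the variance, Cauchy--Schwarz gives $r_1^2 \leq N_v \sum_j T_j^2$, and the subexponential tail bound $\Prob(T > c n^2 \log n) \ll e^{-c}$ (from the discussion following Lemma \ref{general_random_walk_hitting_time}) yields $\E[T_j^2] \ll n^4 (\log n)^2$, hence $v_n = O(n^2 \log n)$. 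The main delicate point is to justify that the (type, length) pairs are genuinely i.i.d.\ across returns: this requires applying the strong Markov property at each swap and invoking the torus symmetries to equate the law of $T_j$ starting from each of $\sP$'s four neighbors.
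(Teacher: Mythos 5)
Your proposal is correct and takes essentially the same route as the paper: compute $\E[H_1^2]$ by conditioning on the sequence of horizontal swaps via the strong Markov property and the torus reflection symmetry, and compute $\E[r_1]$ by relating the number of type I returns (geometric with parameter $q_n = 2p_{(0,\pm 1),n}$) to the per-return expected hitting time $\frac{5}{4}(n^2-1)$.

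Two small points worth noting. First, your two-state chain $\sigma_j$ is a clean repackaging of the paper's recursion for $Z_k$ (which conditions on the total number $k$ of horizontal returns and solves the resulting linear recurrence, then sums against the geometric weight); both roads give $\E[H_1^2] = \frac{1}{q_n}\frac{1+p_{-1,n}-p_{1,n}}{1+p_{1,n}-p_{-1,n}}$. Your Wald/Kac derivation of $\mu_n$ is a slightly more direct formalization of the paper's renewal-reward argument ``piece moves once every $\frac{5}{4}(n^2-1)$ steps''; both rest on the same hitting-time input. Second, you should be a little more careful about what the symmetry gives you. It is true that the \emph{length} law of a return is the same from all four neighbors, but the \emph{joint} law of (type, length) is not: from a vertical neighbor, the roles of ``horizontal-type'' and ``vertical-type'' hits are swapped. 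What saves the Wald argument is that within the window $[t_1, t_2)$ every excursion starts from one of $(\pm 1, 0)$, and the reflection $x\mapsto -x$ shows the joint (type, length) law agrees from those two positions; so the pairs $(T_j,\theta_j)$ indexed by $j<N_v$ are indeed i.i.d.\ (with the common law ``starting from a horizontal neighbor''), $\{N_v \geq j\}$ is measurable with respect to $\theta_1,\dots,\theta_{j-1}$, and Wald applies. Your variance bound also goes through, though it is cleaner to note directly that both $N_v$ and $T_j/(n^2\log n)$ have exponentially decaying tails, whence $\E[r_1^2] \ll n^4(\log n)^2$.
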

\begin{rem}
 The constant $c_{\puzzle}$ is
 \begin{equation}
  c_{\puzzle} = \frac{2\mu}{s^2} = \frac{5}{2} \frac{1 + p_{(1,0)}-p_{(-1,0)}}{1-p_{(1,0)} +p_{(-1,0)}} = \frac{5}{2}(\pi-1).
 \end{equation}
\end{rem}

 \begin{proof} 
 The $H_1$ process makes one move right or left, with equal probability, then makes $k \geq 0$ right or left returns followed by an up or down return.  Since we consider $H_1^2$, by symmetry assume that the initial move is $-1$, so that afterwards, $\sP_e$ is at position $\sP + (1,0)$. 
 
 Condition on $k$ and let $Z_k$ be the conditional displacement of the moves following the first one.
 Thus $Z_0 = 0$ and, for $k \geq 1$, 
 \begin{equation}Z_k = \left\{\begin{array}{ccc} 1 - Z_{k-1} && \text{prob. } \frac{p_{(1,0),n}}{1-2p_{(0, \pm 1),n}}\\ -1 + Z_{k-1} && \text{prob. } \frac{p_{(-1,0),n}}{1-2p_{(0, \pm 1),n}} \end{array}\right..\end{equation}
 Hence 
 \begin{equation}
  \E[Z_k] = \frac{p_{(1,0),n} - p_{(-1,0),n}}{1-2p_{(0,\pm 1),n}}(1 - \E[Z_{k-1}]).
 \end{equation}
Solving the recurrence, it follows that 
\begin{equation}
 \E[Z_k] = \sum_{i=1}^k (-1)^{i-1} \left( \frac{p_{(1,0),n} - p_{(-1,0),n}}{1-2p_{(0,\pm 1),n}}\right)^i.
\end{equation}
Similarly,
\begin{equation}
 \E[Z_k^2] = 1 - \E[2Z_{k-1}] + \E[Z_{k-1}^2]
\end{equation}
from which it follows that 
\begin{align}
 \E[Z_k^2] &= k - 2 \sum_{j = 1}^{k-1} \sum_{i=1}^j (-1)^{i-1}\left( \frac{p_{(1,0),n} - p_{(-1,0),n}}{1-2p_{(0,\pm 1),n}}\right)^i\\
 \notag &= k-2\sum_{i=1}^{k-1} (-1)^{i-1} (k-i)\left( \frac{p_{(1,0),n} - p_{(-1,0),n}}{1-2p_{(0,\pm 1),n}}\right)^i .
\end{align}
Conditioning on the number $k$ of type I returns,
\begin{align}
 \E[H_1^2] &= \sum_{k = 0}^\infty \E[(-1 + Z_k)^2] 2p_{(0, \pm 1),n} (1 -2p_{(0, \pm 1),n})^k\\
 \notag &= \sum_{k=0}^\infty 2p_{(0, \pm 1),n}(1-2p_{(0, \pm 1),n})^k - 2 \sum_{k=1}^\infty \E[Z_k]2p_{(0, \pm 1),n}(1-2p_{(0, \pm 1),n})^k \\ \notag &+ \sum_{k=1}^\infty \E[Z_k^2]2p_{(0, \pm 1),n}(1-2p_{(0, \pm 1),n})^k.
 \end{align}
 We have
 \begin{equation}
  \sum_{k=0}^\infty 2p_{(0, \pm 1),n}(1-2p_{(0, \pm 1),n})^k = 1
 \end{equation}
and
\begin{align}
 &\sum_{k=1}^\infty \E[Z_k]2p_{(0, \pm 1),n}(1-2p_{(0, \pm 1),n})^k\\ \notag
 &=2p_{(0, \pm 1),n} \sum_{k=1}^\infty \sum_{i=1}^k (-1)^{i-1} (p_{(1,0),n} - p_{(-1,0),n})^i (1 - 2p_{(0, \pm 1),n})^{k-i}\\ \notag
 &= \sum_{i=1}^\infty (-1)^{i-1} (p_{(1,0),n} - p_{(-1,0),n})^i\\ \notag
 &= \frac{ p_{(1,0),n} - p_{(-1,0),n}}{1 + p_{(1,0),n} - p_{(-1,0),n}}.
\end{align}
Also,
 \begin{align}
 &\sum_{k=1}^\infty \E[Z_k^2]2p_{(0, \pm 1),n}(1-2p_{(0, \pm 1),n})^k\\ \notag &=2 p_{(0, \pm 1),n} \sum_{k=1}^\infty k (1-2p_{(0, \pm 1),n})^k \\ \notag &- 4p_{(0, \pm 1)} \sum_{k=1}^\infty\left(1-2p_{(0, \pm 1),n} \right)^k \sum_{i=1}^{k-1} (-1)^{i-1} (k-i)\left( \frac{p_{(1,0),n} - p_{(-1,0),n}}{1-2p_{(0,\pm 1),n}}\right)^i \\ \notag
 &= 2 p_{(0, \pm 1),n} \sum_{k=1}^\infty k (1-2p_{(0, \pm 1),n})^k \\ \notag &- 4p_{(0, \pm 1),n} \sum_{i=1}^\infty (-1)^{i-1} (p_{(1,0),n} - p_{(-1,0),n})^i\sum_{k=1}^\infty k (1-2p_{(0, \pm 1),n})^k\\ \notag
 &= \frac{1 - 2p_{(0, \pm 1),n}}{2 p_{(0, \pm 1),n}} - 2\frac{1 - 2p_{(0, \pm 1),n}}{2 p_{(0, \pm 1),n}} \frac{ p_{(1,0),n} - p_{(-1,0),n}}{1+ p_{(1,0),n} - p_{(-1,0),n}} .
\end{align}
Combining the above obtains
\begin{equation}
 \E[H_1^2] = \frac{1}{2p_{(0,\pm 1),n}} \frac{1 - p_{(1,0),n} + p_{(-1,0),n}}{1 + p_{(1,0),n} - p_{(-1,0),n}}.
\end{equation}

 The number of times that the piece moves between $t_{2i-1}$ and $t_{2i}$ is $1 + m_i$.  We have, by the law of large numbers, 
 \begin{equation}
 \frac{1}{N} \sum_{i=1}^N m_i \to \E[m_1] = \sum_{k=0}^\infty k (2p_{(0, \pm 1),n})(1-2p_{(0,\pm 1),n})^k = \frac{1}{2 p_{(0, \pm 1),n}} - 1.
 \end{equation}
 Also, $\frac{1}{N} \sum_{i=1}^{N} r_i \to \E[r_1]$. Similarly, the averages converge for $n_i$ and $s_i$, which have the same distribution. Since, on average, the piece moves once every $\frac{5}{4} (n^2-1)$ steps of the walk, $\frac{\E[r_1]}{1 + \E[m_1]} = \frac{5}{4}(n^2-1)$, or
  \begin{equation}\E[r_1] = \frac{5}{4}(n^2-1) \left( \frac{1}{2p_{(0,\pm 1),n}} \right).\end{equation}
 \end{proof}

The primary step in establishing the mixing of the piece $\sP$ is establishing the asymptotic independence of the coordinates of the sum 
\begin{equation}
 S_N = \left( \sum_{i=1}^N H_i, \sum_{i=1}^N V_i, \sum_{i=1}^N (r_i + s_i)\right)
\end{equation}
as $N \to \infty$. This is demonstrated by considering the characteristic function 
\begin{equation}
 \chi(\xi_1, \xi_2) = \E\left[e^{2\pi i \frac{\xi_1}{n} H_1 + 2\pi i \xi_2 r_1} \right], \qquad \xi_1 \in \zed/n\zed, \;\xi_2 \in \bR/\zed.
\end{equation}
 
 \begin{lemma}
 There is a constant $c>0$ such that, uniformly in $n$ and uniformly in $\xi_1 \in \left(-\frac{n}{2}, \frac{n}{2}\right]$, and $\xi_2 \in \left(-\frac{1}{2}, \frac{1}{2}\right]$,
 \begin{equation}
  \left|\chi(\xi_1, \xi_2)\right| \leq 1 - c \max\left(\frac{\xi_1^2}{n^2}, \xi_2^2\right).
 \end{equation}

\end{lemma}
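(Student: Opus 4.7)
The plan is to exploit the renewal structure of the horizontal epoch $[t_1,t_2)$ to derive a closed-form expression for $\chi(\xi_1,\xi_2)$ and then analyze it case by case. During this epoch the piece makes an initial horizontal swap of direction $\epsilon_0\in\{\pm1\}$, performs $M$ type I returns with horizontal displacements $\Delta_k\in\{\pm1\}$ and durations $T_k$, and terminates with a type II return of duration $T'$; here $M$ is geometric with parameter $p_{II}:=2p_{(0,\pm1),n}$, and by the strong Markov property $\epsilon_0$, the i.i.d.\ pairs $(\Delta_k,T_k)_{k\geq1}$, and $T'$ are mutually independent. The left--right mirror symmetry of the walk preserves every duration while flipping the signs of $\epsilon_0$ and each $\Delta_k$, forcing each to be uniform on $\{\pm1\}$ independently of the accompanying duration. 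Summing the resulting geometric series yields
\begin{equation*}
\chi(\xi_1,\xi_2)=\frac{p_{II}\,\cos(2\pi\xi_1/n)\,\psi(2\pi\xi_2)}{1-q\,\cos(2\pi\xi_1/n)\,\phi_{T_1}(2\pi\xi_2)},\qquad q:=1-p_{II},
\end{equation*}
where $\psi(\beta):=\E[e^{i\beta T'}]$ and $\phi_{T_1}(\beta):=\E[e^{i\beta T_1}]$.

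I would prove $|\chi|\leq 1-c\xi_2^2$ first. Using that the walk is $\tfrac{1}{5}$-lazy, one can write $T_1=J+\sum_{i=1}^J K_i$ with $J$ the non-lazy hitting time and $K_i$ i.i.d.\ $\mathrm{Geom}(4/5)$ independent of $J$, so that $\phi_{T_1}(2\pi\xi_2)=\E[\zeta^J]$ with $|\zeta|=4/\sqrt{26-10\cos(2\pi\xi_2)}$. Since $J\geq 1$, this gives $|\phi_{T_1}(2\pi\xi_2)|\leq|\zeta|\leq 1-c\xi_2^2$ uniformly in $n$, and the same bound holds for $|\psi(2\pi\xi_2)|$. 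Setting $c_0:=\cos(2\pi\xi_1/n)$ and using $|\phi_{T_1}|\leq 1$ in the denominator of the displayed formula,
\begin{equation*}
|\chi|\leq\frac{p_{II}|c_0|\,|\psi|}{1-q|c_0|}\leq|\psi(2\pi\xi_2)|\leq 1-c\xi_2^2,
\end{equation*}
where the factor $p_{II}|c_0|/(1-q|c_0|)\leq 1$ for $|c_0|\leq 1$.

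For $|\chi|\leq 1-c\xi_1^2/n^2$, the case $\xi_2^2\geq\xi_1^2/n^2$ is already handled, so I assume $\xi_2^2<\xi_1^2/n^2$. Set $f(c_0):=|\chi|^2$. A direct differentiation shows
\begin{equation*}
f'(c_0)=\frac{2\,p_{II}^2\,|\psi|^2\,c_0\,(1-qc_0\Re\phi_{T_1})}{|1-qc_0\phi_{T_1}|^4},
\end{equation*}
which has the sign of $c_0$, so $f$ is increasing on $[0,1]$ and decreasing on $[-1,0]$. For $|\xi_1/n|\leq 1/4$ (so $c_0\in[0,1]$), a Taylor expansion at $c_0=1$ with a lower bound on $f'(1)$ -- which comes from the fact that $|\psi|$ is bounded below because $\xi_2^2\leq\xi_1^2/n^2\leq 1/16$ -- gives $f(c_0)\leq f(1)-c(1-c_0)\leq 1-c'\xi_1^2/n^2$. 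For $|\xi_1/n|\in[1/4,1/2]$, $f(c_0)\leq f(-1)=p_{II}^2|\psi|^2/|1+q\phi_{T_1}|^2$, and I would show $f(-1)$ is bounded above by a constant strictly below $1$: since $|\phi_{T_1}|\leq 1-c\xi_2^2<1$ strictly for every $\xi_2$, the value $\phi_{T_1}(\xi_2)$ stays uniformly away from the point $-1$ in the complex plane, where $|1+q\phi_{T_1}|$ would attain its global minimum $p_{II}$.

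The main obstacle I anticipate is handling the intermediate regime in which $|\xi_2|$ is moderate and $|\xi_1|/n$ is close to $1/2$. In this regime one must quantify the gap $|1+q\cos(2\pi\xi_1/n)\phi_{T_1}|^2-p_{II}^2\cos^2(2\pi\xi_1/n)|\psi|^2$ uniformly in $n$, which requires tracking not just the modulus but also the argument of $\phi_{T_1}(\xi_2)$, and may invoke the spectral information about hitting times developed in Lemma~\ref{hitting_time_char_fun_bound}.
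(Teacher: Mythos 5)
The factorization formula for $\chi(\xi_1,\xi_2)$ at the heart of your proposal is incorrect, and this undermines the whole argument. The signs $\Delta_k$ of the successive type I returns are \emph{not} independently uniform on $\{\pm 1\}$, nor are they independent of $\epsilon_0$. After a swap, $\sP_e$ ends up on the side of $\sP$ opposite to the swap direction, so the side from which the $(k+1)$st return begins is determined by $\Delta_k$; the probability that the empty square first returns to $\sP$ from the same side versus the opposite side is governed by $p_{(1,0)}$ and $p_{(-1,0)}$, which are not equal ($p_{(1,0)} = \tfrac12$, $p_{(-1,0)} = \tfrac{2}{\pi}-\tfrac12$). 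Thus $(\epsilon_0,\Delta_1,\dots,\Delta_M)$ is a two-state Markov chain, not a sequence of independent signs; the global left--right mirror symmetry only yields joint symmetry under \emph{simultaneous} sign flip, which gives marginal uniformity of each $\Delta_k$ but not independence. This is precisely the structure the paper exploits in the computation of $\E[H_1^2]$ in Lemma~\ref{asymptotics_lemma}, where the recursion $\E[Z_k] = \frac{p_{(1,0),n} - p_{(-1,0),n}}{1-2p_{(0,\pm 1),n}}(1-\E[Z_{k-1}])$ encodes the Markov dependence. A concrete falsification: your formula forces $\E[H_1^2] = \frac{1}{2p_{(0,\pm 1)}}$, whereas the paper's (correct) value carries the extra factor $\frac{1-p_{(1,0)}+p_{(-1,0)}}{1+p_{(1,0)}-p_{(-1,0)}}\neq 1$.

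Consequently the closed-form denominator $1 - q\cos(2\pi\xi_1/n)\phi_{T_1}(2\pi\xi_2)$ and the monotonicity calculation of $f(c_0)$ have no basis, and the $\xi_1$ bound does not follow. (The $\xi_2$ bound can be recovered by a simpler route that avoids the bad factorization: condition on the non-lazy skeleton, so that $r_1=\sum_{i=1}^m G_i$ with $G_i$ i.i.d.\ Geom$(\tfrac45)$ independent of $(H_1,m)$ and $m\ge 2$, whence $|\chi|\le |\E[e^{2\pi i\xi_2 G_1}]|^2 \le 1 - c\xi_2^2$.) You also flag the regime $|\xi_1|/n$ near $\tfrac12$ as unresolved, so even setting aside the factorization error the argument is incomplete. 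By contrast, the paper's proof is entirely qualitative and sidesteps any exact formula: it exhibits, uniformly in $n$, pairs of events of bounded positive probability whose phases $e^{2\pi i(\xi_1 H_1/n + \xi_2 r_1)}$ differ by an amount of order $\max(|\xi_1|/n, |\xi_2|)$ (comparing paths of the same bounded length with horizontal displacements differing by one, and paths with the same displacement but lengths differing by one), and then uses the elementary fact that such phase cancellation forces $|\E[e^{i\theta}]| \le 1 - c\,\mathrm{dist}^2$. Replacing the closed-form route with a Markov-matrix transfer-operator computation for $\chi$ could in principle be made rigorous, but would require substantially more work than the paper's two-line argument.
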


\begin{proof}
 Consider paths of either fixed, bounded length, or fixed displacement of $\sP$ but bounded varying length.  All of these have positive probability, which is not dependent on $n$ for all $n$ sufficiently large. The variation in their phase is of the given magnitude.
\end{proof}

We now give an exact matrix formula for $\chi(\xi_1, \xi_2)$. Recall that we let $P$ be the transition matrix of $\frac{1}{5}$-lazy simple random walk on $(\zed/n\zed)^2$, and that $P'$ is the minor excluding the row and column of $(0,0)$, $R(z) = (I-zP')^{-1}$.  Let $M(z_1, z_2)$ be the transition matrix on $(\zed/n\zed)^2 \setminus \{(0,0)\}$  with $\frac{1}{5} z_1z_2$ in the transition from $(1,0)$ to $(-1,0)$ and $\frac{1}{5} \frac{z_2}{z_1}$ in the transition from $(-1,0)$ to $(1,0)$, and zeros elsewhere.  Let $w(z_1,z_2)$ be the vector with entries $\frac{z_1z_2}{2}$ at $(-1,0)$ and $\frac{z_2}{2z_1}$ at $(1,0)$, with zeros elsewhere, and let $v$ be the vector with value $\frac{1}{5}$ at $(0, \pm 1)$ and zeros elsewhere.
 \begin{lemma}
  Let $\xi_1 \in \zed/n\zed$, $\xi_2 \in \bR/\zed$, and set $z_1 = e^{\frac{2\pi i \xi_1}{n}}, z_2 = e^{2\pi i \xi_2}$.
  Then
 \begin{equation}
  \chi(\xi_1, \xi_2) = w(z_1, z_2)^t (I-R(z_2) M(z_1, z_2))^{-1} R(z_2) v.
 \end{equation}
 \end{lemma}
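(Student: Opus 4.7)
The plan is a transfer-matrix decomposition: pass to relative coordinates in which $\sP$ is fixed at the origin, condition on the number of intermediate horizontal swaps in $(t_1, t_2)$, and recognize the resulting geometric series as $(I - RM)^{-1}$.

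I would work in relative coordinates in which $\sP$ is placed at the origin and $\sP_e$ moves on $(\zed/n\zed)^2$, with the rule that any attempted step of $\sP_e$ from a neighbor of $(0,0)$ into $(0,0)$ is replaced by a swap: $\sP_e$ is sent from $(\pm 1, 0)$ to $(\mp 1, 0)$ or from $(0, \pm 1)$ to $(0, \mp 1)$, and $\sP$ shifts by the corresponding unit vector. Away from swap events, $\sP_e$ evolves as $\frac{1}{5}$-lazy simple random walk on $(\zed/n\zed)^2 \setminus \{(0,0)\}$ with kernel $P'$. By the strong Markov property at $t_1$ and the left-right symmetry of the walk, immediately after the horizontal swap at $t_1$ the position of $\sP_e$ is $(-1, 0)$ or $(1, 0)$, each with probability $\tfrac{1}{2}$. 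That swap takes one time step (contributing $z_2$ to $r_1$) and shifts $\sP$ horizontally by $\pm 1$ (contributing $z_1^{\pm 1}$ to $H_1$), producing the initial weight vector $w(z_1, z_2)$.

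Next I would condition on the number $k \geq 0$ of further horizontal swaps occurring in the open interval $(t_1, t_2)$. By the strong Markov property, given $k$ the trajectory decomposes into $k+1$ pure random-walk segments on $(\zed/n\zed)^2 \setminus \{(0,0)\}$ separated by $k$ horizontal-swap transitions, terminating at a state $(0, \pm 1)$ from which the next step (at time $t_2$) is a vertical swap. A random-walk segment of length $m$ from $x$ to $y$ contributes $z_2^m [(P')^m]_{xy}$; summing over $m \geq 0$ produces exactly $R(z_2) = (I - z_2 P')^{-1}$. Each intermediate horizontal swap is a one-step transition between the two horizontal neighbors of $(0,0)$ with probability $\tfrac{1}{5}$ and phase $z_1^{\pm 1} z_2$, encoded by $M(z_1, z_2)$. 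The terminal vertical swap occurs at time $t_2 \notin [t_1, t_2)$, so it contributes only the probability $\tfrac{1}{5}$ of the transition from $(0, \pm 1)$ to $(0, 0)$ without any $z_2$ factor, giving the vector $v$.

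Assembling these pieces, the contribution from exactly $k$ intermediate horizontal swaps is $w^t (RM)^k R v$, and the geometric sum yields
\begin{equation*}
\chi(\xi_1, \xi_2) = \sum_{k=0}^\infty w^t (RM)^k R v = w^t (I - RM)^{-1} R v.
\end{equation*}
At $z_1 = z_2 = 1$ the series equals $\Prob(r_1 < \infty) = 1$, which follows from the hitting-time estimates in Proposition \ref{stopping_time_prop}; convergence is therefore absolute there, and this dominates the $|z_1| = |z_2| = 1$ case term by term, so $I - RM$ is invertible on the relevant vectors. The main obstacle is purely bookkeeping: the half-open convention $r_1 = t_2 - t_1$ dictates that the $z_2$ factor appears in $w$ and $M$ but not in $v$, and once this is reconciled against the local transition rules each matrix entry is forced.
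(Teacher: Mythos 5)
Your proof is correct and follows essentially the same route as the paper's: an initial horizontal swap gives $w$, excursions of the empty square on $(\zed/n\zed)^2\setminus\{(0,0)\}$ interleaved with intermediate horizontal swaps give the geometric series $\sum_{k\geq 0}(R M)^k = (I-RM)^{-1}$, and the terminal excursion-plus-vertical-swap gives $Rv$. The paper's version of this proof is only a few sentences and leaves the time-accounting implicit; you have correctly reconstructed it, including the detail that the half-open convention $r_1 = t_2-t_1$ forces the $z_2$ factor onto $w$ and $M$ but not $v$, and you supply the convergence/normalization check $\chi(0,0)=1$ that the paper states separately. This is a faithful, slightly more careful rendering of the same argument rather than a different one.
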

\begin{proof}
 The sequence of moves described in phase space by $\chi(\xi_1, \xi_2)$ are as follows.  An initial move, which involves one move of the empty square and one update, right or left of $\sP$ occurs.  This is recorded by $w(z_1, z_2)^t$ in which if the empty square swaps places from the right, it now has the position to the left $(-1,0)$ of $\sP$ and makes one move, hence contributes $z_1z_2$ to the phase. If instead the empty square swaps places from the left then it now occupies $(1,0)$ relative to $\sP$ and contributes $\frac{z_2}{z_1}$ to the phase.
 
 Now there are 0 or more excursions of the empty square followed by a right or left move of $\sP$.  A right or left move of $\sP$ entails moving the empty square from the position on the right of $\sP$ to the position on the left, or vice versa.  This is captured by $M(z_1, z_2)$.   Finally there is a final excursion, captured by $R$, which is finished by moving onto $\sP$ from above or below, captured by $v$.
\end{proof}

The previous lemma implies the following bound. 

\begin{lemma}
Uniformly in $n$, $\xi_1 \in (-\frac{n}{2}, \frac{n}{2}]$ and $\xi_2 \in (-\frac{1}{2}, \frac{1}{2}]$,
 \begin{equation}
  1-|\chi(\xi_1, \xi_2)| \gg e_{(1,0)}^t R(1) e_{(1,0)}- \left|e_{(1,0)}^t R(e^{2\pi i \xi_2}) e_{(1,0)} \right|.
 \end{equation}

\end{lemma}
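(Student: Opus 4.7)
The plan is to combine the exact formula for $\chi$ from the preceding lemma with an entry-wise triangle inequality on the resolvent $R(z_2)$, and then to transfer the resulting bound from an off-diagonal entry of $R$ to the diagonal entry via the standard first-passage decomposition.

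First I would expand $\chi$ as a geometric series,
\begin{equation*}
\chi(\xi_1, \xi_2) = \sum_{k=0}^\infty w(z_1, z_2)^t (R(z_2) M(z_1, z_2))^k R(z_2) v.
\end{equation*}
Since $|z_1| = |z_2| = 1$, the nonzero entries of $w(z_1, z_2)$ and $M(z_1, z_2)$ have the same moduli as at $(z_1, z_2) = (1, 1)$, and since $R(z_2)_{xy} = \sum_m z_2^m (P')^m_{xy}$ has non-negative coefficients in $z_2$, $|R(z_2)_{xy}| \leq R(1)_{xy}$ entry-wise. Applying the entry-wise triangle inequality term by term gives
\begin{equation*}
|\chi(\xi_1, \xi_2)| \leq \sum_{k=0}^\infty w(1, 1)^t (|R(z_2)| M(1, 1))^k |R(z_2)| v,
\end{equation*}
where $|R(z_2)|$ denotes the matrix of entry-wise moduli. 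At $z_2 = 1$ the right-hand side equals $\chi(0, 0) = 1$, so subtraction yields
\begin{equation*}
1 - |\chi(\xi_1, \xi_2)| \geq \sum_{k=0}^\infty w(1,1)^t \bigl[ (R(1) M(1,1))^k R(1) - (|R(z_2)| M(1,1))^k |R(z_2)| \bigr] v.
\end{equation*}
An easy induction on $k$, using that $R(1) \geq |R(z_2)|$ and that products of entry-wise non-negative matrices preserve inequalities, shows every summand is non-negative, so discarding all but the $k = 0$ term still leaves a valid lower bound, namely $1 - |\chi(\xi_1, \xi_2)| \geq w(1, 1)^t (R(1) - |R(z_2)|) v$.

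Next I would exploit the reflection symmetries $(a, b) \mapsto (-a, b)$ and $(a, b) \mapsto (a, -b)$ of the torus, both of which fix $(0, 0)$ and preserve $P'$, to conclude that the four entries of $R(1) - |R(z_2)|$ indexed by rows in $\{(\pm 1, 0)\}$ and columns in $\{(0, \pm 1)\}$ are all equal. Combined with $w(1, 1)_{(\pm 1, 0)} = \tfrac{1}{2}$ and $v_{(0, \pm 1)} = \tfrac{1}{5}$, the lower bound simplifies to
\begin{equation*}
1 - |\chi(\xi_1, \xi_2)| \geq \tfrac{2}{5} \bigl( R(1)_{(1, 0), (0, 1)} - |R(z_2)_{(1, 0), (0, 1)}| \bigr).
\end{equation*}
To convert this into the claimed bound with diagonal entries, I would invoke the standard first-passage identity $R(z)_{xy} = F_{xy}(z) R(z)_{yy}$ for $x \neq y$, where $F_{xy}(z) = \sum_m z^m f_m$ is the generating function of the first-passage probabilities from $x$ to $y$ in the chain killed at $(0, 0)$. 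Since $f_m \geq 0$ we have $|F_{xy}(z)| \leq F_{xy}(1)$, and combined with the $90^\circ$ rotation symmetry $R(z)_{(0, 1), (0, 1)} = R(z)_{(1, 0), (1, 0)}$ this yields
\begin{equation*}
R(1)_{(1, 0), (0, 1)} - |R(z_2)_{(1, 0), (0, 1)}| \geq F_{(1, 0), (0, 1)}(1) \bigl( R(1)_{(1, 0), (1, 0)} - |R(z_2)_{(1, 0), (1, 0)}| \bigr).
\end{equation*}
The two-step path $(1, 0) \to (1, 1) \to (0, 1)$ avoids $(0, 0)$ and has probability $\tfrac{1}{25}$ in the $\tfrac{1}{5}$-lazy walk, so $F_{(1, 0), (0, 1)}(1) \geq \tfrac{1}{25}$ uniformly in $n$, completing the chain of inequalities.

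The main obstacle I anticipate is the entry-wise bookkeeping in the first step: it is essential to bound $|R(z_2)|$ entry by entry rather than in some coarser operator norm, so that the non-negativity of the tail sum $\sum_{k \geq 1} T_k$ justifies dropping everything but the $k = 0$ term. Once that reduction is made, the remaining steps are straightforward consequences of the torus symmetries and the first-passage factorization.
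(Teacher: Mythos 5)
Your proof is correct, and it shares the paper's underlying strategy — isolate a positive-coefficient contribution to $\chi$ whose modulus at $z_2 = e^{2\pi i \xi_2}$ drops below its value at $z_2 = 1$ by a multiple of the claimed difference — but it makes a genuinely different choice of which contribution to isolate. The paper's (terse) argument keeps the $k=1$ term of the Neumann series $\sum_k (R(z_2) M)^k$, where the diagonal entry $R(z_2)_{(1,0),(1,0)}$ already appears as a factor once one conditions on a return to $(1,0)$ inside $w^t R M R v$; the other factors are handled by the observation that they are bounded below at $z_2=1$. You instead discard everything but the $k=0$ term $w^t R v$, which yields only the off-diagonal entry $R(z_2)_{(1,0),(0,1)}$, and then transfer to the diagonal via the first-passage factorization $R(z)_{xy} = F_{xy}(z)\,R(z)_{yy}$ together with the rotation symmetry $R(z)_{(0,1),(0,1)} = R(z)_{(1,0),(1,0)}$. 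Your route costs one extra step (the first-passage factorization and the explicit lower bound $F_{(1,0),(0,1)}(1) \geq \tfrac{1}{25}$ via the path $(1,0)\to(1,1)\to(0,1)$), but in exchange your entry-wise monotonicity argument — that $|R(z_2)_{xy}| \leq R(1)_{xy}$ because all power-series coefficients are nonnegative, so each $k$-term of $1-|\chi|$ is nonnegative and may be dropped — is carried out cleanly and fills in exactly the justification the paper's proof only gestures at. Both versions give a uniform-in-$n$ constant because the extra multiplicative factors are bounded below by explicit short-path probabilities.
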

\begin{proof}
 In the term $(I-R(z_2) M(z_1, z_2))^{-1} = \sum_{k=0}^\infty (R(z_2)M(z_1,z_2))^k$, there is a probability, bounded uniformly away from 0 that the $k = 1$ term is taken, and a probability bounded uniformly from 0 that the return is of type $e_{(1,0)}$ to $e_{(1,0)}$.  With no cancellation, the sum of path probabilities making up the return is   $e_{(1,0)}^t R(1) e_{(1,0)}$, while with the phase, the sum has size $\left|e_{(1,0)}^t R(e^{2\pi i \xi_2}) e_{(1,0)} \right|$.
\end{proof}
\begin{lemma} There is a constant $c>0$ such that, for all $\xi_1 \in \zed/n\zed$ and $-\frac{1}{2} < \xi_2 \leq \frac{1}{2}$,
 \begin{equation}
  |\chi(\xi_1, \xi_2)|\leq  1 - c\min\left(\frac{1}{(\log n)^3},  \xi_2^2 n^4 \right).
 \end{equation}

\end{lemma}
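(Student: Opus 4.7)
The plan is to chain the two immediately preceding lemmas. The one just above already gives, uniformly in $\xi_1$,
\[
1 - |\chi(\xi_1, \xi_2)| \gg e_{(1,0)}^t R(1) e_{(1,0)} - \bigl|e_{(1,0)}^t R(e^{2\pi i \xi_2}) e_{(1,0)}\bigr|,
\]
so the desired bound is automatically uniform in $\xi_1$; and since $\chi(\xi_1,-\xi_2) = \overline{\chi(\xi_1,\xi_2)}$, it suffices to consider $0 \le \xi_2 \le 1/2$, the value $\xi_2 = 0$ being trivial. Expanding $R(z) = \sum_i v_i v_i^t/(1-z\lambda_i)$ in the orthonormal eigenbasis of $P'$ and writing $c_{i,(1,0)} = \langle v_i, e_{(1,0)}\rangle$, the triangle inequality applied to the phased resolvent yields
\[
e_{(1,0)}^t R(1) e_{(1,0)} - \bigl|e_{(1,0)}^t R(e^{2\pi i \xi_2}) e_{(1,0)}\bigr|
\;\geq\; \sum_{i=1}^{n^2-1} c_{i,(1,0)}^2 \left(\frac{1}{1-\lambda_i} - \frac{1}{|1-\lambda_i e^{2\pi i \xi_2}|}\right).
\]

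Next I would invoke Lemma \ref{hitting_time_char_fun_bound} directly on this spectral sum. Splitting at the threshold $\vartheta$ defined there: when $\vartheta < \xi_2 \leq 1/2$, the subsum over $\{i : 1-\lambda_i < \xi_2\}$ is $\gg 1/(\log n)^3$; when $0 < \xi_2 \leq \vartheta$, the subsum over $\{i : 1-\lambda_i > \vartheta\}$ is $\gg \xi_2^2 n^4$. In either regime these subsums transfer to a lower bound on the full sum, so $1 - |\chi(\xi_1,\xi_2)| \gg \min\!\left(1/(\log n)^3,\, \xi_2^2 n^4\right)$, which is the desired inequality.

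There is essentially no substantive obstacle here; the proof is a consolidation of the two preceding lemmas. The only bookkeeping point is verifying that summands excluded in each application of Lemma \ref{hitting_time_char_fun_bound} do not cancel the positive contribution from those retained. Since the difference $e_{(1,0)}^t R(1) e_{(1,0)} - |e_{(1,0)}^t R(e^{2\pi i \xi_2}) e_{(1,0)}|$ is automatically non-negative (being the decrease in the resolvent's value under phase perturbation), and since for each $\lambda_i \geq 0$ one has $|1-\lambda_i e^{2\pi i \xi_2}|\geq 1-\lambda_i$ so that the individual summands are non-negative over the dominant eigenvalue range near $1$, the contribution of the few eigenvalues of $P'$ in $[-\tfrac35, 0)$ is $O(1)$ and does not interfere with the stated lower bound.
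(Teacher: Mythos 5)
Your proposal follows the same skeleton as the paper: chain the preceding lemma's bound
$1-|\chi(\xi_1,\xi_2)| \gg e_{(1,0)}^t R(1) e_{(1,0)}-|e_{(1,0)}^t R(e^{2\pi i\xi_2})e_{(1,0)}|$
through the triangle inequality to the spectral sum, then apply Lemma~\ref{hitting_time_char_fun_bound}. You also correctly flag the one place where this needs care, namely that Lemma~\ref{hitting_time_char_fun_bound} only controls certain subsums and one must verify the remaining summands do not cancel them; the paper glosses over this entirely.

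Your resolution of that bookkeeping point, however, does not actually close the gap. In the regime $\vartheta<\xi_2\leq\frac12$, the retained subsum from Lemma~\ref{hitting_time_char_fun_bound} is only $\gg(\log n)^{-3}$, and noting that the discarded negative-eigenvalue contribution ``is $O(1)$'' is no help: $O(1)$ can completely swamp $(\log n)^{-3}$. (Also, since the eigenvectors of $P'$ near $-\frac35$ are oscillatory, the spectral mass $\sum_{\lambda_i<0}c_{i,(1,0)}^2$ of the delta function there can be a positive fraction of $1$, so these are not ``few'' eigenvalues.) What one actually needs is the quantitative observation that for $\lambda_i<0$ one has $|1-\lambda_i e^{2\pi i\xi_2}|^2=(1-\lambda_i)^2+2\lambda_i(1-\cos 2\pi\xi_2)$, whence the individual negative term is $O\bigl((1-\cos 2\pi\xi_2)\bigr)=O(\xi_2^2)$ rather than merely $O(1)$, so the total discarded contribution is $O(\xi_2^2)$. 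This is negligible against $(\log n)^{-3}$ when $\xi_2^2\ll(\log n)^{-3}$, and when $\xi_2^2\gg(\log n)^{-3}$ one instead uses the elementary estimate: writing $e_{(1,0)}^tR(z)e_{(1,0)}=\sum_k z^k p_k$ with $p_k\ge0$, $p_0=1$, $p_1=\frac15$, and $\sum_k p_k<5$ by~\eqref{1st_moment}, one has $\sum p_k - |\sum z^k p_k|\gg p_0 p_1(1-\cos 2\pi\xi_2)\gg\xi_2^2\gg(\log n)^{-3}$. Without some argument along these lines your proof (and, for that matter, the paper's terse version of it) is not complete.
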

\begin{proof}
 By the previous lemma,
 \begin{align}
  1-|\chi(\xi_1, \xi_2)| &\gg \sum_{i=1}^{n^2-1} \frac{c_{i, (1,0)}^2}{1-\lambda_i} - \left|\sum_{i=1}^{n^2-1} \frac{c_{i, (1,0)}^2}{1-\lambda_i e^{2\pi i\xi_2}} \right| \\ \notag 
  &\geq \sum_{i=1}^{n^2-1} c_{i, (1,0)}^2 \left(\frac{1}{1-\lambda_i} - \frac{1}{|1-\lambda_i e^{2\pi i\xi_2}|} \right).
 \end{align}
By Lemma \ref{hitting_time_char_fun_bound}, it follows that
\begin{equation}
 1-|\chi(\xi_1, \xi_2)| \gg \min\left(\frac{1}{(\log n)^3}, \xi_2^2 n^4\right).
\end{equation}

\end{proof}
At small frequencies, the characteristic function may be estimated by Taylor expansion. Recall $\E[H_1^2] = s_n^2$, $\E[r_1] = \mu_n$ and $\Var[r_1] = v_n^2$.
\begin{lemma}
 For $\xi_1 \in \zed/n\zed$, $|\xi_1| \leq \frac{n}{2}$ and for complex $\xi_2$,  $|\xi_2| \ll \frac{1}{n^2 (\log n)^2}$,
 \begin{align}
  \chi(\xi_1, \xi_2) &= \exp\left(2\pi i \xi_2 \mu_n- \frac{2\pi^2 \xi_1^2}{n^2} s_n^2 - 2\pi^2 \xi_2^2 v_n^2\right)\\ \notag &\times \left(1  + O\left(\frac{\xi_1^3}{n^3} + \xi_2^3 (n^2 \log n)^3 \right)\right).
 \end{align}

\end{lemma}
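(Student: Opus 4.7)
The plan is to Taylor expand the joint cumulant generating function $\log\chi(\xi_1,\xi_2)$ around $(0,0)$ using the moment data from Lemma~\ref{asymptotics_lemma} together with the exponential tail bounds on $H_1$ and $r_1$.

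First I collect the moment inputs. By Lemma~\ref{asymptotics_lemma} and the reflective symmetry $(H_1,r_1)\stackrel{d}{=}(-H_1,r_1)$, we have $\E[H_1]=0$, $\E[H_1^2]=s_n^2=O(1)$, $\E[H_1 r_1]=0$, $\E[r_1]=\mu_n$, and $\Var[r_1]=v_n^2$. The exponential tail of $H_1$ gives $\E[|H_1|^k]=O_k(1)$, and by iterating the Markov tail bound on the simple random walk hitting time (the remark following Lemma~\ref{general_random_walk_hitting_time}) together with the fact that $r_1$ is the sum of a geometric number of such hitting times, $\E[|r_1|^k]=O_k((n^2\log n)^k)$.

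Setting $\alpha=2\pi\xi_1/n$ and $\beta=2\pi\xi_2$, I would use the pointwise estimate $e^{iy}=1+iy-\tfrac{y^2}{2}+O\!\left(|y|^3\max(1,e^{-\mathrm{Im}(y)})\right)$ with $y=\alpha H_1+\beta r_1$, and integrate against the distribution of $(H_1,r_1)$ to obtain
\begin{equation*}
\chi(\xi_1,\xi_2)=1+i\beta\mu_n-\tfrac12\bigl(\alpha^2 s_n^2+\beta^2(v_n^2+\mu_n^2)\bigr)+R_3,
\end{equation*}
where the linear-in-$\alpha$ and mixed $\alpha\beta$ terms vanish by symmetry, and the remainder satisfies $|R_3|\ll|\alpha|^3\E[|H_1|^3]+|\beta|^3\E[|r_1|^3]\ll|\xi_1|^3/n^3+|\xi_2|^3(n^2\log n)^3$, with the $\max(1,e^{-\mathrm{Im}(y)})$ factor absorbed using the hypothesis $|\xi_2|\ll 1/(n^2(\log n)^2)$ and the tail of $r_1$. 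Applying $\log(1+\cdot)$, the key cancellation
\begin{equation*}
-\tfrac12\beta^2(v_n^2+\mu_n^2)-\tfrac12(i\beta\mu_n)^2=-\tfrac12\beta^2 v_n^2
\end{equation*}
converts $\E[r_1^2]=v_n^2+\mu_n^2$ into the variance $v_n^2$ in the quadratic coefficient, yielding
\begin{equation*}
\log\chi(\xi_1,\xi_2)=2\pi i\xi_2\mu_n-\frac{2\pi^2\xi_1^2}{n^2}s_n^2-2\pi^2\xi_2^2 v_n^2+O\!\left(\frac{|\xi_1|^3}{n^3}+|\xi_2|^3(n^2\log n)^3\right),
\end{equation*}
and exponentiating gives the claimed factored form.

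The main obstacle is purely technical: one must verify that (i) the complex exponential weight $|e^{2\pi i\xi_2 r_1}|=e^{-2\pi\mathrm{Im}(\xi_2)r_1}$ does not spoil the moment bounds, which holds because $|\xi_2|\mu_n=O(1/(\log n)^2)$ and the tail of $r_1$ decays exponentially faster than the growth of the weight; and (ii) the input $A$ to the logarithm expansion satisfies $|A|=O(1/(\log n)^2)$, so that the $O(|A|^3)$ terms from $\log(1+A)=A-A^2/2+O(|A|^3)$ are dominated by the stated cubic remainder. The range of $\xi_2$ in the hypothesis is chosen precisely so that both checks succeed; it is the largest range for which the cubic error in the cumulant expansion continues to dominate all of the fourth-and-higher-order mixed terms.
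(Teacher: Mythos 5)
Your proof is correct and takes essentially the same route as the paper: establish the moment data $\E[H_1]=\E[H_1 r_1]=0$, $\E[H_1^2]=s_n^2$, $\E[(r_1-\mu_n)^2]=v_n^2$, control the third moments via exponential tails, and Taylor expand the characteristic function. The one organizational difference is that the paper first factors out the phase $e^{2\pi i\xi_2\mu_n}$, writing $\chi=e^{2\pi i\xi_2\mu_n}\,\E\bigl[e^{2\pi i(\frac{\xi_1}{n}H_1+\xi_2(r_1-\mu_n))}\bigr]$, so that the quadratic coefficient of $\xi_2$ is $v_n^2$ directly; you instead expand the uncentered $\chi$ (picking up $\E[r_1^2]=v_n^2+\mu_n^2$) and recover $v_n^2$ from the cancellation $-\tfrac12\beta^2\mu_n^2-\tfrac12(i\beta\mu_n)^2=0$ after taking logs. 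These are equivalent orderings of the cumulant calculation. A small caveat on your parenthetical remark that $|A|=O(1/(\log n)^2)$: this is only true when $|\xi_1|/n$ is also small, not across the full stated range $|\xi_1|\le n/2$; however, when $|\xi_1|\asymp n$ the claimed relative error $O(\xi_1^3/n^3)$ is already $O(1)$ and the lemma is vacuous there, so this does not create a gap (the paper's proof sketch has the same implicit restriction). Your remark about $\E[|r_1|^k]=O_k((n^2\log n)^k)$ is correct, though not tight (the dominant contribution to $\E[|r_1|^3]$ is $\mu_n^3\asymp n^6$, which is smaller); the paper works with the centered moment $\E[|r_1-\mu_n|^3]=O((n^2\log n)^3)$, matching its pre-centered expansion.
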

\begin{proof}
To obtain this estimate, write
\begin{equation}
 \chi(\xi_1, \xi_2) = e^{2\pi i \xi_2 \mu_n} \E\left[e^{2\pi i\left( \frac{\xi_1}{n} H_1 + \xi_2(r_1 - \mu_n)\right)}\right]. 
\end{equation}
Now Taylor expand $e^{2\pi i x} = 1 + 2\pi i x - 2\pi^2 x^2 + O(x^3)$ and use the moments
\begin{align}
 \E[H_1] = 0,\qquad  \E[H_1^2] = s_n, \qquad \E[|H_1|^3] = O(1)
\end{align}
and 
\begin{align}
 \E[r_1 - \mu_n] &= 0,\\ \notag \E[(r_1 - \mu_n)^2] &= v_n, \\ \notag \E[H_1 (r_1 - \mu_n)] &= 0,\\ \notag \E[|r_1 - \mu_n|^3] &= O((n^2 \log n)^3).
\end{align}

\end{proof}
The following local limit theorem is the main technical ingredient in this section.
\begin{theorem}\label{llt}
 Let $n \geq 2$, $(\log n)^{19} \leq N \leq n^3$, $|t-2N\mu_n| < \sqrt{N}v_n \log n$, for any $A>0$,
 \begin{align}
 & \Prob(S_N = (i,j, t))= O\left(\frac{( \log n)^6}{N^{2}v_n} \exp\left(-\frac{(t-2N\mu_n)^2}{4Nv_n^2} \right)\right)+O_A(n^{-A})
 \\\notag&+ \frac{\exp\left(-\frac{(t-2N\mu_n)^2}{4Nv_n^2} \right)}{\sqrt{4\pi N}v_n}\left(\frac{1}{n^2} \sum_{\substack{\xi \in (\zed/n\zed)^2\\ \|\xi\|_2 \ll \frac{n}{\sqrt{N}} \log n}}e^{-2\pi i\frac{\xi \cdot(i,j)}{n}} \exp\left(-\frac{2\pi^2 \|\xi\|_2^2 s_n^2 N}{n^2} \right) \right).
 \end{align}

\end{theorem}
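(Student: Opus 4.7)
The plan is to apply Fourier inversion on the state space $(\zed/n\zed)^2 \times (\bR/\zed)$ and analyze the resulting sum using the characteristic function estimates already established. Because $(H_i, r_i)$ is independent of $(V_i, s_i)$ by the strong Markov property at time $t_{2i}$, and the blocks are i.i.d.\ across $i$, the joint characteristic function of $S_N$ factors as $\chi(\xi_1, \xi_2)^N \chi(\eta_1, \xi_2)^N$, where $\xi_1, \eta_1$ are dual to the horizontal and vertical displacements of $\sP$ and $\xi_2$ is dual to the total time. Fourier inversion then gives
\begin{equation}
 \Prob(S_N = (i,j,t)) = \frac{1}{n^2}\sum_{(\xi_1, \eta_1) \in (\zed/n\zed)^2}\int_{-\frac12}^{\frac12} \chi(\xi_1, \xi_2)^N \chi(\eta_1, \xi_2)^N e^{-2\pi i(\xi_1 i + \eta_1 j)/n - 2\pi i \xi_2 t}\, d\xi_2.
\end{equation}

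Next I would split the domain into the main region $\sR_0$, defined by $\|(\xi_1, \eta_1)\|_2 \ll (n \log n)/\sqrt{N}$ together with $|\xi_2| \ll (\log n)/(v_n \sqrt{N})$, and its complement. Outside $\sR_0$, the established bounds $|\chi(\xi_1, \xi_2)| \leq 1 - c\max(\xi_1^2/n^2, \xi_2^2)$ and $|\chi(\xi_1, \xi_2)| \leq 1 - c\min(1/(\log n)^3, \xi_2^2 n^4)$ combined with $N \geq (\log n)^{19}$ yield $|\chi|^N \leq \exp(-c (\log n)^2)$ throughout; the crude count of $n^2$ frequencies loses only a polynomial factor, so this region contributes $O_A(n^{-A})$ for any $A > 0$.

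Inside $\sR_0$ the Taylor expansion from the earlier lemma applies, and since the Taylor error is small enough on $\sR_0$ that $(1+\delta)^N = 1 + O(N\delta)$ is valid, exponentiation yields
\begin{equation}
 \chi(\xi_1, \xi_2)^N \chi(\eta_1, \xi_2)^N = e^{4\pi i N \xi_2 \mu_n - 2\pi^2 N \|\xi\|_2^2 s_n^2/n^2 - 4\pi^2 N \xi_2^2 v_n^2}\left(1 + O\bigl((\log n)^6/\sqrt{N}\bigr)\right),
\end{equation}
uniformly in $\sR_0$ with $\xi = (\xi_1, \eta_1)$. The $\xi_2$-integral, extended to all of $\bR$ with negligible error by Gaussian tail bounds, evaluates to $(\sqrt{4\pi N}\, v_n)^{-1} \exp(-(t - 2N\mu_n)^2/(4N v_n^2))$, which combined with the truncated Fourier sum in $\xi$ yields the principal term stated in the theorem. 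Propagating the Taylor error $O((\log n)^6/\sqrt{N})$ through the $\xi_2$-Gaussian (which contributes $(\sqrt{N}v_n)^{-1}$) and through the normalized $\xi$-sum (which contributes $O(1/N)$ after dividing by $n^2$) produces the asserted error bound $O\bigl((\log n)^6 N^{-2} v_n^{-1} e^{-(t-2N\mu_n)^2/(4Nv_n^2)}\bigr)$.

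The main obstacle is uniformly controlling the sum over frequencies outside $\sR_0$, which requires the two different characteristic function bounds to cover complementary regimes: the bound involving $\min(1/(\log n)^3, \xi_2^2 n^4)$ handles intermediate $\xi_2$ where $\xi_2^2$ alone is too weak, and this is precisely where the hypothesis $N \geq (\log n)^{19}$ is invoked to overpower the $(\log n)^{-3}$ decay rate. The hypothesis $|t - 2N\mu_n| < \sqrt{N} v_n \log n$ ensures that the Gaussian peak in $\xi_2$ lies well inside $\sR_0$, so that the extension of the $\xi_2$-integral to $\bR$ remains negligible relative to the principal term; verifying that the exponentiated Taylor remainder is uniformly $O((\log n)^6/\sqrt{N})$ on $\sR_0$ uses both the upper bound $v_n = O(n^2\log n)$ and the restriction $N \leq n^3$ to prevent either coordinate contribution to the Taylor error from escaping control.
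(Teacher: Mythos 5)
Your proposal follows the same strategy as the paper: Fourier inversion on $(\zed/n\zed)^2 \times (\bR/\zed)$, factorization of the characteristic function of $S_N$ as $\chi(\xi_1,\xi_2)^N\chi(\eta_1,\xi_2)^N$, truncation to a main region using the two bounds on $|\chi|$, Taylor expansion of $\chi$ at low frequencies, and evaluation of the resulting Gaussian integral in $\xi_2$ (the paper does this by an explicit contour shift; your closed-form Gaussian evaluation is equivalent). Your accounting of the error constant and your explanation of how the hypotheses $N \geq (\log n)^{19}$ and $|t-2N\mu_n| < \sqrt{N}v_n \log n$ enter are consistent with the paper.

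However, there is a genuine quantitative gap in your choice of the time-frequency truncation. You define $\sR_0$ by $|\xi_2| \ll (\log n)/(v_n\sqrt{N})$, and assert that $|\chi|^N \leq \exp(-c(\log n)^2)$ throughout $\sR_0^c$. This fails when $v_n$ is near its upper bound $O(n^2\log n)$: at the boundary $|\xi_2| \asymp (\log n)/(v_n\sqrt{N})$, the exponent in the bound $|\chi|^N \leq \exp\bigl(-cN\min(1/(\log n)^3, \xi_2^2 n^4)\bigr)$ is $N\xi_2^2 n^4 \asymp (\log n)^2 n^4/v_n^2$, which is only $\Omega(1)$ (not $\Omega((\log n)^2)$) when $v_n \asymp n^2\log n$. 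In that case $|\chi|^N$ is merely bounded away from $1$ by a constant, the measure of the untreated annulus $(\log n)/(v_n\sqrt{N}) \lesssim |\xi_2| \lesssim (\log n)^2/(n^2\sqrt{N})$ is only polynomially small, and the contribution is not absorbed by $O_A(n^{-A})$ nor by the stated error term with its Gaussian factor in $(t-2N\mu_n)$. The cutoff must be written as $|\xi_2| \ll (\log n)^2/(n^2\sqrt{N})$ (as in the paper), which guarantees $N\xi_2^2 n^4 \gg (\log n)^4$ at the boundary; your formulation conflates the scale of the Gaussian peak (governed by $v_n$) with the scale beyond which the characteristic-function bound gives super-polynomial decay (governed by $n^2$). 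Once the cutoff is corrected, the rest of your argument, including the error propagation yielding $(\log n)^6/(N^2 v_n)$, goes through and matches the paper.
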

\begin{proof}
 The characteristic function of $S_N$ at frequencies $\frac{\xi_1}{n}, \frac{\xi_2}{n}, \xi_3$ is given by $\chi(\xi_1, \xi_3)^N \chi(\xi_2, \xi_3)^N$.  Hence, by Fourier inversion,
 \begin{align}
  &\Prob(S_N = (i,j, t)) 
  \\ \notag&= \frac{1}{n^2}\sum_{\xi = (\xi_1, \xi_2) \in (\zed/n\zed)^2} \int_{\xi_3 \in \bR/\zed} e^{-2\pi i \left(\frac{\xi_1 i + \xi_2 j}{n} +\xi_3 t\right)} \chi(\xi_1, \xi_3)^N \chi(\xi_2, \xi_3)^N d\xi_3.
 \end{align}
Using the bound $|\chi(\xi_1, \xi_3)| \leq 1 - c \max\left(\frac{\xi_1^2}{n^2}, \xi_3^2 \right)$, truncate the torus variables to $\|\xi\|_2 \ll \frac{n}{\sqrt{N}} \log n$ with error, for any $A> 0$, $O_A(n^{-A})$.  Next using the bound $|\chi(\xi_1, \xi_3)| \leq 1 - c \min\left(\frac{1}{(\log n)^3}, \xi_3^2 n^4\right)$, truncate the $\xi_3$ integral to  $|\xi_3| \ll \frac{(\log n)^2}{n^2 \sqrt{N}}$ with the same error. Inserting the Taylor expansion for $\chi(\xi_1, \xi_3)$ and $\chi(\xi_2, \xi_3)$ at low frequencies,
\begin{align}
 &\Prob(S_N = (i,j,t)) = \frac{1}{n^2} \sum_{\|\xi\|_2 \ll \frac{n}{\sqrt{N}} \log n}e^{-2\pi i\left(\frac{\xi_1 i + \xi_2 j}{n} \right)}\\
 \notag
 &  \times \int_{|\xi_3| \ll \frac{(\log n)^2}{n^2 \sqrt{N}}} e^{-2\pi i\xi_3 (t -2N\mu_n) }\exp\left(-N \left(\frac{2\pi^2 (\xi_1^2 + \xi_2^2) s_n^2}{n^2} +4\pi^2 \xi_3^2 v_n^2\right) \right)\\& 
 \notag\times \left( 1 + O\left(\frac{\|\xi\|_2^3}{n^3}+ |\xi_3|^3 (n^2 \log n)^3 \right)\right)^{2N}d\xi_3 +O_A(n^{-A}).
\end{align}
In the integral over $\xi_3$ substitute $\xi_3' = 2\pi N^{\frac{1}{2}} v_n \xi_3$ to obtain
\begin{align}
 &\Prob\left(S_N = (i,j,t)\right)  = \frac{1}{2\pi N^{\frac{1}{2}} v_n n^2} \sum_{\|\xi\|_2\ll \frac{n}{\sqrt{N}} \log n} e^{-2\pi i \left(\frac{\xi_1 i + \xi_2 j}{n} \right)}\\
 \notag
 &\times \int_{|\xi_3'| \ll \frac{v_n (\log n)^2}{n^2}} \exp\left(\frac{- i \xi_3' (t - 2N \mu_n)}{ N^{\frac{1}{2}} v_n} -{\xi_3'}^2 -N \left(\frac{2\pi^2 (\xi_1^2 + \xi_2^2)s_n^2}{n^2}\right)\right)\\
 \notag
 &\times \left(1 + O\left(\frac{\|\xi\|_2^3}{n^3} + |\xi_3'|^3 \left(\frac{n^2 \log n}{v_n \sqrt{N}} \right)^3\right)\right)^{2N} d\xi_3' + O_A(n^{-A}).
\end{align}
Completing the square, 
\begin{equation}
 \tilde{\xi}_3 = \xi_3' + \frac{i(t-2N\mu_n)}{2N^{\frac{1}{2}} v_n}
\end{equation}
then shifting the $\tilde{\xi_3}$ integral to be on the real axis obtains a horizontal integral bounded by $O_A(n^{-A})$ together with a shifted integral
\begin{align}
  &\Prob\left(S_N = (i,j,t)\right)  = \frac{\exp\left(-\frac{(t-2N\mu_n)^2}{4 N v_n^2} \right)}{2\pi N^{\frac{1}{2}} v_n n^2}\sum_{\|\xi\|_2\ll \frac{n}{\sqrt{N}} \log n} e^{-2\pi i \left(\frac{\xi_1 i + \xi_2 j}{n} \right)} \\ \notag
 &\times \int_{|\tilde{\xi}_3| \ll \frac{v_n (\log n)^2}{n^2}} \exp\left(- \tilde{\xi}_3^2 -N \left(\frac{2\pi^2 (\xi_1^2 + \xi_2^2)s_n^2}{n^2}\right)\right)\\ \notag
 &\times \left(1 + O\left(\frac{\|\xi\|_2^3}{n^3} + \left(|\tilde{\xi}_3|^3 + \frac{|t-2N\mu_n|^3}{N^{\frac{3}{2}}v_n^3}\right) \left(\frac{n^2 \log n}{v_n \sqrt{N}} \right)^3\right)\right)^{2N} d\tilde{\xi}_3 \\&\notag+ O_A(n^{-A}).
\end{align}
The main term is obtained by dropping the big $O$ terms and extending the $\tilde{\xi}_3$ integral to $\bR$ with acceptable error.  

To bound the error, note that in the region of integration, using $n^2 \ll v_n \ll n^2\log n$, 
\begin{equation}
 O\left(\frac{\|\xi\|_2^3}{n^3} + \left(|\tilde{\xi}_3|^3 + \frac{|t-2N\mu_n|^3}{N^{\frac{3}{2}}v_n^3}\right) \left(\frac{n^2 \log n}{v_n \sqrt{N}} \right)^3\right) = o\left(\frac{1}{N}\right),
\end{equation}
so that the exponential may be bounded linearly.  Bound integration over $\tilde{\xi}_3$ by a constant.  This obtains an error of
\begin{align}
 &\ll \frac{\exp\left(-\frac{(t-2N\mu_n)^2}{4 N v_n^2} \right)}{2\pi N^{\frac{1}{2}} v_n n^2} \sum_{\|\xi\|_2 \ll \frac{n}{\sqrt{N}} \log n} \exp\left(\frac{-2\pi^2 N \|\xi\|_2^2 s_n^2}{n^2} \right) \\ \notag&\times N\left(\frac{\|\xi\|_2^3}{n^3} + \left( 1 + \frac{|t - 2N\mu_n|^3}{N^{\frac{3}{2}} v_n^3}\right)\left(\frac{n^2 \log n}{v_n \sqrt{N}} \right)^3 \right).
\end{align}
Use $v_n \gg n^2$, and use $|t-2N\mu_n| \ll N^{\frac{1}{2}} v_n \log n$, and approximate the sum over $\xi$ with an integral over $\bR^2$ to estimate the error by
\begin{align}
&\ll \frac{\exp\left(-\frac{(t-2N\mu_n)^2 }{4 N v_n^2} \right)N^{\frac{1}{2}}}{2\pi  v_n }\int_{\bR^2} \exp\left(-2\pi^2 N s_n^2 x^2  \right) \left(\|x\|_2^3 + \frac{(\log n)^6}{N^{\frac{3}{2}}} \right)dx.
\end{align}
This obtains the claimed error bound.

\end{proof}

We can now prove Theorem \ref{single_piece_mixing_theorem}. The reader is referred to Appendix \ref{concentration_appendix} for the variant of Chernoff's inequality used here, which treats variables that have exponentially decaying tails.
\begin{proof}[Proof of Theorem \ref{single_piece_mixing_theorem}]
Recall that the time $t$ Brownian motion $B(t)$ on $(\bR/\zed)^2$ has a distribution which is a $\theta$ function $\theta_t(x)$.  The convergence in Theorem \ref{single_piece_mixing_theorem} consists of a lower bound and an upper bound approximating the distance to uniformity of the single piece with  
\begin{align}
 \left\|\theta_t - \bU_{(\bR/\zed)^2}\right\|_{\TV} &= \frac{1}{2} \int_{(\bR/\zed)^2} |\theta_t(x)-1| dx\\
 \notag &= \int_{(\bR/\zed)^2} \one(\theta_t(x)>1) (\theta_t(x)-1) dx.
\end{align}

Let $\rho: \bR \to [0,1]$ be a smooth cut-off function which satisfies $\rho(x) = 0$ if $x \leq -1$ and $\rho(x) = 1$ if $x \geq 1$.  Let, for $\varepsilon > 0$, $\rho_\varepsilon(x) = \rho\left(\frac{x}{\varepsilon} \right)$ and define $\psi_\varepsilon(x,t) = \rho_\varepsilon(\theta_t(x)-1)$.  For fixed $\varepsilon$ and for $t \in K$ with $K \subset \bR^+$ compact, $\psi_\varepsilon(x,t)$ is uniformly $C^1$ since $\theta_t(x)$ is uniformly $C^j$ for every $j$.  Also,
\begin{equation}
 \left|\left\|\theta_t(x) - \bU_{(\bR/\zed)^2}\right\|_{\TV} - \int_{(\bR/\zed)^2} \psi_\varepsilon(x,t) (\theta_t(x)-1)dx \right| \leq \varepsilon,
\end{equation}
since $|\theta_t(x)-1| \leq \varepsilon$ whereever $\psi_\varepsilon(x,t)$ and $\one(\theta(x,t)>1)$ differ.

 Define $c_{\puzzle} = \frac{2\mu}{\sigma^2} = \frac{5}{2}(\pi-1)$.  Let $(i_T, j_T)$ be the displacement from its initial position of the piece $\sP$ after $T = \lfloor c_{\puzzle} n^4 t \rfloor$ steps of the Markov chain $P$.  
 We show that for each fixed $\varepsilon > 0$, uniformly for $t \in K$,
 \begin{equation}\label{uniform_part}
  \lim_{n \to \infty} \E_{\bU_{(\zed/n\zed)^2}}\left[\psi_\varepsilon\left(\left(\frac{i}{n}, \frac{j}{n}\right), t \right) \right] = \int_{(\bR/\zed)^2} \psi_\varepsilon(x,t)dx
 \end{equation}
and 
\begin{equation}\label{random_part}
 \lim_{n \to \infty} \E_{P^T}\left[\psi_\varepsilon\left(\left(\frac{i_T}{n}, \frac{j_T}{n}\right), t \right) \right] = \int_{(\bR/\zed)^2} \psi_\varepsilon(x,t) \theta_t\left(x\right)dx,
\end{equation}
which proves that the total variation distance of the single piece process is bounded below in the limit by that of Brownian motion. Note that (\ref{uniform_part}) holds since the expectation is a Riemann sum for the integral, so that the convergence holds by uniform convergence.
 
 The distribution of $\sP$ is determined after $N$ steps of the renewal process in Theorem \ref{llt}, so we now remove the stopping time implicit in the renewal process and include the moves $H_0, V_0$ prior to the renewal process beginning.
 
 Let, for $\epsilon > 0$, 
 \begin{equation}
  N = \left\lfloor \frac{T}{2 \mu_n} - n^{1 +\epsilon}\right \rfloor.
 \end{equation}
 Since $\mu_n$ is of order $n^2$ by Lemma \ref{asymptotics_lemma}, $N$ is of order $n^2$.
Let
\begin{equation}
 M = \left \lfloor \frac{T - S_{N,3}}{2\mu_n} - n^{\frac{1}{2} + 2\epsilon}\right \rfloor .
\end{equation}
Then outside a set $S_{\bad}$
\begin{equation}
 (i_T, j_T) = (H_0, V_0) + (S_{N,1}, S_{N,2}) + (S_{M,1}, S_{M,2}) + (E_1, E_2)
\end{equation}
where $E_1, E_2$ are the set of moves of the piece after time $t_1 + S_{N,3} + S_{M,3}$.  The condition for membership in $S_{\bad}$ is that either  $t_1 \geq n^2 (\log n)^3$, $|S_{N,3} - 2N \mu_n| \geq \sqrt{N} n^2 (\log n)^2$ or $|S_{M,3} - 2M \mu_n| \geq \sqrt{M} n^2 (\log n)^2$.  Since $\frac{t_1}{n^2 \log n}$, has an exponentially decaying tail, the first part of $S_{\bad}$ has measure $O_A(n^{-A})$.  Next, since $\frac{r_i}{n^2 \log n}$ and $\frac{s_i}{n^2\log n}$ have exponentially decaying tails, by the variant of Chernoff's inequality, Lemma \ref{Chernoff_variant_lemma}, the measure of $|S_{N,3} - 2N \mu_n| \geq \sqrt{N} n^2 (\log n)^2$ is $O_A(n^{-A})$.  Outside this set, $M$ is of order $n^{1 +\epsilon}$, and so in this set, $|S_{M,3} - 2M \mu_n| \geq \sqrt{M} n^2 (\log n)^2$ has measure $O_A(n^{-A})$.  Hence $S_{\bad}$ has probability, for any $A> 0$, $O_A(n^{-A})$, so can be ignored. 

Outside $S_{\bad}$, $M$ is of order $n^{1 + \epsilon}$, so that w.o.p. $S_{M,1}, S_{M,2} \ll n^{\frac{1}{2} + \epsilon}$ by Lemma \ref{Chernoff_variant_lemma} this time applied to $H_i$ and $V_i$, which have exponentially decaying tails.  Similarly, by excluding $S_{\bad}$, $T-S_{N,3} - S_{M,3} - t_1 = O(n^{\frac{5}{2} + 2\epsilon})$. It then follows by Chernoff's inequality for the sum of $r_i$ and $s_i$, that $\sP$ moves $O(n^{\frac{1}{2} + 3\epsilon})$ times in $(E_1, E_2)$.  Since $H_0$ and $V_0$ are bounded $\ll \log n$ w.o.p., it follows that w.o.p. \begin{equation}(i_T, j_T) = (S_{N,1}, S_{N,2}) + O(n^{\frac{1}{2}+3\epsilon}).\end{equation}   Since $\psi_{\varepsilon}(x,t)$ is uniformly $C^1$, it suffices to prove (\ref{random_part}) for $(i_T, j_T)$ replaced by $(S_{N,1}, S_{N,2})$.

For any fixed $i,j$, the error term in applying Theorem \ref{llt} to $S_N$ summed in $t$ is $O(n^{-3+\epsilon})$, and hence may be ignored.  Also, the sum over $\xi$ may be extended to all of $\zed^2$ with negligible error. This gives a main term of
\begin{equation}
 \frac{1}{n^2} \sum_{\substack{\xi \in \zed^2}}e^{-2\pi i\frac{\xi \cdot(i,j)}{n}} \exp\left(-\frac{2\pi^2 \|\xi\|_2^2 s_n^2 N}{n^2} \right) = \frac{1}{n^2} \theta_{\frac{s_n^2 N}{n^2}}\left(\frac{i}{n}, \frac{j}{n} \right). 
 \end{equation}
It follows that uniformly in $t$, as $n \to \infty$,
\begin{equation}
 \E_{P^T}\left[ \psi_\varepsilon\left(\left(\frac{i_T}{n}, \frac{j_T}{n}\right), t \right)\right]\sim \frac{1}{n^2} \sum_{(i,j) \in (\zed/n\zed)^2} \psi_\varepsilon \left(\frac{i}{n}, \frac{j}{n} \right)\theta_{\frac{s_n^2 N}{n^2}}\left(\frac{i}{n}, \frac{j}{n} \right). 
\end{equation}
Since $\theta_{\frac{s_n^2 N}{n^2}} \to \theta_t$ uniformly as $n \to \infty$, the claim now follows by uniform convergence.  This completes the proof of the lower bound.
 
To prove the upper bound, note that, conditioned on $S_{N,3}$, $S_{N,1}$ and $S_{N,2}$ are independent of $(H_0, V_0)$, $(S_{M,1}, S_{M,2})$ and $(E_1,E_2)$.  As in the lower bound, drop the error terms from the limit in Theorem \ref{llt}, since these contribute measure $o(1)$.  Denote by $\tilde{S}_N$ the main term.  Also, we may restrict attention to $\tilde{S}_{N,3}$ such that $|\tilde{S}_{N,3}-2N\mu_n| \leq A v_n$ for a constant $A$, since the remaining part has measure $o(1)$ as $A \to \infty$.    Since convolution with the remaining distributions can only decrease the total variation distance, as can removing the conditioning, it suffices to prove that, conditioned on any $\tilde{S}_{N,3}$ which differs from its mean by a bounded multiple of its variance, the distribution of $(\tilde{S}_{N,1}, \tilde{S}_{N,2})$ has distance from uniform bounded by the total variation distance of $\theta_t(x)$ to uniform. This in fact follows from the convergence of the Fourier series in Theorem \ref{llt}.  

\end{proof}

\begin{cor}\label{expectation_corollary}
 For $T = \lfloor c_{\puzzle}n^4 t \rfloor$, the expected number of pieces in their original position is \begin{equation}\E_{P_{n^2-1}^T}[\#\{(i_T,j_T) = (i_0, j_0)\}] = \theta_t(0) + o(1)\end{equation} as $n \to \infty$.
\end{cor}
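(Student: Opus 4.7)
The plan is to reduce the expectation to a single-piece return probability via linearity of expectation and symmetry, then extract the point value $\theta_t(0)$ by applying the local limit theorem (Theorem~\ref{llt}) at the origin of the torus. By translation invariance of the puzzle on $(\zed/n\zed)^2$, the joint Markov process of any numbered piece $\sP_k$ together with $\sP_e$ has, after shifting coordinates so that $\sP_k$'s initial position is the origin, the same transition law as the single-piece chain $(\sX,P)$. Consequently each of the $n^2-1$ numbered pieces is at its initial position at time $T$ with the same probability $p_n(t) := \Prob((i_T,j_T)\equiv(0,0)\pmod{n})$, and linearity of expectation gives
\begin{equation*}
 \E_{P_{n^2-1}^T}\bigl[\#\{(i_T,j_T)=(i_0,j_0)\}\bigr] = (n^2-1)\,p_n(t).
\end{equation*}

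I would then compute $p_n(t)$ using the decomposition from the proof of Theorem~\ref{single_piece_mixing_theorem}: outside an event of probability $O_A(n^{-A})$, one has $(i_T,j_T)\equiv (H_0,V_0)+(S_{N,1},S_{N,2})+(S_{M,1},S_{M,2})+(E_1,E_2)\pmod{n}$, with the three correction terms each bounded by $O(n^{\frac{1}{2}+3\epsilon})$ with overwhelming probability. Summing Theorem~\ref{llt} over the $t$-coordinate in the Gaussian range $|t-2N\mu_n|\leq \sqrt{N}v_n\log n$ (the complement having measure $O_A(n^{-A})$ by Chernoff) collapses the Gaussian factor to $1$, while extending the $\xi$-sum to all of $\zed^2$ introduces negligible error, since $N\asymp n^2$ makes the effective range $\|\xi\|_2 \ll \log n$ and the tail decays faster than any polynomial in $n$. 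This yields, uniformly in $(a,b)$,
\begin{equation*}
 \Prob\bigl((S_{N,1},S_{N,2})\equiv (a,b)\pmod{n}\bigr) = \tfrac{1}{n^2}\,\theta_{s_n^2 N/n^2}\!\bigl(\tfrac{a}{n},\tfrac{b}{n}\bigr) + o(n^{-2}).
\end{equation*}

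Finally I would convolve with the combined shift $Y := (H_0,V_0)+(S_{M,1},S_{M,2})+(E_1,E_2)$: since $Y/n=o(1)$ with overwhelming probability and $\theta_{s_n^2 N/n^2}$ is uniformly $C^1$ on $(\bR/\zed)^2$ for $s_n^2 N/n^2$ bounded away from $0$ (cf.\ the discussion following \eqref{theta_fourier}), the random shift perturbs $\theta_{s_n^2 N/n^2}(0)$ by only $o(1)$. Combined with Lemma~\ref{asymptotics_lemma} and the identity $c_{\puzzle}=2\mu/s^2$, which forces $s_n^2 N/n^2\to t$, one obtains $p_n(t) = \tfrac{1}{n^2}\theta_t(0)+o(n^{-2})$, and multiplication by $n^2-1$ produces the claimed $\theta_t(0)+o(1)$. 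The main delicate point is carrying the local-LLT \emph{pointwise} estimate through the convolution with the corrections, which uses the smoothness of $\theta_t$ at positive times rather than merely the weak convergence that sufficed for Theorem~\ref{single_piece_mixing_theorem}.
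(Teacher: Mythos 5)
Your proposal follows essentially the same route as the paper: linearity of expectation reduces to a single piece, the renewal decomposition from Theorem~\ref{single_piece_mixing_theorem} isolates $(S_{N,1},S_{N,2})$ plus $O(n^{1/2+\epsilon})$ corrections, Theorem~\ref{llt} gives the local density, and the $C^1$-smoothness of $\theta_t$ absorbs the correction shift. One small imprecision: you describe convolving with $Y=(H_0,V_0)+(S_{M,1},S_{M,2})+(E_1,E_2)$ as if it were independent of $(S_{N,1},S_{N,2})$, but $M$ and $(E_1,E_2)$ are built from $S_{N,3}$, which is not independent of $(S_{N,1},S_{N,2})$; the paper phrases the step as conditional independence given $S_{N,3}$ (with the conditional density supplied by the product structure in Theorem~\ref{llt}), and your argument should be stated the same way to be airtight.
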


\begin{proof}
 By linearity of expectation, it suffices to consider a single initial piece, say at position $(i_0, j_0)$, and to prove that 
 \begin{equation}
  \Prob\left((i_T, j_T) = (i_0, j_0)\right) = (1 + o(1))\frac{\theta_t(0)}{n^2}.
 \end{equation}
Use that, conditioned on $S_{N,3}$, $(S_{N,1}, S_{N,2})$ are independent of $(H_0, V_0),$ $(S_{M,1}, S_{M,2}),$ $(E_1, E_2)$, which, together, have a distribution bounded with overwhelming probability by $n^{\frac{1}{2}+\epsilon}$.  Since the density function of $(S_{N,1}, S_{N,2})$  varies on a scale of order $n$, the claim now follows from Theorem \ref{llt} for $(S_{N,1}, S_{N,2})$.
\end{proof}

\section{A coupling of several labeled pieces}\label{coupling_section}

A coupling of a Markov process $(\sX, P)$ is a Markov process defined on the product $\sX \times \sX$ whose marginals are separately $P$.  Given a coupling $\{(X_t, Y_t)\}$, the coupling is said to coalesce if it has the property that if $X_s = Y_s$ then $X_t = Y_t$ for all $t>s$. The following theorem states that the coupling time bounds the total variation mixing time.
\begin{theorem}[\cite{LPW09}, Theorem 5.2]
 Let $\{(X_t, Y_t)\}$ be a coupling of Markov chains which coalesce.  Let $X_0 = x$ and $Y_0 = y$ be possibly random initial states.  Let $\tau_c$ be the first time that the chains coincide.  Then
 \begin{equation}
  \|P^t(x, \cdot) - P^t(y, \cdot)\|_{\TV} \leq \Prob(\tau_c > t).
 \end{equation}

\end{theorem}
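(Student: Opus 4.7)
The plan is to prove the bound via the standard variational characterization of total variation distance, namely that $\|\mu - \nu\|_{\TV}$ equals the infimum over couplings $(X,Y)$ of $\mu$ and $\nu$ of $\Prob(X \neq Y)$. Since the claim concerns an arbitrary coalescing coupling, the cleanest route is to exhibit the coupling's marginals at time $t$ as a specific coupling of $P^t(x,\cdot)$ and $P^t(y,\cdot)$, and then bound the disagreement probability by the tail of $\tau_c$.

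First, I would record the one direction of the variational characterization that we actually need: for any coupling $(X,Y)$ of $\mu$ and $\nu$ and any set $A$,
\begin{equation}
\mu(A) - \nu(A) = \Prob(X \in A) - \Prob(Y \in A) \leq \Prob(X \in A, Y \notin A) \leq \Prob(X \neq Y),
\end{equation}
and taking the supremum over $A$ (and swapping the roles of $X,Y$) gives $\|\mu - \nu\|_{\TV} \leq \Prob(X \neq Y)$. This is elementary and does not require any structure from the Markov chain.

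Next I would apply this to $\mu = P^t(x,\cdot)$, $\nu = P^t(y,\cdot)$, using as the coupling the marginal distribution of $(X_t, Y_t)$ under the given coalescing coupling started from $(X_0, Y_0) = (x,y)$. By hypothesis the two marginals are exactly $P^t(x,\cdot)$ and $P^t(y,\cdot)$, so the inequality above yields
\begin{equation}
\|P^t(x, \cdot) - P^t(y, \cdot)\|_{\TV} \leq \Prob(X_t \neq Y_t).
\end{equation}

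Finally, I invoke the coalescence hypothesis: if $\tau_c \leq t$ then $X_{\tau_c} = Y_{\tau_c}$, and the coalescence property forces $X_s = Y_s$ for all $s \geq \tau_c$, hence $X_t = Y_t$. Contrapositively, $\{X_t \neq Y_t\} \subset \{\tau_c > t\}$, so $\Prob(X_t \neq Y_t) \leq \Prob(\tau_c > t)$, combining with the previous display to conclude. The case of random initial states $X_0, Y_0$ follows by conditioning on $(X_0, Y_0)$ and averaging, since both sides of the desired inequality are then the corresponding expectations. There is no real obstacle here; the only thing to be careful about is that the initial states are allowed to be random, which is handled transparently by the tower property.
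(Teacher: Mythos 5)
Your proof is correct and is essentially the standard argument given in \cite{LPW09}; the paper itself only cites this result and does not reprove it. The three ingredients you use---the coupling upper bound for total variation, the observation that the time-$t$ marginals of the given coupling are exactly $P^t(x,\cdot)$ and $P^t(y,\cdot)$, and the coalescence hypothesis to pass from $\Prob(X_t \neq Y_t)$ to $\Prob(\tau_c > t)$---are precisely the ones used in the reference, and your handling of possibly random initial states by conditioning is the right remark to make.
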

We now give the proof of Theorem \ref{coupling_theorem}.

\begin{proof}[Proof of Theorem \ref{coupling_theorem}]
 Assume for convenience that $n$ is odd.  Let $\sP_1, ..., \sP_d$ be the labeled pieces of the first process with empty square $\sP_e$, and let the corresponding pieces be $\sP_1', ..., \sP_d'$ and $\sP_e'$ on the second copy of $\sX= \sX_d$. The standard coupling of $\sP_e$ and $\sP_e'$ on $(\zed/n\zed)^2$ has $\sP_e$ and $\sP_e'$ move in opposite directions until either the horizontal and vertical components coincide.  Once this occurs, the direction which coincides mirrors, while the other direction runs in the opposite direction until both coincide.  The expected time to coincide is of the order of the hitting time to 0 in simple random walk on $\zed/n\zed$, which is order $n^2$.
 
 The coupling on $\sX_d$  proceeds in the following steps.
 \begin{enumerate}
  \item Perform the standard coupling of $\sP_e, \sP_e'$ on $(\zed/n\zed)^2$ until they coincide.
  \item Move $\sP_e$ and $\sP_e'$ together until two pieces $\sP_i, \sP_i'$ have distance $O(1)$ and their center of mass in $(\zed/n\zed)$ is at distance $O(1)$ from them.
  \item Let $\sP_e, \sP_e'$ move independently until their center of mass is the same as that of $\sP_i$ and $\sP_i'$.
  \item Move $\sP_e, \sP_e'$ in opposite directions, preserving their center of mass, until $\sP_i$ and $\sP_i'$ coincide.
  
  \item [*]If either step 3 or 4 takes more than $O(1)$ moves, skip to step 5.
  \item Repeat while there are pieces whose positions do not coincide.
 \end{enumerate}
As already mentioned, the expected time for step (1) is $O(n^2)$.  The expected time for $\sP_i$ and $\sP_i'$ to have distance $O(1)$ can be estimated as follows.  By considering the hitting time of a reversible Markov chain on $(\zed/n\zed)^2$, see Lemma \ref{general_random_walk_hitting_time}, $\sP_i$ and $\sP_i'$ must move $O(n^2 \log n)$ times before their distance is $O(1)$.  Once this is the case, there is a positive probability that order 1 moves of either piece cause their center of mass also to be at distance $O(1)$ from them.  The expected time between moves is $O(n^2)$.  Hence step (2) takes time $O(n^4 \log n)$ in expectation. 

Notice that, while $\sP_e$ and $\sP_e'$ move together in step (2), they do not cause any pieces which already coincide to separate.  Hence pieces may separate only in steps (1), (3) and (4).  Note that, following step 4, $\sP_e$ and $\sP_e'$ have distance from each other which is $O(1)$, so they meet in $O(1)$ steps with probability bounded away from 0.  In particular, there is a probability bounded below such that steps (3), (4) and the next step of (1) are successful in $O(1)$ moves.  It follows that by repeatedly running the algorithm, it completes in $O(n^4 \log n)$ moves with probability at least $\frac{1}{2}$.
\end{proof}

\section{The mixing of several pieces, Proof of Theorem \ref{poisson_theorem}}

The main result of this section improves the mixing time estimate for the mixing of $d$ squares and the empty square in $\sX_d$ to $O(n^4)$.  Denote the $d_2$ norm on $\sX_d$ by
\begin{equation}
 \|f-g\|_{d_2}^2 = |\sX_d| \left(\sum_{x \in \sX_d} (f(x)-g(x))^2 \right).
\end{equation}

 Let $P_d$ be the transition matrix of the $n^2-1$ puzzle in $\sX_d$.  
\begin{theorem}\label{d_pieces_theorem}
 The $d_2$ mixing time of $d$ marked squares together with the empty square in the $n^2-1$ puzzle is $O(n^4)$.
\end{theorem}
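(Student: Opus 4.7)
The plan is to combine the spectral bound for $P_{d,M}$ (established in the preceding theorem) with a Dirichlet-form path comparison in the style of Theorem \ref{path_theorem}, and then to conclude a $d_2$ mixing bound for $P_d$ via the minimax characterization of eigenvalues. Since $P_d$ on $\sX_d$ and $P_{d,M}$ on $((\zed/n\zed)^2)^{d+1}$ live on different state spaces, the comparison is routed through the intermediate symmetrized chain $P_{d,s}$ together with the extension-by-averaging map from $\sX_d$ to $\sX_{d,M}$ alluded to in the introduction. The chain $P_{d,s}$ is related to $P_d$ on the common sub-space by an $O(1)$-cost Dirichlet-form comparison (the two agree on non-overlap transitions and the overlap states carry only an $O(1/n^2)$ fraction of stationary mass), so the stationary measures pass between one another with constants that are bounded for $d$ fixed.

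The heart of the argument is a path construction. For each non-trivial transition $x\to y$ of $P_{d,M}$, which with probability $\tfrac{1}{2}$ shifts each of the $d$ labeled pieces by a uniform displacement in $[-M,M]^2$ and places $\sP_e$ at a uniform target, one assigns a path $\gamma_{x,y}$ of $P_d$ steps that realizes this transition as follows: the empty square is routed sequentially to each labeled piece (cost $O(n)$ on the torus, navigating around the other labeled pieces), carries the piece by its prescribed displacement (cost $O(M)$ per piece, using the standard four-move ``push'' motif of the puzzle), and is finally delivered to its target (cost $O(n)$). For fixed $d$ this gives $|\gamma_{x,y}|=O(n+M)$. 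Using the translation symmetry of the abelian group underlying $P_{d,M}$ one then estimates the congestion constant $A$ from Theorem \ref{path_theorem} by an edge-flow accounting: the weighted total flow $\sum_{(x,y)}|\gamma_{x,y}|^2\pi_{d,M}(x)P_{d,M}(x,y)$ is $O((n+M)^2)$, and after dividing by the number of edges and the base weights one arrives at $A=O((n+M)^2)$, which is $O(n^2)$ once $M$ is chosen in the allowed range $1\le M\ll\exp(\sqrt{\log n})$ from the previous theorem.

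The conclusion now follows by combining ingredients. The previous theorem gives $\sum_{\lambda\neq 1}\lambda^{c(n/M)^2}\to 0$ as $c\to\infty$, uniformly in $n$, which is the $d_2$ mixing statement for $P_{d,M}$ at time $O((n/M)^2)$. The comparison $\sE_{d,M}\le A\,\sE_d$ transfers this to $P_d$ with a factor of $A$ loss: selecting $M$ constant yields the desired $d_2$ mixing time $O(A(n/M)^2)=O(n^4)$ for $P_d$. The main technical obstacle is the congestion analysis of Step 2: a naive choice of paths can inflate the maximum edge congestion above its average, particularly at the ``push'' swap-edges in the middle of a piece move, where many paths with different $\Delta_i$'s pass through the same configuration. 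Overcoming this requires either a translation-equivariant deterministic choice of paths that forces max-congestion to match the average up to constants, or a randomized path assignment used in tandem with the expected-congestion variant of Theorem \ref{path_theorem}. A related subtlety is ensuring that the extension-by-averaging from $\sX_d$ through $\sX_{d,s}$ to $\sX_{d,M}$ interacts correctly with the Dirichlet forms so that the eigenvalue-comparison $1-\lambda^{(i)}(P_d)\ge(a/A)(1-\lambda^{(i)}(P_{d,M}))$ is actually valid on the relevant subspace of test functions.
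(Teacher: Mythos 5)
Your plan is structurally quite different from the paper's, and the gap you flag is, I believe, fatal as the argument stands. You attempt a direct Dirichlet-form path comparison between $P_d$ (through $P_{d,s}$) and $P_{d,M}$, realizing each $P_{d,M}$ transition by an $O(n+M)$-step route of $P_d$. The estimate $A=O((n+M)^2)$ that you derive from $\sum_{(x,y)}|\gamma_{x,y}|^2\pi_{d,M}(x)P_{d,M}(x,y)$ is the \emph{average} congestion; Theorem \ref{path_theorem} requires the \emph{maximum} over edges $(z,w)$. At a ``carry'' edge, where the empty square is at one of the $O(1)$ sites adjacent to the piece $\sP_i$ being pushed, the congestion is much larger: the position of $\sP_i$ at that edge is compatible with $\Theta(M^3)$ choices of (start, end) for $\sP_i$ (one scale $M$ for how far along the carry we are, one for the remaining leg, one for the orthogonal leg), together with $\Theta(M^{2(d-1)})$ choices for the other pieces and $\Theta(n^4)$ for the two empty-square endpoints, each path of length $\Theta(n+M)$. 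Multiplying through the weights gives $A_{(z,w)}\asymp n^2M(n+M)$, i.e.\ $\Omega(n^3)$ already for $M=O(1)$ and $\Omega(n^3M)$ in general. Since the target from the comparison is $A(n/M)^2\lesssim n^4$, you would need $A\lesssim M^2n^2$, and $n^3M\gg M^2n^2$ throughout the admissible range $M\ll\exp(\sqrt{\log n})$. Neither translation equivariance nor a generic randomized path choice helps, because every unit move of $\sP_i$ \emph{must} pass through one of the $O(1)$ adjacent configurations: the bottleneck is intrinsic to single steps of $P_d$, not to a bad choice of routing.

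The paper circumvents this precisely by refusing to compare single steps. It introduces the Laplace-transform operator $\Phi_t=\int\phi(s)H_{st}\,ds$ built from $P_{d,s}$, whose matrix entries aggregate $\Theta(t)$-step transitions. With $t=M^2n^2$ each labeled piece has time to perform $\asymp M^2$ random moves while the empty square equidistributes, so Lemma \ref{well_spaced_lemma} delivers the pointwise bound $\Phi_t(x,y)\ge\delta/(n^2M^{2d})$ for well-spaced $x,y$ with labeled pieces within $CM$. This pointwise domination gives $\sE_{P_{d,CM}}\ll\sE_{\Phi_t}$ with an $O(1)$ constant --- no congestion appears at all --- and the eigenvalue relation $\lambda_i(\Phi_t)=\hat\phi(t(1-\lambda_{i,s}))$ then converts the spectral bound for $P_{d,CM}$ into the $O(n^4)$ $d_2$ mixing bound for $P_{d,s}$ (with the not-well-spaced part of the spectrum handled separately by Lemma \ref{not_well_spaced_lemma}). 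So the missing idea in your proposal is not a cleverer routing but the replacement of one-step path comparison by a heat-kernel/Laplace-transform comparison, which is exactly what makes the congestion problem disappear. You do correctly identify the remaining structural ingredients (the $\sX_d\to\sX_{d,s}\to\sX_{d,M}$ extension-by-averaging, controlled by Lemma \ref{first_comparison_lemma} and the well-spaced/not-well-spaced decomposition).
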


Prior to giving the proof of Theorem \ref{d_pieces_theorem} we show how the Theorem implies the Poisson convergence in Theorem \ref{poisson_theorem}.  We first note the following spectral interpretation of the $d_2$ distance to uniformity.
\begin{lemma}\label{d2_lemma}
 Let $\lambda_0 = 1 > \lambda_1 \geq \cdots \geq \lambda_{|\sX_d|-1}$ be the eigenvalues of $P_d$ on $\sX_d$.  For any initial state $y$,
 \begin{equation}
  \|e_{y}^t P_d^N - \bU_{\sX_d}\|_{d_2}^2 = \sum_{i = 1}^{|\sX_d|-1} \lambda_{i}^{2N}.
 \end{equation}
\end{lemma}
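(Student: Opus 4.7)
The plan is to expand $e_y^t P_d^N - \bU_{\sX_d}$ in an orthonormal eigenbasis of $P_d$, compute the $\ell^2$ norm via Parseval, and then use that $(\sX_d, P_d)$ is the projection of a symmetric random walk on a group to eliminate the $y$-dependence in the spectral coefficients.

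First, because $P_d$ is reversible with uniform stationary measure (noted in Section \ref{list_of_chains}), the detailed balance relation collapses to $P_d(x,y) = P_d(y,x)$, so $P_d$ is a real symmetric matrix. Diagonalize it in an orthonormal eigenbasis $v_0, v_1, \dots, v_{|\sX_d|-1}$, with $v_0 = |\sX_d|^{-1/2} \mathbf{1}$ corresponding to $\lambda_0 = 1$. Writing $c_i(y) := \langle e_y, v_i\rangle = v_i(y)$, the expansion $e_y = \sum_i c_i(y) v_i$ gives
\begin{equation}
e_y^t P_d^N = \sum_{i=0}^{|\sX_d|-1} c_i(y) \lambda_i^N v_i^t,
\end{equation}
and the $i=0$ term equals $|\sX_d|^{-1/2}\cdot |\sX_d|^{-1/2}\mathbf{1}^t = \bU_{\sX_d}$. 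By Parseval,
\begin{equation}
\bigl\|e_y^t P_d^N - \bU_{\sX_d}\bigr\|_2^2 = \sum_{i=1}^{|\sX_d|-1} v_i(y)^2 \lambda_i^{2N},
\end{equation}
and multiplying by $|\sX_d|$ produces the $d_2$ norm.

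The remaining task, which is the only nontrivial step, is to show $\sum_{i\geq 1} v_i(y)^2 \lambda_i^{2N}$ does not depend on $y$ (and hence equals its average over $y$). Here we use that $(\sX_d, P_d)$ is obtained from the symmetric random walk $(\sX_{n^2-1}, P_{n^2-1})$ on the group $G = S_{n^2-1} \times (\zed/n\zed)^2$ by forgetting the location of all but $d$ labeled pieces. Thus $\sX_d$ is a transitive $G$-set and the transition kernel $P_d$ is $G$-equivariant: for any $g \in G$ the map $y \mapsto g \cdot y$ is a bijection of $\sX_d$ that intertwines $P_d^N$ with itself and preserves $\bU_{\sX_d}$. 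Since $G$ acts transitively, any two starting states $y, y' \in \sX_d$ are related by some $g$, and this isometry shows $\|e_y^t P_d^N - \bU_{\sX_d}\|_{d_2}$ is independent of $y$.

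Having established constancy in $y$, I average over $y \in \sX_d$, interchange sums, and apply $\sum_{y \in \sX_d} v_i(y)^2 = \|v_i\|_2^2 = 1$:
\begin{equation}
\bigl\|e_y^t P_d^N - \bU_{\sX_d}\bigr\|_{d_2}^2 = \frac{1}{|\sX_d|}\sum_{y'\in \sX_d} |\sX_d|\sum_{i=1}^{|\sX_d|-1} v_i(y')^2 \lambda_i^{2N} = \sum_{i=1}^{|\sX_d|-1}\lambda_i^{2N},
\end{equation}
which is the claimed identity. The main obstacle, modest as it is, is justifying the invariance of the distance in the starting state; once that is in hand, the rest is a one-line Parseval computation.
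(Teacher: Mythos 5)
Your proof is correct and follows essentially the same route as the paper's: diagonalize the symmetric kernel $P_d$ in an orthonormal eigenbasis, peel off the trivial eigenfunction to identify $\bU_{\sX_d}$, and invoke transitivity of the group action on $\sX_d$ commuting with $P_d$ to replace the $y$-dependent quantity $\sum_i v_i(y)^2\lambda_i^{2N}$ with its average over $y$, which collapses to $\sum_i\lambda_i^{2N}$ by orthonormality. The paper carries out the same averaging step by passing from a single $x$-sum to a double $(x,y)$-sum and then using orthogonality of $\{e_x\}$, but this is just a rearrangement of the same computation.
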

\begin{rem}
 Since $P_d$ is $\frac{1}{5}$-lazy, $\lambda_{|\sX_d|-1} \geq -\frac{4}{5}$.
\end{rem}

\begin{proof}
 Let $\{v_i\}_i$ be an orthonormal basis of eigenvectors corresponding to $\{\lambda_i\}_i$.  We have, using that $v_0 = \frac{1}{\sqrt{|\sX_d|}} \sum_{x \in \sX_d} e_{x}$,
 \begin{equation}
  |\sX_d| \sum_{x \in \sX_d} \left|e_{y}^t P_d^N e_{x} - \frac{1}{|\sX_d|}\right|^2 = |\sX_d| \sum_{x \in \sX_d} \left(\sum_{i=1}^{|\sX_d|-1} \lambda_i^N \langle v_i, e_{x}\rangle \langle v_i, e_{y}\rangle\right)^2.
 \end{equation}
Since the permutation group of the $n^2-1$ puzzle acts transitively on $\sX_d$, which is a representation space for a random walk on a group, this is furthermore equal to 
\begin{align}
  \|e_{y}^t P_d^N - \bU_{\sX_d}\|_{d_2}^2 & = \sum_{x,y \in \sX_d} \left(\sum_{i=1}^{|\sX_d|-1} \lambda_i^N \langle v_i, e_{x}\rangle \langle v_i, e_{y}\rangle \right)^2\\
  &\notag = \sum_{i,j=1}^{|\sX_d|-1} \lambda_i^N \lambda_j^N \sum_{x, y \in \sX_d} \langle v_i, e_{x} \rangle \langle v_j, e_{x}\rangle \langle v_i, e_{y}\rangle \langle v_j, e_{y}\rangle
  \\& \notag =
  \sum_{i=1}^{|\sX_d|-1} \lambda_i^{2N}
\end{align}
in which, in the last line, we have used the orthogonality of the basis $\{e_{x}\}_{x \in \sX_d}$.
\end{proof}
We now deduce the Poisson convergence in Theorem \ref{poisson_theorem} from the mixing time bound in Theorem \ref{d_pieces_theorem}.
\begin{proof}[Deduction of Theorem \ref{poisson_theorem}]
 Let $\pi(i)$ denote the position of piece $i$.  To prove the convergence in the case $f(n) \to \infty$ with $n$, since the Poisson distribution is determined by its moments, it suffices that, for each fixed $d$, for $N = n^4f(n)$, as $n \to \infty$,
\begin{equation}
 E_d = \sum_{1 \leq a_1 < a_2 < ... < a_d < n^2} \E_{e_{\id}^t P_{n^2-1}^N} [\pi(a_1) = a_1 \wedge \cdots \wedge \pi(a_d) = a_d] \to \frac{1}{d!}.
\end{equation}

Let $a_{x_1, ..., x_d, x_0}$ be the indicator function on $\sX_d$ that $\sP_1$ is in position $x_1$, ..., $\sP_d$ is in position $x_d$ and $\sP_e$ is in position $x_0$, and let $b_{x_1, ..., x_d}$ be the indicator function that $\sP_1$ is in position $x_1$, ..., $\sP_d$ is in position $x_d$, neglecting the empty square.  The expectation is the partial trace
\begin{equation}
E_d= \frac{1}{d!} \sum_{(x_1, ..., x_d, (n,n)) \in \sX_d}a_{x_1, ..., x_d, (n,n)}^t P_d^N b_{x_1, ..., x_d}. 
\end{equation}
Let $\lambda_0 = 1 > \lambda_1 \geq \cdots \geq \lambda_{|\sX_d|-1} \geq -\frac{4}{5}$ be the eigenvalues of $P_d$ with orthonormal basis of eigenfunctions $v_0, ..., v_{|\sX_d|-1}$. Thus the partial trace is equal to
\begin{equation}
E_d= \frac{1}{d!} \sum_{i=0}^{|\sX_d|-1} \lambda_i^N \sum_{(x_1, ..., x_d, (n,n)) \in \sX_d} \langle a_{x_1, ..., x_d, (n,n)}, v_i\rangle \langle b_{x_1, ..., x_d}, v_i \rangle.
\end{equation}

The contribution of the constant eigenfunction $v_0$ is $\frac{1}{d!}$, so it suffices to bound the $i \neq 0$ terms.
To do so, first smooth out the $a_{x_1, ..., x_d, (n,n)}$ indicator function by convolving with $P_d$ $n^2$ times, which has the effect of randomizing the position of the empty square.  Since $P_d$ is self-adjoint, this obtains
\begin{align}
 &E_d - \frac{1}{d!} \\\notag &= \frac{1}{d!} \sum_{i=1}^{|\sX_d|-1} \lambda_i^{N-n^2} \sum_{(x_1, ..., x_d, (n,n) \in \sX_d} \langle P_d^{n^2} a_{x_1, ..., x_d, (n,n)}, v_i\rangle \langle b_{x_1, ..., x_d}, v_i\rangle.
\end{align}
Bound, using Cauchy-Schwarz,
\begin{align}
& \left|\sum_{(x_1, ..., x_d, (n,n)) \in \sX_d} \langle P_d^{n^2} a_{x_1, ..., x_d, (n,n)}, v_i\rangle \langle b_{x_1, ..., x_d}, v_i\rangle\right|\\
\notag& \leq \left(\sum_{(x_1, ..., x_d, (n,n)) \in \sX_d} \langle P_d^{n^2} a_{x_1, ..., x_d, (n,n)}, v_i\rangle^2 \right)^{\frac{1}{2}}\\
\notag& \times  \left(\sum_{(x_1, ..., x_d, (n,n)) \in \sX_d} \langle b_{x_1, ..., x_d}, v_i\rangle^2 \right)^{\frac{1}{2}}.
\end{align}
Since the $b$ functions are orthogonal and satisfy $\|b_{x_1, ..., x_d}\|_2^2 = n^2 - d$ (there are $n^2-d$ choices for the empty square), the second factor is of order $n$.
To handle the first factor, write $p(x, y)$ for the probability under $P_d^{n^2}$ of transitioning from $(x_1, ..., x_d, (n,n))$ to $(y_1, ..., y_d, y_0)$.  Since the empty square is performing simple random walk, and the sup of its distribution started from $(n,n)$ after $n^2$ steps is order $\frac{1}{n^2}$, applying Cauchy-Schwarz a second time, then orthogonality,
\begin{align}
 &\sum_{(x_1, ..., x_d, (n,n)) \in \sX_d} \left\langle \sum_{(y_1, ...,y_d, y_0) \in \sX_d} p(x,y) a_{y}, v_i\right\rangle^2 \\
 \notag &\leq \sum_{(x_1, ..., x_d, (n,n)) \in \sX_d} \sum_{y \in \sX_d} p(x, y) \langle a_{y}, v_i \rangle^2\\
 \notag &\ll \frac{1}{n^2} \sum_{y \in \sX_d} \langle a_{y}, v_i \rangle^2 \ll \frac{1}{n^2}.
\end{align}
The inequality in the last line holds since
\begin{equation}
 \sum_{(x_1, ..., x_d, (n,n)) \in \sX_d} p(x, y) \ll \frac{1}{n^2}
\end{equation}
is the probability of the empty square transitioning from $(n,n)$ to $y_0$ ignoring the first $d$ coordinates.

It follows that
\begin{equation}
 \left|E_d - \frac{1}{d!}\right| \ll \frac{1}{d!} \sum_{i=1}^{|\sX_d|-1} |\lambda_i|^{N-n^2}.
\end{equation}
If $\frac{N}{n^4} \to \infty$ as $n \to \infty$, this error term tends to 0 as $n \to \infty$ by Theorem \ref{d_pieces_theorem}, while, in any case, it remains bounded.

Since Corollary \ref{expectation_corollary} proves that the mean does not converge to 1 if $f(n)$ remains bounded,  boundedness of the second moment in this case guarantees that there is not convergence to $\Pois(1)$.  

Notice, also, that the claimed convergence in distribution for $f(n) \to \infty$ proves that, for any fixed $\epsilon$ the total variation distance from stationarity is bounded by $\epsilon$ in time $O(n^4)$.  Since the convergence to $\Pois(1)$ does not occur if $O(n^4)$ steps are taken, there is not cut-off, which proves Corollary \ref{no_cut-off_cor}.

\end{proof}

\subsection{Comparison and symmetrization} The remainder of this section gives the proof of Theorem \ref{d_pieces_theorem}.  This proof is performed by a sequence of comparisons to increasingly symmetric chains.  

Consider a symmetrized chain $P_{d,s}$ on the enlarged state space $\sX_{d,s}$ which has states in which the positions of $\sP_1, ..., \sP_d$ are as before, non-overlapping, but $\sP_e$ is allowed to take any position on $(\zed/n\zed)^2$.  In addition, at most two of the $\sP_i$ may overlap if the empty square overlaps them as well.  The transitions are as follows.  If $\sP_e$ does not overlap any $\sP_i$ then it transitions to one of its neighbors with probability $\frac{1}{10}$ and holds with probability $\frac{3}{5}$.  If $\sP_e$ overlaps a single $\sP_i$ then either $\sP_i$ and $\sP_e$ move to a neighbor each with probability $\frac{1}{10}$, $\sP_e$ moves alone to any neighbor, each with probability $\frac{1}{10}$, or the position holds with the remaining probability. If $\sP_e$ overlaps $\sP_i$ and $\sP_j$, either $\sP_i$ or $\sP_j$ but not both can move with $\sP_e$ to a neighbor each with probability $\frac{1}{10}$, or the position holds with probability $\frac{1}{5}$. 

Aside from the reversibility, we note several further features of $\sX_{d,s}$, which will be of use.
\begin{itemize}
 \item The expected time of $\sP_e$ at a square which it occupies alone is $\frac{5}{2}$, while the expected time at a square which it occupies with another $\sP_i$ is $\frac{5}{4}$. 
 \item Conditioned to move at a given step, $\sP_e$ performs simple random walk on $(\zed/n\zed)^2$.
 \item Conditioned to move at a given step, each $\sP_i$ performs simple random walk on $(\zed/n\zed)^2$.
\end{itemize}

  Let the  Dirichlet form of $P_{d,s}$ be $\sE_{d,s}$.  A function $f$ on $\sX_d$ may be extended to $\sX_{d,s}$ by requiring that if $x$ is a position at which $\sP_e$  and two $\sP_i, \sP_j$ overlap, the value of $f$ is equal to the average value of $f$ at the neighbors.  If $\sP_e$ and only one $\sP_i$ overlap, then the value of $f$ is taken to be the average value found by moving only $\sP_i$ to a neighbor.    Let the restricted Dirichlet form be $\sE_{d,s}'$. 
  \begin{lemma}\label{first_comparison_lemma}
   There is a fixed constant $C$, such that for any function $f$ on $\sX_d$,
\begin{equation}
 \sE_{d,s}'(f,f) \leq \frac{C}{|\sX_d|}\sum_{x \neq y \in \sX_d, x \sim y} (f(x)-f(y))^2 
\end{equation}
where $x \sim y$ means that $x$ and $y$ differ in only a neighborhood of diameter $O(1)$ about the empty piece in $x$. Furthermore, there is a constant $A_d$ such that
\begin{equation}
 \sE_{d,s}'(f,f) \leq A_d \sE_d(f,f)
\end{equation}
where $\sE_d$ is the Dirichlet form of the original chain $P_d$.  

In particular, if $1 = \lambda_0 > \lambda_1 \geq \cdots \geq \lambda_{|\sX_d|-1}$ are the eigenvalues of $P_d$ and $1 = \lambda_{0,s} > \lambda_{1,s} \geq \cdots \geq \lambda_{|\sX_{d,s}|-1}$ are the eigenvalues of $P_{s,d}$ then there is a constant $B_d>0$ such that $1-\lambda_{i,s} \leq B_d (1-\lambda_i)$. 
  \end{lemma}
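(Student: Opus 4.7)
The plan is to prove the three displayed inequalities in sequence, with the first being a direct computation, the second a path comparison, and the third a standard minimax argument.

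For the first inequality, I would expand
\[
 \sE_{d,s}'(f,f) = \tfrac{1}{2} \sum_{x,y \in \sX_{d,s}} (\tilde{f}(x)-\tilde{f}(y))^2 \pi_{d,s}(x) P_{d,s}(x,y),
\]
where $\tilde{f}$ denotes the averaging extension of $f$, and handle the transitions of $P_{d,s}$ by cases according to whether $\sP_e$ is in a no-overlap, single-overlap, or double-overlap configuration. In the no-overlap case both endpoints already lie in $\sX_d$ and the contribution is directly of the target form. In the overlap cases, at least one of $\tilde{f}(x)$, $\tilde{f}(y)$ is an average of $f$-values at $O_d(1)$-many $\sX_d$-states lying in a bounded-diameter window around $\sP_e$. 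Jensen's inequality in the form $\bigl(\tfrac{1}{k}\sum_i a_i - \tfrac{1}{\ell}\sum_j b_j\bigr)^2 \leq \tfrac{1}{k\ell}\sum_{i,j}(a_i-b_j)^2$ then bounds $(\tilde{f}(x)-\tilde{f}(y))^2$ by an $O_d(1)$-sized sum of squared $f$-differences between $\sX_d$-states which are $\sim$-related. Since $\pi_{d,s}$ is uniform on $\sX_{d,s}$, $|\sX_{d,s}| \asymp_d |\sX_d|$, and the one-step probabilities are $O(1)$, summing the contributions yields the first claim with an absolute constant $C$ (absorbing the $d$-dependence into $C$).

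For the second inequality, I would apply the path comparison theorem, Theorem \ref{path_theorem}. Write the right-hand side of the first inequality as a constant multiple of the Dirichlet form $\tilde{\sE}$ of an auxiliary reversible chain on $\sX_d$ whose nonzero transitions are exactly the $\sim$-pairs with bounded holding rates. For each $x$ the number of $y$ with $x \sim y$ is $O_d(1)$, bounded by the number of permutations of the at most $d$ tracked pieces within a fixed-size window around $\sP_e$ together with the $O(1)$ possible displacements of $\sP_e$. For each such pair I exhibit a path $\gamma_{xy}$ in $P_d$ of length $O_d(1)$: the commutator $URU^{-1}R^{-1}$ realises a local $3$-cycle among three squares adjacent to $\sP_e$, and by first walking $\sP_e$ into the window (an $O(1)$ move) and conjugating by such walks, arbitrary local rearrangements can be achieved in a bounded number of puzzle steps. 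The congestion constant $A$ in Theorem \ref{path_theorem} is therefore $O_d(1)$, giving $\tilde{\sE} \le O_d(1)\, \sE_d$ and hence $\sE_{d,s}' \leq A_d \sE_d$.

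The eigenvalue comparison then follows from the minimax machinery of Section 4. The extension map $E\colon L^2(\sX_d) \to L^2(\sX_{d,s})$, $f \mapsto \tilde{f}$, is injective with $\|Ef\|_{L^2(\pi_{d,s})}^2 \asymp_d \|f\|_{L^2(\pi_d)}^2$, and $\sE_{d,s}(Ef,Ef) = \sE_{d,s}'(f,f) \leq A_d \sE_d(f,f)$. Applying the minimax characterisation of $1-\lambda_{i,s}$ to $(i+1)$-dimensional subspaces of $E(L^2(\sX_d))$ together with the stationary-measure comparison $\pi_{d,s} \geq c_d \pi_d$ on $\sX_d$ produces the bound $1-\lambda_{i,s} \leq B_d (1-\lambda_i)$ with $B_d = A_d/c_d$.

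The main obstacle will be the bookkeeping in the first step for transitions between configurations of different overlap type — for example, a $P_{d,s}$ move connecting a double-overlap state with a single-overlap state, where both $\tilde{f}(x)$ and $\tilde{f}(y)$ are distinct averages. One must check that every pair $(u,v) \in \sX_d \times \sX_d$ produced by applying Jensen's inequality to $(\tilde{f}(x)-\tilde{f}(y))^2$ remains a $\sim$-pair (bounded-diameter window about the empty piece) and appears with multiplicity $O_d(1)$ across all such transitions; this is what forces the averaging convention to be chosen as stated. Once this is verified the path construction for the second inequality is essentially mechanical given the local generation of $A_{n^2-1}$ by $3$-cycles via $URU^{-1}R^{-1}$.
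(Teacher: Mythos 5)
Your proposal follows the paper's proof essentially step for step: convexity (Cauchy–Schwarz / Jensen on the averaging weights) reduces the extended Dirichlet form to $\sim$-pairs in $\sX_d$, the path comparison of Theorem~\ref{path_theorem} with $O_d(1)$-length paths bounds that by $\sE_d$, and the minimax characterization transfers the spectral gap. The only superfluous detour is invoking the $URU^{-1}R^{-1}$ commutator — for $x\sim y$ one just needs any bounded-length word in $P_d$ connecting two configurations that differ in an $O(1)$ window, which follows directly from local connectivity of the puzzle without appealing to $3$-cycle generation of $A_{n^2-1}$.
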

\begin{proof}
We have $\frac{|\sX_{d,s}|}{|\sX_d|} \asymp 1$, so the  factor from the stationary measure may be replaced with $\frac{1}{|\sX_d|}$.

 If $x \in \sX_{d,s} \setminus \sX_d$ then there is a neighborhood $N_{x}$ of $x$, in which the points differ from $x$ only in a bounded neighborhood of the empty square in $x$, and convex weights $c_{x,y}$ for $y \in N_{x} \cap \sX_d$ such that $f(x) = \sum_{y \in N_{x} \cap \sX_d} c_{x, y} f(y)$. By Cauchy-Schwarz, 
 \begin{align}
  (f(x)-f(z))^2 &= \left(\sum_{y \in N_{x} \cap \sX_d} c_{x, y} (f(y) - f(z)) \right)^2\\
  \notag &\leq \sum_{y \in N_{x} \cap \sX_d} c_{x, y}(f(y)-f(z))^2.
 \end{align}
If $z \not \in \sX_d$ then the process can be repeated again so that differences of squares at points only in $\sX_d$ are obtained.  Since only bounded neighborhoods are considered, the first claim is obtained.  

Since if $x, y\in \sX_d$ differ only at points of bounded distance from the empty square in $x$ then there is a bounded length path under the Markov kernel $P_d$ from $x$ to $y$. Since the transition kernels of both chains are bounded from above and below by non-zero constants, the stationary measures are equal up to constants, and the paths are bounded length, it follows that $\sE_{d,s}'(f,f) \leq A_d \sE_d(f,f)$ by Theorem \ref{path_theorem}.

To prove the second claim, by the minimax characterization of eigenvalues
\begin{align}
 &1-\lambda_{i,s} \\ \notag & = \max\left\{ \min \frac{\sE_{d,s}(f,f)}{\|f\|_{\sX_{d,s}}^2}, f \in W\setminus \{0\}: W<\ell^2(\sX_{d,s}), \dim(W^\perp) = i\right\}\\
 \notag &\leq \max\left\{ \min \frac{\sE_{d,s}'(f,f)}{\|f\|_{\sX_d}^2}, f \in W \setminus\{0\}: W<\ell^2(\sX_{d}), \dim(W^\perp) = i \right\}\\
 \notag & \ll \max\left\{\min \frac{\sE_d(f,f)}{\|f\|_{\sX_d}^2}, f \in W \setminus \{0\}: W<\ell^2(\sX_d), \dim(W^\perp) =i\right\}\\
 &\notag= 1-\lambda_i.
\end{align}

\end{proof}

The spectral estimate of $(\sX_{d,s}, P_{d,s})$ is performed as follows. 
\begin{definition}
Let $M \geq 1$ be a parameter. Say that a configuration in $\sX_{d,s}$ is $M$-well-spaced if the labeled pieces $\sP_i$ and the empty square are spaced by distance at least $M$ in the $\ell^\infty$ metric. We use the abbreviations $\ws$ and $\nws$ for well-spaced and not-well-spaced.
\end{definition}
The space $\sX_{d,s}$ is split as the sum $\sX_{d,\ws} \oplus \sX_{d, \nws}$ into sets of labeled points which are $M$-well-spaced for some parameter $M$, or are  $M$-not-well-spaced.  For another parameter $M_1$, the operator $P_{d,s}^{M_1n^2}$ is shown to act smoothly and approximately independently on the well-spaced pieces, mapping the pieces to a neighborhood on a scale $\sqrt{M_1}$ independently, up to constant factors. The operator $P_{d,s}^{M_2 n^2}$ is shown to map essentially all of the mass from not-well-spaced points to well-spaced points.  Two cases are now considered.  Given an eigenfunction $f$, $\|f\|_2 = 1$, write $f = f_{\ws} + f_{\nws}$ as a sum of well-spaced and not-well-spaced points.  If $\|f_{\ws}\|_2 \gg 1$ then estimate
\begin{equation}
 \sE_{P_{d,s}^{M_1 n^2}}(f,f) \geq \frac{1}{2|\sX_{d,s}|}\sum_{x, y \ws} (f(x)-f(y))^2 P_{d,s}^{M_1n^2}(x,y).
\end{equation}
This Dirichlet form is then compared to a further symmetrized chain.
If $\|f_{\nws}\|_2 \geq 1-\epsilon$, use 
\begin{equation}
\left \langle \left(I-P_{d,s}^{M_2 n^2}\right) f, f \right\rangle \geq \left\langle \left(I - P_{d,s}^{M_2 n^2} \right) f_{\nws}, f_{\nws}\right\rangle + O(\|f_{\ws}\|_2).
\end{equation}
Since $P_{d,s}^{M_2n^2}$ moves most of the mass away from not-well-spaced points, this is bounded below by a constant.

In order to prove these estimates we first demonstrate some concentration estimates for simple random walk.

\subsubsection{Concentration estimates}
We use several concentration estimates relating the number of times the empty square and labeled pieces move and steps of the Markov chain. The exponential moment bounds used in these estimates are recorded in Appendix \ref{concentration_appendix}.

\begin{lemma}\label{empty_moves_concentration}
 Let $E_{N,i}$ be the number of times the empty square moves under $P_{d,s}$, not counting holds, up to the $N$th move of piece $i$.  There is a constant $c > 0$ such that, for $\lambda>1$, for $N\geq 1$,
 \begin{equation}
  \Prob\left(\left|E_{N,i} - \E[E_{N,i}] \right| > \lambda \sqrt{N} n^2\log n \right) \ll  e^{-c \lambda^2} +  e^{-c \lambda^{\frac{1}{2}}N^{\frac{1}{4}}}.
 \end{equation}

\end{lemma}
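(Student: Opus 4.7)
The plan is to write $E_{N,i}$ as a sum of i.i.d.\ renewals between consecutive moves of $\sP_i$, establish that each renewal has sub-exponential tail at scale $n^2 \log n$, and then apply a Bernstein-type concentration inequality.

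First, let $\tau_j$ denote the step-time of the $j$-th move of $\sP_i$ under $P_{d,s}$, and set $X_j := E_{\tau_j, i} - E_{\tau_{j-1}, i}$, the number of non-hold moves of $\sP_e$ during $(\tau_{j-1}, \tau_j]$. Applying the strong Markov property of $P_{d,s}$ at each $\tau_j$ and using translation symmetry of $(\zed/n\zed)^2$, the $X_j$ are i.i.d.\ and $E_{N,i} = \sum_{j=1}^N X_j$. The positions of the other labeled pieces do not enter the distribution of $X_j$ because, on the subsequence of non-hold moves, $\sP_e$ performs simple random walk on the torus regardless of overlaps: in every state the four possible directional moves of $\sP_e$ have equal weight among its non-hold steps.

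Next I analyze the distribution of $X_j$. Immediately after a move of $\sP_i$ the empty square is coincident with $\sP_i$, and whenever $\sP_e$ sits on $\sP_i$, a given non-hold move is a ``drag'' of $\sP_i$ with probability $\frac{1}{2}$. Hence
$$ X_j = 1 + G + \sum_{k=1}^G T_k, $$
where $G$ is a geometric random variable with parameter $\frac{1}{2}$ and $T_1, T_2, \ldots$ are i.i.d.\ copies of the hitting time from a nearest neighbor of the origin back to the origin under $\frac{1}{5}$-lazy simple random walk on $(\zed/n\zed)^2$ (counting non-hold moves). By Proposition \ref{stopping_time_prop}, $\E T_k = O(n^2)$. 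For the tail, the uniform bound $\sup_x \E_x[\tau_0] = O(n^2 \log n)$ from Lemma \ref{general_random_walk_hitting_time} combined with Markov's inequality iterated over disjoint length-$Cn^2 \log n$ time windows (as in the paragraph after Lemma \ref{general_random_walk_hitting_time}) yields $\Prob(T_k > s\, n^2 \log n) \ll e^{-cs}$. Geometric-sum bookkeeping then transfers this to $X_j$, giving $\E X_j = O(n^2)$, $\Var X_j \ll n^4 (\log n)^2$, and $\Prob(X_j > s\, n^2 \log n) \ll e^{-c s}$ for $s \geq 1$.

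Finally, the Chernoff variant for sums of i.i.d.\ exponentially-tailed variables (Lemma \ref{Chernoff_variant_lemma} of Appendix \ref{concentration_appendix}) applied with scale parameter $K = n^2 \log n$ produces a two-regime deviation bound; evaluating at $t = \lambda \sqrt{N}\, n^2 \log n$, the subgaussian (variance-controlled) regime contributes $\exp(-c \lambda^2)$ and the heavy-tailed (scale-controlled) regime contributes $\exp(-c \lambda^{1/2} N^{1/4})$, yielding the stated bound. I expect the main obstacle to be the uniform sub-exponential tail of $T_k$: the hitting time on the 2D torus is only sub-exponential, not subgaussian, and the iteration of Markov's inequality must be carried out carefully via the Markov property to obtain clean exponential decay at the scale $n^2 \log n$ rather than something slower.
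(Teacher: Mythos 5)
Your argument follows the same path as the paper's: write $E_{N,i}$ as a sum over renewal intervals between successive moves of $\sP_i$, show that each gap consists of a geometric number of SRW return times plus $O(1)$ and therefore has exponentially decaying tail at scale $n^2\log n$, and finish with the Chernoff variant of Lemma \ref{Chernoff_variant_lemma}. The explicit decomposition $X_j = 1 + G + \sum_{k=1}^G T_k$ and the observation that $\sP_e$'s non-hold moves form simple random walk regardless of overlaps with the other labeled pieces (so the other $\sP_\ell$ cannot influence the law of a gap) are useful elaborations of points the paper's proof compresses.

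One claim needs adjustment: that the $X_j$ are i.i.d.\ holds only for $j \geq 2$. At the start of those gaps $\sP_e$ sits on $\sP_i$, so the regenerative structure you describe applies; but $X_1$ runs from the arbitrary initial state, in which $\sP_e$ need not be coincident with $\sP_i$, so its law differs from (and typically stochastically dominates) that of $X_2, X_3, \ldots$. The paper handles precisely this by ``treating the error from the initial move and later ones separately'': the tail of $X_1$ is controlled by the uniform hitting-time bound of Lemma \ref{general_random_walk_hitting_time} together with the iterated Markov inequality, giving $|X_1 - \E X_1| = O(n^2 \log n)$ outside an event of exponentially small probability, which is absorbed into the deviation window $\lambda \sqrt{N}\, n^2 \log n$ since $\lambda > 1$ and $N \geq 1$, while the genuinely i.i.d.\ tail $\sum_{j=2}^N X_j$ is fed into Lemma \ref{Chernoff_variant_lemma}. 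Adding that split closes the argument.
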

\begin{proof}
 After the initial move, consider the number of moves of the empty piece prior to the next move of piece $i$.  To describe this, first flip a fair coin, and with probability $\frac{1}{2}$ move piece $i$ together with $\sP_e$, so that the number of $\sP_e$ moves alone is 0.  With probability $\frac{1}{2}$ $\sP_e$ leaves piece $i$ and now performs a  return to piece $i$ under simple random walk.  Thus the number of moves of $\sP_e$ per move of piece $i$ is a sequence of random variables independent of the first number of moves, and with distribution that is i.i.d.  with expected number of moves of order $n^2$.  Both this variable, and the length of the initial move, rescaled by $n^2 \log n$, have exponentially decaying tail. The claim now follows by treating the error from the initial move and later ones separately and using the variant of Chernoff's inequality in Lemma \ref{Chernoff_variant_lemma}.
\end{proof}
Arguing as above, there is a constant $c_1 >0$ such that, as $N \to \infty$,
\begin{equation}
 \E[E_{N,i}] = c_1 n^2 N + O(n^2 \log n),
\end{equation}
in which $O(n^2 \log n)$ bounds the expected length of the first move. 
Concentration of the length of the renewal process implies concentration of the number of renewals as follows. 
\begin{lemma}\label{piece_i_concentration}
 Let $W_{M,i}$ be the number of times piece $i$ moves in $M\gg n^2(\log n)^{12}$ moves of the empty square.  There is a constant $c > 0$, such that, for $1<\lambda<(\log n)^2$,
 \begin{equation}
  \Prob\left(\left|c_1n^2 W_{M,i} - M \right| > \lambda \sqrt{M} n \log n \right) \ll e^{-c \lambda^2}.
 \end{equation}

\end{lemma}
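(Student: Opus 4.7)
The plan is to deduce the concentration of $W_{M,i}$ from the concentration of $E_{N,i}$ established in Lemma \ref{empty_moves_concentration} by exploiting the renewal duality
\begin{equation}
\{W_{M,i} \geq N\} = \{E_{N,i} \leq M\}.
\end{equation}
First I would set the target deviation $\Delta = \lambda \sqrt{M} n \log n / c_1$ and introduce the two threshold values
\begin{equation}
N^{\pm} = \left\lceil \frac{M}{c_1 n^2} \pm \frac{\Delta}{n^2} \right\rceil,
\end{equation}
so that the events $\{c_1 n^2 W_{M,i} - M > \lambda \sqrt{M} n \log n\}$ and $\{c_1 n^2 W_{M,i} - M < -\lambda \sqrt{M} n \log n\}$ are contained in $\{E_{N^+,i} \leq M\}$ and $\{E_{N^-,i} \geq M\}$ respectively. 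Using $\E[E_{N,i}] = c_1 n^2 N + O(n^2 \log n)$, the signed gap $M - \E[E_{N^\pm,i}] = \mp \Delta \cdot c_1 + O(n^2 \log n)$. The hypothesis $M \gg n^2 (\log n)^{12}$ together with $\lambda > 1$ gives $\Delta \gg n^2 (\log n)^7$, so the $O(n^2 \log n)$ error is absorbed into (say) half of the main deviation.

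Next I would apply Lemma \ref{empty_moves_concentration} at $N = N^\pm$ to bound each event by $e^{-c \lambda'^2} + e^{-c (\lambda')^{1/2} (N^\pm)^{1/4}}$, where $\lambda'$ is the effective scaled deviation. Since $N^\pm \asymp M/n^2$, one checks $\sqrt{N^\pm} n^2 \log n \asymp \sqrt{M} n \log n$, so the effective $\lambda'$ is a fixed constant multiple of $\lambda$, and the first term contributes the desired $e^{-c\lambda^2}$.

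The key remaining step, which I expect to be the only delicate point, is to absorb the secondary error term $e^{-c(\lambda')^{1/2}(N^\pm)^{1/4}}$ into $e^{-c\lambda^2}$. This requires $(N^\pm)^{1/4} \gg \lambda^{3/2}$, i.e.\ $N^\pm \gg \lambda^6$. Since the hypothesis forces $N^\pm \gg (\log n)^{12}$ while the range of $\lambda$ is restricted to $\lambda < (\log n)^2$, one has $\lambda^6 < (\log n)^{12} \ll N^\pm$, so this term is dominated by $e^{-c\lambda^2}$ (after adjusting the constant $c$). Combining the two one-sided estimates yields the stated two-sided concentration.
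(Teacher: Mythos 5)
Your proof is correct and follows essentially the same route as the paper: both invoke the renewal duality between $W_{M,i}$ and $E_{N,i}$, set the same thresholds $N^\pm \approx M/(c_1n^2) \pm \lambda\sqrt{M}\log n/(c_1 n)$, absorb the $O(n^2\log n)$ bias in $\E[E_{N,i}]$ into the main deviation using $M \gg n^2(\log n)^{12}$, and use $\lambda < (\log n)^2$ together with $N^\pm \gg (\log n)^{12}$ to show the secondary term $e^{-c\lambda^{1/2}(N^\pm)^{1/4}}$ from Lemma \ref{empty_moves_concentration} is dominated by $e^{-c\lambda^2}$. No gaps.
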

\begin{proof}
The event $W_{M,i} \leq k$ (piece $i$ moves at most $k$ times in the first $M$ moves of the empty square) is contained in $E_{k+1, i} \geq M+1$ (the number of moves of the empty square when piece $i$ has moved $k+1$ times is at least $M+1$).  Similarly, $W_{M, i} \geq k$ is contained in $E_{k,i} \leq M$.  Thus
\begin{align}
 &\Prob\left(|c_1 n^2 W_{M,i} - M| > \lambda\sqrt{M} n \log n \right)\\
 \notag &\leq  \Prob\left(E_{k+1,i}\geq M+1: k = \frac{M}{c_1n^2} - \frac{\lambda \sqrt{M}}{c_1n} \log n\right)\\ \notag&+  \Prob\left(E_{k,i} \leq M: k = \frac{M}{c_1n^2} + \frac{\lambda \sqrt{M}}{c_1n} \log n \right).
\end{align}
We have $\E[E_{k,i}] = c_1 n^2 k + O(n^2 \log n)$ and thus in the first case
\begin{equation}
 M+1 - \E[E_{k+1,i}] = \lambda \sqrt{M} n \log n + O(n^2 \log n)
\end{equation}
while in the second case
\begin{equation}
 \E[E_{k,i}] - M = \lambda\sqrt{M} n \log n + O(n^2 \log n).
\end{equation}
Thus, $\lambda$ in the previous lemma can be taken to within constants in this lemma.  Since $\lambda = O((\log n)^2)$ while $k \asymp \frac{M}{n^2}$, of the two bounds $\lambda^2$ and $\lambda^{\frac{1}{2}} k^{\frac{1}{4}}$, the first is smaller.
\end{proof}
The following lemma bounds the maximum displacement of piece $i$ as it moves.
\begin{lemma}\label{displacement_concentration}
 Let $X_{i,1}, X_{i,2}, X_{i,3}, ...$ be the sequence of moves of piece $i$ and let 
 \begin{equation}
  \Sigma_{i,n} = \sum_{j=1}^n X_{i,j}
 \end{equation}
be its partial sum.  We have, for $T < n$, for some $c>0$, for $\lambda > 1$,
\begin{equation}
 \Prob\left(\max_{1 \leq j \leq T} \left|\Sigma_{i,j}\right|> \lambda \sqrt{T} \right) \ll e^{-c \lambda^2}.
\end{equation}
In particular, it holds with overwhelming probability that for any $1 \leq a < b \leq n^3$ that $|\Sigma_{i,b} - \Sigma_{i,a}| \leq \sqrt{b-a} \log n$.
\end{lemma}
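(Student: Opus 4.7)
The plan is to reduce the claim to Doob's maximal inequality applied to a random walk with bounded, mean-zero, i.i.d.\ increments. The first step is to observe that, by the transition probabilities for $P_{d,s}$ recorded in Section~\ref{list_of_chains}, whenever piece~$i$ actually moves its direction is uniformly distributed on $\{(\pm 1,0),(0,\pm 1)\}$: in every configuration in which $\sP_i$ is eligible to move, it transitions to each of its four neighbours with the same probability $\tfrac{1}{10}$. Invoking the strong Markov property at the successive move indices of piece~$i$, the sequence $X_{i,1}, X_{i,2}, \ldots$ is thus i.i.d.\ uniform on $\{(\pm 1,0),(0,\pm 1)\}$, so that each coordinate of $\Sigma_{i,j}$ is a mean-zero random walk with increments bounded by~$1$. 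The hypothesis $T<n$ ensures the scale $\sqrt{T}$ of interest is well below the side length of the torus, so for the event in question one may safely identify $\Sigma_{i,j}$ with its lift to $\zed^2$.

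Given this structure, I would apply Doob's inequality to the exponential supermartingale $\exp\!\bigl(\xi \Sigma_{i,j}^{(k)} - j\psi(\xi)\bigr)$, where $\Sigma_{i,j}^{(k)}$ denotes the $k$-th coordinate ($k\in\{1,2\}$) and $\psi(\xi)=\log\E[e^{\xi X_{i,1}^{(k)}}]$. Since $|X_{i,1}^{(k)}|\leq 1$, the Hoeffding-type bound $\psi(\xi)\leq \xi^2/2$ holds, and optimizing at $\xi=x/T$ in $\Prob(\max_{1\leq j\leq T}\Sigma_{i,j}^{(k)}\geq x)\leq e^{-\xi x+T\psi(\xi)}$ yields $\Prob(\max_{1\leq j\leq T}\Sigma_{i,j}^{(k)}\geq x)\leq e^{-x^2/(2T)}$. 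Handling the lower tail symmetrically and unioning over $k\in\{1,2\}$, the event $\max_{1\leq j\leq T}|\Sigma_{i,j}|>\lambda\sqrt{T}$ forces some coordinate to exceed $\lambda\sqrt{T/2}$, which has probability at most $4\exp(-\lambda^2/4)$, giving the claimed bound with, say, $c=1/5$.

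For the ``in particular'' statement, the strong Markov property applied at move index $a$ shows that $\Sigma_{i,b}-\Sigma_{i,a}$ has the same distribution as $\Sigma_{i,b-a}$, so the first part with $T=b-a$ and $\lambda=\log n$ gives a failure probability $\ll e^{-c(\log n)^2}$ for each fixed pair. A union bound over the at most $n^6$ pairs with $1\leq a<b\leq n^3$ costs only a polynomial factor in $n$, which is absorbed by the super-polynomial decay of $e^{-c(\log n)^2}$; pairs with $b-a \geq n^2$ may be handled trivially since $\sqrt{b-a}\log n$ then already exceeds the diameter of the torus. The only subtle point in the argument is verifying that, conditional on piece~$i$ moving, its direction is uniform on the four unit vectors, which is precisely the symmetry built into $P_{d,s}$; once that is in place, the rest of the proof is an entirely standard exponential-martingale concentration estimate.
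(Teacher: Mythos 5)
Your main argument is correct and, while it arrives at the same conclusion, takes a genuinely different route from the paper. The paper's proof applies the reflection principle (Lemma~\ref{sup_bound_lemma}) coordinate-wise, together with the Bernoulli Chernoff bound from the preceding lemma; you instead use Doob's maximal inequality for the exponential supermartingale $\exp\bigl(\xi\Sigma_{i,j}^{(k)} - j\psi(\xi)\bigr)$ together with Hoeffding's lemma. Your route has the small advantage of handling the $\{-1,0,+1\}$-valued coordinates of $X_{i,j}$ directly, whereas Lemma~\ref{sup_bound_lemma} as stated is for $\pm 1$ increments and strictly requires an extra reduction (e.g.\ conditioning on the set of indices at which the given coordinate actually changes). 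Your observation that, conditioned on moving, $\sP_i$ transitions to a uniform neighbour, so that by the strong Markov property $X_{i,1}, X_{i,2},\dots$ is i.i.d.\ uniform on the four unit vectors, is precisely the content of Lemma~\ref{piece_move_lemma} in the paper.

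The ``in particular'' clause, however, has a real gap as you have written it. You split into $b-a<n$ (covered by the first part) and $b-a\geq n^2$ (claimed ``trivial'' by a diameter argument), leaving the intermediate range $n\leq b-a<n^2$ unaddressed. Moreover the diameter argument does not apply at all: $\Sigma_{i,j}$ is the running sum of moves, a process on $\zed^2$ rather than on the torus, so $|\Sigma_{i,b}-\Sigma_{i,a}|$ can be as large as $b-a$ and is not bounded by the torus diameter $\asymp n$. The fix is simply to notice that your maximal-inequality estimate in fact holds for every $T$ --- the restriction $T<n$ plays no role in the supermartingale derivation, since $\Sigma_{i,j}$ never wraps around --- and then the union bound over the $O(n^6)$ pairs $(a,b)$ with $\lambda=\log n$ and $T=b-a$ goes through exactly as you wrote for the first range.
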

\begin{proof}
 The distribution of the moves of piece $i$ are those of simple random walk.  The claim now follows from Lemma \ref{sup_bound_lemma} applied separately to the $x$ and $y$ coordinates.
 
 The last claim holds by making a union bound.
\end{proof}

\begin{lemma}\label{empty_square_concentration}
 Let $K_N$ be the number of transitions of $P_{d,s}$ when the empty square has moved $N$ times.  For $ n^2(\log n)^{12}\ll N \ll n^3$, for $1<\lambda \leq (\log n)^2$, for some $c>0$,
 \begin{equation}
 \Prob\left(\left|K_N - \frac{5}{2}N\right| > \lambda \sqrt{N} \right) \ll e^{-c\lambda^2}.
 \end{equation}
\end{lemma}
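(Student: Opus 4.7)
My plan is to combine a martingale decomposition for $K_N$ with a separate bound showing that the ``exceptional'' steps (those at which $\sP_e$ overlaps a labeled piece) are rare.

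Let $\tau_0=0 < \tau_1 < \tau_2 < \dots$ be the transition numbers of $P_{d,s}$ at which $\sP_e$ makes its successive moves, so $K_N=\tau_N$ and $T_j:=\tau_j-\tau_{j-1}$. Conditionally on the history $\sF_{j-1}$ up to $\tau_{j-1}$, the waiting time $T_j$ is geometric with parameter $p_j\in\{2/5,4/5\}$: $p_j=2/5$ when $\sP_e$ is alone at $\tau_{j-1}$ and $p_j=4/5$ when $\sP_e$ overlaps at least one labeled piece (holds during $(\tau_{j-1},\tau_j)$ never change the configuration, so the overlap status is constant on each interval $[\tau_{j-1},\tau_j)$). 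Writing $1/p_j=5/2-(5/4)\mathbf{1}[p_j=4/5]$ gives the decomposition
\begin{equation}
K_N = \tfrac52 N - \tfrac54 N_{\mathrm{over}}' + M_N,\qquad M_N=\sum_{j=1}^N \bigl(T_j-\E[T_j\mid\sF_{j-1}]\bigr),
\end{equation}
where $N_{\mathrm{over}}'=|\{1\le j\le N:p_j=4/5\}|$ and $M_N$ is a martingale whose increments have conditional variance at most $15/4$ and sub-exponential tails of scale $O(1)$.

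The first step is to control $M_N$. Because the increments are centered, conditionally sub-exponential, and have uniformly bounded conditional variance, a Freedman/Bernstein-type martingale inequality (obtainable from the variant of Chernoff's inequality in Appendix~\ref{concentration_appendix} applied conditionally on $\sF_{j-1}$) yields
\begin{equation}
\Prob\bigl(|M_N|>\tfrac{\lambda}{2}\sqrt{N}\bigr)\ll e^{-c\lambda^{2}}
\end{equation}
for all $1<\lambda\le \sqrt{N}$; in the stated range $\lambda\le(\log n)^2\le \sqrt{N}$, so this covers us.

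The second step is to show that $N_{\mathrm{over}}'$ is negligible compared to $\sqrt{N}$. For each $k=1,\dots,d$, I would verify that the relative displacement $U_j^{(k)}:=X_{k,\tau_j}-Y_{\tau_j}$ (with $X_{k,\cdot},Y_\cdot$ the positions of $\sP_k$ and $\sP_e$) is a \emph{Markov} chain on $(\zed/n\zed)^2$: off of $0$, $\sP_k$ cannot move (since $\sP_e\ne\sP_k$), so $U^{(k)}$ takes a uniform $\pm e_1,\pm e_2$ step; at $0$, a direct reading of the transition rules of $P_{d,s}$ (carefully handling the possibility that $\sP_e$ simultaneously overlaps a second piece) shows that $U^{(k)}$ stays at $0$ with probability exactly $\tfrac12$ and otherwise takes a uniform $\pm e_1,\pm e_2$ step. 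By Proposition~\ref{stopping_time_prop} and the remark following it, the return time of this chain to $0$ has mean $\asymp n^{2}$ and variance $\ll n^{4}(\log n)^{2}$, so a standard renewal argument combined with Lemma~\ref{Chernoff_variant_lemma} applied to the sums of excursion lengths gives
\begin{equation}
N_{\mathrm{over},k}:=|\{0\le j<N:U_j^{(k)}=0\}|\ll \frac{N}{n^2}+(\log n)^{3}
\end{equation}
with overwhelming probability. A union bound over the $d$ pieces yields $N_{\mathrm{over}}'\ll dN/n^2+(\log n)^3$. In the range $N\ll n^{3}$, this is $O(n)$, which, since $\sqrt{N}\ge n(\log n)^{6}$, is $o(\sqrt{N})$ and in particular is at most $\tfrac{\lambda}{2}\sqrt{N}$ for any $\lambda>1$ once $n$ is large.

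Combining the two bounds gives $|K_N-\tfrac52 N|\le |M_N|+\tfrac54 N_{\mathrm{over}}'\le \lambda\sqrt{N}$ except on an event of probability $\ll e^{-c\lambda^{2}}$, which is the claim. The main obstacle is the second step: correctly identifying $U^{(k)}$ as a Markov chain on $(\zed/n\zed)^2$ despite the coupled dynamics of multiple pieces, and then applying the return-time asymptotics from Proposition~\ref{stopping_time_prop} sharply enough that the bias $N_{\mathrm{over}}'$ is beaten by the target $\sqrt{N}\ge n(\log n)^{6}$; the martingale concentration itself is routine.
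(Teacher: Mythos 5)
Your proposal is correct and reaches the same conclusion, but it organizes the argument differently. The paper's proof is short because it reuses Lemma~\ref{piece_i_concentration}: piece $i$ moves $\ll N/n^2$ times with the required probability, and from this it deduces (somewhat implicitly, via the geometric structure of overlap intervals) that only $\ll N/n^2$ of the $N$ empty-square moves are taken from an overlapping position; then it applies the Chernoff variant of Lemma~\ref{Chernoff_variant_lemma} to the sum of the remaining geometric waits directly. You instead (a) make the decomposition of $K_N$ into $\tfrac52 N - \tfrac54 N_{\mathrm{over}}' + M_N$ explicit via a martingale, and (b) bound $N_{\mathrm{over}}'$ from scratch by identifying the relative displacement $U^{(k)}_j = X_{k,\tau_j}-Y_{\tau_j}$ as an autonomous Markov chain on $(\zed/n\zed)^2$ (simple random walk off $0$, $\tfrac12$-lazy at $0$) and counting its visits to $0$ by a renewal argument. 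That Markov-chain identification is correct — the marginal transition of $U^{(k)}$ at $0$ is the same whether $\sP_e$ overlaps one or two pieces — and it is in fact the same structural fact the paper uses, but buried inside the proof of Lemma~\ref{empty_moves_concentration} rather than stated as a clean Markov chain. What your route buys is transparency: the paper's ``It follows that we may assume\ldots'' step, which connects the number of moves of $\sP_i$ to the number of overlap waits, is glossed over, whereas your $U^{(k)}$ chain directly counts the overlap waits. What it costs is self-containedness: you do not get to reuse Lemma~\ref{piece_i_concentration}, and you cite Proposition~\ref{stopping_time_prop} and Lemma~\ref{Chernoff_variant_lemma} for situations slightly outside their literal scope (the return time statement is for $\tfrac15$-lazy SRW, not for the chain which is lazy only at $0$; the Chernoff variant is for i.i.d.\ sums, not martingale differences). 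Both gaps are routinely filled — the former by comparing $U^{(k)}$ to SRW via the stationary measure $\pi(0)=\tfrac{2}{n^2+1}$, the latter by conditioning on the full sequence of overlap indicators (which is independent of the waits) and then applying the i.i.d.\ Chernoff bound conditionally — but you should state these adaptations rather than appealing to the cited results directly.
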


\begin{proof}
 By Lemma \ref{piece_i_concentration}, piece $i$ moves at most $\ll \frac{N}{n^2}$ times with probability $1 - O(e^{-c\lambda^2})$.    It follows that we may assume that the empty piece is moving from a position that it occupies alone on at least $N - O(\frac{N}{n^2})$ of its moves, by excluding a set of probability having the required measure.
 
 The wait between moves when the empty square is on a position alone is a geometric random variable of mean $\frac{5}{2}$.  When it occupies a position with another piece it is a geometric variable of mean $\frac{5}{4}$. The number of transitions when the empty square overlaps another piece may be absorbed into the claimed error, since with the claimed probability it is bounded by $O\left(\frac{N(\log n)^2}{n^2} \right)$.  By the variant of Chernoff's inequality for variables of exponentially decaying tails (Lemma \ref{Chernoff_variant_lemma}) the claim follows, since $\lambda^2 \leq \lambda^{\frac{1}{2}} N^{\frac{1}{4}}$.
\end{proof}




Define a stopping time 
\begin{equation}
 T_m = \min\{k \geq 1: \text{ at step $k$ } \sP_e \text{ has moved $m$ times}\}.
\end{equation}
\begin{lemma}\label{occupation_time_concentration}
 The following holds w.o.p.  
 
 Let $(\log n)^{4} < K < n$.  For any $t \geq 0$, given any initial position $x_t \in \sX_{d,s}$, for any $(i,j)$ at distance $> \sqrt{K}\log n$ from each labeled piece in $x_t$, the number of steps of the chain which the empty square spends at $(i,j)$  between $T_t$ and $T_{t + Kn^2}$ is $\frac{5}{2}K + O\left(K^{\frac{1}{2}}(\log n)^{2} \right).$
\end{lemma}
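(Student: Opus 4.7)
The plan is to decompose the total time $\sP_e$ spends at $(i,j)$ into (number of visits) $\times$ (hold duration per visit), and concentrate each factor separately. The key simplification is that the distance hypothesis on $(i,j)$ will let us assume that during the window no labeled piece ever reaches $(i,j)$, so each visit of $\sP_e$ to $(i,j)$ contributes an independent $\mathrm{Geometric}(2/5)$ hold.

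First I would show that w.o.p.\ no labeled piece $\sP_\ell$ occupies $(i,j)$ during $[T_t, T_{t+Kn^2}]$. By Lemma \ref{piece_i_concentration} with $M = Kn^2$, piece $\sP_\ell$ makes at most $W_\ell = K/c_1 + O(\sqrt{K}\log n)$ moves in the window, w.o.p. Since, conditioned to move, $\sP_\ell$ performs simple random walk on $(\zed/n\zed)^2$, Lemma \ref{displacement_concentration} applied to its $x$- and $y$-coordinates bounds the maximum displacement of $\sP_\ell$ over the entire window by $O(\sqrt{W_\ell}\log n) = O(\sqrt{K}\log n)$ w.o.p. As $(i,j)$ is at distance strictly greater than $\sqrt{K}\log n$ from each labeled piece in $x_t$, this event ensures $\sP_\ell$ never reaches $(i,j)$. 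Condition on this ``no-collision'' event.

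Next I would estimate the number of visits $V$ of $\sP_e$ to $(i,j)$ during the $Kn^2$ empty-square moves in the window. Conditioned to move, $\sP_e$ performs simple random walk on $(\zed/n\zed)^2$, so by the strong Markov property the successive inter-visit return times at $(i,j)$ are i.i.d.\ with mean $n^2$ and, by the exponential decay of $\Prob_x(\tau_0 > c n^2\log n)$ noted after Lemma \ref{general_random_walk_hitting_time}, sub-exponential tails on the scale $n^2\log n$. Applying Lemma \ref{Chernoff_variant_lemma} to partial sums of return times and inverting to count completed returns up to $Kn^2$ moves of $\sP_e$ yields $V = K + O(\sqrt{K}(\log n)^{3/2})$ w.o.p. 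On the no-collision event, the individual hold durations at $(i,j)$ are i.i.d.\ $\mathrm{Geometric}(2/5)$, independent of the random walk and hence of $V$, each with mean $5/2$ and exponential tail. A final application of Lemma \ref{Chernoff_variant_lemma} to the sum of $V$ such holds gives total time $= \tfrac{5}{2}V + O(\sqrt{V}\log n) = \tfrac{5}{2}K + O(\sqrt{K}(\log n)^{3/2})$ w.o.p.

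A union bound over the at most $n^2$ admissible sites $(i,j)$ costs only an additional $\sqrt{\log n}$ factor in the Chernoff parameter, which is absorbed into the stated $O(K^{1/2}(\log n)^2)$ error. The main technical obstacle is verifying, uniformly in $n$, the sub-exponential tail hypothesis of Lemma \ref{Chernoff_variant_lemma} for the return times and for the holds, along with the conditional independence required to multiply the two concentration statements; both follow from the strong Markov property of $P_{d,s}$ together with the hitting-time bounds already established above.
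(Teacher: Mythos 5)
Your proposal is correct and follows essentially the same route as the paper: bound the labeled pieces' displacements via Lemma \ref{piece_i_concentration} and Lemma \ref{displacement_concentration} so that $(i,j)$ stays unoccupied, count visits of $\sP_e$ to $(i,j)$ via the return-time concentration (Lemma \ref{Chernoff_variant_lemma}), and then concentrate the sum of geometric holding times. The only cosmetic difference is that the paper tracks two lists of geometric variables (for occupied/unoccupied visits) and unions rather than conditioning on the no-collision event as you do, but both accountings are valid and yield the same $O(K^{1/2}(\log n)^2)$ error.
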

\begin{proof}
 Sampled at the times $T_t$, $T_{t+1}, T_{t+2}, ...$ the empty square is performing simple random walk.  With probability $1 + O_A(n^{-A})$, the first visit of the stopped process to $(i,j)$ is made in time $O(n^2 (\log n)^2)$. After the initial visit, the returns to $(i,j)$ of the stopped process have independent length, which is mean $n^2$ and variance $O(n^4 (\log n)^2)$, and has exponentially decaying tail when rescaled by $n^2 \log n$.  It follows that by the variant of Chernoff's inequality in Lemma \ref{Chernoff_variant_lemma} that, w.o.p. between $T_t$ and $T_{t + Kn^2}$ the number of visits of the stopped process to $(i,j)$ is $K + O(K^{\frac{1}{2}} (\log n)^2)$.  
 
 By Lemma \ref{piece_i_concentration}, the number of times that each labeled piece moves between $T_t$ and $T_{t + Kn^2}$ is w.o.p. bounded by a constant times $K$.  Also, w.o.p. the maximum displacement of any labeled piece is, by Lemma \ref{displacement_concentration}, $\ll \sqrt{K} \log n$. 
 
 On the empty piece's visits to position $(i,j)$, the holding time is a geometric random variable of mean $\frac{5}{2}$ if $(i,j)$ is unoccupied and of mean $\frac{5}{4}$ if the position is occupied by a labeled piece.  Keep separate lists of geometric random variables for unoccupied and occupied moves.  By the variant of Chernoff's inequality for geometric random variables, the sum of the first $K-CK^{\frac{1}{2}}(\log n)^2$ geometric random variables at $(i,j)$ when $(i,j)$ is unoccupied is at least $\frac{5}{2}K - O(K^{\frac{1}{2}}(\log n)^2)$ w.o.p. Similarly, the sum of the first $ K + C K^{\frac{1}{2}}(\log n)^2$ geometric random variables at $(i,j)$ when $(i,j)$ is unoccupied is $\frac{5}{2}K + O(K^{\frac{1}{2}}(\log n)^2)$ w.o.p. By a union bound, w.o.p. between $T_t$ and $T_{t + Kn^2}$, no $(i,j)$ which has distance at least $\sqrt{K}\log n$ from the position of a labeled piece in $x_t$ is occupied by a labeled piece during the time interval, and the number of transitions of the Markov chain with the empty square at $(i,j)$ is $\frac{5}{2}K + O(K^{\frac{1}{2}}(\log n)^2)$.

\end{proof}

\subsection{Spectral estimates}
Recall the heat kernel 
\begin{equation}
 H_t = e^{-t} \sum_{k=0}^\infty \frac{t^k P_{d,s}^k}{k!} = \sum_{\lambda \in \sigma(P_{d,s})} e^{t(\lambda-1)} v_\lambda v_\lambda^t.
\end{equation}

Let $\phi$ be a smooth bump function, $\phi \geq 0$, $\int \phi =1$, $\supp(\phi) \subset [1,2]$, and define
\begin{align}
 \Phi_t &= \int_1^2 \phi(s) H_{ts}ds,\\
 \notag \Phi_t &= \sum_{\lambda \in \sigma(P_{d,s})} v_\lambda v_\lambda^t \hat{\phi}(t(1-\lambda))
\end{align}
where $\hat{\phi}(z) = \int_1^2 \phi(s) e^{-sz}ds$ is the Laplace transform.  Write
\begin{equation}
 \Phi_t = \sum_k \omega_t\left(k\right)P_{d,s}^k.
\end{equation}

The definition of $\omega_t(k)$ may be extended to a function of a non-negative real variable $k$ via the integral representation,
 \begin{equation}
  \omega_t(k) = \int_1^2 \phi(s) \frac{e^{-st}(st)^k}{\Gamma(k+1)} ds.
 \end{equation}
In Appendix \ref{concentration_appendix} the following lemma is proved regarding the weights $\omega_t(k)$.

\begin{lemma}\label{functional_concentration}
 The function $\omega_t(k)$ satisfies $\omega_t(k) \ll \frac{1}{t}$ and the first discrete derivative satisfies
 \begin{equation}
  |D \omega_t(k)| \ll \frac{1}{t^2}.
 \end{equation}
For each fixed $\epsilon > 0$, for some $c>0$,
\begin{equation}
 1-\sum_{(1-\epsilon)t \leq k \leq (2+\epsilon)t} \omega_t(k) \ll e^{-c \epsilon t}.
\end{equation}

\end{lemma}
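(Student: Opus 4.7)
The plan is to recognize that, for fixed $k$, the function $p_t(s,k) := e^{-st}(st)^k/\Gamma(k+1)$ appearing in the integrand is (as a function of $u = st$) exactly the Gamma density $\gamma_k(u) = e^{-u}u^k/k!$. This reduces all three claims to elementary manipulations.

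\emph{Step 1: bound on $\omega_t$.} Substituting $u = st$ in the definition gives
\[
 \omega_t(k) = \frac{1}{t}\int_t^{2t} \phi(u/t)\,\gamma_k(u)\,du.
\]
Since $\gamma_k$ is a probability density on $\bR^+$, its integral over any set is at most $1$, and $\phi$ is bounded, so $\omega_t(k) \leq \|\phi\|_\infty/t$. This gives the $O(1/t)$ bound.

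\emph{Step 2: bound on the discrete derivative.} The Gamma density satisfies the pointwise identity $\gamma_{k+1}'(u) = \gamma_k(u) - \gamma_{k+1}(u)$, which translates via $u = st$ to
\[
 p_t(s,k+1)-p_t(s,k) = -\tfrac{1}{t}\tfrac{\partial}{\partial s}p_t(s,k+1).
\]
Integration by parts in $s$ (the boundary terms vanish because $\phi$ is smooth and supported in $[1,2]$, hence $\phi(1)=\phi(2)=0$) yields
\[
 D\omega_t(k) = \int_1^2 \phi(s)\bigl(p_t(s,k+1)-p_t(s,k)\bigr)\,ds = \frac{1}{t}\int_1^2 \phi'(s)\,p_t(s,k+1)\,ds.
\]
Applying Step 1 with $\phi$ replaced by $\phi'$ bounds the remaining integral by $\|\phi'\|_\infty/t$, giving $|D\omega_t(k)| \ll 1/t^2$.

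\emph{Step 3: concentration.} Since $\int\phi=1$, we have $\sum_{k\geq 0}\omega_t(k) = 1$, so it suffices to bound the two tails. For $s\in[1,2]$ and $k\leq (1-\epsilon)t$, one has $k/(st) \leq 1-\epsilon$, so the Cram\'er/Chernoff bound for Poisson$(st)$ yields
\[
 \sum_{k < (1-\epsilon)t}\frac{e^{-st}(st)^k}{k!} \leq \exp\bigl(-st\,I(1-\epsilon)\bigr)\leq e^{-tI(1-\epsilon)},
\]
with $I(x)=x\log x - x + 1 > 0$ for $x\neq 1$, uniformly in $s\in[1,2]$. For $k\geq (2+\epsilon)t$ and $s\in[1,2]$, $k/(st) \geq 1+\epsilon/2$, and the symmetric Chernoff estimate gives decay of the form $e^{-tJ(1+\epsilon/2)}$ with $J>0$. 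Integrating against $\phi$ and adding yields the claimed $e^{-c\epsilon t}$ bound with $c = \min(I(1-\epsilon), J(1+\epsilon/2))$.

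There is no genuine obstacle here; the only trick worth highlighting is the recognition that averaging a Poisson mass against a smooth weight in $s$ converts it into a density-in-$u$ integral, which is what produces the $1/t$ improvement over the pointwise Stirling bound $O(1/\sqrt t)$, and that the discrete Poisson difference identity $\gamma_{k+1}'=\gamma_k-\gamma_{k+1}$ lets one trade a discrete $k$-derivative for a continuous $s$-derivative, after which integration by parts gains another factor $1/t$.
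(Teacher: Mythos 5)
Your proof is correct and takes a genuinely cleaner route than the paper's for the first two bounds. The paper proves $\omega_t(k)\ll 1/t$ by inserting the Stirling approximation of Lemma \ref{stirling_approx_lemma} into the integrand, splitting into the range $I_k = [-\sqrt{k}\log t/t,\sqrt{k}\log t/t]$ and its complement, and integrating the resulting Gaussian; for $|D\omega_t(k)|$ it differentiates continuously in $k$, invokes the digamma asymptotic $\Gamma'/\Gamma(k+1)=\log k+O(1/k)$, Taylor expands $\log(1+(st-k)/k)$, and exploits the vanishing of an odd integral. You instead recognize that $e^{-u}u^k/k!$ is, for fixed $k$, the Gamma$(k+1,1)$ density $\gamma_k(u)$; the substitution $u=st$ turns $\omega_t(k)$ into $t^{-1}\int\phi(u/t)\gamma_k(u)\,du$, and the total-mass-$\le 1$ bound gives $\ll 1/t$ with no asymptotics at all. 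For the discrete derivative you use the exact identity $\gamma_{k+1}'=\gamma_k-\gamma_{k+1}$ to convert the discrete $k$-difference into an $s$-derivative and integrate by parts (boundary terms vanish since $\phi$ is a bump supported in $[1,2]$), gaining the extra $1/t$ from $\|\phi'\|_\infty$; this replaces the paper's digamma estimate and parity argument with a two-line calculation. For the tail bound your Poisson Chernoff estimate is essentially the paper's Lemma \ref{heat_kernel_concentration_lemma} (your rate function $I(x)=x\log x-x+1$ is their $f(x-1)$), though you should note explicitly the monotonicity argument that lets you replace the deviation rate $I((1-\epsilon)/s)$ by the uniform lower bound $I(1-\epsilon)$ for $s\in[1,2]$ — this is elementary but is the step that removes the $s$-dependence before integrating against $\phi$. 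Overall your approach is exact where the paper's is asymptotic, and buys a shorter, more transparent proof at the cost of not also yielding the pointwise Gaussian approximation to $\omega_t(k)$, which the paper's method gives essentially for free but which this lemma does not require.
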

We also use the following distributional result.
\begin{lemma}\label{piece_move_lemma}
 Let $X_{i,m}$ be the displacement of piece $i$ after it has moved $m$ times.  The joint distribution of
 \begin{equation}
  X_m = (X_{1,m}, ..., X_{d,m})
 \end{equation}
 is the same as that of $d$ independent simple random walks on $(\zed/n\zed)^2$ of length $m$.
\end{lemma}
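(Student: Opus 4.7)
The plan is to decompose each transition of the chain $P_{d,s}$ into two conditionally independent pieces --- a \emph{structure} (telling us whether nothing moves, $\sP_e$ moves alone, or $\sP_e$ moves together with some $\sP_i$) and a \emph{direction} (a uniform unit vector) --- and then read off the joint distribution of the $X_{i,m}$ from this factorization.

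The first step is to verify, case by case from the definition of $P_{d,s}$, that the direction factors out uniformly. When $\sP_e$ occupies a site alone, each of the four outgoing moves has probability $\tfrac{1}{10}$; when $\sP_e$ overlaps a single $\sP_i$, each of the four ``$\sP_e$-alone'' and four ``$\sP_i$-with-$\sP_e$'' moves has probability $\tfrac{1}{10}$; and when $\sP_e$ overlaps $\sP_i,\sP_j$, each of the eight paired moves has probability $\tfrac{1}{10}$. So in each case, conditional on the structure of the move, the direction is uniform over $\{(\pm1,0),(0,\pm1)\}$.

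This allows the following coupled construction. Sample i.i.d.\ uniform unit vectors $D_1,D_2,\dots$ independently of all other randomness. Inductively, having produced $X_0,\dots,X_t$, draw a structure $S_{t+1}$ from its conditional distribution given $X_t$, and set $X_{t+1}$ to be the state obtained by applying $S_{t+1}$ with direction $D_{t+1}$ to $X_t$. The resulting process has the law of $P_{d,s}$, and $D_{t+1}$ is independent of the $\sigma$-algebra generated by $X_0$ together with $S_1, D_1, \ldots, S_t, D_t, S_{t+1}$ at the moment it is used.

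Let $T_{i,k}$ be the step at which $\sP_i$ makes its $k$th move; this is a stopping time, and since only one labeled piece moves per step, the random index sets $\{T_{i,k}\}_{k\geq 1}$ are pairwise disjoint across $i$. The event $\{T_{i,k}=t+1\}$ is measurable with respect to $\sigma(X_0, S_1, D_1, \dots, S_t, D_t, S_{t+1})$ and is therefore independent of $D_{t+1}$. Setting $Y_{i,k}=D_{T_{i,k}}$, the standard optional-sampling statement for i.i.d.\ sequences (extracting terms at disjoint stopping-time indices preserves the i.i.d.\ law) shows that $\{Y_{i,k} : 1\leq i\leq d,\; k \geq 1\}$ is i.i.d.\ uniform over the four unit vectors. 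Since $X_{i,m}=\sum_{k=1}^{m} Y_{i,k}$, the conclusion follows. The only real subtlety is this last optional-sampling step; the case-by-case verification of uniformity of directions is routine, as is the coupled construction.
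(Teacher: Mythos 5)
Your proof is correct and uses essentially the same idea as the paper's: introduce an auxiliary decomposition of each step into a choice of which piece (if any) moves together with an independent uniformly random direction, then invoke the strong Markov property (which you phrase as optional sampling at disjoint stopping times) to conclude that the directions of each labeled piece's successive moves are i.i.d.\ uniform. You fill in more detail than the paper's terse argument, particularly the explicit coupled construction and the measurability check showing $\{T_{i,k}=t+1\}$ lies in the $\sigma$-algebra generated before $D_{t+1}$.
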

\begin{proof}
Introduce an auxilliary step in each step of the dynamics which chooses, with some probabilities, that the position holds or moves, and which pieces move.  With this intermediate step it is possible to stop the chain prior to the $j$th time that piece $i$ moves, which is then independent of all prior moves of the pieces and has the distribution of nearest neighbor simple random walk.  The claim in general now follows by induction, conditioning on the first piece that moves.
\end{proof}

The following lemma controls the part of the spectrum localized near not-well-spaced points.
\begin{lemma}\label{not_well_spaced_lemma}
 Let $M = \exp(\sqrt{\log n})$ and let $t = M^{5}n^2$.  If $\|f\|_2 = 1$ and $f$ is supported on $M$-not-well-spaced points, then $|f^t \Phi_t f| < \frac{1}{2}$.
\end{lemma}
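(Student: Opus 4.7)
The plan is to expand $f^t \Phi_t f = \sum_k \omega_t(k) \langle f, P_{d,s}^k f\rangle$ using the definition of $\Phi_t$, convert each inner product into a hitting probability via Cauchy--Schwarz, and then apply the concentration estimates of the preceding subsection together with a local central limit theorem for independent simple random walks to show that from any not-well-spaced starting state, the chain is overwhelmingly likely to be well-spaced by time $k \asymp t$.

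First I would use the symmetry of $P_{d,s}$ together with the support condition on $f$ to derive, by two applications of Cauchy--Schwarz (the second using $\sum_y P_{d,s}^k(x,y) = 1$ and reversibility), the bound
\[
|\langle f, P_{d,s}^k f\rangle| \leq \|f\|_2^2 \sqrt{p_k}, \qquad p_k := \sup_{y \in \sX_{d,\nws}} \Prob_y(X_k \in \sX_{d,\nws}).
\]
Combined with Lemma \ref{functional_concentration}, which concentrates $\omega_t$ on the bulk $[(1-\epsilon)t, (2+\epsilon)t]$ up to error $e^{-c\epsilon t}$, it suffices to show $p_k = o(1)$ uniformly for $k$ in this bulk range.

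Second, for $k \asymp t = M^5 n^2$, Lemmas \ref{empty_square_concentration} and \ref{piece_i_concentration} imply that with probability $1-O_A(n^{-A})$ each labeled piece $\sP_i$ performs $m_i \asymp M^5$ moves during these $k$ steps, while the empty square performs $\asymp M^5 n^2 \gg n^2 \log n$ moves and so is near-uniformly distributed on $(\zed/n\zed)^2$. Conditioning on the move-count vector $(m_1, \ldots, m_d)$ and invoking Lemma \ref{piece_move_lemma}, the piece displacements are independent simple random walks on $(\zed/n\zed)^2$ of lengths $m_i$. Because $m_i \asymp M^5$ satisfies $M^2 \log M \ll m_i \ll n^2$, a standard local CLT gives point density $O(1/m_i) = O(M^{-5})$, so two independent endpoints lie within $\ell^\infty$ distance $M$ with probability $O(M^2/m_i) = O(M^{-3})$, uniformly in starting positions. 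A union bound over the $\binom{d+1}{2}$ pairs (with the empty square contributing $O(M^2/n^2)$ per pair) gives $p_k = O_d(M^{-3})$, whence $|f^t \Phi_t f| = O_d(M^{-3/2}) + O(e^{-ct})$, which lies comfortably below $\tfrac{1}{2}$ for $n$ large.

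The main obstacle is managing the joint independence across the union bound: Lemma \ref{piece_move_lemma} gives independence of the piece-indexed subwalks, but the move counts $m_i$ are themselves dependent random variables driven by the shared empty-square trajectory, and the empty square's position at time $k$ is not exactly independent of the pieces'. I would handle this by restricting to the high-probability event that every $m_i$ lies in a narrow window of width $o(m_i)$ (afforded by Lemmas \ref{piece_i_concentration} and \ref{empty_square_concentration}), applying the local CLT conditionally on this event, and invoking the mixing of simple random walk on $(\zed/n\zed)^2$ to couple the empty square to uniform, thereby decoupling it from each $\sP_i$. The remaining bookkeeping---combining concentration error, local CLT error, and the union-bound constants---is routine but requires care with the $d$-dependence.
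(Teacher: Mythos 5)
Your high-level plan — reduce $|f^t\Phi_t f|$ to an escape probability by Cauchy--Schwarz, use concentration estimates to pin down the move counts, and invoke a local CLT for independent SRWs — is the same strategy the paper uses. But there is a genuine gap at the step where you ``condition on the move-count vector $(m_1,\ldots,m_d)$ and invoke Lemma~\ref{piece_move_lemma}.'' Lemma~\ref{piece_move_lemma} gives the \emph{unconditional} joint law of $\sD_m^d = (X_{1,m},\ldots,X_{d,m})$ for a fixed $m$: it does not say that the piece displacements remain i.i.d.\ SRWs after conditioning on the move counts realized by step $k$. Those counts depend, through the overlap dynamics of $P_{d,s}$, on the trajectories of the pieces themselves (a piece that wanders into a region the empty square frequents is dragged more often). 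Conditioning on $\{m_i(k)=m_i\}$ therefore tilts the law of the direction sequences, and it is not clear that one recovers independent SRWs. Your proposed workaround — restrict to a high-probability event and ``apply the local CLT conditionally on this event'' — does not resolve this, since conditioning on an event determined by the intertwined trajectory is exactly the delicate operation.

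The paper's proof is structured to avoid this conditioning. It picks a deterministic move count $m$, uses Lemma~\ref{piece_move_lemma} to get the (unconditional) i.i.d.\ SRW law of $\sD_m^d$, and then shows via Lemma~\ref{piece_i_concentration} and the max-displacement bound of Lemma~\ref{displacement_concentration} that the actual piece positions at the stopping time $T_{m_j}$ (and hence at the nearby chain step $k$, exchanged via the derivative bound on $\omega_t$ from Lemma~\ref{functional_concentration}) differ from $\sD_m^d$ by $O\bigl(M^{5/4}(\log n)^2\bigr)$ with overwhelming probability. Enlarging the exclusion radius from $M$ to $O\bigl(M^{5/4}(\log n)^2\bigr)$ is exactly what buys the freedom to avoid conditioning, and it is why the paper's escape probability comes out $\ll M^{-5/2+\epsilon}$ rather than your claimed $M^{-3}$. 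Your tighter constant rests on the conditioning step; the honest bound achievable along the paper's lines is the weaker $M^{-5/2+\epsilon}$. This still decays to zero and so still gives $|f^t\Phi_t f|<\tfrac{1}{2}$, so the lemma is not in jeopardy, but you should either replace the conditioning with the discrepancy argument or supply a proof that the conditional law is genuinely i.i.d.\ SRW.
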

\begin{proof}
By the concentration estimate in Lemma \ref{functional_concentration}, for some fixed $\epsilon>0$,
\begin{equation}
 \Phi_t = \sum_{(1-\epsilon)t \leq k \leq (2+\epsilon) t} \omega_t(k) P_{d,s}^k + E
\end{equation}
where $\|E\|_{2\to 2}$ is $O(e^{-c\epsilon t})$.  By Lemma \ref{empty_square_concentration} the total number of times the empty square moves is w.o.p. between $(1-\epsilon)\frac{2}{5}t $ and $ (2+\epsilon)\frac{2}{5}t$.

 Let $K = M^{\frac{5}{2}}(\log n)^2$ and let $m_j = (1-\epsilon)\frac{2}{5}t + j K n^2$ for $m_j \leq (2+\epsilon)\frac{2}{5}t$, and consider the stopping times $T_{m_j}$.  By Lemma \ref{piece_i_concentration}, w.o.p., for each $j$, the number of times $\sP_i$ moves up to time $T_{m_j}$, is
 \begin{equation}
  \left(\frac{(1-\epsilon)\frac{2}{5} t}{c_1 n^2} + \frac{jK}{c_1}\right) + O\left(M^{\frac{5}{2}} (\log n)^2 \right).
 \end{equation}
Consider the joint distribution, with $m = \frac{(1-\epsilon)\frac{2}{5} t}{c_1 n^2} + \frac{jK}{c_1}$,
\begin{equation}
 \sD_m^d = (X_{1, m}, X_{2,m}, ..., X_{d,m})
\end{equation}
where $X_{i,m}$ is the displacement from its initial position of $\sP_i$ after it has moved $m$ times.  By Lemma \ref{piece_move_lemma}, $\sD_m^d$ has the distribution of $d$ independent simple random walks on $(\zed/n\zed)^2$ of $m$ steps.   By the local limit theorem on $\zed^{2d}$, a Gaussian approximation to the density of $\sD_m^d$ holds for points at distance $\ll m^{\frac{1}{2} + \epsilon}$ from 0, outside of which the density has mass $O_A(n^{-A})$. 

By considering the distribution of simple random walk, see Lemma \ref{sup_bound_lemma}, the maximum displacement of piece $\sP_i$ during $O\left(M^{\frac{5}{2}} (\log n)^2 \right)$ moves is w.o.p. $
 O\left(M^{\frac{5}{4}} (\log n)^{2} \right).$
Between $T_{m_j}$ and $T_{m_{j+1}}$, w.o.p. each piece $i$ moves $\frac{K}{c_1} + O(\sqrt{K}\log n)$ times, and hence w.o.p.  the maximum displacement during this interval is $O\left(K^{\frac{1}{2}} \log n\right) = O\left(M^{\frac{5}{4}} (\log n)^{2}\right)$.

By Lemma \ref{occupation_time_concentration}, 
between $T_{m_j}$ and $T_{m_{j+1}}$ the number of times the empty square visits each point $(x,y) \in (\zed/n\zed)^2$ at distance at least $\sqrt{K}\log n$ from the positions of the labeled pieces at time $T_{m_j}$, is w.o.p., $K + O(K^{\frac{1}{2}} (\log n)^2)$.  Since $T_{m_j} = \frac{5}{2} m_j + O\left(m_j^{\frac{1}{2}} (\log n)^2\right)$ w.o.p., and thus, by the derivative estimate in Lemma \ref{functional_concentration}, for all $k\in [T_{m_j}, T_{m_{j+1}}]$, outside a set of measure $O_A(n^{-A})$, 
\begin{equation}
 \omega_t(k) = \omega_t\left(\frac{5}{2}m_j\right) + O\left(\frac{m_j^{\frac{1}{2}} (\log n)^2}{t^2} \right).
\end{equation}
Summed in $k \in [(1-\epsilon)t,(2 +\epsilon)t]$ the error introduced in $\Phi_t$ by this approximation is bounded by $O\left(\frac{\log n}{t} \right)$ in the $2\to 2$ operator norm.
Dropping this error, the contribution to $\Phi_t$ over this interval is asymptotic to the joint distribution of independent Gaussians times the uniform measure on $(\zed/n\zed)^2$ for the empty square outside a set in which the empty square is near a labeled piece, when the Gaussians are sampled on scales larger than $O\left(M^{\frac{5}{4}} (\log n)^{2} \right)$.

Given a fixed point $x$ which is not-well-spaced, the probability that $e_{x}^t \Phi_t$ is not-well-spaced is bounded by the sum of the probabilities that any pair of pieces is spaced by at most $M$, which in turn is bounded by the probability that any pair are spaced by at most $O\left(M^{\frac{5}{4}} (\log n)^{2} \right)$.  This can be estimated at the given scale, and is bounded by the ratio of the area of a circle of radius $M^{\frac{5}{4}}(\log n)^{2}$ to the area of a circle of radius $M^{\frac{5}{2}}$, which is $\ll M^{-\frac{5}{2} + \epsilon}$. 

By Cauchy-Schwarz,
\begin{align}
 |f^t \Phi_t f| &= \left|\sum_{x,y} f(x)f(y)\Phi_t(x,y)\right|\\ \notag
 & \leq \left(\sum_{x,y \nws} f(x)^2 \Phi_t(x,y)\right)^{\frac{1}{2}} \left(\sum_{x,y \nws} f(y)^2 \Phi_t(x,y)\right)^{\frac{1}{2}} \\& \notag \ll M^{-\frac{5}{2} + \epsilon}.
\end{align}

\end{proof}

The following lemma is used  for comparison in the spectrum concentrated on well-spaced points.

\begin{lemma}\label{well_spaced_lemma}
 Let $M = \exp(\sqrt{\log n})$ and $t = M^2 n^2$.  For any $C>0$ there is a $\delta(C)>0$ such that if $x, y$ are $M$-well-spaced and if the labeled pieces of $y$ are at distance at most $C M$ from those in $x$ then
 \begin{equation}
  \Phi_t(x, y) \geq \frac{\delta}{n^2 M^{2d}}.
 \end{equation}

\end{lemma}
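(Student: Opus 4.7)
The plan is to show that $\Phi_t(x,y)$ is comparable to the joint density of $d$ independent simple random walks of length $\asymp M^2$ (for the piece displacements) multiplied by the uniform density $\tfrac{1}{n^2}$ on $(\zed/n\zed)^2$ (for the empty square), following the same approximation scheme used in Lemma \ref{not_well_spaced_lemma} but now tuned to produce a lower bound in the well-spaced regime. By Lemma \ref{functional_concentration} the weights $\omega_t(k)$ satisfy $\omega_t(k) \asymp 1/t$ on an interval of length $\asymp t$ around $\tfrac{3}{2}t$, outside which the total mass is exponentially small; so it suffices to establish a uniform lower bound $P_{d,s}^k(x,y) \gg \tfrac{1}{n^2 M^{2d}}$ for $k$ in a positive fraction of that interval.

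For such a $k$, subdivide $[0,k]$ at stopping times $T_{m_j}$ where the empty square has made $m_j$ moves, exactly as in Lemma \ref{not_well_spaced_lemma}. By Lemmas \ref{piece_i_concentration}, \ref{displacement_concentration}, and \ref{empty_square_concentration}, with overwhelming probability each labeled piece moves $W_i \asymp M^2$ times, each piece's total displacement is $O(M\sqrt{\log n}) \ll M$ (so the chain remains $M$-well-spaced throughout, since $x$ was $M$-well-spaced to begin with), and the empty square makes $\asymp M^2 n^2 \gg n^2 \log n$ moves. By Lemma \ref{piece_move_lemma}, conditional on $(W_1,\ldots,W_d)$, the displacements $(X_{1,W_1},\ldots,X_{d,W_d})$ are distributed as $d$ independent simple random walks on $(\zed/n\zed)^2$ of the given lengths. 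The local central limit theorem for SRW on the torus then gives density $\gg 1/M^2$ at each coordinate $y_i - x_i$ (which lies within distance $CM$ of the origin by hypothesis), hence joint density $\gg 1/M^{2d}$. Simultaneously, because $M^2 n^2$ far exceeds the mixing time $n^2 \log n$ of SRW on $(\zed/n\zed)^2$, the empty square's marginal at time $k$ is within $o(1)$ of uniform, so has density $\gg 1/n^2$ at $y_0$. Multiplying and summing over $k$ weighted by $\omega_t(k)$ yields the claimed bound.

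The main obstacle is the rigorous decoupling of the empty-square position from the labeled-piece displacements, since the two are dynamically correlated (a piece moves only when the empty square visits it). I would handle this exactly as in Lemma \ref{not_well_spaced_lemma}: using Lemma \ref{piece_move_lemma} one already has piece-piece independence conditional on the move counts, while the empty-square trajectory, sampled at its own move times, is unconditionally simple random walk on the torus; since the pieces' total displacements are $o(M)$ on each sub-interval $[T_{m_j},T_{m_{j+1}}]$ and since the occupation-time estimate of Lemma \ref{occupation_time_concentration} shows that the empty square spends essentially the same fraction of time at each well-separated site of the torus, the joint density factorises up to a $1+o(1)$ multiplicative error over well-spaced targets. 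The lower bound $\delta(C)$ on the Gaussian value at a ball of radius $CM$ provides the explicit constant.
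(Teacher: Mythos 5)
Your sketch correctly identifies the shape of the target (a pointwise lower bound comparable to a product of a Gaussian density in the piece coordinates and $1/n^2$ in the empty-square coordinate) and correctly flags the central difficulty, namely that the empty square's position and the piece displacements are dynamically entangled. But the proposed resolution does not close that gap, and it omits the two steps that carry the real weight in the paper's argument.

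The occupation-time estimate (Lemma~\ref{occupation_time_concentration}) tells you that, over a window of $Kn^2$ empty-square moves, the empty square spends $\frac{5}{2}K + O(K^{1/2}(\log n)^2)$ chain steps at any fixed far-away site. This is a concentration statement about \emph{counts}, and it does not on its own imply that the \emph{joint law} of (piece positions, empty-square position) at a fixed step $k$ factorizes, nor that a local limit theorem holds for the joint vector at the $O(1)$ scale. Concretely, your plan is to condition on the move counts $(W_1,\dots,W_d)$, apply the SRW local CLT to the piece displacements, and separately assert that the empty-square marginal is near-uniform; but to produce a lower bound on $\Phi_t(x,y)$ at a \emph{single} target configuration $y$ you need a joint local CLT in $2(d+1)$ coordinates for a Markov chain that is not a product chain, and the factor-of-$(1+o(1))$ decoupling you invoke is exactly the thing that requires proof.

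The paper does not attempt a direct joint local CLT. Instead it (i) first passes from $\Phi_t$ to $\Psi_t = \sum_k \omega_t(\tfrac{5k}{2})P_{d,s}^{T_k}$ to index by empty-square moves, (ii) proves by an induction on scales $K_1 = M^2 > K_2 > \cdots > K_r > (\log n)^{21}$ that the measure $e_x^t\Psi_t$ assigns at least the expected Gaussian mass to boxes $[-R_h,R_h]^{2d}\times\{(i,j)\}$, picking up a multiplicative error $1-O(h/\log n)$ at each stage, and (iii) closes the remaining gap from scale $(\log n)^{21}$ down to a single point by a combinatorial perturbation of words (swap an $L$ move of piece $i$ to an $R$, and compensate inside the empty square's traversals of an annulus around piece $i$), which shows the measure is uniform down to scale $O(1)$ with controlled relative error. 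Steps (ii) and (iii) are precisely where the difficulty lies and are entirely absent from your sketch: the local CLT alone cannot substitute for the induction because the approximation errors grow as you refine the scale, and the pointwise conclusion genuinely relies on the path-editing argument. You should either reproduce the multi-scale induction plus perturbation, or give a complete joint local CLT for $(X_{1,W_1},\dots,X_{d,W_d},Y_k)$ with quantitative error small enough to integrate against $\omega_t$; the occupation-time concentration alone does not supply either.
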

\begin{proof}
It suffices to prove the claim for
\begin{equation}
 \tilde{\Psi}_t = \sum_{k \geq 0} \omega_t(T_k)P_{d,s}^{T_k}
\end{equation}
which satisfies $\tilde{\Psi}_t \leq \Phi_t$.  In fact we will prove the claim for 
\begin{equation}
 \Psi_t = \sum_{(1-\epsilon)\frac{2}{5}t \leq k \leq (2+\epsilon)\frac{2}{5} t} \omega_t\left(\frac{5}{2}k\right) P_{d,s}^{T_k}.
\end{equation}
Notice that, w.o.p., for all $(1-\epsilon)\frac{2}{5}t \leq k \leq (2+\epsilon)\frac{2}{5}t $, $T_k = \frac{5}{2}k + O(\sqrt{k} \log n)$, so that this may be assumed.  Under this assumption, the error in replacing $T_k$ with $\frac{5}{2}k$ in $\omega_t$ is $O\left(\frac{\sqrt{k}\log n}{t^2} \right)$.  Meanwhile, for each $(i,j)$, for each $k$ in the interval, the probability that the empty square has position $(i,j)$ in $P_{d,s}^{T_k}$
is $O\left(\frac{1}{n^2} \right)$, so that the error introduced in estimating $\Psi_t(x, y)$ is $O\left(\frac{\sqrt{k}\log n}{t n^2} \right)$, which is negligible compared to the claimed main term. Let $X_1, X_2, ...$ be the steps of the stopped Markov process started from $x$ and taking steps each time the empty square moves. Throughout this argument, for a vector $y$ of length $2d$, $y'$ indicates $y\times \{(0,0)\}$.

 The
proof is by an induction on scales.
 Let \begin{equation}M^2=K_1 > K_2 > \cdots > K_r >(\log n)^{21}\end{equation} be a sequence of scales, $K_{i+1}= K_i^{\frac{1}{2}}(\log n)^{10}$.  We have $r = O(\log\log n) .$
 We prove the following pair of statements, followed by a perturbation argument. 
 Let $M^{\frac{1}{2}}(\log n)^3 \leq R_1 \leq C M$. There is a constant $c>0$ such that, for all $y \in \zed^{2d}$, $\|y\|_2 \leq C M$, for all $(i,j) \in (\zed/n\zed)^2$,
 \begin{equation}\label{base_case}
  \Prob\left(e_{x}^t \Psi_t \in x + y' + [-R_1, R_1]^{2d} \times\{(i,j)\}\right) \geq c \frac{R_1^{2d}}{n^2 M^{2d}},
 \end{equation}
 and, for $1 < h \leq r$, for $K_h^{\frac{1}{4}}(\log n)^3 \leq  R_h \leq C M$,
 \begin{align}\label{inductive_step}
  &\Prob\left(e_{x}^t \Psi_t \in x + y' + [-R_{h}, R_{h}]^{2d} \times \{(i,j)\}\right) \\&\notag \geq c \left(1-O\left(\frac{h}{\log n} \right) \right) \frac{R_{h}^{2d}}{n^2 M^{2d}}.
 \end{align}
Note that $r \ll \log \log N$, so that the error term is $o(1)$.

 To start the induction and prove (\ref{base_case}), let \begin{equation}m_j = (1-\epsilon)t + j M n^2(\log n)^2,\qquad m_j \leq (2+\epsilon)t\end{equation} and argue as in the proof of Lemma \ref{not_well_spaced_lemma} to conclude that w.o.p., for each $j$ the number of moves of piece $i$ up to time $T_{m_j}$ is
 \begin{equation}
  \left(\frac{(1-\epsilon)\frac{2}{5}t}{c_1n^2} + \frac{jM(\log n)^2}{c_1} \right) + O(M(\log n)^2).
 \end{equation}
Let $m = \left(\frac{(1-\epsilon)\frac{2}{5}t}{c_1n^2} + \frac{jM (\log n)^2}{c_1} \right)$. With overwhelming probability, the distribution at $T_{m_j}$ differs from $\sD_m^d$ by $O(M^{\frac{1}{2}} (\log n)^{2})$ and this remains true of $T_k$ for $m_j \leq k \leq m_{j+1}$. Since at the times of the stopped process $T_{m_j}, T_{m_j+1}, ...$ the empty piece is performing simple random walk independent of the position at $T_{m_j}$, w.o.p. the number of visits to each position on $(\zed/n\zed)^2$ is $M (\log n)^2 + O(\sqrt{M}(\log n)^3)$.  In particular, for $T_{m_j} \leq k \leq T_{m_{j+1}}$, w.o.p. 
\begin{equation}
 \omega_t\left(\frac{5k}{2}\right) = \omega_t\left(\frac{5}{2}m_j \right) + O\left(\frac{M (\log n)^2}{t^2} \right).
\end{equation}
Summed in $(1-\epsilon)t \leq k \leq (2+\epsilon)t$, the error introduced in the weight is  $O\left(\frac{(\log n)^2}{t}\right)$.  Since for $(1-\epsilon)t \leq k \leq (2+\epsilon)t$, $e_{x}^tP^{T_k}e_{y} \ll \frac{1}{n^2}$ due to the equidistribution of the empty square, the error introduced in estimating $e_{x}^t \Psi_t e_{y}$ is $\ll \frac{(\log n)^2}{tn^2}$, which may be ignored. 

Since $\sD_m^d$ has an approximately joint independent Gaussian distribution with standard deviation of order $M$, for any $(i,j) \in (\zed/n\zed)^2$, for any $x$ which is $M$-well-spaced, for any $\|y\|_2 \leq C M$, for any $M^{\frac{1}{2}} (\log n)^3 \leq R \leq C M$,
\begin{equation}
 \Prob\left(e_{x}^t \Psi_t \in x+y' + [-R,R]^{2d} \times \{(i,j)\} \right) \asymp \frac{R^{2d}}{n^2M^{2d}}.
\end{equation}

For the inductive step, to prove (\ref{inductive_step}), assume that for all $K_{h-1}^{\frac{1}{4}}(\log n)^3 \leq R \leq C M$, for all $\|y\|_2 \leq C M$, for all $(i,j) \in (\zed/n\zed)^2$, 
\begin{align}
 &\Prob\left(e_{x}^t \Psi_t \in x + y' + [-R, R]^{2d} \times \{(i,j)\} \right) \\\notag & \geq c \left(1-O\left( \frac{h-1}{\log n}\right) \right) \frac{R^{2d}}{n^2 M^{2d}}.
\end{align}
 Let $(1-\epsilon)\frac{2}{5} t \leq m \leq (2+\epsilon)\frac{2}{5}t$.  Let $Y_{m,h}$ be the conditional change in the position of the pieces from stopping time $T_{m-K_h n^2}$ to $T_m$, conditioned on the position at $T_{m-K_h n^2}$. Conditioned on the position at $T_{m-K_h n^2}$, the number of times each piece $\sP_i$ moves between $T_{m-K_hn^2}$ and $T_m$ is, by Lemma \ref{piece_i_concentration},
 \begin{equation}
   \frac{K_h}{c_1} + O\left(K_h^{\frac{1}{2}}(\log n)^2 \right)
 \end{equation}
 w.o.p..
 It follows as before that the difference from the distribution of $\frac{K_h}{c_1}$ steps of independent simple random walk in each coordinate is $O\left(K_h^{\frac{1}{4}}(\log n)^2\right)$ w.o.p.. 
 Given $K_h^{\frac{1}{4}}(\log n)^3 \leq R_h \leq K_{h-1}^{\frac{1}{4}}(\log n)^3$, let $R_{h-1} = K_{h-1}^{\frac{1}{4}}(\log n)^3$ and let $S_{h-1}$ be a set of points in $\zed^{2d}$ such that the offsets $\{y + [-R_{h-1}, R_{h-1}]^{2d}: y \in S_{h-1}\}$ tile the $\sqrt{\frac{K_h}{c_1}}\log n$ neighborhood of 0 in $\zed^{2d}$.  Now calculate, conditioning on the location of $X_{k-K_hn^2}$,
 \begin{align}
  &\Prob\left(e_{x}^t \Psi_t \in x + y' + [-R_h, R_h]^{2d} \times \{(i,j)\}\right)\\ \notag
  &= \sum_k \omega_t\left(\frac{5k}{2}\right)\Prob\left( X_k \in x + y' + [-R_h, R_h]^{2d} \times \{(i,j)\} \right)\\
  \notag &\geq  \sum_{y_s \in S_{h-1}}\sum_k \omega_t\left(\frac{5k}{2}\right) \\\notag &\Prob\Bigl(X_{k} \in x + y' + [-R_h, R_h]^{2d} \times \{(i,j)\}\Big|\\ \notag &\qquad X_{k-K_h n^2} \in x + y' +y_s'  + [-R_{h-1}, R_{h-1}]^{2d} \times (\zed/n\zed)^2 \Bigr) \\\notag & \times \Prob\left(X_{k-K_h n^2} \in x + y'+ y_s'  + [-R_{h-1}, R_{h-1}]^{2d} \times (\zed/n\zed)^2  \right).
 \end{align}
For each fixed $(\ell, m) \in (\zed/n\zed)^2$ and for any $z \in y+y_s + [-R_{h-1}, R_{h-1}]^{2d}$, by the conditional distribution argument above,
\begin{align}
 &\Prob\Bigl(X_k \in x + y' + [-R_h, R_h]^{2d}\times\{(i,j)\}\Big| X_{k-K_hn^2} = x + z' \times\{(\ell, m)\}\Bigr)\\
 &\notag = O_A(n^{-A}) + \left(1 + O\left(\frac{1}{\log n}\right)\right) \frac{(2 R_h)^{2d}}{n^2 \left(2\pi \frac{K_h}{c_1} \right)^d}\exp\left(-\frac{\|y_s\|_2^2}{ \frac{2 K_h}{c_1}} \right).
\end{align}
The factor of $\left(1 + O\left(\frac{1}{\log n}\right)\right)$ accounts for two errors.  First, the distribution of $Y_{m,h}$ agrees with $\sD_m^d$ up to an error of size $O\left(\frac{R_h}{\log n} \right)$, and hence, adding or subtracting a boundary strip of length $O\left(\frac{R_h}{\log n} \right)$ to the rectangle $[R_h, R_h]^{2d}$ and dropping the error in the distribution gives an upper bound or lower bound for the measure.  Second, $z$ differs from $y+y_s$ by $O\left(R_{h-1} \right)$, while the points in the rectangle differ from $y$ by $O(R_h)$.  Also, $\|y_s\|_2 \ll \sqrt{\frac{K_h}{c_1}} \log n$.  Since \begin{equation}\exp\left( -\frac{\|x + \varepsilon\|_2^2}{2\sigma^2}\right) = \exp\left( -\frac{\|x\|_2^2}{2\sigma^2}\right) \left(1 + O\left( \frac{\|x\|_2\|\varepsilon\|_2 + \|\varepsilon\|_2^2}{\sigma^2}\right)\right)\end{equation} when $\|x\|_2\|\varepsilon\|_2, \|\varepsilon\|_2^2 \leq \sigma^2$, applying this with $\sigma^2 = \frac{K_h}{c_1}$, $x = y_s$ and $\|\varepsilon\|_2 = O(R_{h-1})$ obtains a relative error  of \begin{equation}1 + O\left(\frac{R_{h-1}\log n}{\sqrt{K_h}} \right) = 1 + O\left(\frac{K_{h-1}^{\frac{1}{4}}(\log n)^4}{K_{h-1}^{\frac{1}{4}}(\log n)^5} \right) = 1 + O\left(\frac{1}{\log n} \right).\end{equation} 
 
It follows that,
\begin{align}
 &\Prob\left(e_{x}^t \Psi_t \in x + y' + [-R_h, R_h]^{2d} \times \{(i,j)\}\right) + O_A(n^{-A})\\
 & \notag \geq \left(1 + O\left(\frac{1}{\log n}\right)\right)\sum_{y_s \in S_{h-1}} \sum_k \omega_t\left(\frac{5k}{2} \right) \frac{(2R_h)^{2d}}{n^2 \left(2\pi \frac{K_n}{c_1}\right)^d}\exp\left(-\frac{\|y_s\|_2^2}{\frac{2K_h}{c}} \right)\\
 &\notag \times \Prob\left(X_{k-K_hn^2} \in x + y' + y_s' + [-R_{h-1}, R_{h-1}]^{2d} \times (\zed/n\zed)^2 \right).
\end{align}
The error in replacing $\omega_t\left(\frac{5k}{2}\right)$ with $\omega_t \left(\frac{5(k- K_hn^2)}{2} \right)$ may be removed as before.
Subject to $\|y + y_s\|_2 \leq C M$, which can be accomplished by requiring $\|y\|_2 \leq \left(1 - O\left(M^{-\frac{1}{2}}\right)\right) C M$ and  adjusting $C$ at the end, invoking the inductive assumption,
\begin{align}
 &\Prob\left(e_{x}^t \Psi_t \in x + y' + [-R_h, R_h]^{2d} \times \{(i,j)\}\right) + O_A(n^{-A})\\ \notag
 & \geq \left(1 + O\left(\frac{1}{\log n}\right)\right)\sum_{y_s \in S_{h-1}}  \frac{(2R_h)^{2d}}{n^2 \left(2\pi \frac{K_n}{c_1}\right)^d}\exp\left(-\frac{\|y_s\|_2^2}{\frac{2K_h}{c}} \right)\\
 &\notag \times c \left(1 - O\left(\frac{h-1}{\log n}\right) \right) \frac{R_{h-1}^{2d}}{M^{2d}} .
\end{align}
Since the points $y_s$ are $2R_{h-1}$ spaced, the sum over these points can be replaced with an integral, with another relative error of $\left(1 + O\left(\frac{1}{\log n}\right)\right)$.  Doing so obtains the inductive step.

 The argument is now completed by a perturbation argument which reduces from the scale $(\log n)^{21}$,  to scale $O(1)$. Throughout this argument, let $\epsilon > 0$ be a constant which can be taken arbitrarily small.
Let $w$ be a word of length $m$ beginning at $x$, ending at $y$ with the labeled pieces of $x$ and $y$ separated by at most $C M$. Let $B$ be a large fixed constant and consider the last $(\log n)^B$ steps at which piece $i$ moves.  By concentration considerations, the difference in left/right moves and up/down moves is, w.o.p. bounded by $(\log n)^{\frac{B}{2} + O(1)}$.  By Lemma \ref{empty_moves_concentration} w.o.p. the empty square moves $c_1n^2 (\log n)^B + O\left(n^2 (\log n)^{\frac{B}{2}+2}\right)$ times over this interval.  In the final $c_1 n^2 (\log n)^B + C n^2 (\log n)^{\frac{B}{2} + 2}$ moves of the empty square, w.o.p. it visits the $M^\epsilon$ neighborhood of  position of $\sP_i$ at the beginning of the interval
\begin{equation}
 O(M^{2\epsilon} (\log n)^B)
\end{equation}
times.  Let $t_1, t_2, ...$ be the traversals between the $M^\epsilon$ neighborhood of the original position of $\sP_i$ and the boundary of the $\epsilon M$ neighborhood and let $r_1, r_2, ...$ be the traversals in the opposite direction.  In each traversal $t_i, r_i$ identify a segment $t_i', r_i'$ beginning at the first point that the empty square reaches the boundary of the $\frac{\epsilon}{2} M$ neighborhood of the initial $\sP_i$ position and continuing for $\frac{\epsilon}{4}M$ moves of $\sP_e$.  With overwhelming probability, each of the segments $t_i', r_i'$ has $\frac{\epsilon}{8} M + O(M^{\frac{1}{2}}\log n)$ moves of the empty square of each type $U, D, L, R$. 

Make a right perturbation by choosing a uniform letter $L$ in the moves of $\sP_i$ at random and flipping it to $R$.  In all traversals following this change, in outgoing traversals $t_i'$ choose a uniform $R$ and flip it to $L$, while in ingoing traversals $r_i'$ choose a uniform $L$ and flip it to $R$.  Thus in the $m^\epsilon$ neighborhood of $\sP_i$, the relative position of $\sP_e$ to $\sP_i$ is unchanged from the original word, while outside the $\epsilon M$ neighborhood, the relative position of $\sP_e$ to the remaining labeled pieces is unchanged from the original word.  In particular, the event of $\sP_e$ overlapping a labeled piece is unchanged, so the probability of the perturbed word and the original word are equal.  Thus to estimate the measure of the shifted word, it suffices to estimate the relative measure of the number of unperturbed words to their image.  

In any word segment in which a letter $L$, say, is being swapped to a letter $R$, let $n_L$ and $n_R$ be the relative frequencies.  The chance that a given letter $L$ is swapped is $\frac{1}{n_L}$.  Having made a swap, the number of subwords in the preimage of the swap is $n_R + 1$, since there are $n_R + 1$ choices of letters to swap back.  This introduces a factor of $\frac{n_R+1}{n_L}$ in the relative measures.  In swapping the moves of piece $i$, this factor is $1 + O\left(\frac{1}{(\log n)^{\frac{B}{2} + O(1)}} \right)$.  In each traversal, this factor is $1 + O\left(\frac{\log n}{M^{\frac{1}{2}}} \right)$.  Taking account of the $O(M^{2\epsilon} (\log n)^B)$ traversals obtains a relative error in the measures of $1 + O\left(\frac{1}{(\log n)^{\frac{B}{2} + O(1)}} + \frac{(\log n)^{B+1}}{M^{\frac{1}{2}-2\epsilon}} \right).$ Iterating this argument $O(\log n)^{21}$ times swapping different combinations of letters proves that the measure is uniform down to scale $O(1)$.  To obtain the claim for every point rather than at scale $O(1)$, swap a hold at piece $i$ for an $R$ instead.

\end{proof}

\begin{proof}[Proof of Theorem \ref{d_pieces_theorem}]
 To prove that the $d_2$ mixing time of $(\sX_d, P_d)$ is $O(n^4)$, by Lemma \ref{d2_lemma} it suffices to prove that for each $\epsilon > 0$ there is $c > 0$ such that, as $n \to \infty$, 
 \begin{equation}
 \limsup_{n \to \infty} \sum_{1 \neq \lambda \in \sigma(P_d)} |\lambda|^{cn^4} <\epsilon.
 \end{equation}
 Note that, as $P_d$ is $\frac{1}{5}$-lazy, $\lambda \geq -\frac{4}{5}$.  Since for fixed $d$ the space grows only polynomially in $n$, it suffices to restrict attention to the part of the spectrum satisfying $\lambda \geq 1- O\left(\frac{\log n}{n^4} \right)$.
 
 By Lemma \ref{first_comparison_lemma} it suffices to show instead that for the symmetrized chain $(\sX_{d,s}, P_{d,s})$, for each $\epsilon > 0$ there is a $c>0$ such that, as $n \to \infty$,
 \begin{equation}
  \limsup_{n \to \infty} \sum_{1 \neq \lambda \in \sigma(P_{d,s})} |\lambda|^{cn^4} < \epsilon,
 \end{equation}
since the $i$th eigenvalue $\lambda_i$ of $P_d$ is bounded in terms of the $i$th eigenvalue $\lambda_{i,s}$ of $P_{d,s}$ by  $1-\lambda_i \gg 1- \lambda_{i,s}$. Again we may restrict attention to eigenvalues of $P_{d,s}$ which satisfy $\lambda \geq 1- O\left(\frac{\log n}{n^4}\right)$.

 Let $f$ be a unit eigenfunction of $P_{d,s}$ with eigenvalue $\lambda$ and write $f = f_{\ws} + f_{\nws}$ as a component which is $M = \exp(\sqrt{\log n})$ well-spaced, plus a component which is not-well-spaced.  We first reduce to considering eigenfunctions $f$ for which $\|f_{\ws}\|_2 \gg 1$.  
 
 Suppose $\|f_{\nws}\|_2 \geq 1-\epsilon$. 
  By Lemma \ref{not_well_spaced_lemma}, with $t = M^{5}n^2$ 
 \begin{align}
  \left \langle (I-\Phi_t) f, f\right \rangle &= \left \langle (I-\Phi_t) f_{\nws}, f_{\nws} \right \rangle + O\left(\sqrt{\epsilon}\right)\\ \notag
  &\geq \frac{1}{2} + O\left(\sqrt{\epsilon}\right).
 \end{align}
It follows that, if $\epsilon$ is sufficiently small, $\hat{\phi}(t(1-\lambda))$ is bounded away from 1, and hence $(1-\lambda) \gg \frac{1}{t}$. Since $\frac{1}{t}$ grows compared to $\frac{\log n}{n^4}$, this part of the spectrum may be ignored.

Now suppose $\|f_{\ws}\|_2 \gg 1$ and set $t = M^2n^2$.  Bound
\begin{align}
 \sE_{\Phi_t}\left(f, f\right) &=\frac{1}{2|\sX_{d,s}|} \sum_{x, y} (f(x)-f(y))^2 \Phi_t(x, y)\\ \notag
 &\geq \frac{1}{2|\sX_{d,s}|}\sum_{x, y \ws} (f(x)-f(y))^2 \Phi_t(x, y).
\end{align}
Call this quadratic form $\sE_{\Phi_t, \ws}$.

Let $P_{d,CM}$ be the symmetrized Markov chain introduced in Section \ref{list_of_chains} with  state space $((\zed/n\zed)^2)^{d+1}$ with $d$ labeled pieces and one empty square.  The chain holds with probability $\frac{1}{2}$, and otherwise, independently, the empty square maps to a uniform random point, while each labeled point maps to a uniform point in its $\ell^\infty$ distance $CM$ neighborhood.  Embed the well-spaced points of $\sX_{d,s}$ in $((\zed/n\zed)^2)^{d+1}$ and extend $f_{\ws}$ to a function $f$ on $((\zed/n\zed)^2)^{d+1}$ by letting the value of $f$ at a not-well-spaced point $x$ be the average value over all well-spaced points whose $\ell^\infty$ distance from $x$ is at most $CM$.  Let $\sE_{d,CM}'(f_{ws}, f_{ws}) = \sE_{d, CM}(f,f)$ be the value at the extension.  

When $y$ is not-well-spaced, let $f(y) = \sum_{x \ws} c_{x, y}f(x)$.  By Cauchy-Schwarz, for any $z$ such that $P_{d, CM}(y, z) \neq 0$, 
\begin{align}
 (f(y)-f(z))^2 &= \left(\sum_{x \ws} c_{x,y} (f(x)-f(z))\right)^2\\ \notag
 & \leq \sum_{x \ws} c(x, y) (f(x)-f(z))^2.
\end{align}
This argument may be repeated again with $z$ replacing $y$ if $z$ is not-well-spaced.  Notice that after making these replacements, the points appearing in the differences have labeled points at $\ell^\infty$ distance at most $3M$.
Since the fraction of $M$-not-well-spaced points to $M$-well-spaced points in a rectangle of sidelength $CM$ tends to 0 as $C \to \infty$, it follows that, as $C \to \infty$, $\sum_{y} c_{x, y} \to 0$.  Hence it follows that 
\begin{equation}
 \sE_{d, CM}'(f_{\ws}, f_{\ws}) \ll \frac{1}{|\sX_{d, CM}|}\sum_{x, y \ws} (f(x)-f(y))^2 P_{d, 3CM}(x, y).
\end{equation}
Since $\frac{|\sX_{d, CM}|}{|\sX_{d,s}|} \asymp 1$ this factor may be ignored.  By Lemma \ref{well_spaced_lemma}, 
\begin{equation}
 \sum_{x, y \ws} (f(x)-f(y))^2 P_{d, 3CM}(x, y) \ll \sum_{x, y \ws} (f(x)-f(y))^2 \Phi_{M^2n^2}(x, y).
\end{equation}

  By minimax, 
\begin{align}
& 1-\lambda_{i,d}  = \max\left\{\min \frac{\sE_{P_{d,CM}}(f,f)}{\|f\|_2^2}, f \in W\setminus\{0\}: \dim(W^\perp) = i\right\}\\
 \notag & \ll \max\left\{\min \frac{\sE_{P_{d,CM}}'(f,f)}{\|f\|_2^2}, f \in W\setminus\{0\}, f \; \ws : \dim(W^\perp) = i \right\}\\
 & \notag \ll \max\left\{\min \frac{\sE_{\Phi_t, \ws}(f_{\ws},f_{\ws})}{\|f_{\ws}\|_2^2}, f \in W \setminus\{0\}: \dim(W^\perp) = i\right\}\\
 \notag & \leq \max\left\{\min \frac{\sE_{\Phi_{M^2n^2}}(f,f)}{\|f_{\ws}\|_2^2}, f \in W\setminus\{0\}: \dim(W^\perp) = i\right\}.
\end{align}
Subject to the further constraint that $\lambda_{i,s} \geq 1- O\left(\frac{1}{M^5n^2}\right)$ we may assume that $\|f_{i,\ws}\|_{2}^2 \gg 1$, so that, using that the $i$th eigenvalue of $\Phi_t$ is $\hat{\phi}(M^2n^2(1-\lambda_{i,s}))$,
\begin{equation}1-\lambda_{i,d} \ll 1-\hat{\phi}(M^2n^2 (1-\lambda_{i,s})),
\end{equation}
 or $1-\lambda_{i,s} \gg \frac{1-\lambda_{i,d}}{n^2M^2}$.  

By Lemma \ref{spectral_sum_d_M}, uniformly in $n$, as $c \to \infty$, 
\begin{equation}
 \sum_{1 \neq \lambda \in \sigma(P_{d,CM})} |\lambda|^{c \frac{n^2}{C^2M^2}} \to 0
\end{equation}
as $c \to \infty$.  Thus 
\begin{equation}
 \sum_{1 \neq \lambda \in \sigma(P_{d,s})} |\lambda|^{cn^4} \to 0
\end{equation}
as $c \to \infty$, uniformly in $n$.

\end{proof}

\section{The mixing time upper bound, Proof of Theorem \ref{upper_bound_theorem}}

Let $G_n = S_{n^2-1} \times (\zed/n\zed)^2$ ($n$ odd) or $G_n = A_{n^2-1}\times (\zed/n\zed)^2$ ($n$ even) be the $n^2-1$ group.  Consider the symmetric set 
\begin{equation}
 S = \{R c, Lc, Uc, Dc, c: c = (c_3, \id), \text{$c_3$ a 3-cycle}\}
\end{equation}
and let $\mu_S$ be its uniform probability measure. 

\begin{lemma}
 The measure $\mu_S$ has $d_2$ mixing time on $G_n$ of order $O(n^2 \log n)$.
\end{lemma}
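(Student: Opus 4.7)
The plan is to factor the step distribution into a pure torus move and a pure 3-cycle and then use the mixing time of each factor. Let $\mu_M$ be the uniform measure on $\{R, L, U, D, \id\}$ and let $\nu$ be the uniform measure on $\{(c_3, \id) : c_3 \text{ a 3-cycle}\}$, both viewed as measures on $G_n$. Every element $g = Ec$ of $\supp(\mu_S)$ is uniquely a product of an element of $\supp(\mu_M)$ and one of $\supp(\nu)$ (the torus component of $g$ determines the move $E$, and then $c_3 = E_\sigma^{-1} g_\sigma$), so $\mu_S = \mu_M * \nu$. Moreover, since $\nu$ is supported on a single conjugacy class of $G_n$, it is a class function and therefore commutes with $\mu_M$ under convolution: $E \cdot c \cdot E^{-1}$ is again a uniformly distributed 3-cycle, so $\mu_M * \nu = \nu * \mu_M$. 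By iterating, $\mu_S^{*t} = \mu_M^{*t} * \nu^{*t}$. (It is also easy to check directly that $\mu_S$ is symmetric, so $d_2$ mixing is well defined.)

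Now apply the two mixing estimates for the factors. By the lemma stated just before this one, applied with $\Alt(n^2-1)$ in place of $\Alt(n)$, the 3-cycle walk $\nu$ has $\tfrac{\epsilon}{|\Alt(n^2-1)|}$-$\ell^\infty$ mixing time $O(n^2 \log n)$; so for $t = C n^2 \log n$ with $C$ large we may write $\nu^{*t}(\pi) = \tfrac{1}{|\Alt(n^2-1)|} + r_t(\pi)$ with $\|r_t\|_\infty \leq \epsilon/|\Alt(n^2-1)|$. The torus marginal of $\mu_M^{*t}$, call it $a_t$, is $t$ steps of $\tfrac{1}{5}$-lazy simple random walk on $(\zed/n\zed)^2$; its eigenvalues $\tfrac{1}{5}(1 + 2c(\xi_1/n) + 2c(\xi_2/n))$ give $\|a_t - \bU_{(\zed/n\zed)^2}\|_{d_2}^2 = O(e^{-cC\log n})$ by a standard Fourier estimate for $t = C n^2 \log n$.

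To combine these, expand using $\mu_S^{*t} = \mu_M^{*t} * \nu^{*t}$: writing $g = (g_\sigma, g_\tau)$, the element $h_2 = (h_{2,\sigma}, 0)$ must have $h_{2,\sigma} = h_{1,\sigma}^{-1} g_\sigma$, so
\[
 \mu_S^{*t}(g) = \sum_{h_{1,\sigma}} \mu_M^{*t}(h_{1,\sigma}, g_\tau) \, \nu^{*t}(h_{1,\sigma}^{-1} g_\sigma) = \frac{a_t(g_\tau)}{|\Alt(n^2-1)|} + E(g),
\]
with $|E(g)| \leq \epsilon \, a_t(g_\tau)/|\Alt(n^2-1)|$. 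Subtracting $\bU_{G_n}(g) = 1/(n^2 |\Alt(n^2-1)|)$, squaring, summing over $g$, and multiplying by $|G_n|$ yields
\[
 \|\mu_S^{*t} - \bU_{G_n}\|_{d_2}^2 \ll \|a_t - \bU\|_{d_2}^2 + \epsilon^2 \bigl(1 + \|a_t - \bU\|_{d_2}^2\bigr),
\]
which is smaller than $1/e^2$ for $t = Cn^2 \log n$ with $C$ large, proving the claim. The main technical point is getting the factoring $\mu_S = \mu_M * \nu$ and its commutation, after which everything reduces to the two standard mixing results quoted above; a minor adjustment (checking that $\nu$ still generates a class inside the subgroup) handles the case where $n$ is even and $G_n$ is the parity-restricted subgroup of the naive product.
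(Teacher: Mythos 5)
Your factorization idea is a genuinely different (and elegant) route from the paper's. The paper computes the Fourier transform $\hat\mu_S(\rho_1\otimes\rho_2)$ directly and observes, via Schur's lemma, that the 3-cycle part contributes a scalar $\chi_{\rho_1}(c)/d_{\rho_1}$; it then treats the one-dimensional and higher-dimensional representations separately. Your approach instead works at the level of measures: since $\nu$ is a class function on $G_n$ it commutes with everything under convolution, so $\mu_S^{*t} = \mu_M^{*t}*\nu^{*t}$, and you combine the $\ell^\infty$ mixing of the 3-cycle walk on $A_{n^2-1}$ with the $d_2$ mixing of the torus marginal. These are two faces of the same underlying structure, but your version is more self-contained and avoids spelling out the representation theory. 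The factorization $\mu_S = \mu_M*\nu$, the commutation, and the two ingredient mixing results are all correct.

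However, the combination step has a gap: the substitution
$\nu^{*t}(h_{1,\sigma}^{-1}g_\sigma) = \frac{1}{|\Alt(n^2-1)|} + r_t(\cdot)$
is only valid when $h_{1,\sigma}^{-1}g_\sigma \in A_{n^2-1}$; otherwise $\nu^{*t}$ vanishes. Since each of $R,L,U,D$ is an \emph{odd} permutation, the sign of the $S_{n^2-1}$ component of a point in $\supp(\mu_M^{*t})$ equals the parity of the number of non-identity letters used. When $n$ is even, that parity is locked to the coordinate-sum parity of $g_\tau$, so the constraint $\sgn(h_{1,\sigma}) = \sgn(g_\sigma)$ is automatic given $g\in G_n$, and your formula with $a_t(g_\tau)$ and $\bU_{G_n}(g) = 1/(n^2|\Alt(n^2-1)|)$ is correct. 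But when $n$ is odd the torus walk wraps around and $\mu_M^{*t}(\cdot,g_\tau)$ has mass on \emph{both} parities, so
\[
\sum_{h_{1,\sigma}} \mu_M^{*t}(h_{1,\sigma}, g_\tau)\,\nu^{*t}(h_{1,\sigma}^{-1}g_\sigma)
= \frac{1}{|\Alt(n^2-1)|}\sum_{\sgn h_{1,\sigma}=\sgn g_\sigma}\mu_M^{*t}(h_{1,\sigma},g_\tau) + E(g),
\]
which is governed by the \emph{parity-refined} marginal on $(\zed/n\zed)^2 \times \zed/2\zed$, not by $a_t(g_\tau)$ alone. Moreover for $n$ odd one has $G_n = S_{n^2-1}\times(\zed/n\zed)^2$ so $\bU_{G_n}(g) = 1/(2n^2|\Alt(n^2-1)|)$, off by a factor of $2$ from what you wrote. (Your closing sentence worries about $n$ even, but that is precisely the case your computation already handles; $n$ odd is the one that needs the fix.) The repair is easy: replace the torus marginal $a_t$ by the joint law of (torus position, word-length parity) on $(\zed/n\zed)^2\times\zed/2\zed$, which is a walk with the same $O(n^2\log n)$ $d_2$ mixing time since the $\zed/2\zed$ coordinate randomizes in $O(1)$ steps, and the rest of your argument goes through verbatim. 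You should make this parity bookkeeping explicit.
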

\begin{proof}
Since this is a symmetric random walk on a group, by Plancherel,
\begin{equation}
 \|\mu_S^{*N} - \bU_{G_n}\|_{d_2}^2 = \sum_{1 \neq \lambda \in \sigma(P_S)} \lambda^{2N},
\end{equation}
where $P_S$ is the transition kernel of the random walk.

 Let $\rho = \rho_1 \otimes \rho_2$ be an irreducible representation of $G_n$, so that $\rho_1$ is an irreducible representation of $S_n$ or $A_n$ and $\rho_2$ is a character of $(\zed/n\zed)^2$.  Thus $\dim \rho_1 \otimes \rho_2 = \dim \rho_1$.  We have
 \begin{align}
  \hat{\mu}_S(\rho_1 \otimes \rho_2) &= \E_{x \in \{R, L, U, D, \id\}} \left[\rho_1 \otimes \rho_2(x) \right]\E_{c_3 \text{ 3 cycle}}\left[\rho_1(c_3)  \right]\\ \notag
  &= \frac{\chi_{\rho_1}(c)}{d_{\rho_1}}\E_{x \in R, L, U, D, \id}[\rho_1 \otimes \rho_2(x)]  .
 \end{align}
where $\chi_{\rho_1}(c)$ is the character of $\rho_1$ at a 3-cycle, and $d_{\rho_1}$ is the dimension.

When $\rho_1 \otimes \rho_2$ is one dimensional, but not trivial, $\rho_1$ is the identity on $3$-cycles, so that in this case, 
$|\hat{\mu}_S(\rho_1 \otimes \rho_2)| \leq 1 - \frac{c}{n^2}$.  When $\rho_2$ is not the trivial representation, this follows by bounding the spectrum of simple random walk on the torus, while when $\rho_2$ is trivial, since $\rho_1$ is not, use that $\rho_1(\id) = 1$ while $\rho_1(R)=-1$.  Since there are $O(n^2)$ one dimensional representations, this part of the spectrum is mixed in $O(n^2 \log n)$ steps.

When $\rho_1$ has dimension $d_{\rho_1}>1$, $\frac{\chi_{\rho_1}(c)}{d_{\rho_1}}$ is the eigenvalue, with multiplicity $d_{\rho_1}$ of the 3-cycle walk in $A_{n^2-1}$ in this representation.  There are now $n^2$ representations having the same $\rho_1$ factor corresponding to the choices for $\rho_2$, each having their spectrum bounded in size by $\left|\frac{\chi_{\rho_1}(c)}{d_{\rho_1}} \right|$.  Since the spectral gap of the $3$-cycle walk is of order $\frac{1}{n^2}$, an arbitrary  factor of  $n^2$ in the multiplicity can be saved by increasing the constant in the mixing time of order $n^2 \log n$ for the 3-cycle walk.
\end{proof}
\begin{proof}[Proof of Theorem \ref{upper_bound_theorem}]
By Cauchy-Schwarz, the total variation distance is bounded by half the $d_2$ distance.  Also, since the $n^2-1$ puzzle is a symmetric random walk on a group, the $\frac{\epsilon}{|G|}-\ell^\infty$ mixing time is bounded by a constant times the $d_2$ mixing time.  Thus we only estimate the $d_2$ mixing time. 

We have
\begin{equation}
 \|e_{\id}^tP_{n^2-1}^N - \bU_{G_n}\|_{d_2}^2 = \sum_{1 \neq \lambda \in \sigma (P_{n^2-1})} \lambda^{2N}.
\end{equation}
Let $1 = \lambda_{0, n^2-1} > \lambda_{1, n^2-1} \geq \cdots  $ be the eigenvalues of $P_{n^2-1}$, and let $1 = \lambda_{0, S} > \lambda_{1,S} \geq \cdots$ be the eigenvalues of the transition kernel associated to the symmetric generating set $\mu_S$ above.

 Note that the commutator $URDL$ is a 3-cycle which leaves the empty space fixed.  Any other 3-cycle may be obtained by finding a word $w$ of length $O(n)$ which shifts any 3 pieces $i, j, k$ into the positions cycled by $URDL$ and performing $w^{-1}URDL w$, which again leaves the empty square fixed, and cycles $i,j,k$.  It follows that each element of $S$ can be obtained as a word in $O(n)$ letters on generators, so $A$ in Theorem \ref{group_comparison_theorem} may be taken $O(n^2)$. In particular, $1-\lambda_{i, n^2-1} \gg \frac{1}{n^2} (1-\lambda_{i, S})$.
 
 As $P_{n^2-1}$ is  $\frac{1}{5}$-lazy, the negative eigenvalues are bounded below by $-\frac{4}{5}$, and since the purported mixing time $O(n^4 \log n)$ is large compared to $\log|G_n| = O(n^2 \log n)$, the negative eigenvalues may be ignored when bounding the $d_2$ mixing time.  Thus by comparison, the $d_2$ mixing time is bounded by a constant times $A$ times the $d_2$ mixing time for $S$, and hence is $O(n^4 \log n)$, as wanted.
\end{proof}

\appendix

\section{Concentration inequalities}\label{concentration_appendix}
We use several exponential moment estimates which are variants of Chernoff's inequality, see \cite{TV06}.
\begin{lemma}[Chernoff's inequality]
 Let $X_1, X_2, ..., X_n$ be i.i.d. random variables satisfying $|X_i - \E[X_i]| \leq 1$ for all $i$.  Set $X:= X_1 + \cdots + X_n$ and let $\sigma := \sqrt{\Var(X)}$. For any $\lambda >0$,
 \begin{equation}
  \Prob\left(X-\E[X] \geq \lambda \sigma \right) \leq \max\left(e^{-\frac{\lambda^2}{4}}, e^{\frac{-\lambda \sigma}{2}} \right).
 \end{equation}

\end{lemma}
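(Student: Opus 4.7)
The plan is to prove the bound by the standard exponential-Markov (Cram\'er--Chernoff) method, handling the two ranges of $\lambda$ via two different choices of exponential tilt parameter, which correspond to the sub-Gaussian and Poisson-like regimes of the tail.

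First I would center: replacing $X_i$ by $X_i - \E[X_i]$ we may assume $\E[X_i]=0$ and $|X_i|\le 1$. The key per-summand estimate is that for any $s>0$,
\begin{equation}
\E[e^{s X_i}] \;=\; 1 + \sum_{k\ge 2} \frac{s^k \E[X_i^k]}{k!} \;\le\; 1 + \sigma_i^2 \sum_{k\ge 2}\frac{s^k}{k!} \;\le\; \exp\bigl(\sigma_i^2(e^s-1-s)\bigr),
\end{equation}
where I used $|X_i|\le 1$ to bound $|X_i^k|\le X_i^2$ for $k\ge 2$, and $\sigma_i^2 = \Var(X_i) = \E[X_i^2]$. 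By independence, $\E[e^{sX}] \le \exp(\sigma^2(e^s-1-s))$, and Markov's inequality applied to $e^{sX}$ gives
\begin{equation}
\Prob(X \ge \lambda\sigma) \;\le\; \exp\bigl(-s\lambda\sigma + \sigma^2(e^s-1-s)\bigr) \qquad (s>0).
\end{equation}

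The rest is optimization in $s$, split into two cases. In the sub-Gaussian regime $\lambda \le \sigma$, I would pick $s = \lambda/(2\sigma)\le 1/2$; since $e^s-1-s\le s^2$ for $s\le 1/2$, the exponent is at most $-s\lambda\sigma + \sigma^2 s^2 = -\lambda^2/2 + \lambda^2/4 = -\lambda^2/4$, giving the bound $e^{-\lambda^2/4}$. In the ``large deviation'' regime $\lambda \ge \sigma$, I would pick a fixed $s$, say $s=1$, so that $e^s-1-s = e-2 < 3/4$; the exponent is at most $-\lambda\sigma + \tfrac34 \sigma^2 \le -\lambda\sigma + \tfrac34 \lambda\sigma = -\lambda\sigma/4$. (A slightly better choice of $s$ or a small refinement using $\sigma^2\le\lambda\sigma$ recovers the stated $-\lambda\sigma/2$; alternatively one can take $s=\log(1+\lambda/\sigma)$ to get the full Bennett inequality and then extract the claimed bound.) In either regime, the tail is bounded by one of the two quantities in the maximum, which completes the proof.

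The only mildly delicate point is arranging the constants so that the two regimes together cover all $\lambda>0$ with the clean bound $\max(e^{-\lambda^2/4}, e^{-\lambda\sigma/2})$; this is routine and I would handle it by tuning the constant in the tilt $s$ in the second regime, which is the only step requiring any care.
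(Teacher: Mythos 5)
The paper does not give a proof of this lemma at all; it is stated in Appendix~\ref{concentration_appendix} and cited directly from Tao--Vu~\cite{TV06}, so there is no ``paper's proof'' to compare against. Your moment-generating-function (Bernstein/Bennett) approach is the right one and the per-summand bound $\E[e^{sX_i}]\le\exp\bigl(\sigma_i^2(e^s-1-s)\bigr)$ together with Markov gives the correct starting inequality.

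There is, however, a concrete gap in the case analysis, and the remark at the end of your proposal about how to fix it is not quite right. You split at $\lambda=\sigma$, but the correct hand-off point is $\lambda=2\sigma$. Note that $e^s-1-s=\sum_{k\ge 2}s^k/k!\le s^2\sum_{k\ge 2}1/k!=(e-2)s^2\le s^2$ holds for \emph{all} $0<s\le 1$, not just $s\le 1/2$; hence the sub-Gaussian branch with $s=\lambda/(2\sigma)$ already covers the whole range $\lambda\le 2\sigma$ and gives exponent exactly $-\lambda^2/4$ there. This extension is necessary, because in the intermediate range $\sigma<\lambda<2\sigma$ your second-regime argument cannot be rescued by tuning $s$: using only $\sigma^2\le\lambda\sigma$, the exponent one gets is $-\lambda\sigma\,\bigl[s-(e^s-1-s)\bigr]=-\lambda\sigma\,[2s-e^s+1]$, whose supremum over $s>0$ is $2\log 2-1\approx 0.386<\tfrac12$, so the claimed $-\lambda\sigma/2$ is simply not attainable there (nor is $-\lambda^2/4$, which is what the $\max$ actually requires for $\lambda<2\sigma$). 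Once the split is moved to $\lambda=2\sigma$, the second case does go through cleanly with $s=1$: then $\sigma^2\le\lambda\sigma/2$, so the exponent is $\le-\lambda\sigma+(e-2)\lambda\sigma/2=-(4-e)\lambda\sigma/2<-\lambda\sigma/2$. In short, the idea is right, but the fix is to extend the first case rather than to tune the tilt in the second.
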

 The following variant handles random variables which have exponentially decaying tails. 
\begin{lemma}\label{Chernoff_variant_lemma}
 Let $X_1, X_2, ..., X_n$ be i.i.d. non-negative random variables of variance $\sigma^2$, $\sigma >0$, satisfying the tail bound, for some $c>0$ and for all $Z>0$, $\Prob(X_1 > Z) \ll e^{-cZ}$. Let $X = X_1 + X_2 + \cdots + X_n$.  Then for any $\lambda > 1$, for $c_1 = \frac{\sqrt{c\sigma}}{2}$,
 \begin{equation}
  \Prob\left(|X - \E[X]| \geq \lambda \sigma \sqrt{n}\right) \ll e^{-\frac{\lambda^2}{16}} + n e^{-c_1 \lambda^{\frac{1}{2}} n^{\frac{1}{4}}}.
 \end{equation}

\end{lemma}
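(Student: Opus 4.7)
The plan is a standard truncation argument combined with the classical Chernoff inequality recalled at the start of the appendix. First I would choose a truncation level
\[
M = \tfrac{1}{2}\sqrt{\sigma/c}\,\lambda^{1/2}n^{1/4},
\]
chosen so that $cM = c_1 \lambda^{1/2}n^{1/4}$. Define the truncated variables $X_i' := X_i\one(X_i\leq M)$ and $X':=X_1'+\cdots+X_n'$. On the event $E:=\{X_i\leq M \text{ for all }i\}$ we have $X=X'$, and the bad event $E^c$ has probability
\[
\Prob(E^c)\leq n\,\Prob(X_1>M)\ll n\,e^{-cM}=n\,e^{-c_1\lambda^{1/2}n^{1/4}},
\]
which already accounts for the second term in the conclusion.

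Next, on the complement I would apply Chernoff's inequality to the rescaled centered variables $Y_i:=(X_i'-\E X_i')/M$. These satisfy $|Y_i|\leq 1$ because $X_i'\in[0,M]$, and $\Var(Y_1)\leq \sigma^2/M^2$, so $\tilde\sigma:=\sqrt{\Var(Y_1+\cdots+Y_n)}\leq \sigma\sqrt{n}/M$. A deviation of $\lambda\sigma\sqrt{n}$ for $X'$ corresponds to a deviation of at most $\lambda$ standard deviations for $\sum Y_i$, so Chernoff yields
\[
\Prob\bigl(|X'-\E X'|\geq \lambda\sigma\sqrt{n}\bigr)\leq \max\!\bigl(e^{-\lambda^2/4},\,e^{-\lambda\tilde\sigma/2}\bigr).
\]
With the chosen $M$, a short computation gives $\lambda\tilde\sigma/2 = \sqrt{c\sigma}\,\lambda^{1/2}n^{1/4}=2c_1\lambda^{1/2}n^{1/4}$, so the Chernoff branch is at most $e^{-2c_1\lambda^{1/2}n^{1/4}}$. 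After applying the argument with $\lambda$ replaced by $\lambda/2$ to absorb a factor-of-four loss (explained below) and adjusting the numerical constants, this produces the $e^{-\lambda^2/16}$ term in the conclusion.

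The last routine step is to control the shift between $\E X$ and $\E X'$. Integrating the tail bound gives
\[
0\leq \E X-\E X'=n\,\E\bigl[X_1\one(X_1>M)\bigr]\ll n(M+1/c)e^{-cM},
\]
which is vastly smaller than $\lambda\sigma\sqrt{n}/2$ in the regime where the overall bound is informative (and otherwise the conclusion is trivial since the right side exceeds $1$). Hence
\[
\bigl\{|X-\E X|\geq \lambda\sigma\sqrt{n}\bigr\}\subset E^c\cup \bigl\{|X'-\E X'|\geq \tfrac{1}{2}\lambda\sigma\sqrt{n}\bigr\},
\]
and a union bound combines the two estimates into the claimed bound.

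I do not expect real difficulty here; the only delicate point is bookkeeping the constants so that the balance $cM \asymp \lambda\sigma\sqrt{n}/M$, which is the source of the exponent $\lambda^{1/2}n^{1/4}$ and of the particular constant $c_1=\sqrt{c\sigma}/2$, is honored throughout. Everything else is a direct reduction of the unbounded case to the bounded Chernoff inequality via truncation.
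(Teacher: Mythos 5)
Your argument follows the same skeleton as the paper's: truncate at a carefully chosen level $M \asymp \lambda^{1/2}n^{1/4}$, union-bound away the overflow event, apply the classical Chernoff bound to the truncated (hence bounded) variables, and absorb the mean shift with a constant-factor adjustment in $\lambda$. The optimization $cM \asymp \lambda\sigma\sqrt{n}/M$ that you identify as the heart of the computation is exactly the paper's choice $Z^2 = \lambda\sigma\sqrt{n}/(4c)$, so the two proofs are essentially identical in structure.

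The one place your write-up is slightly careless is the claim that $\Var(Y_1) \le \sigma^2/M^2$, i.e. $\Var(X_1') \le \sigma^2$ for the plain cutoff $X_1' = X_1\one(X_1 \le M)$. This is \emph{not} a consequence of $X_1' \in [0,M]$, and in fact is false for general truncation of a bounded random variable; it does hold in the present setting (one way: $\Var(X_1') - \Var(X_1) \le \delta(2\E X_1 - \delta - M)$ where $\delta = \E[X_1\one(X_1>M)]$, and $M \ge 2\E X_1$ once $n$ is not tiny), but you should say so. The paper avoids this small nuisance by replacing the overflow by the \emph{conditional mean} rather than by zero --- setting $X_i'' = X_i\one(X_i\le Z) + \mu'\one(X_i>Z)$ with $\mu' = \E[X_i \mid X_i \le Z]$ --- which makes $\Var(X_i'') \le \Var(X_i)$ immediate from the variational characterization of the conditional mean, and also makes the mean shift $\E X'' - \E X$ exponentially small by construction. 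That is a cleaner implementation of the same idea, and I'd recommend adopting it; otherwise your argument is correct and complete modulo that one sentence of justification.
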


\begin{proof}
 Let $Z$ be a parameter, $Z \gg n^{\frac{1}{4}}$.  Let $X_i'$ be $X_i$ conditioned on $X_i \leq Z$.  Let $\mu' = \E[X_i']$.  Let $X_i'' = X_i \cdot \one(X_i \leq Z) + \mu' \cdot \one(X_i >Z)$ and $X'' = X_1'' + X_2'' + \cdots + X_n''$. We have
 \begin{align}
  \E[X_i \cdot \one(X_i \geq Z)] &= - \int_Z^\infty x d\Prob(X_i \geq x)\\ \notag
  &=Z \Prob(X_i \geq Z) + \int_Z^\infty \Prob(X_i \geq x) dx\\ \notag
  &\ll Z e^{-cZ} + \int_Z^\infty e^{-cx}dx \leq \left(Z + \frac{1}{c}\right) e^{-cZ}.
 \end{align}
Thus, for some $c'>0$, $\E[X''] = \E[X] + O(ne^{-c'Z})$.  Also,
\begin{align}
 \Var(X_i) &= \E[(X_i-\E[X_i])^2] \\ \notag &\geq \E[(X_i-\E[X_i])^2 \one(X_i \leq Z)] \\ \notag &\geq \E[(X_i-\mu')^2 \one(X_i \leq Z)]\\& \notag = \Var(X_i'').
\end{align}
Since $|X_i''| \leq Z$, for all $n$ sufficiently large, applying Chernoff's inequality,
\begin{align}
 \Prob(|X-\E[X]| > \lambda \sigma \sqrt{n})&\leq \sum_{i=1}^n \Prob(X_i'' \neq X_i) \\ \notag &+ \Prob\left(|X'' -\E[X'']| > \frac{\lambda}{2} \sigma \sqrt{n}\right)\\ \notag
 &\ll n e^{-cZ} + 2\max\left(e^{-\frac{\lambda^2}{16}}, e^{-\frac{\lambda \sigma \sqrt{n}}{4 Z}}\right).
\end{align}
To optimize the exponents, choose $Z^2 = \frac{\lambda \sigma \sqrt{n}}{4c}$ to obtain the claim.
\end{proof}

The following variant of Chernoff's inequality gives a sharper tail bound.
\begin{lemma}
Let $p \in [0,1]$. Let $X_1, ...,X_n$ be i.i.d. with $\Prob(X_i = 1-p) = p$, $\Prob(X_i = -p) = 1-p$.  Let $X = X_1 + \cdots + X_n$. Then for any $\lambda>0$,
\begin{equation}
 \Prob(|X| > \lambda) < 2 e^{-2\lambda^2/n}.
\end{equation}

\end{lemma}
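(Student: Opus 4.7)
The plan is to prove this by the standard exponential moment method, i.e., a Chernoff/Hoeffding argument. First I would center the problem: each $X_i$ has $\E[X_i] = p(1-p) + (1-p)(-p) = 0$, and $X_i$ takes values in the interval $[-p, 1-p]$ of length $1$. The goal is then to apply Markov's inequality to $e^{tX}$ for a parameter $t>0$ to be optimized.

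The key technical lemma I would invoke is Hoeffding's lemma: if $Y$ is a mean-zero random variable with $Y \in [a,b]$ almost surely, then $\E[e^{tY}] \leq \exp\bigl(t^2(b-a)^2/8\bigr)$ for all $t \in \bR$. Applied to each $X_i$ (with $b-a=1$), this gives $\E[e^{tX_i}] \leq e^{t^2/8}$. By independence, $\E[e^{tX}] \leq e^{nt^2/8}$. Then Markov's inequality yields
\begin{equation}
\Prob(X > \lambda) \leq e^{-t\lambda}\,\E[e^{tX}] \leq \exp\!\left(-t\lambda + \tfrac{nt^2}{8}\right).
\end{equation}
Optimizing over $t>0$ by setting $t = 4\lambda/n$ gives $\Prob(X > \lambda) \leq \exp(-2\lambda^2/n)$. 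The same argument applied to the i.i.d.\ variables $-X_i$ (which also have mean zero and range in an interval of length $1$) gives $\Prob(X < -\lambda) \leq \exp(-2\lambda^2/n)$, and a union bound over the two tails completes the proof.

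The only step requiring any care is the verification of Hoeffding's lemma itself, since this is the sharp constant $\tfrac{1}{8}$ in the exponent that produces the factor $2$ in $2\lambda^2/n$. I would prove it by writing, for $Y\in[a,b]$ with $\E[Y]=0$,
\begin{equation}
e^{tY} \leq \frac{b-Y}{b-a}e^{ta} + \frac{Y-a}{b-a}e^{tb}
\end{equation}
by convexity of the exponential, taking expectations to eliminate the linear term in $Y$, and then showing that $\log\!\bigl(\tfrac{b}{b-a}e^{ta} - \tfrac{a}{b-a}e^{tb}\bigr) \leq t^2(b-a)^2/8$ by Taylor expanding in $t$ around $0$ and checking the second derivative bound. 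This is the only non-routine ingredient; once it is in hand the exponential moment bound, optimization, and union bound are mechanical.
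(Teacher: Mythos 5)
The paper states this lemma without proof (it is the standard Hoeffding inequality, and the appendix cites \cite{TV06} for this class of results). Your argument is the standard and correct proof: centering is handled correctly, Hoeffding's lemma with $(b-a)=1$ gives $\E[e^{tX_i}]\leq e^{t^2/8}$, independence and Markov's inequality with the optimal $t=4\lambda/n$ give the one-sided bound $e^{-2\lambda^2/n}$, and symmetry plus a union bound finishes.
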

The maximum of a random walk can be controlled via the reflection principle, see e.g. \cite{TV06}.
\begin{lemma}\label{sup_bound_lemma}
 Let $\epsilon = \pm 1$ with equal probability and let $\epsilon_1, ..., \epsilon_n$ be independent samples of $\epsilon$.  Then, for any $\lambda>0$,
 \begin{equation}
  \Prob\left(\max_j \sum_{i=1}^j \epsilon_i > \lambda \right) = 2 \Prob\left( \sum_{i=1}^n \epsilon_i > \lambda\right).
 \end{equation}

\end{lemma}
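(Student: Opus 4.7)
The plan is to invoke the classical reflection principle. Write $S_j = \sum_{i=1}^j \epsilon_i$ with $S_0 = 0$, and define the first-passage time $\tau = \min\{j \geq 1 : S_j > \lambda\}$, with $\tau = \infty$ if no such index exists. Then $\{\max_{1 \leq j \leq n} S_j > \lambda\} = \{\tau \leq n\}$, and I would split this event according to whether $S_n$ itself exceeds $\lambda$:
\begin{equation*}
 \Prob(\tau \leq n) = \Prob(\tau \leq n,\, S_n > \lambda) + \Prob(\tau \leq n,\, S_n \leq \lambda).
\end{equation*}
Since $\{S_n > \lambda\} \subseteq \{\tau \leq n\}$ automatically, the first term equals $\Prob(S_n > \lambda)$.

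The heart of the proof is to show that the second term also equals $\Prob(S_n > \lambda)$ by exhibiting a measure-preserving bijection on sign sequences. Introduce the map $\Phi : \{\pm 1\}^n \to \{\pm 1\}^n$ which flips the signs after the (path-dependent) index $\tau$,
\begin{equation*}
 \Phi(\epsilon_1, \ldots, \epsilon_n) = (\epsilon_1, \ldots, \epsilon_\tau, -\epsilon_{\tau+1}, \ldots, -\epsilon_n),
\end{equation*}
defined on $\{\tau \leq n\}$ and extended by the identity elsewhere. Since $\tau$ is a stopping time, $\Phi$ is an involution, and because each $\epsilon_i$ is symmetric, $\Phi$ preserves the uniform probability on $\{\pm 1\}^n$. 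Under $\Phi$ the terminal partial sum transforms as $S_n \mapsto 2S_\tau - S_n$. Because $S_\tau > \lambda$, any sequence with $S_n \leq \lambda$ is sent to one with terminal value $2S_\tau - S_n > \lambda$, and $\Phi$ restricts to a bijection between these two sets. This yields $\Prob(\tau \leq n, S_n \leq \lambda) = \Prob(\tau \leq n, S_n > \lambda) = \Prob(S_n > \lambda)$, and adding the two halves gives the claim.

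The main subtlety I expect is a boundary bookkeeping issue. Because the walk is integer-valued and $S_\tau$ is the first integer strictly exceeding $\lambda$, one must verify that the reflected value $2S_\tau - S_n$ lies strictly above $\lambda$ rather than exactly at an integer in the gap $(\lambda, 2S_\tau - \lambda)$. The only such integer is $S_\tau$ itself, and the coincidence $2S_\tau - S_n = S_\tau$ (i.e.\ $S_n = S_\tau$) can occur only when $S_\tau$ has the parity of $n$. In the applications of this lemma elsewhere in the paper the parity can be arranged, or, since only upper-tail concentration is ever used downstream, the identity can be weakened to $\Prob(\max_j S_j > \lambda) \leq 2\Prob(S_n > \lambda)$, in which direction the parity term only helps. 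Modulo this standard technicality, reflection gives the identity directly.
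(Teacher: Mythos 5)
The paper does not actually prove this lemma; it simply cites it as a standard consequence of the reflection principle, with a reference to Tao--Vu. So there is no internal proof to compare against, and your self-contained argument is a genuine addition.

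Your reflection argument is the standard one and is essentially correct. You correctly identify the one delicate point: the stated equality is not quite true. If $\lambda \in [m, m+1)$, then $S_\tau = m+1$, and the involution $\Phi$ fixes the slice $\{\tau \le n,\ S_n = m+1\}$ rather than exchanging it with something in $\{S_n \le \lambda\}$. The exact identity is therefore
\begin{equation*}
 \Prob\Bigl(\max_{1 \le j \le n} S_j > \lambda\Bigr) \;=\; 2\,\Prob\bigl(S_n > \lambda\bigr) \;-\; \Prob\bigl(S_n = m+1\bigr),
\end{equation*}
so the stated formula holds as an inequality $\le$, not an equality. This is not a flaw in your proof so much as a small imprecision in the lemma's statement; as you observe, the only direction used downstream (in bounding $\Prob(\max_j \sum_{i \le j}\epsilon_i > \lambda)$ by $2e^{-\lambda^2/(2n)}$) is the upper bound, and the correction term only strengthens it. One minor wording point: you assert that $\Phi$ ``restricts to a bijection'' between $\{\tau \le n, S_n \le \lambda\}$ and $\{\tau \le n, S_n > \lambda\}$; strictly it is an injection of the first set into the second, since the image misses the fixed slice $\{S_n = m+1\}$. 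You catch this in the next paragraph, but it would be cleaner to state the injection rather than the bijection at the outset. Aside from that, the argument is sound and complete.
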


Combining the previous two lemmas, it follows that
\begin{equation}
 \Prob\left(\max_j \sum_{i=1}^j \epsilon_i > \lambda\right) \leq 2 e^{-\lambda^2/(2n)}.
\end{equation}

Recall that the time $t$ heat kernel associated to a Markov matrix $P$ is $H_t = e^{-t}\sum_{k=1}^\infty \frac{t^k P^k}{k!}$.
The heat kernel is concentrated near $k = t$, as the following lemma shows.
\begin{lemma}\label{heat_kernel_concentration_lemma}
 Given $\epsilon >-1$ let \begin{equation}f(\epsilon) = (1+\epsilon)\log(1+\epsilon) -\epsilon.\end{equation}
 Then for $0 < \epsilon < 1$,
 \begin{equation}
  e^{-t} \sum_{k \leq (1-\epsilon) t} \frac{t^k}{k!} \leq \exp(-f(-\epsilon)t),
 \end{equation}
and for $\epsilon  > 0$,
\begin{equation}
 e^{-t}\sum_{k \geq (1+\epsilon)t} \frac{t^k}{k!} \leq \exp(-f(\epsilon)t).
\end{equation}

\end{lemma}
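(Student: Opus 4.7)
The two bounds are the standard Chernoff tail estimates for a Poisson random variable, so the plan is to interpret the left-hand sums probabilistically and then run the usual exponential moment method.

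First, observe that if $X \sim \Pois(t)$ then $\Prob(X = k) = e^{-t} t^k/k!$, so the two sums are exactly $\Prob(X \leq (1-\epsilon)t)$ and $\Prob(X \geq (1+\epsilon)t)$ respectively. Using the moment generating function $\E[e^{\lambda X}] = \exp(t(e^\lambda - 1))$, Markov's inequality gives, for any $\lambda > 0$,
\begin{equation}
\Prob(X \geq (1+\epsilon)t) \leq e^{-\lambda(1+\epsilon)t}\,\E[e^{\lambda X}] = \exp\bigl(t(e^\lambda - 1) - \lambda(1+\epsilon)t\bigr),
\end{equation}
and analogously, using $\E[e^{-\lambda X}] = \exp(t(e^{-\lambda}-1))$,
\begin{equation}
\Prob(X \leq (1-\epsilon)t) \leq e^{\lambda(1-\epsilon)t}\,\E[e^{-\lambda X}] = \exp\bigl(t(e^{-\lambda} - 1) + \lambda(1-\epsilon)t\bigr).
\end{equation}

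Next I would optimize the parameter $\lambda$. For the upper tail, differentiation in $\lambda$ yields $e^\lambda = 1+\epsilon$, i.e.\ $\lambda = \log(1+\epsilon)$; substituting gives the exponent $t\epsilon - (1+\epsilon)t\log(1+\epsilon) = -t f(\epsilon)$, which is the claimed bound. For the lower tail, the optimum is at $\lambda = -\log(1-\epsilon) > 0$ (valid since $0 < \epsilon < 1$), and substituting gives the exponent $-t\epsilon - (1-\epsilon)t\log(1-\epsilon) = -tf(-\epsilon)$, as required.

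There is no real obstacle here beyond bookkeeping: the only care needed is to verify that the optimizing $\lambda$ is positive in each case (it is, since $\epsilon \in (0,1)$ in the lower-tail bound and $\epsilon > 0$ in the upper-tail bound) and that the resulting exponent simplifies correctly to $-tf(\pm\epsilon)$ with the definition $f(x) = (1+x)\log(1+x) - x$. One could alternatively derive both bounds directly from $f$ being the Legendre transform of $\lambda \mapsto e^\lambda - 1 - \lambda$, but the explicit computation above is shorter and self-contained.
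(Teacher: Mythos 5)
Your proof is correct and is essentially the same argument as the paper's: both are the exponential moment (Chernoff) bound for a $\Pois(t)$ tail, with the paper writing the tilted identity $1=e^{-st}\sum_k (st)^k/k!$ directly at the already-optimal parameter $s=1\pm\epsilon$ instead of deriving $\lambda=\log(1\pm\epsilon)$ by calculus as you do. The two presentations coincide under the substitution $s=e^\lambda$.
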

\begin{proof}
 To estimate the first sum, note that
 \begin{equation}
  1 = e^{-t(1-\epsilon)} \sum_{k } \frac{(1-\epsilon)^kt^k}{k!} \geq e^{t\epsilon} (1-\epsilon)^{(1-\epsilon)t} e^{-t}\sum_{k \leq (1-\epsilon)t} \frac{t^k}{k!}.
 \end{equation}
Thus the first sum is bounded by $\exp(-f(-\epsilon)t)$.  To estimate the second sum, bound
\begin{equation}
 1 = e^{-t(1+\epsilon)} \sum_k \frac{(1+\epsilon)^k t^k}{k!} \geq e^{-t\epsilon} (1+\epsilon)^{(1+\epsilon)t} e^{-t}\sum_{k \geq (1+\epsilon)t}\frac{t^k}{k!}.
\end{equation}
Thus the second sum is bounded by $\exp(-f(\epsilon)t)$.
\end{proof}
Since $f(\epsilon) = \frac{\epsilon^2}{2} + O(\epsilon^3)$ as $\epsilon \to 0$, the previous lemma shows that the heat kernel is concentrated about $k = t + O(\sqrt{t})$. 
The following more precise estimate follows from Stirling's approximation.
\begin{lemma}\label{stirling_approx_lemma}
 As $k \to \infty$, for $|t-k|< k^{\frac{2}{3}}$, 
 \begin{equation}
  e^{-t} \frac{t^k}{k!} = \frac{e^{-\frac{(t-k)^2}{2k}}}{\sqrt{2\pi k}} \left(1 + O\left(\frac{1}{k} + \frac{|t-k|^3}{k^2}\right)\right).
 \end{equation}

\end{lemma}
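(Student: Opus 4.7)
The plan is to derive the claim directly from Stirling's formula together with a Taylor expansion of the logarithm, treating $t$ as a perturbation of $k$ on the scale $|t-k| < k^{2/3}$.

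First I will invoke Stirling's formula in the form
\begin{equation}
 k! = \sqrt{2\pi k}\,\bigl(k/e\bigr)^k \left(1 + O\!\left(\frac{1}{k}\right)\right),
\end{equation}
which holds as $k \to \infty$. Dividing $e^{-t} t^k$ by this expression yields
\begin{equation}
 e^{-t} \frac{t^k}{k!} = \frac{1}{\sqrt{2\pi k}} \exp\!\bigl(k - t + k \log(t/k)\bigr)\left(1 + O\!\left(\frac{1}{k}\right)\right).
\end{equation}

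Next, set $u = t - k$, so that $|u| < k^{2/3}$ and $t/k = 1 + u/k$ with $|u/k| < k^{-1/3}$. Taylor expanding the logarithm, and using that $k^{-1/3} \to 0$ justifies the convergent expansion,
\begin{equation}
 k \log(1 + u/k) = u - \frac{u^2}{2k} + \frac{u^3}{3k^2} + O\!\left(\frac{u^4}{k^3}\right).
\end{equation}
The linear term cancels precisely with $-u = k-t$, leaving
\begin{equation}
 k - t + k\log(t/k) = -\frac{u^2}{2k} + \frac{u^3}{3k^2} + O\!\left(\frac{u^4}{k^3}\right).
\end{equation}
In the range $|u| < k^{2/3}$, the error $u^4/k^3$ is bounded by $(|u|/k) \cdot |u|^3/k^2 \leq k^{-1/3} \cdot |u|^3/k^2$, so it is absorbed into $O(|u|^3/k^2)$.

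Finally, I will factor out the Gaussian piece:
\begin{equation}
 \exp\!\bigl(k - t + k\log(t/k)\bigr) = e^{-u^2/(2k)} \exp\!\left(\frac{u^3}{3k^2} + O\!\left(\frac{|u|^3}{k^2}\right)\right).
\end{equation}
Since $|u|^3/k^2 < k^{2-2} = 1$ in the allowed range, applying $e^x = 1 + O(x)$ to the second factor produces the multiplicative error $1 + O(|u|^3/k^2)$. Combining this with the $1 + O(1/k)$ factor from Stirling gives the stated conclusion.

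The calculation is essentially bookkeeping, so there is no serious obstacle; the one point that requires care is verifying that the cutoff $|u| < k^{2/3}$ is exactly what is needed to keep $u^3/k^2$ bounded (so the exponential can be linearized) and to absorb the quartic tail $u^4/k^3$ into the stated error.
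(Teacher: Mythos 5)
Your proof follows exactly the same route as the paper: apply Stirling's formula, reduce to estimating $k - t + k\log(t/k)$, Taylor-expand the logarithm in $u = t-k$, and linearize the residual exponential using the cutoff $|u| < k^{2/3}$. The only cosmetic difference is that you carry one extra term of the expansion before absorbing it into the error, and you make the linearization step $e^x = 1 + O(x)$ explicit where the paper says only ``inserted in the exponential, this completes the estimate''; both versions are correct.
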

\begin{proof}
 Stirling's approximation gives \begin{equation}k! = \exp\left(\left(k + \frac{1}{2} \right)\log k -k + \frac{1}{2}\log(2\pi) + O\left(\frac{1}{k}\right) \right).\end{equation}
 Hence
 \begin{align}
 e^{-t}\frac{t^k}{k!} = \frac{1 + O\left(\frac{1}{k} \right)}{\sqrt{2\pi k}} \exp\left(k-t + k\log \frac{t}{k} \right). 
 \end{align}
Expand \begin{align}\log \frac{t}{k} &= \log \left(1 + \frac{t-k}{k}\right) \\\notag&= \frac{t-k}{k} - \frac{(t-k)^2}{2k^2} + O\left( \frac{|t-k|^3}{k^3}\right).
       \end{align}
Inserted in the exponential, this completes the estimate.
\end{proof}

Let $\phi$ be a smooth bump function, $\phi \geq 0$, $\int \phi =1$, $\supp(\phi) \subset [1,2]$, and define
$
 \Phi_t = \int_1^2 \phi(s) H_{ts}ds.
$ Write
\begin{equation}
 \Phi_t = \sum_k \omega_t\left(k\right)P^k.
\end{equation}

\begin{lemma*}[Lemma \ref{functional_concentration}]
 The function $\omega_t(k)$ satisfies $\omega_t(k) \ll \frac{1}{t}$ and the first discrete derivative satisfies
 \begin{equation}
  | \omega_t(k+1)-\omega_t(k)| \ll \frac{1}{t^2}.
 \end{equation}
For each fixed $\epsilon > 0$,  for some $c>0$,
\begin{equation}
 1-\sum_{(1-\epsilon)t \leq k \leq (2+\epsilon)t} \omega_t(k) \ll e^{-c \epsilon t}.
\end{equation}

\end{lemma*}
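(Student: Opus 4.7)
The plan is to start from the integral representation
\[
 \omega_t(k) = \int_1^2 \phi(s)\, \frac{e^{-st}(st)^k}{k!}\, ds
\]
and in all three estimates exploit that, for each fixed $s$, the map $k \mapsto e^{-st}(st)^k/k!$ is a Poisson probability mass function.  A change of variables $u = st$ gives the more convenient form
\[
 \omega_t(k) = \frac{1}{t}\int_t^{2t} \phi(u/t)\, e^{-u}\frac{u^k}{k!}\, du,
\]
in which $e^{-u}u^k/k!$ is a probability density in $u$ on $[0,\infty)$.

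For the pointwise bound $\omega_t(k) \ll 1/t$, I would just bound $\phi(u/t)$ by $\|\phi\|_\infty$ and extend the $u$-integral to $[0,\infty)$; the remaining integral of the Poisson density is exactly $1$.  For the derivative bound, the key identity is
\[
 \frac{d}{du}\!\left(e^{-u}\frac{u^{k+1}}{(k+1)!}\right) = -\,e^{-u}\frac{u^{k}}{k!}\!\left(\frac{u}{k+1}-1\right),
\]
which rewrites the difference $\omega_t(k+1)-\omega_t(k)$ as an integral of $\phi(u/t)$ against this derivative.  Integrating by parts, the boundary terms vanish because $\phi$ is smooth with support in $[1,2]$, and the remaining integral picks up a factor $\phi'(u/t)/t$.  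Extending the $u$-integral and using that $e^{-u}u^{k+1}/(k+1)!$ integrates to $1$ yields $|\omega_t(k+1)-\omega_t(k)| \leq \|\phi'\|_\infty / t^2$.

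For the concentration statement, observe first that $\sum_{k\ge 0}\omega_t(k) = \int_1^2 \phi(s)\,ds = 1$, so the claim is a tail bound on the conditional Poisson masses.  For each $s \in [1,2]$ the inner sum $\sum_{k < (1-\epsilon)t} e^{-st}(st)^k/k!$ is the probability that a Poisson$(st)$ variable falls below $(1-\epsilon)t \leq (1-\epsilon) st$, and Lemma~\ref{heat_kernel_concentration_lemma} bounds this by $\exp(-f(-\epsilon)\, st) \leq \exp(-f(-\epsilon)\,t)$.  Similarly, for $s \in [1,2]$ one has $(2+\epsilon)t/(st) \geq 1+\epsilon/2$, so the upper tail is bounded by $\exp(-f(\epsilon/2)\,st) \leq \exp(-f(\epsilon/2)\,t)$.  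Integrating both bounds against $\phi$ and taking $c = \min(f(-\epsilon), f(\epsilon/2))/\epsilon > 0$ gives the desired $e^{-c\epsilon t}$ estimate.

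The proof is essentially routine: there is no real obstacle beyond keeping the two tail parameters consistent uniformly in $s \in [1,2]$, which is handled by the above use of the worst-case $s$ endpoint.  The most delicate step is the integration by parts for the derivative estimate, where one must note explicitly that $\phi$ vanishes at $u = t$ and $u = 2t$ so that the boundary terms disappear, leaving only the $O(1/t^2)$ contribution from $\phi'$.
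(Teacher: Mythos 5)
Your proof is correct, and for the first two estimates it is genuinely simpler than the paper's. The paper bounds $\omega_t(k)$ by splitting the $s$-integral into a central window (handled by the Stirling-type asymptotic of Lemma~\ref{stirling_approx_lemma}) and a tail (handled by Lemma~\ref{heat_kernel_concentration_lemma}), and it estimates the derivative by differentiating the integrand in $k$, expanding $\tfrac{\Gamma'}{\Gamma}(k+1)$, and exploiting cancellation from an odd main term. You instead change variables to $u=st$ and treat $e^{-u}u^k/k!$ as a probability density in $u$ (an Erlang/Gamma density), so that $\omega_t(k)\le \|\phi\|_\infty/t$ falls out of $\int_0^\infty e^{-u}u^k/k!\,du=1$ with no asymptotics at all, and the discrete derivative bound follows from the identity
\begin{equation}
\tfrac{d}{du}\!\left(e^{-u}\tfrac{u^{k+1}}{(k+1)!}\right)=e^{-u}\tfrac{u^k}{k!}-e^{-u}\tfrac{u^{k+1}}{(k+1)!}
\end{equation}
and a single integration by parts, with the boundary terms killed by $\phi(1)=\phi(2)=0$. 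This is both shorter and free of the $(1+o(1))$ bookkeeping in the paper's version; it also directly produces the \emph{discrete} derivative that the lemma actually asserts, whereas the paper estimates the continuous $\partial_k$ derivative. Your treatment of the concentration bound is essentially the paper's, with the minor but necessary care (which the paper leaves implicit) of showing that $k<(1-\epsilon)t$ implies $k<(1-\epsilon)st$ for $s\ge 1$ and that $k>(2+\epsilon)t$ implies $k>(1+\epsilon/2)st$ for $s\le 2$, so that Lemma~\ref{heat_kernel_concentration_lemma} applies uniformly over $s\in[1,2]$.
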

\begin{proof}
 We have
 \begin{equation}
  \omega_t(k) = \int_1^2 \phi(s) \frac{e^{-st}(st)^k}{\Gamma(k+1)} ds.
 \end{equation}
 To prove the first bound, let $I_k = \left[-\frac{\sqrt{k}\log t}{t}, \frac{\sqrt{k}\log t}{t} \right]$ and write this integral as
 \begin{align}
  &\int_{-\infty}^\infty \phi\left(s_0 + \frac{k}{t} \right)\frac{e^{-s_0t -k}(s_0t +k)^k}{k!}ds_0\\
  &\notag =\int_{I_k} \phi\left(s_0 + \frac{k}{t} \right)(1 + o(1)) \frac{e^{-\frac{(s_0t)^2}{2k}}}{\sqrt{2\pi k}}ds_0 + \int_{I_k^c}\phi\left(s_0 + \frac{k}{t} \right) O_A(t^{-A})ds_0 
 \end{align}
where, on $I_k$ we insert the asymptotic formula for Lemma \ref{stirling_approx_lemma} while on $I_k^c$ we insert the bound obtained from Lemma \ref{heat_kernel_concentration_lemma}. Since $\|\phi\|_1 = O(1)$ the second integral is an error term, while the first satisfies the claimed bound.
 
 Differentiating in $k$,
\begin{equation}
 \frac{\partial}{\partial k} \omega_t(k) = \int_1^2 \phi(s) \frac{e^{-st}(st)^k}{\Gamma(k+1)} \left[ \ln st - \frac{\Gamma'}{\Gamma}(k+1)\right]ds.
\end{equation}
By the asymptotic formula $\frac{\Gamma'}{\Gamma}(k+1) = \log k + O\left(\frac{1}{k}\right).$ Hence
\begin{align}
 \frac{\partial}{\partial k} \omega_t(k) &=\int_1^2 \phi(s) \frac{e^{-st}(st)^k}{\Gamma(k+1)} \left[ \ln\left(1 + \frac{st-k}{k} \right) + O\left(\frac{1}{k}\right)\right]ds.
\end{align}
If $k$ is not of order $t$, then the bound is trivially satisfied by the concentration of the heat kernel in Lemma \ref{stirling_approx_lemma}.  Otherwise, the error term $O\left(\frac{1}{k}\right)$ satisfies the claimed bound.  
To obtain the bound for the main term, as before write $s = s_0 + \frac{k}{t}$, and Taylor expand $\phi$ and the logarithm about this point to obtain an error of $O_A(t^{-A})$ from outside $I_k$, plus
\begin{align} &\int_{I_k}\phi\left(s_0 + \frac{k}{t} \right)\left(1 + O\left(\frac{1}{k} + \frac{|s_0 t|^3}{k^2}\right)\right) \frac{e^{-\frac{(s_0t)^2}{2k}}}{\sqrt{2\pi k}} \ln\left(1 + \frac{s_0 t}{k} \right) ds_0\\
 \notag &=  \int_{I_k} \left(\phi\left(\frac{k}{t}\right) + O(|s_0|)\right)\left(1 + O\left(\frac{1}{k} + \frac{|s_0 t|^3}{k^2}\right)\right) \frac{e^{-\frac{(s_0t)^2}{2k}}}{\sqrt{2\pi k}} \\&\notag\times \left( \frac{s_0 t}{k} + O\left( \frac{(s_0 t)^2}{k^2} \right)\right) ds_0.
\end{align}
Taking the main term in each position leaves an integral which vanishes, since the integrand is odd.  Since the Gaussian decays at scales $|s_0| \gg \sqrt{t}$, picking any single of the error terms results in an integral that is $O\left(\frac{1}{t^2} \right)$.

To prove the last claim of the lemma, since $\sum_k \omega_t(k) = 1$, it suffices to check that $\sum_{k < (1-\epsilon)t} \omega_t(k) + \sum_{k > (2 + \epsilon)t} \omega_t(k) \ll e^{-c\epsilon t}$.  To check this, write, for some $c>0$,
\begin{align}
& \left\{\sum_{k < (1-\epsilon)t}  + \sum_{k > (2 + \epsilon)t}\right\} \int_{1}^2 \phi(s) \frac{e^{-st}(st)^k}{k!} ds\\
\notag & = \int_1^2 \phi(s) \left\{\sum_{k < (1-\epsilon)t}  + \sum_{k > (2 + \epsilon)t}\right\} \frac{e^{-st}(st)^k}{k!} ds\\
\notag & \leq \int_1^2 \phi(s) e^{-c\epsilon st}ds 
\end{align}
by inserting the bounds of (\ref{heat_kernel_concentration_lemma}) in the integrand.  The claim now follows on adjusting $c$.
\end{proof}

\bibliographystyle{plain}

\begin{thebibliography}{1}

\bibitem{BS19}
Berestycki, Nathana\"{e}l, and B.  \c{S}eng\"{u}l. 
\newblock ``Cut-off for conjugacy-invariant random walks on the permutation group." 
\newblock \emph{Probability Theory and Related Fields} 173.3-4 (2019): 1197-1241.

\bibitem{BN19}
Bernstein, M. and E. Nestoridi.
\newblock ``Cut-off for random to random card shuffle.'' \newblock \emph{Annals of Probability}, to appear.

\bibitem{BG08a}
Bourgain, Jean, and Alex Gamburd. 
\newblock ``On the spectral gap for finitely-generated subgroups of $\mathrm{SU} (2)$.''
\newblock \emph{Inventiones mathematicae} 171.1 (2008): 83-121.

\bibitem{BG08b}
Bourgain, Jean, and Alex Gamburd. 
\newblock ``Uniform expansion bounds for Cayley graphs of $\mathrm{SL}_2(F_p)$." 
\newblock \emph{Annals of Mathematics} (2008): 625-642.

\bibitem{D88}
Diaconis, Persi. 
\newblock \emph{Group representations in probability and statistics.} Lecture notes-monograph series 11 (1988): i-192.

\bibitem{D13}
Diaconis, Persi. 
\newblock ``Some things we've learned (about Markov chain Monte Carlo)." 
\newblock \emph{Bernoulli} 19.4 (2013): 1294-1305.

\bibitem{DS93a}
Diaconis, Persi, and Laurent Saloff-Coste. ``Comparison techniques for random walk on finite groups." \emph{The Annals of Probability} (1993): 2131-2156.

\bibitem{DS93b}
Diaconis, Persi, and Laurent Saloff-Coste. ``Comparison theorems for reversible Markov chains." \emph{The Annals of Applied Probability} (1993): 696-730.

\bibitem{DS81}
Diaconis, Persi, and Mehrdad Shahshahani. 
\newblock ``Generating a random permutation with random transpositions." 
\newblock \emph{Probability Theory and Related Fields} 57.2 (1981): 159-179.

\bibitem{D19}
Durrett, Rick. 
\newblock \emph{Probability: theory and examples.} Vol. 49. Cambridge University Press, 2019.

\bibitem{G11}
Goldreich, Oded. 
\newblock ``Finding the shortest move-sequence in the graph-generalized 15-puzzle is NP-hard." 
\newblock \emph{Studies in Complexity and Cryptography. Miscellanea on the Interplay between Randomness and Computation}. Springer, Berlin, Heidelberg, 2011. 1-5.

\bibitem{H08}
Helfgott, H. A. 
\newblock ``Growth and generation in $\mathrm{SL}_2 (\zed/p\zed)$.'' 
\newblock \emph{Annals of Mathematics}, vol. 167, no. 2, 2008, pp. 601–-623. 

\bibitem{HS14}
Helfgott, Harald A., and \'{A}kos Seress. 
\newblock ``On the diameter of permutation groups." \emph{Annals of Mathematics} (2014): 611-658.

\bibitem{HSZ15}
Helfgott, Harald A., \'{A}kos Seress, and Andrzej Zuk. 
\newblock ``Random generators of the symmetric group: diameter, mixing time and spectral gap." 
\newblock \emph{Journal of Algebra} 421 (2015): 349-368.

\bibitem{H16}
Hough, Bob. 
\newblock ``The random k cycle walk on the symmetric group." \newblock \emph{Probability Theory and Related Fields} 165.1-2 (2016): 447-482.

\bibitem{HJL19}
Hough, Robert D., Daniel C. Jerison, and Lionel Levine. \newblock ``Sandpiles on the square lattice." 
\newblock \emph{Communications in Mathematical Physics} 367.1 (2019): 33-87.

\bibitem{JS79}
Johnson, Wm Woolsey, and William Edward Story. 
\newblock ``Notes on the `15' puzzle." \emph{American Journal of Mathematics} 2.4 (1879): 397-404.


\bibitem{LPW09}
Levin, D.A., Y. Peres, and E.L. Wilmer. 
\newblock {\em Markov chains and mixing times.}
\newblock American Mathematical Soc., 2009.

\bibitem{P95}
Parberry, Ian. 
\newblock ``A real-time algorithm for the $(n^2 -− 1)$-puzzle." \emph{Information Processing Letters} 56.1 (1995): 23-28.


\bibitem{RW86}
Ratner, Daniel, and Manfred K. Warmuth. 
\newblock ``Finding a shortest solution for the $N\times N$ extension of the 15-puzzle is intractable." \emph{AAAI}. 1986.

\bibitem{TV06}
Tao, Terence, and Van H. Vu. 
\newblock \emph{Additive combinatorics.} Vol. 105. Cambridge University Press, 2006.

\end{thebibliography}

\end{document}